\title[Finite and infinite quotients of discrete and indiscrete groups]{Finite and infinite quotients of \\discrete and indiscrete groups}
\author{Pierre-Emmanuel Caprace}
\thanks{P.-E.C. is a F.R.S.-FNRS senior research associate, supported in part by EPSRC grant no EP/K032208/1.}
\address{Universit\'e catholique de Louvain, IRMP, Chemin du Cyclotron 2, bte L7.01.02, 1348 Louvain-la-Neuve, Belgique}
\email{pe.caprace@uclouvain.be}
\date{May 15, 2018}
\newtheorem{thm}{Theorem}[section]
\newtheorem{prop}[thm]{Proposition}
\newtheorem{lem}[thm]{Lemma}
\newtheorem{cor}[thm]{Corollary}
\newtheorem{prob}[thm]{Problem}
\theoremstyle{definition}
\newtheorem{qu}[thm]{Question}
\newtheorem{rmk}[thm]{Remark}
\newtheorem{example}[thm]{Example}
\newcommand{\Sym}{\mathrm{Sym}}
\newcommand{\Alt}{\mathrm{Alt}}
\newcommand{\Aut}{\mathrm{Aut}}
\newcommand{\Out}{\mathrm{Out}}
\DeclareMathOperator{\Ker}{Ker}
\newcommand{\PGL}{\mathrm{PGL}}
\newcommand{\PSL}{\mathrm{PSL}}
\newcommand{\SL}{\mathrm{SL}}
\newcommand{\FF}{\mathbf{F}}
\newcommand{\QQ}{\mathbf{Q}}
\newcommand{\ZZ}{\mathbf{Z}}
\newcommand{\Isom}{\mathrm{Isom}}
\newcommand{\Comm}{\mathrm{Comm}}
\newcommand{\Cay}{\mathrm{Cay}}
\newcommand{\Nrd}{\mathrm{Nrd}}
\begin{document}

\begin{abstract} 
These notes are devoted to lattices in products of trees and related topics. They provide an introduction to the construction, by M.~Burger and S.~Mozes, of examples of such lattices  that are simple as abstract groups. Two features of that construction are emphasized: the relevance of non-discrete locally compact groups, and the two-step strategy in the proof of simplicity, addressing separately, and with completely different methods, the existence of finite and  infinite quotients. A brief history of the quest for finitely generated and finitely presented infinite simple groups is also sketched. A comparison with Margulis' proof of Kneser's simplicity conjecture is discussed, and  the relevance of the Classification of the Finite Simple Groups is pointed out. A final chapter is devoted to finite and infinite quotients of hyperbolic groups and their relation to the asymptotic properties of the finite simple groups. Numerous open problems are discussed  along the way. 
\end{abstract}

\maketitle

\tableofcontents


\section{Just-infiniteness versus SQ-universality}
\bigskip

\begin{flushright}
\begin{minipage}{0.8\linewidth}
\itshape
The true infinite is both finite and infinite: it is the overcoming of both infiniteness and finiteness. It is therefore pure indeterminacy and pure freedom. 

\smallskip
\hfill \upshape Carlos Alberto Blanco, \emph{Philosophy and salvation}, 2012
\end{minipage}
\end{flushright}

\bigskip

The general theme of these notes is the normal subgroup structure of infinite groups. Two opposite extreme behaviors  can be observed: groups with `few' normal subgroups, like the \textit{simple groups}, on  one hand, and groups with `many' normal subgroups, like the \textit{free groups}, on the other hand.  The following concepts provide a possible formal way to isolate two precise classes of groups corresponding to those two extremes. A group is called \textbf{just-infinite} if it is infinite and all its proper quotients are finite. In other words, a just-infinite group is an infinite group all of whose non-trivial normal subgroups are of finite index. Infinite simple groups are obvious examples; other basic examples are the infinite cyclic group and the infinite dihedral group. At the other extreme, a group $G$ is called \textbf{SQ-universal} if every countable group embeds as a subgroup in some quotient of $G$. That term was coined by P.~Neumann \cite{Neumann73} following a suggestion of G.~Higman. An emblematic result, due to Higman--Neumann--Neumann \cite{HNN}, is that the free group of rank~$2$ is SQ-universal.  

It is important to observe that these two properties are mutually exclusive for a countable group. Indeed, if a just-infinite countable group were SQ-universal, then it would contain an isomorphic copy of every  finitely generated infinite group. This is impossible, since a countable group contains countably many finite subsets, while the $2$-generated groups fall into uncountably many isomorphism classes by a classical result of B.~Neumann \cite[Theorem~14]{Neumann37}. 

Just-infinite groups arise naturally in view of the following basic consequence of Zorn's lemma: \textit{every finitely generated infinite group has a just-infinite quotient}. An important theorem describing the structure of just-infinite groups was established by J.~Wilson in his thesis, see \cite{Wilson71},  \cite{Wilson_JustInfinite} and \cite{Grigorchuk_branch}. His results highlight the importance of the subclass consisting of the so-called \textbf{hereditarily just-infinite groups}, i.e. the infinite groups all of whose finite index subgroups are just-infinite. The wreath product $S \wr C_2$ of an infinite simple group $S$ with a cyclic group of order~$2$ is an example of a just-infinite group which is not hereditarily so. Let us record the following elementary fact. 

\begin{prop}\label{prop:hji}
Let $G$ be a hereditarily just-infinite group. Either $G$ is  residually finite, or the intersection $G^{(\infty)}$ of all its subgroups of finite index is   simple and  of finite index in $G$. 
\end{prop}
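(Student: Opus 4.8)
The plan is to apply the hypotheses to the \textbf{finite residual} $R := G^{(\infty)}$, i.e. the intersection of all finite-index subgroups of $G$. Since every automorphism of $G$ permutes the family of finite-index subgroups (indices are preserved), $R$ is characteristic, in particular $R \normal G$. The statement is a dichotomy whose dividing line is whether $R$ is trivial: if $R = \triv$, then $G$ is residually finite by definition and there is nothing to prove, so I would assume $R \neq \triv$ from now on.

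Under this assumption, $R$ is a nontrivial normal subgroup of $G$, and $G$ is just-infinite (it is a finite-index subgroup of itself, hence just-infinite by the definition of hereditary just-infiniteness). Therefore the proper quotient $G/R$ is finite, i.e. $[G:R] < \infty$; this already yields the ``finite index'' half of the conclusion. The crucial point is then that $R$ has \emph{no} proper subgroup of finite index: if $K \sleq R$ with $[R:K] < \infty$, the tower law gives $[G:K] = [G:R]\,[R:K] < \infty$, so $K$ is one of the finite-index subgroups of $G$ whose intersection is $R$, forcing $R \sleq K$ and hence $K = R$.

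It remains to see that $R$ is simple. Since $[G:R] < \infty$ and $G$ is \emph{hereditarily} just-infinite, $R$ is just-infinite; in particular $R$ is infinite. Given a nontrivial normal subgroup $N \normal R$, just-infiniteness of $R$ forces $[R:N] < \infty$, and the previous paragraph then forces $N = R$. Hence $R$ is simple, and the proof is complete.

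I do not anticipate a real obstacle: the statement is elementary, as advertised in the text. The only things to be careful about are recording that $R$ is normal (so that ``$G/R$'' makes sense and just-infiniteness of $G$ applies), the multiplicativity of the index in the tower $K \sleq R \sleq G$, and invoking \emph{hereditary} just-infiniteness --- rather than mere just-infiniteness of $G$ --- at exactly the step where one needs $R$ itself to be just-infinite.
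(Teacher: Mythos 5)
Your proof is correct and follows essentially the same route as the paper: reduce to the case where $R = G^{(\infty)}$ is nontrivial, use just-infiniteness of $G$ to get $[G:R]<\infty$, use \emph{hereditary} just-infiniteness to get that $R$ is just-infinite, and observe that $R$ has no proper finite-index subgroup, whence $R$ is simple. The only cosmetic differences are that you explicitly record that $R$ is characteristic and spell out the index multiplicativity step, both of which the paper leaves implicit.
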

\begin{proof}
Suppose that $G$ is not residually finite. Then $G^{(\infty)}$  is a non-trivial normal subgroup of $G$. Hence $[G: G^{(\infty)}]$ is finite, so that $G^{(\infty)}$  is itself just-infinite. Any finite index subgroup of $G^{(\infty)}$ has finite index in $G$, and thus contains $G^{(\infty)}$ by the definition of $G^{(\infty)}$. Therefore, the just-infinite group $G^{(\infty)}$ has no proper subgroup of finite index. Hence, it is  a simple group. 
\end{proof}

The intersection $G^{(\infty)}$ of all finite index subgroups of $G$ is called  the \textbf{finite residual} of $G$.

Proposition~\ref{prop:hji} suggests a two-step approach to show that a group $G$ is virtually simple:   one step is to show  that $G$ is hereditarily just-infinite, the other is that $G$ is not residually finite. As we shall see in Section~\ref{sec:Kneser}, it is precisely this approach that G.~Margulis \cite{Margulis80} used in his proof of Kneser's  conjecture on the simplicity of certain anisotropic simple algebraic groups over number fields. Moreover, non-discrete locally compact groups are used in an essential way to establish the just-infiniteness, via Margulis' Normal Subgroup Theorem (see Section~\ref{sec:Margulis_NST}), while the Classification of the Finite Simple Groups has been used extensively in more recent investigations of the normal subgroup structure of anisotropic simple algebraic groups (see Section~\ref{sec:RPP}).

It is the same two-step strategy that M.~Burger and S.~Mozes  \cite{BuMo2} implemented in their celebrated construction of  simple lattices in the automorphism group of a product of two regular locally finite trees. We  give an overview of their work in Section~\ref{sec:BMW}, and present also the seminal ideas developed independently around the same time by D.~Wise \cite{Wise_phd} concerning lattices in products of trees. In particular, we give an explicit presentation   of a simple group, recently found by N.~Radu \cite{Radu_SimpleLatt}, that splits as an amalgamated free product of two free groups of~rank~$3$ over a common subgroup of index~$5$, see Section~\ref{sec:SimpleBMW}. A key idea due to Burger--Mozes, is to exploit some  geometric aspects of finite group theory, which become relevant through the concept of the \textit{local action} of a group of automorphisms of  a connected locally finite graph, see Section~\ref{sec:LocalAction}. 

The contrast between the study of   finite and  infinite quotients of a finitely presented group is further illustrated in the final section, devoted to Gromov hyperbolic groups. While those are known to be either SQ-universal or virtually cyclic in view of a result independently established  by T.~Deltzant~\cite{Delzant96} and A.~Olshanskii \cite{Olsh_SQ}, it is a major open problem to determine whether they are all residually finite. We discuss that problem, and its relation to the asymptotic properties of the finite simple groups, notably by the consideration of the space of marked groups.

\section{Finitely generated infinite simple groups: historical landmarks}

\subsection{The  first existence proof, after G.~Higman}

The question of existence of a finitely generated infinite simple group was asked by A.~Kurosh \cite{Kurosh}  in 1944. A positive answer was given by G.~Higman \cite{Higman} a few years later. It is based on the following observation. 

\begin{lem}[G.~Higman \cite{Higman}] \label{lem:Higman}
	\begin{enumerate}[(i)]
		\item 	The only positive integer $n$ such that $n$ divides $2^n-1$ is $n=1$. 
		
		\item In a finite group $G$, the only element $g$ that is conjugate to its square by an element of the same order as $g$ is the trivial one. 
	\end{enumerate}	
\end{lem}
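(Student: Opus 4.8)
For part (i), the plan is to argue by contradiction: suppose $n > 1$ is a positive integer with $n \mid 2^n - 1$, and take $n$ to be the smallest such integer. Since $2^n - 1$ is odd, $n$ must be odd; in particular $n \geq 3$. Let $p$ be the smallest prime divisor of $n$. Then $2^n \equiv 1 \pmod p$, so the multiplicative order $d$ of $2$ modulo $p$ divides $n$. But $d$ also divides $p - 1$ by Fermat's little theorem, so $d \mid \gcd(n, p-1)$. Since every prime factor of $n$ is at least $p$, while every prime factor of $p - 1$ is strictly less than $p$, we get $\gcd(n, p-1) = 1$, hence $d = 1$. This forces $2 \equiv 1 \pmod p$, i.e. $p = 1$, which is absurd. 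Therefore no such $n > 1$ exists.

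For part (ii), I would reduce to part (i) by a counting argument over a cyclic subgroup. Suppose $g \in G$ is conjugate to $g^2$ by an element $x$ of the same order as $g$, so $x g x^{-1} = g^2$ and $|x| = |g| =: m$. Iterating the relation gives $x^k g x^{-k} = g^{2^k}$ for all $k \geq 0$; in particular, taking $k = m$, we obtain $g = g^{2^m}$, so $m = |g|$ divides $2^m - 1$. By part (i) this yields $m = 1$, hence $g = 1$. The one point that deserves care is the deduction $x^k g x^{-k} = g^{2^k}$, which follows by an immediate induction on $k$ from the base relation, and the observation that $x^m = 1$ since $|x| = m$.

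The main (mild) obstacle is purely in part (i): getting the elementary number-theoretic argument to close cleanly, i.e. making sure the minimality of $n$ and the choice of the least prime divisor interact correctly so that $\gcd(n, p - 1) = 1$. Everything in part (ii) is then a short and formal consequence. I do not anticipate any genuine difficulty beyond this; the lemma is elementary, and the real content is the clever packaging of (i) and (ii) for the later construction of a finitely generated infinite simple group.
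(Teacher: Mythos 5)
Your proof is correct and follows essentially the same argument as the paper: in (i) you pick the smallest prime divisor $p$ of $n$ and observe that the order of $2$ modulo $p$ divides both $n$ and $p-1$, which are coprime; in (ii) you iterate the conjugation relation to reduce to (i). One small remark: you invoke minimality of $n$ but never actually use it---the contradiction already comes from the minimality of $p$ among prime divisors of $n$, so the ``smallest counterexample'' framing is superfluous, and the paper indeed argues directly without it.
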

\begin{proof}
	(i). Suppose for a contradiction that $n>1$ divides $2^n-1$ and let $p$ be the smallest prime divisor of $n$. Then $p$ divides $2^n-1$. Hence $p$ is odd and the order of~$2$ in the multiplicative group $~\mathbf F_p^*$ is a divisor of $n$, as well as a divisor of~$p-1$. By the choice of $p$, the integer $n$ is relatively prime to $p-1$, and we get a contradiction. 
	
	\medskip \noindent
	(ii). Let $x \in G$ be an element of the same order as $g \in G$, say $n$, such that $xgx^{-1} = g^2$. Then $g = x^n g x^{-n} = g^{2^n}$, so that $g^{2^n-1}=1$. The conclusion follows from (i). 
\end{proof}

\begin{thm}[G.~Higman \cite{Higman}]\label{thm:Higman}
 The group 
 $$H = \langle a_0 , a_1, a_2, a_3 \mid a_i a_{i+1} a_i^{-1} = a_{i+1}^2, \ i \!\!\!\mod 4\rangle$$ 
 is infinite and its only finite quotient is the trivial one. In particular $H$ has a finitely generated infinite simple quotient. 
\end{thm}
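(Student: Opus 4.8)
The plan is to establish the three assertions in turn: first that the only finite quotient of $H$ is trivial, then that $H$ is infinite, and finally to deduce the existence of a finitely generated infinite simple quotient by invoking the Zorn's lemma argument already mentioned in the text.

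For the finite-quotient claim, I would argue as follows. Suppose $\varphi \colon H \to G$ is a surjection onto a finite group. Write $g_i = \varphi(a_i)$ for $i \in \ZZ/4\ZZ$. The defining relation $a_i a_{i+1} a_i^{-1} = a_{i+1}^2$ shows that $g_i$ conjugates $g_{i+1}$ to its square, and in particular $\langle g_{i+1}\rangle$ is normalized by $g_i$, so the order of $g_{i+1}$ divides that of $g_i$ times nothing—more precisely, conjugation is an automorphism, so $g_{i+1}$ and $g_{i+1}^2$ have the same order, forcing that order to be odd, and also $g_i g_{i+1} g_i^{-1}$ has the same order as $g_{i+1}$. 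Cycling around $i \mod 4$, each $g_i$ has the same order as $g_{i+1}$ (since conjugation preserves order and $g_{i+1}, g_{i+1}^2$ are equi-ordered); hence all four elements $g_0,g_1,g_2,g_3$ have a common order, call it $n$. Now $g_1 = \varphi(a_1)$ is conjugated to its square $g_1^2$ by $g_0$, an element of the same order $n$ as $g_1$. By Lemma~\ref{lem:Higman}(ii), $g_1 = 1$. Running the same argument with each index shows $g_i = 1$ for all $i$, so $G = \varphi(H) = \{1\}$. The one subtle point to get right is the claim that all four $g_i$ share a common order: from $g_i g_{i+1} g_i^{-1} = g_{i+1}^2$ one gets $\mathrm{ord}(g_{i+1}) = \mathrm{ord}(g_{i+1}^2)$, which makes $\mathrm{ord}(g_{i+1})$ odd, but to compare $\mathrm{ord}(g_i)$ with $\mathrm{ord}(g_{i+1})$ one should use the cyclic symmetry of the relations together with the fact that $\mathrm{ord}(g_i) \geq \mathrm{ord}(g_i g_{i+1} g_i^{-1})$ is an equality—I would phrase this carefully, noting that since the relation is cyclic, the multiset of orders $\{\mathrm{ord}(g_i)\}$ is invariant under a shift, and the divisibility constraints force them all equal.

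For infiniteness, the cleanest route is to exhibit a quotient of $H$ that is infinite, or to realize $H$ (or a related Baumslag--Solitar-type piece) inside something infinite. The subgroup generated by two consecutive generators $a_i, a_{i+1}$ with the single relation $a_i a_{i+1} a_i^{-1} = a_{i+1}^2$ is the Baumslag--Solitar group $BS(1,2) = \langle a, b \mid a b a^{-1} = b^2\rangle$, which is infinite (it is metabelian, isomorphic to $\ZZ[1/2] \rtimes \ZZ$). One should check that the natural map from $BS(1,2)$ into $H$ sending the two generators to $a_i, a_{i+1}$ is injective—this follows because $H$ is an iterated amalgamated product / HNN-type construction built from these Baumslag--Solitar pieces glued cyclically, so each two-generator subgroup embeds. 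Concretely, $H$ is the fundamental group of a graph of groups (a square of groups) with vertex groups $BS(1,2)$ and edge groups infinite cyclic, and by Bass--Serre theory the vertex groups embed; in particular $H$ contains a copy of $BS(1,2)$ and is therefore infinite. I expect that making the graph-of-groups decomposition precise, and verifying that the edge maps are the intended injections, is the main technical obstacle in this part.

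Finally, granting that $H$ is infinite and finitely generated, the consequence mentioned at the start of the notes applies: every finitely generated infinite group has a just-infinite quotient. Let $Q$ be a just-infinite quotient of $H$. Since every proper quotient of $H$ that is finite is trivial, and quotients of $Q$ are quotients of $H$, the group $Q$ has no non-trivial finite quotient; being just-infinite, all its proper quotients are finite, hence trivial, so $Q$ has no proper non-trivial quotient at all, i.e. $Q$ is simple. Thus $H$ surjects onto a finitely generated infinite simple group, completing the proof.
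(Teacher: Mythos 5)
Your finite-quotient argument has a genuine gap exactly at the step you flagged as ``the one subtle point to get right.'' You want all four $g_i=\varphi(a_i)$ to have the same order so that Lemma~\ref{lem:Higman}(ii) applies, and you justify this by saying that the cyclic symmetry of the relations makes the multiset $\{\mathrm{ord}(g_i)\}$ shift-invariant. That inference is not valid: the relations of $H$ are cyclically symmetric, so $H$ has an order-$4$ automorphism, but an \emph{individual} finite quotient $\varphi\colon H\to G$ need not be compatible with that automorphism, so there is no reason for the multiset of orders of the $g_i$ in $G$ to be shift-invariant a priori. The inequality you invoke, ``$\mathrm{ord}(g_i)\geq \mathrm{ord}(g_ig_{i+1}g_i^{-1})$ is an equality,'' is also off target: conjugation always preserves order, so this just restates $\mathrm{ord}(g_ig_{i+1}g_i^{-1})=\mathrm{ord}(g_{i+1})$ and gives no comparison between $\mathrm{ord}(g_i)$ and $\mathrm{ord}(g_{i+1})$.

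There are two standard repairs. The paper's proof uses the automorphism of $H$ to \emph{modify the quotient}: if $N=\Ker\varphi$ is a proper finite-index normal subgroup and $\sigma$ is the cyclic automorphism, then $N'=\bigcap_{k=0}^3\sigma^k(N)$ is a proper, finite-index, $\sigma$-invariant normal subgroup, so $\sigma$ descends to $H/N'$ and permutes the images of the $a_i$, forcing them all to have a common order $n$; then Lemma~\ref{lem:Higman}(ii) applies verbatim. Alternatively, one can avoid equalizing orders entirely: from $g_i^{n_i}=1$ and $g_ig_{i+1}g_i^{-1}=g_{i+1}^2$ one gets $g_{i+1}^{2^{n_i}-1}=1$, i.e.\ $n_{i+1}\mid 2^{n_i}-1$, and running the smallest-prime argument of Lemma~\ref{lem:Higman}(i) around the cycle of divisibilities $n_1\mid 2^{n_0}-1,\ n_2\mid 2^{n_1}-1,\ n_3\mid 2^{n_2}-1,\ n_0\mid 2^{n_3}-1$ yields a contradiction unless all $n_i=1$. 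Either route is fine, but you must pick one; the shift-invariance claim as stated does not hold and cannot carry the argument.

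For the infiniteness part and the Zorn's-lemma deduction of a finitely generated infinite simple quotient, your plan matches the paper's in spirit: the paper also only sketches the amalgam decomposition of $H$, and your version via a cycle-of-groups with $BS(1,2)$ vertex groups and $\ZZ$ edge groups is essentially the same structure. You correctly identify that the work lies in verifying the edge maps are injective with proper image; this is the same technical point the paper leaves unspoken.
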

\begin{proof}
To see that $H$ is infinite, one observes that it can be decomposed as a non-trivial amalgamated free product whose factors are themselves non-trivial amalgamated free products of proper HNN-extensions. 

In order to elucidate the finite quotients of $H$, let us observe that $H$ has an automorphism of order~$4$ that cyclically permutes the generators. Therefore, if $H$ has a non-trivial finite quotient, then $H$ also has a non-trivial finite quotient  in which  the order of the image of $a_i$ is the same integer $n$ for all $i$. Since $ a_i$   conjugates $ a_{i+1}$ on its square, we deduce from Lemma~\ref{lem:Higman} that the image of $a_i$ in the finite quotient in question is trivial for all $i$, which is absurd. This shows that the only finite quotient of $H$ is the trivial one. 

We conclude the proof by observing that every finitely generated group has a maximal normal subgroup by Zorn's lemma, hence a finitely generated simple  quotient. 
\end{proof}

\begin{rmk}\label{rem:Baumslag}
As observed by G.~Baumslag \cite{Baumslag}, Lemma~\ref{lem:Higman} also implies that the one-relator group
$$B = \langle a, b \mid bab^{-1} a ba^{-1}b^{-1} = a^2\rangle,$$ 
which is infinite and non-cyclic by Magnus' Freiheitssatz, has all its finite quotients cyclic. 
\end{rmk}

The Higman group $H$ itself is far from simple: indeed, it is  {SQ-universal} by \cite[Corollary~3]{Schupp71}. The interest for the Higman group remains vivid in contemporary research: we refer to A.~Martin's recent work \cite{Martin_Higman} for a description of its intriguing geometric features.

\subsection{The first explicit family, after R.~Camm}

Shortly after Higman's theorem~\ref{thm:Higman} was published, R.~Camm obtained the first explicit example of a finitely generated infinite simple group. Her construction actually yields uncountably many pairwise non-isomorphic such groups.

\begin{thm}[R.~Camm \cite{Camm}] \label{thm:Camm}
There is an explicit uncountable set $R$ of triples of permutations of the non-zero integers such that for all $(\rho, \sigma, \tau) \in R$, the group
$$C = \langle a, p, b , q \mid a^i p^{\rho(i)} =  b^{\tau(i)} q^{\sigma\tau(i)},  \ i \in \mathbf Z \setminus\{0\} \rangle$$
is an infinite torsion-free simple group. 
\end{thm}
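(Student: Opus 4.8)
The plan is to follow the classical strategy for constructing infinite simple groups via amalgamated products of free groups along infinite-index subgroups, in the spirit of Higman's and Camm's original arguments, with the work split into three conceptually separate parts: (a) identifying the abstract structure of $C$, (b) establishing simplicity, and (c) controlling isomorphism type to get uncountably many pairwise non-isomorphic examples.

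First I would analyze the presentation. The defining relations $a^i p^{\rho(i)} = b^{\tau(i)} q^{\sigma\tau(i)}$ for $i \in \mathbf Z \setminus \{0\}$ should be read as identifying two subgroups of two free groups. Concretely, set $F_1 = \langle a, p\rangle$ and $F_2 = \langle b, q\rangle$, each free of rank $2$. The elements $u_i = a^i p^{\rho(i)}$ (for $i \neq 0$) should generate a subgroup $U \leqslant F_1$, the elements $v_i = b^{\tau(i)} q^{\sigma\tau(i)}$ should generate a subgroup $V \leqslant F_2$, and one checks — this is where the explicit choice of the permutations $\rho, \sigma, \tau$ and the combinatorics of reduced words in free groups enter — that $U$ and $V$ are each free of infinite (countable) rank on the respective families $\{u_i\}$, $\{v_i\}$, and that the map $u_i \mapsto v_i$ extends to an isomorphism $U \to V$. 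Then $C = F_1 \ast_{U = V} F_2$ is an amalgamated free product of two free groups of rank $2$ over a common subgroup of infinite rank. Torsion-freeness is then immediate: a free product amalgamated over a common subgroup of two torsion-free groups is torsion-free, and free groups are torsion-free.

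Next, simplicity. Here I would use a Higman-type argument tailored to amalgamated products, or more precisely the small-cancellation/normal-form machinery that Camm's construction is designed to support: the point of choosing $R$ correctly is to arrange that $U$ and $V$ are \emph{malnormal} (or satisfy an even stronger intersection condition) in $F_1$ and $F_2$ respectively, and that there are no "accidental" conjugacies between pieces coming from the two factors. Given a non-trivial normal subgroup $N \trianglelefteq C$, one picks $1 \neq n \in N$ of minimal syllable length in the normal form of the amalgam and argues, by repeatedly conjugating $n$ and using the malnormality/geometric hypotheses encoded in $R$, that the syllable length can be reduced — forcing $n$ to lie in a conjugate of $F_1$ or $F_2$, and then a further argument (exploiting that neither $F_1$ nor $F_2$ is normal in $C$, and that $U$, $V$ have infinite index) collapses $N$ to all of $C$. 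This is the step I expect to be the main obstacle: verifying that the explicitly prescribed permutation triples really do yield the required malnormality and no-accidental-identifications properties is the combinatorial heart of the theorem, and it is precisely what makes the set $R$ "explicit" rather than merely "generic". I would expect the author to isolate the key property of $R$ as a lemma about reduced words before invoking it.

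Finally, to get uncountably many isomorphism classes, I would produce a family of invariants distinguishing the groups $C = C(\rho,\sigma,\tau)$ as the triple ranges over $R$. The natural candidates are invariants of the amalgam decomposition that are visible to the isomorphism type of $C$ — for instance, because $C$ is simple and splits as $F_1 \ast_U F_2$, Bass--Serre theory constrains its splittings, so data such as the isomorphism type (or a numerical invariant) of the edge-to-vertex inclusion, reflecting the specific combinatorics of $\rho, \sigma, \tau$, can be read off canonically. One then arranges $R$ to be uncountable while the associated invariant takes uncountably many values, so that only countably many of the $C(\rho,\sigma,\tau)$ can be mutually isomorphic for each value, forcing uncountably many isomorphism classes overall; this mirrors the counting argument already used in the excerpt (via B.~Neumann's $2^{\aleph_0}$ continuum of $2$-generated groups) to separate just-infiniteness from SQ-universality. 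I would treat the construction of $R$ and the verification of its combinatorial properties as a single technical lemma, and then deduce infiniteness, torsion-freeness, simplicity, and the cardinality statement from it in quick succession.
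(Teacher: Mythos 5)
The paper does not actually prove Theorem~\ref{thm:Camm}; it cites Camm's original article and records only the single remark that $C$ is infinite and torsion-free because it is a free amalgamated product of two rank-$2$ free groups over a common infinitely generated subgroup (and hence also not finitely presentable). Your step (a) recovers exactly that remark, with the same torsion-freeness deduction. Steps (b) and (c) go beyond anything the paper supplies, so there is no ``paper's proof'' to compare them to. Your sketch of (b) has the right general shape for Camm's argument -- pick a minimal-length element of a proper normal subgroup in the amalgam normal form, conjugate to shorten, and derive a contradiction from the combinatorics of $(\rho,\sigma,\tau)$ -- but all the actual content (that the families $\{u_i\}$ and $\{v_i\}$ really are free bases of $U$ and $V$, and that the intersection or malnormality conditions needed to close the length-reduction argument hold) is folded into the phrase ``one checks'', which is precisely where Camm's work is. Also, calling this a Higman-type argument is a slight mismatch: Higman's proof in Theorem~\ref{thm:Higman} shows the absence of nontrivial \emph{finite} quotients and then invokes Zorn's lemma, whereas Camm establishes simplicity of the group $C$ itself directly.

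Two cautions about step (c). First, the theorem as stated only asks for an explicit uncountable set $R$ with each resulting $C$ simple; the stronger fact that the groups are pairwise non-isomorphic is asserted in the prose preceding the theorem but is not part of the statement you were given, so your step (c) is solving a strictly harder problem. Second, the Bass--Serre route you propose is not automatic: simplicity does not by itself make the amalgam splitting of $C$ canonical (a simple group can act nontrivially on many trees), so before the edge-to-vertex inclusion data can serve as an isomorphism invariant you would need a separate rigidity argument pinning down the splitting.
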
	 

The fact that $C$ is infinite and torsion-free is clear since $C$ is a  free  amalgamated   product of two free groups of rank~$2$ over a common infinitely generated subgroup. In particular $C$ is not finitely presentable (see \cite[Corollary~12]{Neumann37}).

\subsection{The first finitely presented infinite simple group, after R.~Thompson}\label{sec:Thompson}

In an unpublished manuscript that  was circulated in 1965, R. J.~Thompson introduced three finitely generated infinite groups, known as \textbf{Thompson's groups} $F$, $T$ and $V$. Thompson showed  that $T$ and $V$ are finitely presented and simple; they were the first known specimen of that kind, see \cite{CFP}. G.~Higman \cite{Higman_Thompson} showed that Thompson's group $V$  is a member of an infinite family of finitely presented infinite simple groups, known as the \textbf{Higman--Thompson groups}. The following result records K.~Brown's presentation of Thompson's group $V$ as the fundamental group of a triangle of finite groups (we refer to \cite[\S II.12]{BH} for the basic theory of simple complexes of groups). Other short presentations of $V$ were also found more recently by Bleak--Quick \cite{BleakQuick}. 

%

\begin{thm}[{K.~Brown \cite[Theorem~3]{Brown}}]\label{thm:Brown}
The infinite simple Thompson group $V$ is isomorphic to the fundamental group of a triangle  of finite groups, whose vertex groups are isomorphic to $\Sym(5), \Sym(6)$ and $\Sym(7)$ respectively. 
\end{thm}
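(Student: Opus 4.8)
The plan is to realize Thompson's group $V$ as a group acting on a suitable contractible $2$-dimensional complex, so that it becomes the fundamental group of the associated triangle of groups. First I would recall the standard combinatorial model for $V$: elements are represented by pairs of finite rooted binary forests with the same number of leaves (equivalently, by prefix codes on $\{0,1\}^*$ together with a bijection between two such codes), and composition is by common refinement. The key object is the poset of such "paired trees"; taking a suitable Stein--Brown-style subcomplex of the associated simplicial complex yields a contractible complex on which $V$ acts. The point of Brown's argument is to pass from the binary model to a model using forests with bounded caret size, where the cell stabilizers become the finite symmetric groups $\Sym(5)$, $\Sym(6)$, $\Sym(7)$: these arise as the groups of permutations of the leaves of a single $n$-caret for the relevant small values of $n$, and a triangle is forced because the complex is $2$-dimensional with exactly three orbits of vertices.

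The main steps, in order, are: (1) set up the relevant poset/complex (the "Stein space" for $V$ or the version adapted to $n$-ary carets) and verify it is contractible — this is where a discrete Morse theory or "Brown's criterion" argument enters, showing the descending links are highly connected; (2) identify the $V$-action as cocompact on the $2$-skeleton with exactly three orbits of vertices, one orbit of... and so on, so that the quotient is a triangle (a $2$-simplex) of groups in the sense of Bridson--Haefliger \cite[\S II.12]{BH}; (3) compute the vertex, edge, and face stabilizers, showing the three vertex groups are $\Sym(5)$, $\Sym(6)$, $\Sym(7)$, the edge groups are the corresponding intersections (symmetric groups on fewer letters, appearing as parabolic-type subgroups), and the face group is the common smaller symmetric group; (4) invoke the fact that for a simply connected complex of groups which is developable with contractible development, the fundamental group of the complex of groups is the group acting — here $V$ — giving the claimed isomorphism.

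The hard part will be step (1) together with the bookkeeping in step (3): one must choose the combinatorial model of $V$ so that simultaneously the complex is genuinely $2$-dimensional (so a \emph{triangle}, not a higher-dimensional polytope, of groups appears), the action is cocompact, and the stabilizers come out to be exactly $\Sym(5), \Sym(6), \Sym(7)$ rather than some other finite groups. This is a delicate calibration: it reflects the specific arithmetic of how $V$ is generated by "small" rearrangements, and getting the three symmetric groups on precisely $5$, $6$, $7$ letters requires the right normalization of which refinements of binary trees are declared to be the generating moves. Verifying contractibility of the development — equivalently, that the complex of groups is non-positively curved or at least that Brown's finiteness/connectivity criterion applies — is the technical heart, and I would lean on the established connectivity estimates for the Stein--Farley complexes of Thompson-like groups rather than reprove them from scratch.

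Finally, I would double-check the consistency of the presentation by recovering known facts: the abelianization of the triangle-of-groups presentation must be trivial (consistent with $V$ being simple, via Theorem~\ref{thm:Brown}'s own consequences), and the Euler characteristic / deficiency count should match what is known about $V$. This serves as a sanity check rather than part of the logical skeleton, but it is the quickest way to catch an error in the stabilizer computation of step (3).
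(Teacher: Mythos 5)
A preliminary remark: the paper does not prove Theorem~\ref{thm:Brown}; it records it as a cited result of K.~Brown, so there is no in-paper argument to compare against. Measured against Brown's actual proof in \cite{Brown}, your high-level plan --- act on a simply connected $2$-complex with finite cell stabilizers and read off a triangle of groups from the quotient --- has the right shape, but two of your steps would fail as sketched, and they are precisely the ones you wave at as ``delicate calibration.''

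The first gap is the model for the complex and the resulting stabilizers. Your ``poset of paired trees'' is never pinned down; as stated it consists either of representatives of elements of $V$, on which $V$ acts freely so the stabilizers are trivial, or of standard clopen partitions of the Cantor set, which $V$ does not even preserve (the $V$-image of a standard clopen set is not standard in general), and whose setwise stabilizer would in any case be the infinite wreath product $V\wr\Sym(n)$ rather than $\Sym(n)$, since an element fixing the $n$ blocks setwise may still act by an arbitrary element of $V$ inside each block. Your gloss that the vertex groups are ``permutations of the leaves of a single $n$-caret,'' and your references to a Stein space ``adapted to $n$-ary carets,'' compound this by sliding from $V=V_{2,1}$ (binary trees throughout) to the Higman--Thompson groups $V_{n,1}$. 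What Brown actually uses is Higman's rigid model: a vertex is a free generating set of the free Cantor algebra $C_{2,1}$, where $V$ is realized as $\Aut(C_{2,1})$, and an automorphism permuting a size-$n$ generating set is \emph{determined} by the induced permutation, so the stabilizer is exactly $\Sym(n)$. Since obtaining precisely $\Sym(5),\Sym(6),\Sym(7)$ is the content of the theorem, this choice of model is not a normalization to be adjusted later.

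The second gap is the cell structure. Even in the rigid model, the order complex of the expansion poset truncated to bases of sizes $\{5,6,7\}$ does not have a single $2$-simplex as $V$-quotient: $V$ is transitive on bases of each fixed size and on elementary expansion pairs, but the pairs $B<B''$ with $|B|=5$ and $|B''|=7$ split into several $V$-orbits according to whether the two elementary expansions are performed on distinct generators (stabilizer $\Sym(2)\times\Sym(3)$) or on a generator and then one of its new children (stabilizer $\Sym(4)$), and the $2$-simplices split accordingly. So the assertion that ``a triangle is forced because the complex is $2$-dimensional with three orbits of vertices'' is false for this complex, and identifying the correct choice of edges and $2$-cells so that the quotient really is one triangle is the substantive part of Brown's argument, not a postscript. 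A smaller point: you require the $2$-complex to be contractible, but simple connectivity is all that is needed to compute $\pi_1$ of the associated complex of groups, and simple connectivity (via descending-link connectivity) is what Brown's method delivers; asking for contractibility of a cocompact $2$-complex for $V$ overreaches.
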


Triangles of finite groups constitute  the second simplest kind of amalgams of finite groups after graphs of   finite groups. Fundamental groups of the latter are all virtually free groups, hence residually finite. That the fundamental group of a triangle of finite groups may fail to be residually finite is rather striking. As observed by K.~Brown \cite{Brown}, his Theorem~\ref{thm:Brown} provided a negative answer to a question of B.~H.~and H.~Neumann \cite{NN53}. 

It is also natural to ask whether the fundamental group of a triangle of finite groups must be residually finite, or whether it can be simple, under the additional hypothesis that it has \textbf{non-positive curvature} in the sense of \cite[\S II.12]{BH}. We emphasize that Brown's triangle of groups from Theorem~\ref{thm:Brown} fails to satisfy that curvature hypothesis (see the Remark following Theorem~3 in \cite{Brown}); in fact Thompson's group $V$ contains a copy of every finite group, and therefore cannot have a proper cocompact action on a CAT($0$) space. The existence of a non-positively curved triangle of finite groups with a non-residually finite fundamental group has been proved by Hsu--Wise \cite{HsuWise}. However, the following question remains open. 

\begin{prob}\label{prob:triangle}
Can the fundamental group of a non-positively curved triangle of finite groups be simple? 
\end{prob}

Explicit candidates for a positive answer may be found among the $48$ triangles of finite groups discussed by J.~Tits in his contribution to the proceedings of the Groups St Andrews 1985 conference, \cite[\S3.1]{Tits85}. Those can be constructed as follows. Up to isomorphism, there are exactly four triangles of groups whose vertex groups are Frobenius groups of order~$21$, and whose edge groups are cyclic of order~$3$; this was first observed by M.~Ronan  \cite[Theorem~2.5]{Ronan}. Similarly, there are exactly $44$ isomorphism classes of triangles of finite groups with edge groups $C_9$ cyclic  of order~$9$ and vertex groups isomorphic to the Frobenius group $C_{73}\rtimes C_9$, see \cite[\S3.1]{Tits85} and \cite[\S3.2 in Cours 1984--85]{Tits_cours}.   The fundamental group of each of them   acts simply transitively on the  chambers of a Euclidean building of type~$\tilde A_2$. Therefore, all $48$ of them are hereditarily just-infinite by an unpublished theorem of Y.~Shalom and T.~Steger. A couple of them are linear (in characteristic~$0$ and~$2$ respectively) by \cite{KMW84, KMW85}, while most  do not admit any finite-dimensional representation over any commutative unital ring by  \cite[Theorem~1.1 and \S 1.3]{BCL}. It is conjectured that all of those non-linear groups fail to be residually finite, and are thus virtually simple by Proposition~\ref{prop:hji}. However, none of the $4$ smaller triangles have a simple fundamental group by \cite{KMW_triangle}, but  it is possible that the fundamental group of one of the $44$ larger ones is so.  Indeed, among those $44$ groups, $22$ are perfect. Computer experiments led by Stefan Witzel with MAGMA showed that one of   these $22$ groups has $\PSL_2(\FF_{73})$ as a quotient, while none of the other $21$ groups  admits any finite simple quotient of order~$< 2.10^{10}$.

\subsection{Quotients of free amalgamated products of free groups}

The knowledge of   finitely generated simple amalgams of free groups  on one hand (see Theorem~\ref{thm:Camm}), and of finitely presented infinite simple groups on the other hand (see Theorem~\ref{thm:Brown}) led P.~Neumann to ask the following question in 1973. 

\begin{qu}[{P.~Neumann \cite[Problem in \S3]{Neumann73}}]\label{ques:Neumann}
Can an amalgamated free product $G = A *_C B$ with $A, B$ finitely generated free and $C$ of finite index in $A$ and $B$, be a simple group? Or is it always SQ-universal? 
\end{qu}	

Notice that an amalgamated free product of that kind is finitely presented. The necessity of imposing that the index of $C$ be finite in $A$ and $B$ comes from the following (see also \cite{MinasyanOsin} for  more general results on the SQ-universality of groups acting on trees). 

\begin{thm}[{\cite[Corollary~2]{Schupp71}}]\label{thm:Schupp}
Let  $G = A *_C B$ be an amalgamated free product of groups $A, B, C$, where $A$ and $C$ are finitely generated free groups.   If  $[A: C]$ is infinite and $B \neq C$, then $G$ is SQ-universal. 
\end{thm}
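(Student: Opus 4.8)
The plan is to prove this by small cancellation theory over the amalgamated free product $G = A *_C B$, in the form developed by Schupp \cite{Schupp71}. Let $Q$ be an arbitrary countable group; we must embed it in some quotient of $G$. By the embedding theorem of Higman--Neumann--Neumann \cite{HNN} we may assume that $Q$ is generated by two elements $x_1, x_2$. Form $P = G * Q$. Since $(A *_C B) * Q$ and $A *_C (B * Q)$ are defined by the same presentation, $P$ is again an amalgamated free product over $C$,
$$P \;\cong\; A *_C (B * Q),$$
with $C$ sitting inside the second factor via $C \leq B \leq B * Q$; in particular $Q$, being a free factor of $B * Q$, embeds in $P$.

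Now I would choose two elements $w_1, w_2 \in G \leq P$ and pass to
$$\overline{P} \;=\; P \big/ \langle\langle x_1 w_1^{-1}, x_2 w_2^{-1} \rangle\rangle .$$
In $\overline P$ we have $x_i = w_i \in \langle A, B\rangle$, so $\overline P$ is generated by the images of $A$ and $B$ alone; hence the natural map $G \to \overline P$ is surjective and $\overline P$ is a quotient of $G$. It therefore suffices to arrange that the symmetrised closure of the two relators $r_i = x_i w_i^{-1}$ satisfies a suitable metric small cancellation condition over the amalgam $P = A *_C (B * Q)$: granting this, Schupp's embedding theorem for small cancellation quotients of amalgamated free products guarantees that both factors $A$ and $B * Q$ embed in $\overline P$, so that in particular $Q \hookrightarrow \overline P$, which completes the proof.

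The construction of $w_1, w_2$ is where the hypotheses enter. Since $B \neq C$, pick $b \in B \setminus C$; since $[A:C] = \infty$, pick $a_1, \dots, a_{2k} \in A$ lying in pairwise distinct, and sufficiently generic, cosets of $C$ (genericity being available because $A$ is free), and set
$$w_1 = a_1 b\, a_2 b \cdots a_k b, \qquad w_2 = a_{k+1} b\, a_{k+2} b \cdots a_{2k} b .$$
As $b \notin C$ and each $a_j \notin C$, these are reduced words of length $2k$ in $A *_C B$; and as $x_i \in Q \setminus B$, the $r_i$ are cyclically reduced of length $2k$ in $A *_C (B*Q)$, hence not conjugate into a factor. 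The point is that a ``piece'' — a common segment of two distinct elements of the symmetrised set, read in reduced form — must be short: the syllable involving $x_i$ occurs in only one relator and only once, so no piece crosses it; and a block $a_j b\, a_{j+1} b \cdots$ containing two or more of the $a$'s determines which relator it comes from and its position there, because the $a_j$ are pairwise distinct modulo $C$, so such a block cannot recur. Hence pieces involve at most one of the $a_j$ and have length bounded independently of $k$, whereas $|r_i| = 2k$, so the small cancellation condition holds for all large $k$. Thus $[A:C] = \infty$ is used to provide arbitrarily many independent $A$-syllables, $B \neq C$ is exactly what makes $B \setminus C$ — hence reduced words of even length, hence the whole construction — nonempty, and the freeness of $A$ makes the genericity of the $a_j$ explicit.

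The main obstacle is the careful verification of the small cancellation condition. In an amalgam the syllables of a reduced word are only determined up to multiplication by elements of $C$ on either side, so that $a_j b$, $a_j(cb)$ and $(a_j c)(c^{-1} b)$ all represent the ``same'' occurrence; one must choose the coset representatives $a_j$ robustly enough to defeat every such near-coincidence of segments among $r_1, r_2$, their cyclic permutations and their inverses, and then confirm that the resulting presentation meets the precise hypotheses of Schupp's Greendlinger/factor-embedding results for amalgamated free products. This bookkeeping is the technical heart; the remainder of the argument is formal.
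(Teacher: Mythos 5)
The paper does not actually prove Theorem~\ref{thm:Schupp}; it simply cites Schupp's Corollary~2 from \cite{Schupp71}. So what you have written is a reconstruction of Schupp's argument rather than an alternative to the paper's proof, and your reconstruction does follow the classical route: pass to a two-generated overgroup $Q$ of the given countable group via HNN, observe that $G*Q = A*_C(B*Q)$ is again an amalgam over $C$, kill relators $x_iw_i^{-1}$ with $w_i\in G$ chosen to make a small cancellation presentation over the amalgam, and invoke Schupp's factor-embedding theorem to conclude $Q\hookrightarrow\overline P$. This is indeed the intended mechanism, and the way you locate where each hypothesis enters ($B\neq C$ to get $b\in B\setminus C$, $[A:C]=\infty$ to get many coset representatives, freeness of $A$ to make genericity concrete) is accurate.

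You are right, however, that the piece estimate is the real content, and ``pairwise distinct cosets $a_jC$'' is not quite the condition you need. Two concrete pitfalls that your phrase ``sufficiently generic'' must cover: (1) cyclic permutations of $r_1$ and of $r_1^{-1}$ compare subwords of the form $\cdots a_ib\,a_{i+1}b\cdots$ with $\cdots b^{-1}a_j^{-1}b^{-1}\cdots$, and if the chosen $b\in B\setminus C$ happens to satisfy $b^2\in C$ (which can be forced, e.g.\ if $C$ has index~$2$ in $B$), this amounts to comparing $a_iC$ with $a_j^{-1}C$, so you must in fact arrange that the $4k$ cosets $a_1^{\pm1}C,\dots,a_{2k}^{\pm1}C$ are pairwise distinct; (2) a long piece need not align syllable-by-syllable with your chosen spelling, so the coset condition has to be propagated through the ``semi-reduced'' matchings that Schupp's definition of a piece allows. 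Both can be handled — for (1), B.~H.~Neumann's lemma lets you avoid any finite union of left and right cosets of an infinite-index subgroup, and for (2) the fixed letter $b$ between consecutive $a_j$'s rigidifies the matching — but the argument as written asserts rather than establishes this. It would also be worth saying explicitly that you normalise $Q$ so that $x_1,x_2$ are nontrivial (replace $Q$ by $Q*\mathbf Z$ before applying HNN if necessary), since otherwise one of the $r_i$ lies in $G$ and the ``$x_i$-syllable occurs only once'' argument collapses. With those caveats, I regard this as a correct sketch of the argument underlying the cited result.
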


P.~Neumann's question generated a substantial amount of research, and was eventually answered   by M.~Burger and S.~Mozes \cite{BuMo2}, see Section~\ref{sec:BMW} below. Before presenting their solution, we mention the following  result  of M.~Bhattacharjee, providing examples of finitely presented amalgamated products of free groups   whose only finite quotient is the trivial one. 

\begin{thm}[{M.~Bhattacharjee \cite[Theorem~8]{Bhatt}}]\label{thm:Bhatt}
There exists  an amalgamated free product $G = A *_C B$, where $A$ and $B$ are free groups of rank~$3$ and $C$ is a free group of rank~$13$ embedded as a subgroup of index~$6$ in $A$ and $B$, whose only finite quotient is the trivial one. 
\end{thm}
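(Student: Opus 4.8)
The plan is to build $G$ geometrically and then to kill its finite quotients by transporting to the tree setting the mechanism behind Higman's Theorem~\ref{thm:Higman}. Note at the outset that, unlike the Burger--Mozes theorem, Theorem~\ref{thm:Bhatt} does not claim $G$ is simple --- only that it has no nontrivial finite quotient --- so only the ``finite quotient'' half of the two-step strategy is in play, and no Normal Subgroup Theorem machinery is needed. I would realize $G$ simultaneously as: (a) the fundamental group of a finite, non-positively curved square complex whose link at every vertex is a complete bipartite graph, so that its universal cover is a product $T_1\times T_2$ of two regular trees on which $G$ acts freely and cocompactly (and hence $G$ is torsion-free and finitely presented); and (b) the amalgam $A*_CB$ of the statement, where $A$ and $B$ are the stabilisers in the action on $T_1$ of two adjacent vertices and $C=A\cap B$ their common stabiliser --- these are free because they act freely on $T_2$, the valence of $T_1$ is $6$, so $[A:C]=[B:C]=6$, and $C$ is then free of rank $1+6(3-1)=13$ by Nielsen--Schreier. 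Both structures, and everything below, are governed by a single finite datum, the set of squares of the complex, so the whole problem is the choice of those squares.

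That datum is encoded by two permutation groups $F_1,F_2\le\Sym(6)$, where $F_i$ is the local action --- on the six edges issuing from a vertex --- of the stabiliser of that vertex in the $G$-action on $T_i$ (equivalently, the image of $A$ in $\Sym(A/C)$, and its $T_2$-counterpart). I would choose the squares so that the links are complete, so that $F_1$ and $F_2$ are $2$-transitive --- say $\Alt(6)$, or $\PSL_2(5)$ acting on $\mathbf P^1(\FF_5)$ --- and, crucially, so that the square relations are forced into a \emph{Higman configuration}: a finite generating set $g_1,\dots,g_r$ of $G$ in which each $g_j$ is conjugate to $g_j^{2}$ by another $g_i$, exactly the pattern of the relations $a_ia_{i+1}a_i^{-1}=a_{i+1}^{2}$ defining Higman's group. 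The richness of the local actions is what should make this attainable: $2$-transitivity supplies, inside the vertex groups, enough conjugating elements of controlled order to rewrite the square relations in Higman form while keeping the presentation symmetric.

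Granting such a construction, the finite quotients disappear as in the proof of Theorem~\ref{thm:Higman}. A nontrivial finite quotient of $G$ yields a nontrivial finite \emph{simple} one $\phi$ by the maximal-normal-subgroup argument; using a symmetry of the complex cyclically permuting the $g_j$ --- the analogue of the order-$4$ automorphism of Higman's $H$ --- one may assume all $\phi(g_j)$ have a common order $n$; then each $\phi(g_j)$ is conjugate to $\phi(g_j)^{2}$ by an element of order $n$, hence $\phi(g_j)^{2^{n}-1}=1$, and Higman's Lemma~\ref{lem:Higman} forces $n=1$, so $\phi(g_j)=1$ for all $j$ and $\phi$ is trivial --- a contradiction. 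The finiteness of $[A:C]$ is essential to this whole scheme: in the infinite-index case $G$ is instead SQ-universal by Theorem~\ref{thm:Schupp}.

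\textbf{The main obstacle} is producing the Higman configuration. There is no soft reason why a square complex with complete bipartite links should force its fundamental group into that shape, so one must exhibit, by explicit computation, a gluing of the squares that is simultaneously complete, yields $2$-transitive local actions, and carries the required ``conjugate-to-the-square-by-a-generator'' relations --- a finite but genuinely delicate design problem, and the real content of Bhattacharjee's argument. Even deciding which permutation groups of degree~$6$ can serve as the local actions rests on the short, CFSG-flavoured list of $2$-transitive groups of that degree.
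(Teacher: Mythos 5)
Your plan is internally inconsistent, and the paper itself flags the precise obstruction. You propose to realise $G$ simultaneously as (a) the fundamental group of a finite non-positively curved square complex whose universal cover is a product $T_1\times T_2$ of regular trees, with $G$ acting freely and cocompactly (so $G$ is a torsion-free BMW-type group), and (b) a group carrying a ``Higman configuration'': a generating set $g_1,\dots,g_r$ in which each $g_j$ is conjugate (in $G$) to $g_j^2$. These two requirements cannot both hold. If $G$ acts properly and cocompactly on a CAT($0$) space, translation length is a conjugacy invariant satisfying $\ell(g^2)=2\ell(g)$; so $g$ conjugate to $g^2$ forces $\ell(g)=0$, i.e.\ $g$ is elliptic, hence torsion (\cite[Proposition~II.6.10(2)]{BH}). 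In a free action, elliptic means trivial. So in the torsion-free CAT($0$) setting you have set up, the only $g$ conjugate to $g^2$ is $g=1$, and the Higman relations evaporate. What you describe as ``a finite but genuinely delicate design problem'' is in fact impossible.

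This is not a subtle point in context: the paper's footnote to the sentence about faithfulness of the amalgam explains that Bhattacharjee's presentation does contain a relation saying a generator is conjugate to its square, and draws exactly the conclusion above --- the group cannot act properly cocompactly on a CAT($0$) space, and in particular is not commensurable to any BMW-group. So the square-complex / products-of-trees framework that works for Wise and Burger--Mozes is exactly the framework Bhattacharjee's construction must avoid. His amalgam $A*_CB$ (with the index-$6$, rank-$13$ data, which you correctly recover via Nielsen--Schreier) is built by choosing a specific embedding of the rank-$13$ free group $C$ into two copies of $F_3$ directly, arranging the Higman-type relations combinatorially in the resulting presentation; the finite-quotient argument then runs via Lemma~\ref{lem:Higman} roughly as you outline, but without any appeal to non-positive curvature, complete links, or $2$-transitive local actions. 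The rest of your sketch --- using an automorphism to symmetrise orders of the images, then invoking the order-$n$ conjugation-to-square argument, and the role of $[A:C]<\infty$ via Theorem~\ref{thm:Schupp} --- is sound; the error is confined to the geometric scaffolding.
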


M.~Battacharjee provides a very explicit presentation of that amalgam. At the time of this writing, it is still unknown whether Bhattacharjee's group is simple or not, or even if the amalgam is faithful\footnote{We anticipate on the terminology that will be introducted in Section~\ref{sec:BMW} to make the following comment. Among the defining relations of the group considered by Battacharjee, there is a relation specifying that one of the generators is conjugate to its square. Since the group defined by Battacharjee's presentation is torsion-free, that relation prevents the group from acting   properly cocompactly on a CAT($0$), since an element conjugate to its square must have zero translation length, hence be torsion by \cite[Proposition~II.6.10(2)]{BH}. In particular the Battacharjee group is not commensurable to any BMW-group.} (i.e. the $G$-action on the Bass--Serre tree of the amalgam is faithful).  In order to control infinite quotients of certain free amalgamated products of free groups, M.~Burger and S.~Mozes elaborated on ideas originally developed by G.~Margulis in his seminal work on discrete subgroups of semi-simple Lie and algebraic groups. We shall now give a brief introduction to the relevant material, based on the discussion of specific families of examples.

\section{Kneser's simplicity conjecture}\label{sec:Kneser}

\subsection{The multiplicative group of the  Hamiltonian quaternions}

Given a commutative domain $R$ of characteristic~$\neq 2$, we denote by 
$$\mathbf H(R) = \{x_0 + x_1 i +x_2 j + x_3 k \mid x_i \in R\}$$
the ring of Hamiltonian quaternions over $R$. The symbols $i, j, k$ are subjected to the usual multiplication rules: $i^2 = j^2 = k^2 = -1$ and $ij =k = -ji$. The \textbf{conjugate} of a quaternion $x = x_0 + x_1 i + x_2 j + x_3 k $ is defined as $\bar x = x_0 - x_1 i- x_2 j - x_3 k $ and the \textbf{reduced norm} of $x$ is the product $\Nrd(x) =  x \bar x$, which is an element of $R$. 

Over the field of real numbers $\mathbf R$, we get the standard Hamiltonian quaternions, which is a division algebra over $\mathbf R$. Similarly $\mathbf H(K)$ is a division algebra for any subfield $K \subset \mathbf R$. We denote its multiplicative group by  $\mathbf H(K)^*$. It is easy to check that the center of the group $\mathbf H(K)^*$ consists of the non-zero scalars $x_0 \in K^*$. It can also be seen that the quotient group $\mathbf H(K)^*/K^*$ is infinitely generated for any subfield $K \subset \mathbf R$; this will become apparent later on. 

Observe that the reduced norm $\Nrd \colon \mathbf H(K)^* \to   K^*$ is multiplicative. Its kernel is  denoted by $\mathrm{SL}_1 (\mathbf H(K))$.    The center of $\mathrm{SL}_1(\mathbf H(K))$ is the cyclic group $\{\pm 1\}$ of order~$2$, and the embedding of $\mathrm{SL}_1(\mathbf H(K))$ in $\mathbf H(K)^*$ induces an injective homomorphism  
$$ \mathrm{SL}_1(\mathbf H(K)) /\{\pm 1\} \to \mathbf H(K)^*/K^*.$$
To describe its image, observe that the reduced norm descends to a homomorphism
$$\nu \colon \mathbf H(K)^*/K^* \to K^* /(K^*)^2$$
whose target is an abelian group of exponent~$2$. The kernel of $\nu$ coincides with the image of the embedding $ \mathrm{SL}_1(\mathbf H(K)) /\{\pm 1\} \to \mathbf H(K)^*/K^*$. It consists of those cosets $xK^*$ represented by a non-zero quaternion $x$ whose reduced norm $\Nrd(x)$ is a square in $K^*$.

We recall that over the complex numbers, the algebra $\mathbf H(\mathbf C)$ is isomorphic to the matrix algebra $M_2(\mathbf C)$; an isomorphism  is given by the map
$$x_0 + x_1 i +x_2 j + x_3 k  \mapsto 
\left(
\begin{array}{cc}
x_0 + x_1 i & x_2 + x_3 i\\
-x_2 + x_3 i & x_0 - x_1 i
\end{array}\right).$$ 
In particular, for any subfield $K \subset \mathbf R$, the group 
$\mathbf H(K)^*/K^*$ embeds as a subgroup of $\PGL_2(\mathbf C)$. This map identifies $\mathbf H(\mathbf R)^*/\mathbf R^*$ with the compact Lie group $\mathrm{PU(2)}$. Since the reduced norm of a   quaternion $x \in \mathbf H(\mathbf R)^*$ is a positive real number, it is a square in $\mathbf R^*$, hence the embedding $ \mathrm{SL}_1(\mathbf H(\mathbf R)) /\{\pm 1\}  \to \mathbf H(\mathbf R)^*/\mathbf R^*  $ is surjective. This yields  the  isomorphism  $\mathrm{PSU}(2) \cong \mathrm{PU(2)}$.  Recall moreover that the latter group is simple. 

What is the normal subgroup structure of the group $ \mathrm{SL}_1(\mathbf H(K)) /\{\pm 1\}$ for other subfields $K \subset \mathbf R$? 
Our goal in this section is to provide a partial answer to that question by discussing the following facts, which are special cases of a result due to G.~Margulis. 

\begin{thm}[{G.~Margulis \cite{Margulis80}}]\label{thm:Margulis80} Let $K \subset \mathbf R$ be a number field. 
\begin{enumerate}[(i)]
	\item The group   $\mathrm{SL}_1(\mathbf H(K)) /\{\pm 1\}$ is hereditarily just-infinite. 
	
	\item If for every non-archimedean local field $k$ containing $K$ as a subfield, the algebra $\mathbf H(k)$ is isomorphic to   $M_2(k)$, then the group    $\mathrm{SL}_1(\mathbf H(K)) /\{\pm 1\}$ is simple; in particular the derived group of $\mathbf H(K)^*/K^*$ is simple. Otherwise it is residually finite. 

\end{enumerate}
\end{thm}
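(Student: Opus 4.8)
The plan is to derive Theorem~\ref{thm:Margulis80} from the Strong Approximation Theorem together with Margulis' Normal Subgroup Theorem, applied to a suitable $S$-arithmetic lattice. First I would fix the number field $K$ and, for part (i), choose a finite set $S$ of places of $K$ containing all archimedean places. The key object is the group $G = \mathrm{SL}_1(\mathbf H(K))$, viewed as the group of $K$-points of the anisotropic $K$-algebraic group $\mathbf G = \mathrm{SL}_1(\mathbf H)$. One embeds $G$ diagonally into $\prod_{v \in S} \mathbf G(K_v)$; by the theory of $S$-arithmetic groups (Borel--Harish-Chandra / Behr--Harder), the image of $\mathbf G(\mathcal O_S)$ is a lattice, and since $\mathbf G$ is anisotropic over $K$, this lattice is \emph{cocompact}. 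The strategy is then to pass between finite-index subgroups of $\mathbf G(\mathcal O_S)$ and of $G$ itself: the finite residual and the lattice of normal subgroups are governed by the congruence subgroup property and the NST. I would spell out the $\mathbf{Q}$-rank one case first (e.g. $S = \{v_0, v_\infty\}$ with $\mathbf H$ split at $v_0$), where $\mathbf G(K_{v_0}) \cong \PGL_2(K_{v_0})$ (up to isogeny) acts on a tree, making the geometric picture — and the link with lattices in products of trees promised in the introduction — transparent.

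For part (i), I would argue as follows. Given any finite-index subgroup $\Lambda \le G/\{\pm 1\}$, I want to show $\Lambda$ is just-infinite, i.e. every nontrivial normal subgroup $N \trianglelefteq \Lambda$ has finite index. Since $\Lambda$ is itself (commensurable to) a cocompact $S$-arithmetic lattice in a semisimple group of the form $\prod_{v \in S} \mathbf G(K_v)$, and since the sum of the local ranks is $\ge 2$ as soon as $|S| \ge 2$ (each non-archimedean factor contributing rank one when $\mathbf H$ splits there, and one uses that infinitely many places split), Margulis' Normal Subgroup Theorem applies: every normal subgroup of $\Lambda$ is either central (hence finite, as the center is $\{\pm 1\}$ or trivial) or of finite index. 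This gives just-infiniteness of every finite-index subgroup, hence hereditary just-infiniteness. The one subtlety is that NST is usually stated for irreducible lattices in higher-rank \emph{semisimple Lie} groups or more generally in products of simple algebraic groups over local fields, so I must ensure the non-archimedean factors are legitimately included and irreducibility holds; irreducibility follows from $\mathbf G$ being $K$-simple and strong approximation, which forces the projection to any proper subproduct to be dense.

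For part (ii), the dichotomy is controlled by the congruence kernel, via the Strong Approximation Theorem. Suppose first that $\mathbf H(k) \cong M_2(k)$ for every non-archimedean local field $k \supseteq K$, i.e. $\mathbf H$ is split at every finite place; then $\mathbf H$ ramifies only at (some) real places. In this case I would show $G/\{\pm 1\}$ is \emph{not} residually finite: by Proposition~\ref{prop:hji} it then follows that its finite residual is simple of finite index, but in fact one shows $G/\{\pm1\}$ equals its own finite residual. The mechanism: the congruence completion of $\mathbf G(\mathcal O_S)$ is $\prod_{v \notin S} \mathbf G(\mathcal O_v)$, and the profinite completion differs from it by the congruence kernel; by Serre's conjecture (a theorem here, since $\mathbf G$ is simply connected and $S$-rank $\ge 2$ with the relevant local groups isotropic — Bass--Milnor--Serre, Prasad--Raghunathan, Rapinchuk), the congruence kernel is central, hence finite. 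Combining with strong approximation, every finite quotient of $G$ factors through a congruence quotient; but taking $S$ to exhaust all places where $\mathbf H$ is isotropic, the ``remaining'' places are exactly the ramified real ones, at which $\mathbf G(K_v) = \mathrm{SL}_1(\mathbf H(\mathbf R))$ is compact and contributes nothing new — more precisely, one shows $G$ itself, not merely an $S$-arithmetic subgroup, has no proper finite-index subgroups because every congruence condition is already ``used up''. Then $G/\{\pm 1\}$ is just-infinite with trivial finite residual, hence simple. Conversely, if $\mathbf H(k) \not\cong M_2(k)$ for some non-archimedean $k$, then $\mathbf H$ ramifies at some finite place $v_0$; taking $S$ to \emph{exclude} $v_0$, the congruence topology on $\mathbf G(\mathcal O_S)$ is nontrivial and Hausdorff on a finite-index subgroup (as $\mathbf G(\mathcal O_{v_0})$ is a nontrivial profinite group), which forces residual finiteness of $G/\{\pm1\}$; the simplicity of the derived group of $\mathbf H(K)^*/K^*$ follows since that derived group is exactly the image of $\mathrm{SL}_1(\mathbf H(K))/\{\pm 1\}$ under the map $\nu$ described above.

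The main obstacle I anticipate is the careful bookkeeping of places and the precise invocation of NST and CSP in the mixed archimedean/non-archimedean $S$-arithmetic setting — in particular verifying the rank hypothesis of NST (needing at least two noncompact local factors, hence at least two places where $\mathbf H$ splits, which requires a global argument that infinitely many finite places split, a consequence of the fact that a quaternion algebra ramifies at only finitely many places) and confirming that the congruence kernel is central in exactly the cases claimed. Once those two deep inputs are in place, assembling the four assertions — hereditary just-infiniteness, the simple case, the residually finite case, and the simplicity of the derived group of $\mathbf H(K)^*/K^*$ — is essentially a matter of combining them with Proposition~\ref{prop:hji} and the exact sequences relating $\mathrm{SL}_1(\mathbf H(K))$, $\mathbf H(K)^*/K^*$, and $\nu$.
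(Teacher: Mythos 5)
Your high-level two-step strategy matches the one sketched in Section~3.1 of the paper: exclude infinite quotients by the Margulis Normal Subgroup Theorem, then analyse finite quotients separately, with the "otherwise residually finite" branch handled exactly as the paper does, by embedding $\mathrm{SL}_1(\mathbf H(K))/\{\pm 1\}$ into the compact profinite group $\mathrm{SL}_1(\mathbf H(k))/\{\pm 1\}$ at a ramified finite place.

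However, part (i) of your argument has a genuine gap. You write that a finite-index subgroup $\Lambda \leq G/\{\pm 1\}$ is ``itself (commensurable to) a cocompact $S$-arithmetic lattice'' and then apply the NST to $\Lambda$ directly. But $G = \mathrm{SL}_1(\mathbf H(K))$ is the group of $K$-rational points, not an $S$-arithmetic group: it is the \emph{increasing union} of the lattices $\mathbf G(\mathcal O_S)$ over all finite $S$, it is infinitely generated, and so are its finite-index subgroups. No such $\Lambda$ is a lattice in any fixed product $\prod_{v\in S}\mathbf G(K_v)$, so Theorem~\ref{thm:MargulisNST} cannot be invoked for $\Lambda$ itself. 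The paper's Section~3.2 handles this precisely by the directed-union device: apply the NST to each $\Gamma_S$, deduce that every proper quotient of $G$ (or of $\mathbf H(\mathbf Q)^*/\mathbf Q^*$) is merely \emph{locally finite}, and then a separate Strong Approximation argument is needed to upgrade ``locally finite'' to ``finite'', i.e.\ to hereditary just-infiniteness. Your proposal skips this passage entirely. A secondary concern is part (ii) in the ``simple'' branch: passing from centrality of the congruence kernel for each $\Gamma_S$ to the absence of proper finite-index subgroups of the full group $\mathbf G(K)$ is not the triviality suggested by ``every congruence condition is already used up'' — one must see that the congruence ideals occurring for a fixed finite quotient, taken coprime to ever larger $S$, are forced to be trivial. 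Moreover the congruence subgroup property for the anisotropic group $\mathrm{SL}_1(D)$ is not an independent input available from Bass--Milnor--Serre: in this setting it is essentially a reformulation of the Margulis--Platonov statement one is trying to prove, so invoking it as a black box risks circularity, which is presumably why the paper defers the entire analysis of finite quotients to \cite[Chapter~9]{PlatonovRapinchuk} rather than attempting to reduce it to CSP.
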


Theorem~\ref{thm:Margulis80}(ii) is a special case of a theorem from  \cite{Margulis80}, whose scope encompasses the reduced norm~$1$ group $\mathrm{SL}_1(D)/\{\pm 1\}$ for all  quaternion division algebras $D$ over an arbitrary number field. That simplicity statement was known as \textbf{Kneser's conjecture}, in reference to Kneser's remark following Satz~C in \cite{Kneser56}.  We refer the reader to \cite[Chapter~9]{PlatonovRapinchuk} for a detailed account on this fascinating subject; see also Section~\ref{sec:RPP} below for a brief discussion of the tremendous amount of research that it triggered.  

The point we would like to make here is that Margulis' proof of  the simplicity part in Theorem~\ref{thm:Margulis80}(ii) relies on (i) in an essential way. In other words, the proof of simplicity is achieved in two steps: the first is to exclude non-trivial   infinite quotients, and was achieved by Margulis using his celebrated Normal Subgroup Theorem \cite{Margulis79}, to which the next section is devoted. The second step is to analyze   the   finite quotients. As indicated by the statement, the nature of those finite quotients happens to depend on the arithmetic properties of $K$.

Let us illustrate this matter of facts by  concrete examples of number fields. To that end, let us recall that for any local field $k$ of residue characteristic $p \neq 2$, the Hamiltonian quaternion algebra $\mathbf H(k)$ is isomorphic to the matrix algebra $M_2(k)$, see \cite[Proposition~9.14]{JacobsonII}. This is however not true if $k$ has residue characteristic~$2$: indeed,  it can be checked that $\mathbf H(\mathbf Q_2)$ is a division algebra, see \cite[Exercise~5 in Chapter~9]{JacobsonII}.

Now,  if $\mathbf H(k)$ is a division algebra, then the reduced norm~$1$ group $ \mathrm{SL}_1(\mathbf H(k))$ is compact when endowed with the topology induced by the ultrametric topology on $k$ (this   can checked explicitly; for a more general fact see \cite[Theorem~3.1]{PlatonovRapinchuk}). Moreover it is totally disconnected, because the topology of $k$ is so. In particular it is a profinite group. If the number field $K$ embeds in such a local field $k$, then  $\mathrm{SL}_1(\mathbf H(K)) /\{\pm 1\}$ embeds in the profinite group $ \mathrm{SL}_1(\mathbf H(k)) /\{\pm 1\} $, and is thus residually finite. This is in particular the case for $K = \mathbf Q$ since $\mathbf H(\mathbf Q_2)$ is a division algebra.  

On the other hand $\sqrt 2$ is not an element of $\mathbf Q_2$, since otherwise its $2$-adic valuation  would be $1/ 2$, which is impossible because the $2$-adic valuation of any element of  $\mathbf Q_2$ is an integer. Thus $\mathbf Q_2(\sqrt 2)$ is a quadratic extension of $\mathbf Q_2$. Therefore it follows from \cite[Proposition~9.13]{JacobsonII} that $\mathbf H(\mathbf Q_2(\sqrt 2))$ is isomorphic to the matrix algebra $M_2(\mathbf Q_2(\sqrt 2))$. Any local field  $k$ of residue characteristic~$2$ that contains   a copy of $K = \mathbf Q(\sqrt 2)$ also contains $\mathbf Q_2(\sqrt 2)$ (namely $\mathbf Q_2(\sqrt 2)$ is the closure of $K$ in $k$). We deduce that $K = \mathbf Q(\sqrt 2)$ satisfies the condition of Theorem~\ref{thm:Margulis80}(ii), so that $\mathrm{SL}_1(\mathbf H(\mathbf Q(\sqrt 2)) /\{\pm 1\}$ is simple. 

For details on the proof of Theorem~\ref{thm:Margulis80}(ii), we refer to \cite[Chapter~9]{PlatonovRapinchuk}. We shall now emphasize the relevance of locally compact groups in the proof of Theorem~\ref{thm:Margulis80}(i).  

\subsection{The Margulis Normal Subgroup Theorem}\label{sec:Margulis_NST}

A \textbf{lattice}  in a locally compact group $G$ is a discrete subgroup $\Gamma  \leq G$ such that the quotient space $G/\Gamma$ carries a $G$-invariant probability measure. For an excellent treatment of the basic theory of lattices, we refer to \cite{Raghu}. 
A detailed exposition of the following fundamental result, first established in \cite{Margulis79}, may be consulted in \cite[\S IV.4 and \S IX.5]{Margulis_book}. Given a product group $G_1 \times \dots \times G_n$ and a subset $A \subseteq \{1, \dots, n\}$ of the index set, we denote by $G_A$ the subproduct $G_A = \prod_{i \in A} G_i$, that we identify in the natural way with a direct factor of $G_1 \times \dots \times G_n$.

\begin{thm}[{Margulis Normal Subgroup Theorem \cite[Theorem~(4) in the Introduction]{Margulis_book}}]\label{thm:MargulisNST}
Let $n \geq 1$ and for each $i =1 , \dots, n$, let $k_i$ be a non-discrete locally compact field, let $ \mathbf G_i$ be an almost $k_i$-simple algebraic group over $k_i$, let  $r_i$ be the   $k_i$-rank of $\mathbf G_i$, and assume that $r_i >0$.  Let $G_i$ be the quotient of $\mathbf G_i(k_i)$ by its center. 

Let $\Gamma < G_1 \times \dots \times G_n$   be a lattice. Assume that for every partition $\{1, \dots, n\} = A \cup B$ with $A \neq \varnothing \neq B$, the product group  $(\Gamma \cap G_A)(\Gamma\cap G_B)$ is of infinite index in $\Gamma$. If  $\sum_{i=1}^n r_i \geq 2$, then $\Gamma$ is hereditarily just-infinite. 
\end{thm}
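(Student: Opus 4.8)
The plan is to deduce Theorem~\ref{thm:MargulisNST} from the classical Margulis Normal Subgroup Theorem (NST) together with a few general facts about lattices in products. Recall that the classical NST, under the stated hypotheses (irreducibility of $\Gamma$ in the precise sense given, and $\sum r_i \geq 2$), asserts that every normal subgroup $N \trianglelefteq \Gamma$ is either central---hence finite, since the center of $G_1 \times \dots \times G_n$ is trivial after passing to the adjoint quotients, so in fact $N = \triv$---or of finite index in $\Gamma$. This is precisely the statement that $\Gamma$ is just-infinite, once one checks $\Gamma$ is infinite, which follows because a lattice in a non-compact group (here $\sum r_i \geq 1$ forces non-compactness of the ambient group) is infinite. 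So the \emph{just-infinite} half is immediate; the real content is \emph{hereditary} just-infiniteness, i.e. that every finite-index subgroup $\Gamma_0 \leq \Gamma$ is again just-infinite.

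First I would observe that a finite-index subgroup $\Gamma_0 \leq \Gamma$ is again a lattice in $G_1 \times \dots \times G_n$ (the covolume multiplies by the index). So to apply the classical NST to $\Gamma_0$ it suffices to verify the irreducibility hypothesis: for every nontrivial partition $\{1,\dots,n\} = A \cup B$, the product $(\Gamma_0 \cap G_A)(\Gamma_0 \cap G_B)$ has infinite index in $\Gamma_0$. The key point is that this irreducibility condition for $\Gamma$ passes to $\Gamma_0$. Indeed, suppose for contradiction that $(\Gamma_0 \cap G_A)(\Gamma_0 \cap G_B)$ has finite index in $\Gamma_0$, hence in $\Gamma$. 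Then the subgroup $\Gamma_0 \cap G_A$ has the property that its image under the projection $p_B \colon G_1 \times \dots \times G_n \to G_B$ is trivial, so $(\Gamma_0 \cap G_A)(\Gamma_0 \cap G_B) \subseteq \ker(p_B \rest \Gamma) \cdot \ker(p_A\rest \Gamma)$; but $\Gamma \cap G_A = \ker(p_B \rest \Gamma)$ and $\Gamma \cap G_B = \ker(p_A \rest \Gamma)$, and a finite-index subgroup of $\Gamma$ contained in $(\Gamma \cap G_A)(\Gamma \cap G_B)$ forces $(\Gamma \cap G_A)(\Gamma \cap G_B)$ itself to have finite index in $\Gamma$, contradicting the hypothesis on $\Gamma$. (One should be a little careful: $\Gamma_0 \cap G_A$ need not equal $\Gamma_0 \cap (\Gamma \cap G_A)$ verbatim, but it does, since $\Gamma_0 \leq \Gamma$; and $[\Gamma \cap G_A : \Gamma_0 \cap G_A] \leq [\Gamma : \Gamma_0] < \infty$, so the relevant indices stay finite.) Thus the irreducibility hypothesis is inherited by all finite-index subgroups.

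Having verified that $\Gamma_0$ satisfies exactly the same hypotheses as $\Gamma$ in the classical NST, I conclude by the classical NST that $\Gamma_0$ is just-infinite: any nontrivial $N \trianglelefteq \Gamma_0$ is either central in $G_1 \times \dots \times G_n$, hence trivial, or of finite index. Since $\Gamma_0$ has finite index in $\Gamma$ it is infinite. As $\Gamma_0$ was an arbitrary finite-index subgroup of $\Gamma$, this shows $\Gamma$ is hereditarily just-infinite, completing the proof.

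The main obstacle---and the point that deserves the most care---is the inheritance of the irreducibility (``infinite index of $(\Gamma \cap G_A)(\Gamma \cap G_B)$'') condition under passing to finite-index subgroups, together with correctly invoking the precise form of the classical NST: one must make sure the version quoted really does give the dichotomy ``central or finite index'' for \emph{arbitrary} irreducible lattices in such products (not merely arithmetic ones), and that $\sum r_i \geq 2$ is exactly the higher-rank condition needed. A secondary subtlety is the passage from ``$N$ is central in the ambient group'' to ``$N$ is trivial'': this uses that the $G_i$ are the \emph{adjoint} (centerless) forms, so the center of the product is trivial and a central normal subgroup of the lattice is in fact trivial—whence the proper quotients of $\Gamma_0$ are all finite. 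Everything else is routine covolume and index bookkeeping.
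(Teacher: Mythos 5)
The paper does not give a proof of Theorem~\ref{thm:MargulisNST}: it states the result as a citation to Margulis' book, \cite[Theorem~(4) in the Introduction]{Margulis_book}. What you have produced is, correctly, the silent reduction that the paper's formulation relies on: Margulis' theorem as usually stated gives the dichotomy that every normal subgroup of an irreducible lattice is either contained in the center of the ambient group or of finite index, whereas the paper phrases the conclusion as ``hereditarily just-infinite.'' Your argument closes exactly that gap, by (a) noting that in the adjoint case ``central'' means ``trivial,'' (b) observing that finite-index subgroups of a lattice are again lattices, and (c) verifying that the irreducibility hypothesis is inherited by finite-index subgroups via the index chain $(\Gamma_0 \cap G_A)(\Gamma_0\cap G_B) \leq (\Gamma\cap G_A)(\Gamma\cap G_B) \leq \Gamma$. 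All three steps are correct. Two small remarks for polish: the phrase ``its image under the projection $p_B$ is trivial'' is a detour you do not actually need, since the inclusion $(\Gamma_0 \cap G_A)(\Gamma_0\cap G_B) \subseteq (\Gamma\cap G_A)(\Gamma\cap G_B)$ is immediate from $\Gamma_0 \leq \Gamma$ and directly forces the contradiction once the left-hand side is of finite index in $\Gamma$; and it is worth noting explicitly that $(\Gamma\cap G_A)(\Gamma\cap G_B)$ is a subgroup because $\Gamma\cap G_A$ and $\Gamma\cap G_B$ are commuting normal subgroups of $\Gamma$, so the index-bookkeeping makes sense.
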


The hypothesis that  $(\Gamma \cap G_A)(\Gamma\cap G_B)$ is of infinite index in $\Gamma$ expresses the fact that $\Gamma$ is an \textit{irreducible} lattice. That condition is obviously necessary for $\Gamma$ to be hereditarily just-infinite, since the intersections $\Gamma \cap G_A$ and $\Gamma \cap G_B$ are both normal subgroups of $\Gamma$. 
The hypothesis on the rank of $ G_1 \times \dots \times G_n$ is also necessary: indeed lattices in rank~$1$ simple Lie groups are never just-infinite by \cite{Gromov81}. Actually  they are all  SQ-universal, see Theorem~\ref{thm:LatticeInHyp} below. 

Let us illustrate Theorem~\ref{thm:MargulisNST} with a specific example related to the previous section. We retain the notation introduced there. 

Given a finite set of primes $S = \{\ell_1, \dots, \ell_r\}$, we consider the ring $\ZZ_S = \ZZ[\frac 1 {\ell_1\dots \ell_r}]$ consisting of those rationals $x \in \QQ$ whose $p$-adic valuation is non-negative for all primes $p \not \in S$. We also let 
$$L_S = \mathbf H(\ZZ_S)^*$$
be the multiplicative  group of units of the ring $\mathbf H(\ZZ_S)$. Since $x^{-1} = \frac{\bar x}{\Nrd(x)}$ for all $x \in \mathbf H(\QQ)^*$, we see that an element $x \in \mathbf H(\ZZ_S)$ is a unit if and only if its reduced norm $\Nrd(x)$ is a unit of the ring $\ZZ_S$. 

As in the previous section, the prime $2$ and the odd primes play a different role. 

We first consider $p=2$. Clearly, any $x \in L_{\{2\}}$ can be written as $2^n y$, where $n \in \ZZ$ and $y \in \mathbf H(\ZZ)$. By \cite[Lemma~2.5.5]{DSV}, any quaternion $y \in \mathbf H(\ZZ)$ can be written as a product $y = 2^m \pi \varepsilon$ for some $m \in \mathbf N$, some $\pi \in \{1, 1+i, 1+j, 1+k, (1+i)(1+j), (1+i)(1-k)\}$ and some $\varepsilon \in \mathbf H(\ZZ)$ whose reduced norm is odd. Since $\Nrd(x) \in \ZZ[\frac 1 2]^*$, we deduce that $\varepsilon$ belongs to $L_\varnothing =  \mathbf H(\ZZ)^* = \{\pm 1, \pm i, \pm i, \pm j, \pm k \}$. It follows that the group $L_{\{2\}}/\ZZ[\frac 1 2]^*$ is finite. 
On the other hand, for any odd prime $p$, the group  $L_{\{p\}}^*/\ZZ[\frac 1 p]^*$ is infinite and finitely generated,  see \cite[Theorem~2.5.13]{DSV}. 

As mentioned in the previous section, if the prime $p$ is odd, the Hamiltonian quaternion algebra $\mathbf H(\mathbf Q_p)$ over the $p$-adic numbers is isomorphic to the matrix algebra $M_2(\mathbf Q_p)$, which yields a natural injective homomorphism $\varphi_p \colon \mathbf  H(\mathbf Q)^*/ \mathbf Q^* \to \PGL_2(\mathbf Q_p)$. 
%
Given a finite set of  primes $S$ containing at least one odd prime, let $\Gamma_S$ be the image of the product homomorphism 
$$\prod_{p \in S \setminus \{2\}} \varphi_p \colon L_S  /\ZZ_S^* \to \prod_{p \in S \setminus \{2\}} \PGL_2(\mathbf Q_p) = G_S.$$ 
By \cite[Chapter~IV, Theorem~1.1]{Vigneras}, the group $\Gamma_S$ is a discrete subgroup of $G_S$ and  the quotient space $G_S/\Gamma_S$ is compact. The discreteness is due to the fact   that $\Gamma_S \cap \prod_{p \in S \setminus \{2\}} \PGL_2(\mathbf Z_p) = \Gamma_{\{2\}} \cong L_{\{2\}}/\ZZ[\frac 1 2]^*$, which  is a finite group as mentioned above. Moreover,  for each $p \in S \setminus \{2\}$, the homomorphism $\varphi_p$ maps injectively $L_S  / \ZZ_S^*$ to a  Zariski dense subgroup of $\PGL_2(\mathbf Q_p)$, so that no finite index subgroup of $L_S /\ZZ_S^*$ splits as the direct product of two infinite subgroups. 

Those facts imply that for any non-empty finite set of primes $S$ containing at least two odd primes,   the hypotheses of Theorem~\ref{thm:MargulisNST} are satisfied, so that the group $\Gamma_S$ is hereditarily just-infinite. 

Since the group $\mathbf H(\mathbf Q)^*/ \mathbf Q^*$ is the directed union of the collection of subgroups $L_S/\ZZ_S^*$ indexed by the finite sets of primes $S$, it readily follows from Theorem~\ref{thm:MargulisNST} that every proper quotient of $\mathbf H(\mathbf Q)^*/ \mathbf Q^*$ is a \textbf{locally finite} group, i.e. a group in which every finitely generated subgroup is finite. Similarly, every proper quotient of the subgroup $\SL_1(\mathbf H(\mathbf Q))/\{\pm 1\}$ is locally finite. 
In fact, as observed by Margulis \cite{Margulis79}, using Strong Approximation one can  show via Theorem~\ref{thm:MargulisNST} that $\SL_1(\mathbf H(\mathbf Q))/\{\pm 1\}$ is hereditarily just-infinite (see \cite[p.~517]{PlatonovRapinchuk}). This is how the  proof of Theorem~\ref{thm:Margulis80}(i) is completed in the case of $K = \mathbf Q$.  

\begin{rmk}\label{rem:ArithmeticAmalgams}
If $S= \{p, \ell \}$ is a set consisting of two distinct odd primes, then the group $\Gamma_S$ is a cocompact lattice with dense projections in $\PGL_2(\mathbf Q_p)  \times \PGL_2(\mathbf Q_\ell)$. Using the actions of $\PGL_2(\mathbf Q_p) $ and $ \PGL_2(\mathbf Q_\ell)$ on their associated Bruhat--Tits trees, one can show that $\Gamma_S$ has a finite index subgroup which splits as an amalgamated free product $A *_C B$ where $A$ and $B$ are finitely generated free groups and $C$ is of finite index in $A$ and $B$. We refer to Proposition~\ref{prop:Rattaggi}  below for a concrete example with $S = \{3, 5\}$.  Since $\Gamma_S$ is hereditarily just-infinite, this shows that an amalgam as in Question~\ref{ques:Neumann} can be hereditarily just-infinite; in particular it may fail to be SQ-universal. 	
\end{rmk}
 
\subsection{Finite quotients of the  multiplicative group of a division algebra}\label{sec:RPP}

Kneser's conjecture has been successively generalized by V.~Platonov \cite{Platonov74} and G.~Margulis \cite{Margulis79}; its most general form, formulated in \cite[\S 2.4.8]{Margulis79}, is known as the \textbf{Margulis--Platonov conjecture}. That conjecture  describes the normal subgroup structure of the group of rational points of a simply connected simple algebraic group over a number field. Although the conjecture is still open in full generality, the special case of the reduced norm~$1$ group $\mathrm{SL}_1(D)$ of a division algebra $D$ over a number field was settled in a series of important papers by various authors (see \cite{SegevSeitz}, \cite{Rapinchuk06} and references therein). Roughly speaking, the normal subgroup structure of $\mathrm{SL}_1(D)$  is elucidated by a similar scheme as in the proof of Theorem~\ref{thm:Margulis80}: infinite quotients  and finite quotients are investigated separately, with completely different methods. While the treatment of infinite quotients is based on Margulis' Normal Subgroup Theorem and Strong Approximation as above,   the Classification of the Finite Simple Groups (CFSG) is used   in an essential way to investigate the finite quotients of $\mathrm{SL}_1(D)$ for all division algebras $D$ of degree~$\geq 3$ over number fields, see \cite{RapPot},  \cite{SegevSeitz} and \cite{Rapinchuk06}. We finish this section by mentioning  the following striking culmination of this direction of research, which is valid over an \textit{arbitrary} ground field. The proof relies on the CFSG via the consideration of   commuting graphs. 

\begin{thm}[{Rapinchuk--Segev--Seitz \cite{RPP}}]
	Let $D$ be  a  division algebra which is finite-dimensional over its center.   Every finite quotient of the multiplicative group $D^*$  is solvable.
\end{thm}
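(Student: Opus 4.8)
The plan is to reduce, first, to ruling out a single very restrictive type of non-solvable quotient, and then to play off the internal structure of a division algebra against the geometry of commuting graphs of finite simple groups, the latter being controlled through the Classification; this is the route of Rapinchuk, Segev and Seitz. Throughout, write $K$ for the centre of $D$, so that $Z(D^*)=K^*$.

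\emph{Step 1: reduction to a minimal non-solvable quotient.} Assume for contradiction that $N \trianglelefteq D^*$ has finite index with $D^*/N$ non-solvable. Since $D^*/N$ is finite, there is a normal subgroup $M \supseteq N$ of $D^*$ which is maximal subject to $G := D^*/M$ being non-solvable; then every proper quotient of $G$ is solvable. A routine argument now shows that $G$ has a \emph{unique} minimal normal subgroup $S$ (two distinct ones would embed $G$ into a product of proper, hence solvable, quotients), that $S$ is non-solvable (otherwise $G$, being an extension of the solvable group $G/S$ by $S$, would be solvable), hence that $S \cong T^m$ for some non-abelian finite simple group $T$, and that $C_G(S)=1$. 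Crucially, $M$ is \emph{noncentral}: were $M$ contained in $K^*$, the group $D^*/K^*$ would be finite, which is impossible — if $K$ is finite then $D=K$ by Wedderburn's little theorem, and if $K$ is infinite then the multiplicative group of a noncommutative division algebra is infinite modulo its centre (as recalled above for Hamilton's quaternions). Consequently every normal subgroup of $D^*$ containing $M$, and in particular the preimage $\widetilde S$ of $S$ in $D^*$, is noncentral.

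\emph{Step 2: the division-algebra input.} The one algebraic ingredient is the Cartan--Brauer--Hua theorem: a division subalgebra of $D$ invariant under conjugation by $D^*$ is contained in $K$ or equal to $D$ (recall that, $D$ being finite-dimensional over $K$, every subring of $D$ is automatically a division subalgebra, being a domain that is finite-dimensional over $K$). Two consequences are used. First, for every noncentral $a \in D^*$ the subalgebra generated by the conjugates of $a$ is a conjugation-invariant division subalgebra strictly containing $K$, hence equal to $D$; therefore $\bigcap_{g \in D^*} C_D(gag^{-1}) = K$, and so no noncentral normal subgroup $\Gamma \trianglelefteq D^*$ can be contained in the centralizer $C_{D^*}(a)$ of a noncentral element $a$ (otherwise $\Gamma = g\Gamma g^{-1} \subseteq C_{D^*}(gag^{-1})$ for all $g$, forcing $\Gamma \subseteq K^*$). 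Second, since $K(a)$ is a field for every $a$, the group $D^*$ is the union of the unit groups $L^*$ of the maximal subfields $L \subseteq D$; each $L^*$ is abelian, and $D^*$ permutes these $L$'s transitively within each isomorphism type by Skolem--Noether. Passing to $G = D^*/M$, the images $\bar L$ of the $L^*$ provide a $G$-invariant cover of the commuting graph $\Delta(G)$ (vertices the noncentral elements, edges the commuting pairs) by abelian cliques, while the first consequence forbids the non-abelian socle $S$ — a noncentral normal subgroup of $D^*$ in disguise — from being absorbed by the centralizers attached to a bounded part of this cover. Making this precise yields a strong metric/combinatorial constraint on the way $S$ sits inside $\Delta(G)$.

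\emph{Step 3: incompatibility via the Classification, and the main obstacle.} One then invokes the analysis of commuting graphs of finite simple groups — and, more generally, of finite groups $G$ with $\mathrm{soc}(G)$ a direct product of non-abelian simple groups and $C_G(\mathrm{soc}(G))=1$ — carried out with the CFSG in the work underlying \cite{RPP}: no such group admits a cover of its commuting graph with the homogeneity and metric features forced by Step 2. For groups of Lie type one uses that regular semisimple elements are centralized exactly by a maximal torus, and that distinct maximal tori meet far too thinly, so that $\Delta(G)$ fragments; the alternating groups (already $A_5$, whose self-centralizing Sylow $5$-subgroups are isolated cliques) and the sporadic groups are treated separately. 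The resulting contradiction shows that $D^*/N$ was solvable. The genuinely hard part is precisely this last step: pinning down the exact invariant of $\Delta(G)$ that is at once imposed by the arithmetic of $D$ and prohibited for every finite simple group, and then checking the prohibition uniformly across the Classification — which is where the CFSG enters essentially.
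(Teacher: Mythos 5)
The paper does not prove this theorem; it is stated as a citation to Rapinchuk--Segev--Seitz with the one-line remark that the proof uses the CFSG via commuting graphs, so there is no paper proof to compare against. Your sketch is a plausible reconstruction of the route that paper takes. Step 1 is correct in detail: the reduction to a minimal non-solvable finite quotient $G$ with a unique, non-abelian, centralizer-free minimal normal subgroup $S \cong T^m$ is standard, and the observation that the kernel $M$ must be noncentral (Wedderburn's little theorem when $Z(D)$ is finite, and the impossibility of covering a $\geq 2$-dimensional vector space over an infinite field by finitely many lines otherwise) is right. Step 2 correctly isolates the two algebraic inputs: the Cartan--Brauer--Hua consequence that no noncentral normal subgroup of $D^*$ can lie in $C_{D^*}(a)$ for a noncentral $a$, and the Skolem--Noether-invariant cover of $D^*$ by unit groups of maximal subfields.

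The genuine gap --- which you candidly flag yourself --- is that the sketch never names the precise commuting-graph invariant of $G$ that the division-algebra structure forces and that the CFSG shows no finite group with non-abelian simple socle and trivial centralizer can possess. In the Rapinchuk--Segev--Seitz programme this is a carefully defined balance condition on $\Delta(G)$, extracted from exactly the interaction between the abelian cliques coming from maximal subfields and the Cartan--Brauer--Hua constraint on $S$; ruling it out across the Classification is the content of the companion Segev--Seitz paper. Without pinning that invariant down, the assertion that ``$S$ cannot be absorbed by the centralizers attached to a bounded part of the cover'' has no checkable meaning, and the Step 3 contradiction is a black box. So: correct scaffolding and correct reductions, but the mathematical heart of the theorem --- the definition of the obstruction and its CFSG-based non-existence theorem --- is left unidentified, and that is where all the work lives.
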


\section{Lattices in products of trees, after M.~Burger, S.~Mozes and D.~Wise}\label{sec:BMW}

\bigskip

\begin{flushright}
\begin{minipage}{0.6\linewidth}
\itshape
Let's start with the A-B-C of it\\
Getting right down to the X-Y-Z of it\\
Help me solve the mystery of it

\smallskip
\hfill \upshape G. De Paul, S. Cahn, \emph{Teach me tonight}, 1953
\end{minipage}
\end{flushright}

\bigskip

The goal of this section is to discuss a class of discrete groups acting properly and cocompactly on a product of regular locally finite trees. The study of those groups was pioneered by S.~Mozes~\cite{Mozes92}, Burger--Mozes \cite{BM_CRAS, BuMo1, BuMo2} and D.~Wise \cite{Wise_phd} in the mid 1990's, and provided notably an answer to P.~Neumann's Question~\ref{ques:Neumann}. 

\subsection{BMW-groups and BMW-complexes}

We recall that the \textbf{Cartesian product} of two graphs $(V_1, E_1)$ and $(V_2, E_2)$ is the graph whose vertex set is $V_1 \times V_2$ and whose edge set is defined as the collection of pairs $\big\{ \{v_1, v_2\}, \{w_1, w_2\}\big\}$ such that either $\{v_1, w_1\} \in E_1$ and $v_2 = w_2$, or  $\{v_2, w_2\} \in E_2$  and $v_1 = w_1$. 

A group $\Gamma$ is called a \textbf{BMW-group} if $\Gamma$ is capable of acting by automorphisms on  the Cartesian product of two trees, say $T$ and $T'$, so that every element of $\Gamma$ preserves the product decomposition   $T \times T'$ (i.e. no element of $\Gamma$ swaps the factors $T$ and $T'$), and that the action of $\Gamma$ on the vertex set of $T\times T'$ is free and transitive. Equivalently, $\Gamma$ is a BMW-group if it has   a finite generating set $\Sigma$ such that the Cayley graph of $(\Gamma, \Sigma)$, viewed as an undirected unlabeled graph, is isomorphic to the Cartesian product of two trees, say $T$ and $T'$, and if the image of the associated homormorphism $\Gamma \to \Aut(T \times T')$ is contained  in $\Aut(T) \times \Aut(T')$. Since $\Gamma$ acts vertex-transitively on its Cayley graphs, it is not difficult to see that $T$ and $T'$ must be regular  trees. The \textbf{degree} of a BMW-group   is the pair $(\deg T, \deg T')$, which depends a priori on the choice of the generating set $\Sigma$. 

Recall that the groups $\Gamma$ admitting a finite generating set $\Sigma$ such that the Cayley graph of $(\Gamma, \Sigma)$ is a tree, are the free products of finitely generated free groups and finitely generated free Coxeter groups. A \textbf{free Coxeter group} is a free product of cyclic groups of order~$2$. We emphasize that two non-isomorphic groups can have the same tree as a Cayley graph. This matter of fact is greatly amplified when passing from single trees to Cartesian products of two trees. Indeed, as we shall see, the various BMW-groups admitting a given product of trees as a Cayley graph can enjoy radically different algebraic properties. 

The product $C_2 \times C_2$ of two cyclic groups of order~$2$ is the only   BMW-group of degree $(1, 1)$. For all $m, n$, the direct product of the free Coxeter group of rank $m$ and the free Coxeter group of rank $n$ is a BMW-group of degree $(m, n)$. Similarly, the direct product of the free group of rank $m$ and the free group of rank $n$ is a BMW-group of degree $(2m, 2n)$. A BMW-group is called \textbf{reducible} if it has a finite index subgroup that splits as a direct product of two free groups of rank~$\geq 1$. The following example of a BMW-group was studied by D.~Wise in his thesis \cite{Wise_phd}, where it is notably proved to be irreducible. 

\begin{example}
 The group 
 $$\Gamma_{\mathrm{Wise}}= \langle a, b,  x, y, z \mid 
 a y a^{-1} x^{-1}, b y b^{-1} x^{-1}, a z b^{-1} z^{-1}, a x b^{-1} y^{-1}, b x a^{-1} z^{-1}, bz a^{-1} y^{-1} \rangle$$
 is an irreducible BMW-group of degree $(4, 6)$. It is called the \textbf{Wise lattice}. 
\end{example}

A key property of BMW-groups is that they can be identified by means of a presentation of a very specific form, defined as follows. 
A \textbf{BMW-presentation} is a group presentation of the form 
$$\Gamma = \langle A \cup X \mid R \rangle,$$
where $A$ and $X$ are disjoint  finite sets, and the set of relations $R$ satisfies the following two conditions:
\begin{enumerate}[(BMW1)]

	\item $R$ has a (possibly trivial) partition $R = R_2 \cup R_4$, such that  every $r \in R_2$ is of the form $r = t^2$ with $t \in A \cup X$, and every $r \in R_4$ is of the form $r = axa'x'$ with $a, a' \in   A \cup A^{-1}$ and $x, x' \in  X\cup X^{-1}$;
	
	\item For all $a \in   A \cup A^{-1}$ and $x \in  X\cup X^{-1}$, there exists a unique 
	$a' \in  A \cup A^{-1}$ and a unique $x' \in  X\cup X^{-1}$ such that $axa'x'$ or $a'x'ax$ or $a^{-1} x' a' x^{-1}$ or $a'x^{-1}a^{-1}x'$  belongs to $R_4$.
	
\end{enumerate}

To interpret correctly the uniqueness  conditions appearing in (BMW2), it is important to view the elements of $R_4$ as words in the group $ \langle A \cup X \mid R_2 \rangle$. Hence,  if $(a')^2 \in  R_2$, then $a'$ and $(a')^{-1}$ are the same element of $A \cup A^{-1}$, and similarly for $x'$.

The next result collects  basic properties of BMW-groups and  BMW-presentations.  The proof uses  basic results on  CAT($0$) groups as well as standard tools from Bass--Serre theory (some details may be found in \cite[Section~6.1]{BuMo2}, \cite[Chapter~I]{Rattaggi_phd} and \cite[\S3--4]{Radu_SimpleLatt}). 

\begin{prop}\label{prop:BMW-basic}
	Every BMW-group admits a BMW-presentation. 
	
	Conversely, let $\Gamma = \langle A \cup X \mid R\rangle$ be a BMW-presention, with $R = R_2 \cup R_4$ as above. Let   $A'  = \{a \in A\mid a^2 \in R_2\}$, $X'  =   \{x \in X\mid x^2 \in R_2\}$ and let $m = |A \setminus A'|$, $m' = |A'|$, $n = |X \setminus X'|$ and  $n' = |X'|$.
	\begin{enumerate}[(i)]
		\item The Cayley graph of $\Gamma$ with respect $A \cup X$ is isomorphic to the Cartesian product $T_A \times T_X$ of two regular trees of degree $M = 2m+m'$ and $N = 2n+n'$ respectively. In particular $\Gamma$ is a BMW-group of degree $(M, N) = (2m+m', 2n+n')$.

		\item We have $|R_4| \geq mn$. Moreover, if $R_2 = \varnothing$ and $|R_4| = mn$ then $\Gamma$ is torsion-free. 
		
		\item Every torsion-free BMW-group of degree $(2m, 2n)$ admits a BMW-presention with $|R_4|  = mn$. 
		
		\item The subgroup $\langle A  \rangle$ is the free product of a free group of rank~$m$ with a free Coxeter group of rank $m'$; it fixes  a vertex in $T_X$ and acts simply transitively on the vertices of $T_A$. 
		
		Similarly $\langle X \rangle$ is the free product of a free group of rank~$n$ with a free Coxeter group of rank $n'$; it fixes  a vertex in $T_A$ and acts simply transitively on the vertices of $T_X$. 
		
		\item $\Gamma$ has a torsion-free normal subgroup $\Gamma^+$ of index~$4$ with $\Gamma/\Gamma^+\cong C_2 \times C_2$. The group $\Gamma^+$ acts without edge-inversion on both $T_A$ and $T_X$. 
		
		\item If the $\Gamma^+$-action on $T_A$ is edge-transitive, then $\Gamma^+$ admits a decomposition as a free amalgamated product of the form $\Gamma^+\cong F_{N-1} *_{F_{MN-2M+1}} F_{N-1}$, where $F_d$ denotes the free group of rank~$d$. 
		
		Similarly, if the $\Gamma^+$-action on $T_X$ is edge-transitive, then $\Gamma^+$ admits a decomposition as a free amalgamated product of the form $\Gamma^+ \cong F_{M-1} *_{F_{MN-2N+1}} F_{M-1}$. 
		 
		\item Given a BMW-presentation $\Gamma' = \langle A' \cup X' \mid R'\rangle$ with $A \subset A'$, $X \subset X'$ and $R \subset R'$, the natural homomorphism $\Gamma \to \Gamma'$ induced by the inclusion of the generating set of $\Gamma$ is injective. Moreover the Cayley graph $\Cay(\Gamma, A \cup X)$ embeds as a $\Gamma$-invariant convex subgraph of $\Cay(\Gamma', A' \cup X')$.

	\end{enumerate}
\end{prop}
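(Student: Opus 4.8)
The plan is to route everything through one geometric object: a square complex with a single vertex whose universal cover is a Cartesian product of two trees. Throughout I fill in the evident squares of a product of trees $T\times T'$, so that it becomes a CAT($0$) square complex acted on by $\Aut(T)\times\Aut(T')$. For the first assertion, start from a BMW-group $\Gamma$ acting freely transitively on the vertex set of $T\times T'$ and fix the base vertex $v_0$ corresponding to $1\in\Gamma$. The edges at $v_0$ in the $T$-direction are permuted simply transitively by $\Gamma$, so they are labelled by a symmetric finite subset $A^{\pm1}\subseteq\Gamma$, an involution occurring exactly when an element of $\Gamma$ inverts the corresponding edge; the $T'$-edges give $X$ the same way, and $\Gamma=\langle A\cup X\rangle$. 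Since $\Gamma$ preserves the product decomposition and acts by graph automorphisms, reading the four corners of a square of $T\times T'$ incident to $v_0$ yields a relation $axa'x'=1$ with $a,a'\in A^{\pm1}$ and $x,x'\in X^{\pm1}$, while the fact that in a product of trees a corner together with a choice of horizontal and of vertical direction lies on a unique square is exactly condition (BMW2), condition (BMW1) being built in. That $R=R_2\cup R_4$ is a \emph{defining} set of relations amounts to saying that $T\times T'$ is the universal cover of the resulting one-vertex square complex: the latter is non-positively curved---its vertex link is the complete bipartite graph on the germs of horizontal and of vertical edges, by (BMW2)---so its universal cover is a CAT($0$) square complex with the same links, which I claim is forced to be a product of two trees.

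Conversely, given a BMW-presentation $\Gamma=\langle A\cup X\mid R\rangle$, build the one-vertex square complex $Y$ with an edge for each generator (a ``$C_2$-edge'' for each element of $A'\cup X'$) and a square for each relator in $R_4$. Conditions (BMW1)--(BMW2) say precisely that the vertex link of $Y$ is the complete bipartite graph $K_{M,N}$ with $M=2m+m'$ and $N=2n+n'$, so $Y$ is non-positively curved; its universal cover $\widetilde Y$ is then a CAT($0$) square complex all of whose vertex links are copies of $K_{M,N}$, and I would invoke the structure theorem that such a complex, with its global horizontal/vertical colouring, splits as a product $T_A\times T_X$ of regular trees of degrees $M$ and $N$. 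Since $\pi_1(Y)=\Gamma$ acts on $\widetilde Y=\Cay(\Gamma,A\cup X)$ freely, transitively on vertices, and preserving the decomposition, this is (i). Restricting the action to $\langle A\rangle$, which preserves and acts simply transitively on one horizontal tree while fixing a vertex of $T_X$, and using that a group acting simply transitively on the vertices of a regular tree is---by the characterisation of groups with tree Cayley graphs recalled earlier---a free product of a free group with a free Coxeter group (the $C_2$-factors being the inverted edges), gives (iv). For (v) I set $\Gamma^+=\Gamma\cap(\Aut^+(T_A)\times\Aut^+(T_X))$ with $\Aut^+$ the type-preserving subgroup: the two parity homomorphisms $\Gamma\to C_2$ are onto and independent because every $a\in A$ has non-trivial type-parity on $T_A$ (being a length-one translation or an edge-inversion) while fixing a vertex of $T_X$, and symmetrically for $X$, so $[\Gamma:\Gamma^+]=4$ and $\Gamma/\Gamma^+\cong C_2\times C_2$; and $\Gamma^+$ is torsion-free because an inversion-free finite-order tree automorphism fixes a vertex, so a torsion element of $\Gamma^+$ would fix a vertex of $T_A\times T_X$ and hence be trivial.

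For the counting statements, observe that by (BMW2) the $MN$ pairs $(a,x)\in A^{\pm1}\times X^{\pm1}$ are partitioned among the relators of $R_4$, each relator $a_0x_0a_1x_1$ accounting for at most the four pairs $(a_0,x_0),(a_1,x_1),(a_0^{-1},x_1^{-1}),(a_1^{-1},x_0^{-1})$; hence $4|R_4|\ge MN\ge 4mn$, i.e. $|R_4|\ge mn$. When $R_2=\varnothing$ one has $MN=4mn$, and these four pairs fail to be distinct only if the relator has the form $(ax)^2$, so $|R_4|=mn$ is equivalent to the absence of such relators. This gives the equality clause of (ii), and (iii) follows since a torsion-free BMW-group can have no relator $(ax)^2$ (such a relator produces an element of order two), whence $|R_4|=mn$ in every one of its BMW-presentations. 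That $R_2=\varnothing$ together with $|R_4|=mn$ forces torsion-freeness I would then get by showing $\Gamma$ acts freely on \emph{all} cells of the contractible complex $T_A\times T_X$: $R_2=\varnothing$ makes $\langle A\rangle$ and $\langle X\rangle$ free so that no generator squares to $1$, ruling out reflection-type cell stabilisers (edge-inversions), while $|R_4|=mn$ rules out the $180^\circ$-rotation type, since a rotation of the square attached to a corner $v_0$ with directions $a,x$ would force $(ax)^2$ to be a relator; hence $\Gamma$ has finite cohomological dimension.

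For (vi), edge-transitivity of $\Gamma^+$ on $T_A$ forces exactly two vertex-orbits---the two bipartition classes, since $\Gamma^+$ is inversion-free and $C_2\times C_2$ acts regularly on the four product-classes---so $\Gamma^+\backslash T_A$ is a single edge and Bass--Serre theory gives $\Gamma^+\cong\Gamma^+_v*_{\Gamma^+_e}\Gamma^+_w$; here $\Gamma_v=\langle X\rangle$, and its ``even'' subgroup $\Gamma^+_v$ acts freely without inversion on $T_X$ with a two-vertex, $N$-regular quotient graph, hence with $N$ edges, so $\Gamma^+_v\cong F_{N-1}$; $\Gamma^+_w$ is $\Gamma$-conjugate to $\Gamma^+_v$, hence also $F_{N-1}$; and $\Gamma^+_e$, being of index $M$ in $\Gamma^+_v$ (which is transitive on the $M$ edges at $v$), is free of rank $1+M(N-2)=MN-2M+1$ by Nielsen--Schreier. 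For (vii), the $A$-labelled edges at $v_0$ in $\Cay(\Gamma',A'\cup X')=T_{A'}\times T_{X'}$ span a regular subtree $S_A$ of degree $M$ and likewise $S_X$, and $S_A\times S_X$ is a convex subcomplex on which $\langle A\cup X\rangle\le\Gamma'$ acts freely, transitively on vertices, and preserving the decomposition; uniqueness in (BMW2), applied in $\Gamma$ and in $\Gamma'$, forces the $R_4$-relations read off $S_A\times S_X$ to be exactly $R_4$, so the presentation of $\langle A\cup X\rangle$ is $\langle A\cup X\mid R\rangle$, the obvious surjection $\Gamma\to\langle A\cup X\rangle$ is an isomorphism, and $\Cay(\Gamma,A\cup X)\cong S_A\times S_X$ sits in $\Cay(\Gamma',A'\cup X')$ as a $\Gamma$-invariant convex subgraph. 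The main obstacle is the structure theorem invoked twice above---that a CAT($0$) square complex all of whose vertex links equal one fixed complete bipartite graph, with a consistent global horizontal/vertical colouring, is a product of two trees---which is precisely what converts the combinatorial conditions (BMW1)--(BMW2) into product-of-trees geometry; everything else (the parity homomorphisms, the Bass--Serre bookkeeping, the Nielsen--Schreier rank counts, the corner-coincidence analysis) is routine once it is in place, the one further point deserving care being the verification of freeness on the higher-dimensional cells in the torsion-free criterion.
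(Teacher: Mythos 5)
Your proof takes the route the paper itself signals (CAT($0$) square complexes plus Bass--Serre theory), and is essentially correct in its main lines. The paper gives no proof of its own, only citations to Burger--Mozes \S6.1, Rattaggi's thesis Chapter~I, and Radu \S3--4, and your presentation-complex/local-isometry framework is exactly what those sources use. The counting argument in (ii)--(iii) (each relator of $R_4$ accounts for at most four corners $(a,x)$, with degeneracy only for relators $(ax)^2$, whence $4|R_4|\geq MN$ and torsion is traced to edge-inversions or $180^\circ$-rotations of squares), the identification of $\langle A\rangle$ and $\langle X\rangle$ as the vertex stabilisers for the two projections, the two independent type-parity homomorphisms defining $\Gamma^+$, and the Bass--Serre/Nielsen--Schreier bookkeeping giving $F_{N-1}*_{F_{MN-2M+1}}F_{N-1}$ are all correct.

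Two points deserve more care than you give them. First, when some generators are involutions ($R_2\neq\varnothing$) the one-vertex square ``complex'' $Y$ is really an orbihedron (the $C_2$-edges are folded in half), and both the link computation and the passage to the universal cover have to be carried out in that category; you wave at this with the phrase ``$C_2$-edge'' but never address how it interacts with the non-positive-curvature and local-isometry machinery. Second, in (vii) you assert rather than prove that $\langle A\cup X\rangle$ preserves $S_A\times S_X$ and acts transitively on its vertices --- as stated this is circular, since $V(S_A\times S_X)=\langle A\rangle\times\langle X\rangle$ and the equality $\langle A\cup X\rangle=\langle A\rangle\times\langle X\rangle$ inside $\Gamma'$ is precisely what one is trying to establish. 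The clean fix is the one you begin but do not finish: verify that $Y\hookrightarrow Y'$ is a local isometry, i.e.\ that the link of $Y$ is a \emph{full} subgraph of the link of $Y'$, which is exactly what the uniqueness clause of (BMW2) applied in both $\Gamma$ and $\Gamma'$ (together with $R\subset R'$) gives; then $\pi_1$-injectivity and the convex embedding $\tilde Y\hookrightarrow\tilde Y'$ are standard consequences, and $\tilde Y$ is identified with $S_A\times S_X$ afterwards rather than beforehand. The remaining ``obstacle'' you flag --- that a simply connected VH-square complex with all links the same complete bipartite graph is a product of two trees --- is indeed the crux, and it is a known theorem (folklore/Wise/Burger--Mozes) rather than a gap, so invoking it is appropriate.
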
	


The presentation $2$-complex of a torsion-free BMW-group $\Gamma$ of degree $(2m, 2n)$ with BMW-presentation $\Gamma = \langle A \cup X \mid R \rangle$ is called a \textbf{BMW-complex} of degree $(2m, 2n)$. It is a square complex $Y$ with a single vertex $v$, $m + n$ oriented edges labeled by the $m$ elements of $A$ and the $n$ elements of $X$, and $mn$ squares corresponding to the relations in $R$. The condition (BMW2)  corresponds to the geometric property that the link of $Y$ at $v$ is isomorphic to the complete bipartite graph $K_{2m, 2n}$. The universal cover of $Y$ is isomorphic to the product $T_{2m} \times T_{2n}$ of the regular trees of degrees $2m$ and $2n$, viewed as a square complex. The group $\Gamma$ is isomorphic to the fundamental group $\pi_1(Y)$; it acts on $T_{2m} \times T_{2n}$ by covering transformations, and the action preserves each of the two tree factors.

\subsection{Examples of BMW-groups of small degree}
We now describe some   BMW-groups of degree $(M, N)$ for the smallest values of $M$ and $N$. We start with the torsion-free case. 

The only torsion-free BMW-groups of degree $(2, 2)$ are the free abelian group  $\mathbf Z^2 = \langle a, x \mid axa^{-1}x^{-1} \rangle$ and the Klein Bottle group $\langle a, x \mid axa^{-1} x\rangle$. 

For every $n$, all torsion-free BMW-groups of degree $(2, 2n)$ are reducible. This follows from Theorem~\ref{thm:IrredInsep} below. 

The torsion-free BMW-groups of degree $(4,4)$ have been studied by Kimberley--Robertson \cite{KR} and  D.~Rattaggi \cite{Rattaggi_phd}. As explained in Section~7 from \cite{KR}, there are exactly $52$ homeomorphism types of BMW-complexes of degree $(4,4)$. It is important to underline that two non-homeomorphic complexes can have isomorphic fundamental groups. The number of isomorphism classes of  torsion-free BMW-groups of degree $(4,4)$ is not known, but according to loc.~cit. it belongs to the set $\{41, 42, 43\}$. 
By comparing and combining Table C.4 on p.~278 in Section C.5 of \cite{Rattaggi_phd} with Section~7 from \cite{KR} (keeping an eye on the structure of the abelianization), one can see that among the  $52$ BMW-complexes of degree $(4,4)$,  at least $50$ have a reducible fundamental group. As we shall see, the remaining two complexes happen to have an irreducible fundamental group. Those admit the following BMW-presentations.

\begin{example}
\label{ex:4,4}
The groups 
  $$\Gamma_{\mathrm{SV}}= \langle a, b, x, y \mid axay, ax^{-1} b x^{-1}, ay^{-1} b^{-1}y^{-1}, bxby^{-1}  \rangle $$
and
   $$\Gamma_{\mathrm{JW}}= \langle a, b, x, y \mid axay, ax^{-1} b y^{-1}, ay^{-1} b^{-1}x^{-1}, bxb^{-1}y^{-1} \rangle $$
are the only two irreducible torsion-free BMW-groups of  degree $(4,4)$, up to isomorphism. 
\end{example}

The irreducibility of $\Gamma_{\mathrm{SV}}$ is a consequence of  the main results in \cite{StixVdovina}. 

\begin{prop}[J.~Stix and A.~Vdovina \cite{StixVdovina}]\label{prop:StixVdovina}
	The BMW-group $\Gamma_{\mathrm{SV}}$ is irreducible. It embeds as a cocompact lattice with dense projections in $\PGL_2(\FF_3(\!(t)\!)) \times \PGL_2(\FF_3(\!(t)\!))$. In particular it is hereditarily just-infinite by the Margulis Normal Subgroup Theorem. 
\end{prop}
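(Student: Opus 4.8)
The plan is to exhibit $\Gamma_{\mathrm{SV}}$ explicitly as a lattice in a product of two $\PGL_2$'s over the local field $\FF_3(\!(t)\!)$, and then to read off irreducibility and hereditary just-infiniteness from the Margulis Normal Subgroup Theorem (Theorem~\ref{thm:MargulisNST}). The key input is an arithmetic one: $\FF_3[t]$ (or a quaternion order over it, or an $\SL_2$-order) admits an $S$-arithmetic group, with $S$ consisting of two places of the rational function field $\FF_3(t)$, which acts simply transitively on the vertices of the product of the two Bruhat--Tits trees. For $\PGL_2$ over a local field of residue characteristic $q$, the Bruhat--Tits tree is the $(q+1)$-regular tree; here $q = 3$, so each tree has degree $4$, matching the degree $(4,4)$ of the presentation. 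The first step is therefore to identify the two places of $\FF_3(t)$ (typically the place at infinity and one place corresponding to a degree-one prime, or two degree-one primes) and the precise $S$-arithmetic lattice $\Lambda \leq \PGL_2(\FF_3(\!(t)\!)) \times \PGL_2(\FF_3(\!(t)\!))$; this is exactly the content of the construction in \cite{StixVdovina}, and I would invoke it.

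Second, I would verify that $\Lambda$ is cocompact and that its vertex stabilizers in the product of trees are trivial, so that $\Lambda$ acts simply transitively on the vertex set of $T_4 \times T_4$; cocompactness follows from the fact that $S$ omits at least one place at which the relevant division algebra ramifies, or equivalently (in the split/$\SL_2$ case) from a theorem of the type \cite[Chapter~IV, Theorem~1.1]{Vigneras} together with a counting/strong-approximation argument, and the simple transitivity is precisely the identification of $\Lambda$ with a BMW-group. By Proposition~\ref{prop:BMW-basic}, a simply transitive action on $T_4 \times T_4$ preserving the factors yields a BMW-presentation of degree $(4,4)$, and one then checks that the resulting presentation is isomorphic to the displayed presentation of $\Gamma_{\mathrm{SV}}$ — this matching of presentations is carried out in \cite{StixVdovina} and I would cite it rather than redo it.

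Third, irreducibility: I would show that the image of $\Lambda$ (equivalently $\Gamma_{\mathrm{SV}}$) under each of the two coordinate projections $\PGL_2(\FF_3(\!(t)\!)) \times \PGL_2(\FF_3(\!(t)\!)) \to \PGL_2(\FF_3(\!(t)\!))$ is dense. For an $S$-arithmetic group this is Strong Approximation: the projection to a single factor is Zariski dense (because $\mathbf{SL}_2$ is almost simple and the $S$-arithmetic group is Zariski dense in it), and Zariski density together with the fact that the group is not contained in any compact subgroup forces topological density in $\PGL_2(\FF_3(\!(t)\!))$ by strong approximation for $\mathbf{SL}_2$ over the global field $\FF_3(t)$. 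Dense projections imply in particular that no finite-index subgroup of $\Gamma_{\mathrm{SV}}$ splits as a direct product of two infinite subgroups — for such a splitting would, after passing to a finite-index subgroup, produce a finite-index subgroup of one tree-factor's image that commutes with an infinite subgroup, contradicting density in the (center-free, non-discrete) group $\PGL_2(\FF_3(\!(t)\!))$ — hence $\Gamma_{\mathrm{SV}}$ is irreducible as a BMW-group.

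Finally, hereditary just-infiniteness is immediate from Theorem~\ref{thm:MargulisNST}: take $n = 2$, $k_1 = k_2 = \FF_3(\!(t)\!)$, $\mathbf{G}_1 = \mathbf{G}_2 = \mathbf{PGL}_2$, each of $k_i$-rank $1$, so $\sum r_i = 2$; the dense-projections property is exactly the irreducibility hypothesis $(\Gamma \cap G_A)(\Gamma \cap G_B)$ of infinite index in $\Gamma$ for the unique nontrivial partition; and $\Gamma_{\mathrm{SV}}$ is a (cocompact, hence finite-covolume) lattice in $G_1 \times G_2$. The theorem then yields that $\Gamma_{\mathrm{SV}}$ is hereditarily just-infinite. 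The main obstacle is the first/second step — pinning down the correct arithmetic group over $\FF_3(t)$ and checking that it matches the displayed presentation exactly — but since this is established in \cite{StixVdovina}, the proof here consists of quoting that identification and then feeding it into the Normal Subgroup Theorem.
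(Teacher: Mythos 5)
Your proposal is correct and matches what the paper actually does: the paper gives no independent proof of this proposition, but simply attributes the arithmetic identification of $\Gamma_{\mathrm{SV}}$ with a cocompact, simply transitive $S$-arithmetic lattice (with dense projections) to \cite{StixVdovina}, and then observes that Theorem~\ref{thm:MargulisNST} yields hereditary just-infiniteness. Your elaboration — strong approximation for dense projections, and plugging $n=2$, $k_i=\FF_3(\!(t)\!)$, $\mathbf G_i=\mathbf{PGL}_2$, $r_i=1$ into the Normal Subgroup Theorem — is exactly the intended deduction; the only small quibble is that your commuting-subgroups argument for irreducibility could be shortened by invoking the equivalence (i)$\Leftrightarrow$(iv) of Theorem~\ref{thm:IrredInsep} directly (dense, hence non-discrete, projections to $\Aut(T_4)$ force irreducibility).
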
	
	
The fact that $\Gamma_{\mathrm{JW}}$ is irreducible is established by Janzen--Wise in \cite{JW}. Although the presentations of $\Gamma_{\mathrm{SV}}$ and $\Gamma_{\mathrm{JW}}$ are rather similar, the groups are quite different. Indeed   $\Gamma_{\mathrm{SV}}$ is linear in characteristic~$3$,  while the group $\Gamma_{\mathrm{JW}}$ fails to be residually finite (this was observed independently in \cite[Theorem~15]{BondarenkoKivva} and \cite{CapraceWesolek}; see also Section~\ref{sec:ResidualFiniteness} below).

One can pursue the enumeration of BMW-complexes of larger degrees. The Wise lattice is an example of an irreducible torsion-free BMW-group of degree $(4,6)$. As we shall see in Section~\ref{sec:ResidualFiniteness} below, it is not residually finite. Another example of degree $(4, 6)$ is provided by the following result, due to D.~Rattaggi. 

\begin{prop}[{D.~Rattaggi \cite[Theorem~3.35 and Proposition~3.47]{Rattaggi_phd}}] \label{prop:Rattaggi}
The BMW-group 
$$\Gamma_{\mathrm{Ratt}} = \langle a, b, x, y, z \mid   axby, a y b x^{-1}, a z b^{-1} x, a z^{-1} a y^{-1}, ax^{-1} b^{-1} z, bzby^{-1} \rangle$$
is irreducible.  It embeds as a cocompact lattice with dense projections in $\PGL_2(\QQ_3) \times \PGL_2(\QQ_5)$. In particular it is hereditarily just-infinite by the Margulis Normal Subgroup Theorem. 
\end{prop}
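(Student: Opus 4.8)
The plan is to verify directly that the presentation defining $\Gamma_{\mathrm{Ratt}}$ is a genuine BMW-presentation, and then to identify the group with an arithmetic lattice coming from the Hamiltonian quaternions at the primes $3$ and $5$, as in Remark~\ref{rem:ArithmeticAmalgams}. First I would check conditions (BMW1) and (BMW2): here $A = \{a,b\}$, $X = \{x,y,z\}$, so $m=2$, $n=3$, $R_2 = \varnothing$, and there are $|R_4| = 6 = mn$ relations, each of the form $axa'x'$ with $a,a' \in A \cup A^{-1}$ and $x,x' \in X \cup X^{-1}$. The content is the combinatorial bookkeeping of (BMW2): for each of the $2\cdot 2 \cdot 2 \cdot 3 = \dots$ (rather, for each of the $4$ choices of $a \in A \cup A^{-1}$ and $6$ choices of $x \in X \cup X^{-1}$) one must exhibit the unique relator, up to the four rewriting moves listed in (BMW2), beginning with $ax$. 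Granting this, Proposition~\ref{prop:BMW-basic} yields that $\Gamma_{\mathrm{Ratt}}$ is a torsion-free BMW-group of degree $(4,6)$ acting on $T_4 \times T_6$.

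Next I would produce the embedding into $\PGL_2(\mathbf Q_3) \times \PGL_2(\mathbf Q_5)$. The natural candidate is the arithmetic lattice $\Gamma_{\{3,5\}}$ of Remark~\ref{rem:ArithmeticAmalgams}: the image of $\mathbf H(\mathbf Z[\frac{1}{15}])^*/\mathbf Z[\frac{1}{15}]^*$ under $\varphi_3 \times \varphi_5$, which by the cited work of Vign\'eras is a cocompact lattice in $\PGL_2(\mathbf Q_3) \times \PGL_2(\mathbf Q_5)$ with dense projections, hence hereditarily just-infinite by Theorem~\ref{thm:MargulisNST}. To match $\Gamma_{\mathrm{Ratt}}$ with (a finite-index overgroup, or the relevant index-$4$ subgroup $\Gamma_{\mathrm{Ratt}}^+$ of) this lattice, one identifies the $\Gamma_{\mathrm{Ratt}}$-action on $T_4 \times T_6$ with the action of the quaternion lattice on the product of Bruhat--Tits trees of $\PGL_2(\mathbf Q_3)$ and $\PGL_2(\mathbf Q_5)$: the tree $T_3$ for $\PGL_2(\mathbf Q_3)$ is $3+1 = 4$-regular and $T_5$ for $\PGL_2(\mathbf Q_5)$ is $5+1=6$-regular, which is exactly the degree of $\Gamma_{\mathrm{Ratt}}$. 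Concretely one exhibits explicit quaternions over $\mathbf Z[\frac{1}{15}]$ of reduced norm a power of $3$ (resp.\ of $5$) realising the generators $a,b$ (resp.\ $x,y,z$), checks that these satisfy the six defining relators modulo the center, and checks that they generate the full lattice (or a finite-index subgroup with the correct covolume); a vertex count on $T_4 \times T_5$ versus the covolume computed from Vign\'eras' mass formula closes the argument. Since the quaternion lattice has dense projections, so does $\Gamma_{\mathrm{Ratt}}$, and irreducibility follows: a direct factor decomposition of a finite-index subgroup would force a projection to be discrete, contradicting density. The final assertion, hereditary just-infiniteness, is then immediate from Theorem~\ref{thm:MargulisNST}, since the irreducibility established above is precisely the required condition on the index of $(\Gamma \cap G_A)(\Gamma \cap G_B)$.

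The main obstacle I expect is the explicit identification of $\Gamma_{\mathrm{Ratt}}$ with the arithmetic lattice: one has to pin down the right quaternions, get the orientation and edge-identification conventions on the two Bruhat--Tits trees to agree with the square-complex structure coming from the BMW-presentation, and verify the covolume/index bookkeeping. None of this is conceptually deep, but it is the step where a concrete, somewhat delicate computation is unavoidable; everything else (the BMW-axioms check, and the deduction of irreducibility and hereditary just-infiniteness from density of projections plus Theorem~\ref{thm:MargulisNST}) is routine. I would present the quaternion data in a small table, refer to \cite{Rattaggi_phd} for the verification, and keep the emphasis on the structural consequence.
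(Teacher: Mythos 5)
Your outline matches the paper's approach: the paper attributes the result to Rattaggi's thesis and then supplies exactly the explicit quaternion data you anticipate, namely $\varphi(a)=1-i-j$, $\varphi(b)=1-i+j$ (reduced norm $3$) and $\varphi(x)=1+2k$, $\varphi(y)=1-2i$, $\varphi(z)=1-2j$ (reduced norm $5$), extending to an injective homomorphism onto a finite-index subgroup of $\mathbf H(\ZZ_{\{3,5\}})^*/\ZZ_{\{3,5\}}^*$, with irreducibility and hereditary just-infiniteness then following from density of projections and Theorem~\ref{thm:MargulisNST} exactly as you describe. (Minor slip: the product of Bruhat--Tits trees should be $T_4\times T_6$, not $T_4\times T_5$.)
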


The lattice $\Gamma_{\mathrm{Ratt}}$ happens to be a quaternionic arithmetic lattice as those discussed in Section~\ref{sec:Margulis_NST}. More precisely, consider the quaternion algebra $\mathbf H(\QQ)$ with standard basis $\{1, i, j, k\}$. One can compute that the assignments:
$$\begin{array}{rcl}
\varphi(a) & = & 1 - i - j, \\
\varphi(b) & = & 1-i + j ,\\ 
\varphi(x) & = & 1 + 2k, \\ 
\varphi(y) & = & 1- 2i,\\
\varphi(z) & = &1-2j
\end{array}$$
extend to a homomorphism $\varphi \colon \Gamma_{\mathrm{Ratt}} \to D^*/\QQ^*$, by checking that the defining relations of $\Gamma_{\mathrm{Ratt}}$ are satisfied. It turns out that $\varphi$ is injective, and maps $\Gamma_{\mathrm{Ratt}}$ to a finite index subgroup of the lattice $\mathbf H(\ZZ_{\{3, 5\}})^*/\ZZ_{\{3, 5\}}^*$ discussed in Section~\ref{sec:Margulis_NST}.

\medskip 
The following example of degree $(6, 6)$ was pointed out to me by I.~Bondarenko. 

\begin{prop}[I.~Bondarenko, D.~D'Angeli, E.~Rodaro] \label{prop:Lamplighter(6,6)}
The BMW-group 
\begin{align*}
\Gamma_{\mathrm{BDR}} = \langle a, b, c, x, y, z \mid  \ & axa^{-1}x^{-1}, ayb^{-1}z^{-1}, azc^{-1}y^{-1},cyc^{-1}x^{-1},\\
&  bya^{-1}y^{-1}, bxc^{-1}z^{-1}, bzb^{-1}x^{-1}, cxb^{-1}y^{-1}, cza^{-1}z^{-1}\rangle
\end{align*}
is an irreducible torsion-free BMW-group of degree $(6,6)$. The image of the respective projections of the free groups $\langle a, b, c \rangle$ and $\langle x, y, z\rangle$ to the automorphism groups of the tree factors $T_{\{x, y, z\}}$ and $T_{\{a, b, c\}}$ are both isomorphic to the lamplighter group $C_3 \wr \ZZ$. 
\end{prop}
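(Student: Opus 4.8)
The plan has three parts: (A) verify that the displayed presentation is a BMW-presentation, so that Proposition~\ref{prop:BMW-basic} yields at once that $\Gamma_{\mathrm{BDR}}$ is a torsion-free BMW-group of degree~$(6,6)$; (B) identify the image of the projection $p_{T_X}\colon\langle a,b,c\rangle\to\Aut(T_{\{x,y,z\}})$, and symmetrically of $p_{T_A}\colon\langle x,y,z\rangle\to\Aut(T_{\{a,b,c\}})$, with the lamplighter group $C_3\wr\ZZ$; (C) deduce irreducibility from~(B). Part~(A) is routine: with $A=\{a,b,c\}$, $X=\{x,y,z\}$, $R_2=\varnothing$ and $R_4=R$, condition (BMW1) holds since each relator has the form $\alpha\xi\alpha'\xi'$ with $\alpha,\alpha'\in A\cup A^{-1}$ and $\xi,\xi'\in X\cup X^{-1}$, and (BMW2) is the finite verification that the nine relators partition the $36$ pairs $(\alpha,\xi)\in(A\cup A^{-1})\times(X\cup X^{-1})$, each relator $\alpha_1\xi_1\alpha_2\xi_2$ accounting for $(\alpha_1,\xi_1)$, $(\alpha_2,\xi_2)$, $(\alpha_1^{-1},\xi_2^{-1})$ and $(\alpha_2^{-1},\xi_1^{-1})$. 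Then Proposition~\ref{prop:BMW-basic}(i) gives degree $(2\cdot3,2\cdot3)=(6,6)$, and~(ii) gives torsion-freeness, since $R_2=\varnothing$ and $|R_4|=9=3\cdot3$.

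The heart of the matter is~(B). By Proposition~\ref{prop:BMW-basic}(iv), $\langle a,b,c\rangle$ is free of rank~$3$ and fixes a vertex $o$ of $T_{\{x,y,z\}}$, while $\langle x,y,z\rangle$ is free of rank~$3$ and acts simply transitively on $V(T_{\{x,y,z\}})$; identify $V(T_{\{x,y,z\}})$ with the free group $\langle x,y,z\rangle$, so that $o\leftrightarrow1$. Condition (BMW2) says exactly that for every $\alpha\in A\cup A^{-1}$ and $\xi\in X\cup X^{-1}$ there is a unique rewriting $\alpha\xi=\xi'\alpha'$ in $\Gamma_{\mathrm{BDR}}$ with $\xi'\in X\cup X^{-1}$ and $\alpha'\in A\cup A^{-1}$. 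Reading these $36$ rewritings off the nine relators produces a finite invertible transducer with six states $A\cup A^{-1}$ over the alphabet $X\cup X^{-1}$, and the action of $p_{T_X}(\langle a,b,c\rangle)$ on $T_{\{x,y,z\}}$ is precisely the associated automaton action, applied letter by letter along reduced words (reducedness being preserved since the transducer realises genuine tree automorphisms). This is an explicit self-similar action; for instance $\bar a:=p_{T_X}(a)$ fixes the neighbours $x,x^{-1},y^{-1},z^{-1}$ of $o$, swaps $y\leftrightarrow z$, and has $\bar a,\bar b,\bar c$ again as its sections, so that the local permutation group of $\langle a,b,c\rangle$ at $o$ is $\Sym(3)\times C_3$ acting on the orbits $\{x,y,z\}$ and $\{x^{-1},y^{-1},z^{-1}\}$.

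It remains to recognise this $6$-state automaton group as $C_3\wr\ZZ$. A first observation is that $\Gamma_{\mathrm{BDR}}$ maps onto $\ZZ^2$ by sending every letter of $A$ to $(1,0)$ and every letter of $X$ to $(0,1)$ (each relator maps to $0$), whence $p_{T_X}(\langle a,b,c\rangle)$ preserves the $\ZZ$-grading on $V(T_{\{x,y,z\}})=\langle x,y,z\rangle$ given by the exponent sum, and therefore lies in the metabelian group of grading-preserving automorphisms fixing~$o$. I would then pin the group down by a self-similar computation: show it is metabelian (the sections of any commutator of the generators stabilise in a diagonal form), compute its abelianisation (equal to $\ZZ$) and its derived subgroup (the locally finite group $\bigoplus_{\ZZ}C_3$, on which $\ZZ$ acts by the shift), and verify that the extension $1\to\bigoplus_{\ZZ}C_3\to p_{T_X}(\langle a,b,c\rangle)\to\ZZ\to1$ is the wreath-product one; together these data identify the group with $C_3\wr\ZZ$. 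Alternatively, restricting to the rooted subtree below $o$, one matches the transducer with the standard two-state lamplighter automaton. The projection $p_{T_A}(\langle x,y,z\rangle)$ is treated identically, using the rewritings $\xi\alpha=\alpha'\xi'$. I expect this identification step — the metabelianity together with the computation of the derived subgroup and of the extension class of the explicit automaton — to be the main obstacle; it is precisely the content of the computation of Bondarenko--D'Angeli--Rodaro.

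For~(C): a reducible BMW-group has discrete projections to both tree factors. Indeed a finite-index subgroup $\cong F_m\times F_n$ acts freely, properly and cocompactly on $T_{\{a,b,c\}}\times T_{\{x,y,z\}}$, so (the trees being $6$-regular) $m,n\geq2$; the normal free subgroup $F_m\times1$ then acts freely on $V(T_{\{a,b,c\}})$ and, not fixing an end (being non-amenable), minimally, whence $p_{T_A}(F_m\times1)\cong F_m$ is discrete with trivial centraliser in $\Aut(T_{\{a,b,c\}})$, so $p_{T_A}(1\times F_n)=1$ and $p_{T_A}$ is discrete on the whole finite-index subgroup, hence on $\Gamma_{\mathrm{BDR}}$. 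But by~(B) the stabiliser $p_{T_A}(\langle x,y,z\rangle)\cong C_3\wr\ZZ$ of a vertex in $p_{T_A}(\Gamma_{\mathrm{BDR}})$ is infinite, so its closure in the compact vertex stabiliser of $\Aut(T_{\{a,b,c\}})$ is infinite and $p_{T_A}(\Gamma_{\mathrm{BDR}})$ is non-discrete — a contradiction. (Equivalently, $C_3\wr\ZZ$ is not virtually free, containing the infinite locally finite group $\bigoplus_{\ZZ}C_3$, whereas reducibility would force the projections to be virtually free.) Hence $\Gamma_{\mathrm{BDR}}$ is irreducible.
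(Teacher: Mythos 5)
Your overall approach mirrors the paper's, which is a compact chain of citations: Proposition~\ref{prop:BMW-basic} for part~(A); the Glasner--Mozes correspondence between BMW-presentations and bireversible automata together with the main result of \cite{BDR} for part~(B); and Theorem~\ref{thm:IrredInsep} for part~(C). Your transducer description in~(B) is precisely the content of \cite[Corollary~2.14]{GlasnerMozes}, and you correctly acknowledge that the lamplighter identification itself is the substance of Bondarenko--D'Angeli--Rodaro's computation, exactly as the paper does.

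Where you depart is part~(C): instead of invoking Theorem~\ref{thm:IrredInsep}, you try to rederive the implication \emph{reducible $\Rightarrow$ discrete projections} from scratch, and there your argument has a genuine gap. The step ``$F_m\times 1$ acts freely on $V(T_{\{a,b,c\}})$'' is not justified: a priori a nontrivial element of $F_m\times 1$ could fix a vertex of $T_{\{a,b,c\}}$ while moving freely in the $T_{\{x,y,z\}}$-fibre (freeness of the action on $V(T_A\times T_X)$ only gives freeness on the product). Ruling this out --- i.e.\ matching the abstract direct factors of the finite-index subgroup with the two geometric tree factors --- is precisely the content of the splitting result behind the equivalence of conditions (i) and (iv) in Theorem~\ref{thm:IrredInsep}, due to Burger--Mozes \cite[Proposition~1.2]{BuMo2}. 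Your parenthetical (``$C_3\wr\ZZ$ is not virtually free, whereas reducibility would force the projections to be virtually free'') is the right slogan, but it rests on the same unproved implication. The clean repair is to argue as the paper does: since $p_{T_A}(\langle x,y,z\rangle)\cong C_3\wr\ZZ$ is the full stabiliser of a vertex and is infinite, $p_{T_A}(\Gamma_{\mathrm{BDR}})$ is non-discrete, and symmetrically for $T_X$; condition~(i) of Theorem~\ref{thm:IrredInsep} therefore fails for both factors, so $\Gamma_{\mathrm{BDR}}$ is irreducible.
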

\begin{proof}
The fact that $\Gamma_{\mathrm{BDR}}$ is a  torsion-free BMW-group of degree $(6,6)$ is clear from Proposition~\ref{prop:BMW-basic}. The statement on the projection of $\Gamma_{\mathrm{BDR}}$ to the automorphism groups of the tree factors follows from \cite[Corollary~2.14]{GlasnerMozes} and the main result in \cite{BDR}. The irreducibility follows from the statement on the projections together with Theorem~\ref{thm:IrredInsep} below. 
\end{proof}

We now present some examples of BMW-presentations involving generators of order~$2$. As remarked in Proposition~\ref{prop:BMW-basic}, if all generators of a BMW-group $\Gamma$ in a BMW-presentation are of infinite order, then the degree of $\Gamma$ consists of a pair of even integers. Allowing the generators to be torsion gives rise to BMW-groups in odd degree.
	
\begin{prop}[{N.~Rungtanapirom \cite[Theorem~A]{Rungtana}}]\label{prop:Rungtana}
	The BMW-group 
	\begin{align*}
	\Gamma_{\mathrm{Rung}} = \langle a, b, x, y \mid a^2, x^2,  axax, ayby, bxby^{-1}\rangle
		\end{align*}
  is irreducible of degree $(3, 3)$. It embeds as a cocompact lattice with dense projections in $\PGL_2(\FF_2(\!(t)\!)) \times \PGL_2(\FF_2(\!(t)\!))$. In particular it is hereditarily just-infinite by the Margulis Normal Subgroup Theorem. 
\end{prop}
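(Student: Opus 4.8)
The plan is to follow the template of Proposition~\ref{prop:Rattaggi}: realise $\Gamma_{\mathrm{Rung}}$ as a quaternionic arithmetic lattice, the only novelty being that the ground field is now the rational function field $\FF_2(t)$ rather than $\QQ$, and then deduce hereditary just-infiniteness from the Margulis Normal Subgroup Theorem.

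\textbf{Step 1 (BMW-data).} First I would apply Proposition~\ref{prop:BMW-basic}: here $A=\{a,b\}$, $X=\{x,y\}$, $R_2=\{a^2,x^2\}$ and $R_4=\{axax,\ ayby,\ bxby^{-1}\}$, so $A'=\{a\}$, $X'=\{x\}$, $m=m'=n=n'=1$, and the degree is $(2m+m',2n+n')=(3,3)$. Condition (BMW2) is checked by verifying directly that each of the nine pairs in $(A\cup A^{-1})\times(X\cup X^{-1})$ — namely $(a,x)$, $(a,y^{\pm 1})$, $(b^{\pm1},y^{\pm1})$, $(b^{\pm1},x)$ — is covered exactly once by a relation of $R_4$ in one of the four admissible forms; the degenerate relation $axax$ (with $a'=a$, $x'=x$) accounts for the single pair $(a,x)$. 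By Proposition~\ref{prop:BMW-basic}(i) the Cayley graph is then the product $T_A\times T_X$ of two $3$-regular trees, that is, a product of Bruhat--Tits trees of $\PGL_2$ over a local field with residue field $\FF_2$.

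\textbf{Step 2 (arithmetic model).} Next I would choose a quaternion division algebra $D$ over $k=\FF_2(t)$ whose (even, nonempty) ramification set is disjoint from a pair of degree-one places $\mathfrak p,\mathfrak q$ of $k$ (say $\mathfrak p$ the zero and $\mathfrak q$ the pole of $t$), so that $k_{\mathfrak p}\cong k_{\mathfrak q}\cong\FF_2(\!(t)\!)$ and $D$ splits at $\mathfrak p$ and at $\mathfrak q$. Writing $\mathcal O$ for a maximal $\FF_2[t,t^{-1}]$-order of $D$, the image $\Lambda$ of $\mathcal O^{*}$ in $D^{*}/k^{*}\hookrightarrow\PGL_2(k_{\mathfrak p})\times\PGL_2(k_{\mathfrak q})$ is a lattice; it is cocompact because $D$ is anisotropic (function-field Godement criterion), and its two projections are dense because $\SL_1(D)$ is simply connected and isotropic at $\mathfrak p$ and at $\mathfrak q$, so Kneser--Platonov strong approximation applies. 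One then writes down explicit images $\varphi(a),\varphi(b),\varphi(x),\varphi(y)\in D^{*}/k^{*}$ in the spirit of the formulas following Proposition~\ref{prop:Rattaggi}, checks that the five relations of $\Gamma_{\mathrm{Rung}}$ hold, and verifies that the resulting homomorphism $\varphi\colon\Gamma_{\mathrm{Rung}}\to\Lambda$ is injective with image of finite index. Since $\Gamma_{\mathrm{Rung}}$ acts simply transitively on the vertices of $T_A\times T_X$ and both factors of the ambient group act vertex-transitively on their $3$-regular Bruhat--Tits trees, injectivity alone already forces $\varphi(\Gamma_{\mathrm{Rung}})$ to be vertex-transitive, hence a cocompact finite-index sublattice of $\Lambda$; thus $\Gamma_{\mathrm{Rung}}$ embeds as a cocompact lattice with dense projections in $\PGL_2(\FF_2(\!(t)\!))\times\PGL_2(\FF_2(\!(t)\!))$. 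Injectivity itself can be obtained by a ping-pong argument for the free parts of $\langle a,b\rangle$ and $\langle x,y\rangle$ inside $\PGL_2(k_{\mathfrak p})$ and $\PGL_2(k_{\mathfrak q})$, or, more robustly, by the finite computation that identifies $\varphi(\Gamma_{\mathrm{Rung}})$ with an explicitly described finite-index subgroup of $\Lambda$ via a fundamental domain for the $\Lambda$-action on $T_A\times T_X$.

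\textbf{Step 3 (conclusion).} Dense projections to the simple factors immediately prevent any finite-index subgroup of $\Gamma_{\mathrm{Rung}}$ from splitting as a direct product, giving irreducibility (this also follows from Theorem~\ref{thm:IrredInsep}), and they imply $\Gamma_{\mathrm{Rung}}\cap G_A=\{1\}$ for each proper nonempty factor, which is precisely the irreducibility hypothesis of Theorem~\ref{thm:MargulisNST}. Since $\PGL_2$ is adjoint and of rank $1$ over each of $k_{\mathfrak p},k_{\mathfrak q}$, the rank sum is $2\geq2$, so Theorem~\ref{thm:MargulisNST} applies and $\Gamma_{\mathrm{Rung}}$ is hereditarily just-infinite. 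The main obstacle is Step 2: one must pin down the correct algebra $D$ together with explicit elements realising $a,b,x,y$, and certify both that the relations hold in $D^{*}/k^{*}$ and that the subgroup they generate has finite index in the arithmetic lattice — essentially a computer-assisted computation. This step is unavoidable, since the combinatorial data of the BMW-presentation determines only the local action on each tree and not the closure of the projection; in particular it does not on its own single out $\PGL_2(\FF_2(\!(t)\!))$ among the many closed subgroups of $\Aut(T_A)$ with the same local action.
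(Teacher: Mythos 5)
The paper does not prove Proposition~\ref{prop:Rungtana}; it is imported verbatim from Rungtanapirom~\cite{Rungtana}, just as Propositions~\ref{prop:StixVdovina} and~\ref{prop:Rattaggi} are imported from \cite{StixVdovina} and \cite{Rattaggi_phd}. Your overall plan --- realise $\Gamma_{\mathrm{Rung}}$ as a finite-index subgroup of an $S$-arithmetic group attached to a quaternion algebra over $\FF_2(t)$ split at two degree-one places, and then appeal to Theorem~\ref{thm:MargulisNST} --- is precisely the strategy of the cited source, and your Steps~1 and~3 are correct.

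There is, however, a logical gap in the middle of Step~2: the claim that ``injectivity alone already forces $\varphi(\Gamma_{\mathrm{Rung}})$ to be vertex-transitive, hence a cocompact finite-index sublattice of $\Lambda$'' does not hold. An injective $\varphi\colon\Gamma_{\mathrm{Rung}}\to\Lambda$ certainly yields a discrete copy of $\Gamma_{\mathrm{Rung}}$ inside $\PGL_2(k_{\mathfrak p})\times\PGL_2(k_{\mathfrak q})$, but the induced action on the product of Bruhat--Tits trees is a \emph{different} action from the simply transitive one on the Cayley graph $T_A\times T_X$, and nothing transports vertex-transitivity from the abstract action to the concrete one: a discrete subgroup of a cocompact lattice can perfectly well have infinite index and a non-cocompact action. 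Knowing that the abstract group $\Gamma_{\mathrm{Rung}}$ admits \emph{some} simply transitive action on $T_3\times T_3$ gives no information about the orbit structure of the action induced by $\varphi$. The finite-index property therefore has to be established directly --- by the covolume or fundamental-domain computation you mention at the end of Step~2 as the ``more robust'' alternative --- and this is not merely more robust, it is necessary. Once that is done, the remainder of your argument (dense projections via strong approximation, irreducibility, and the Margulis Normal Subgroup Theorem) goes through.
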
	

The following examples, respectively of degree $(3, 3)$, $(4, 5)$ and $(6,6)$, are due to N.~Radu \cite{Radu_SimpleLatt}. They all contain non-trivial torsion elements.
\begin{prop}[N.~Radu]\label{prop:GammaRadu}
The BMW-groups
 \begin{align*}
\Gamma_{3,3} = \langle a, b, c, x, y, z \mid \ &  a^2, b^2, c^2, x^2, y^2, z^2, \\
& axax, ayay, azbz, bxbx, bycy, cxcz  \rangle, 
\end{align*}
 \begin{align*}
\Gamma_{4,5} = \langle a, b, c, d, x, y, z, t, u \mid \ & a^2, b^2, c^2, d^2,x^2, y^2, z^2, t^2, u^2,\\
&  axax, ayay, azbz, bxbx, bycy, cxcz, \\
&  xtxt,  a t a t ,  a u d u ,  b t c u ,  d x d y ,  d z d t  \rangle
\end{align*}
and 
\begin{align*}
\Gamma_{6, 6} = \langle a, b, c, x, y, z \mid  \ & axay, ax^{-1} b y^{-1}, ay^{-1} b^{-1}x^{-1}, bxb^{-1}y^{-1},\\
& cx^{-1}c^{-1}x^{-1}, c^{-1} y c^{-1} y, cycz^{-1},\\
& az^{-1}a^{-1}z, bzcz, bz^{-1}bz^{-1}
\rangle
\end{align*}
are irreducible of degree $(3, 3)$, $(4, 5)$ and $(6, 6)$ respectively.  The subgroup $\langle a, b, c, x, y, z\rangle$ of $\Gamma_{4, 5}$ is isomorphic to $\Gamma_{3, 3}$; the subgroup $\langle a, b, x, y\rangle$ of $\Gamma_{6, 6}$ is isomorphic to the BMW-group $\Gamma_{\mathrm{JW}}$  from Example~\ref{ex:4,4}. 
\end{prop}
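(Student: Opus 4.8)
The statement consists, for each of $\Gamma_{3,3}$, $\Gamma_{4,5}$ and $\Gamma_{6,6}$, of three assertions: that the displayed presentation is a BMW-presentation of the announced degree (with torsion present), that the group is irreducible, and — for $\Gamma_{4,5}$ and $\Gamma_{6,6}$ — that a distinguished subgroup is isomorphic to a previously described group. The plan is to dispose of the first and third kinds of assertion by bookkeeping with Proposition~\ref{prop:BMW-basic}, and to reduce all three irreducibility statements to a single computation inside $\Gamma_{3,3}$.

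\emph{Presentations, degrees, subgroups.} For each group one checks that the relators partition into squares and alternating length-$4$ words as in (BMW1), and that every ordered pair in $(A\cup A^{-1})\times(X\cup X^{-1})$ occurs exactly once, in one of the four prescribed positions, among the length-$4$ relators — a finite verification of (BMW2). Proposition~\ref{prop:BMW-basic}(i) then returns the degrees: in $\Gamma_{3,3}$ all six generators have order~$2$, giving $(M,N)=(3,3)$; in $\Gamma_{4,5}$ one has $A=\{a,b,c,d\}$, $X=\{x,y,z,t,u\}$ with all generators of order~$2$, giving $(4,5)$; in $\Gamma_{6,6}$ one has $R_2=\varnothing$, $A=\{a,b,c\}$, $X=\{x,y,z\}$, giving $(6,6)$, and since $|R_4|=10>9=mn$ Proposition~\ref{prop:BMW-basic}(ii) does \emph{not} force torsion-freeness — indeed $bz^{-1}bz^{-1}=(bz^{-1})^2$ shows $bz^{-1}$ is an involution, while $\Gamma_{3,3}$ and $\Gamma_{4,5}$ visibly contain torsion. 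For the subgroups: the twelve defining relators of $\Gamma_{3,3}$ all appear verbatim among the relators of $\Gamma_{4,5}$, compatibly with $\{a,b,c\}\subseteq\{a,b,c,d\}$ on the $A$-side and $\{x,y,z\}\subseteq\{x,y,z,t,u\}$ on the $X$-side, while every remaining relator of $\Gamma_{4,5}$ involves one of $d,t,u$; hence Proposition~\ref{prop:BMW-basic}(vii) applies and $\Gamma_{3,3}\to\Gamma_{4,5}$ is injective with image $\langle a,b,c,x,y,z\rangle$. In the same way the four relators of $\Gamma_{\mathrm{JW}}$ are precisely the first four relators of $\Gamma_{6,6}$ and each of the other six involves $c$ or $z$, so $\langle a,b,x,y\rangle\leq\Gamma_{6,6}$ is a copy of $\Gamma_{\mathrm{JW}}$ from Example~\ref{ex:4,4}.

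\emph{Reducing irreducibility to $\Gamma_{3,3}$.} By Theorem~\ref{thm:IrredInsep}, a BMW-group is irreducible as soon as one of the two projections $\langle A\rangle\to\Aut(T_X)$, $\langle X\rangle\to\Aut(T_A)$ of its vertex-transitive subgroups (Proposition~\ref{prop:BMW-basic}(iv)) has infinite image. For $\Gamma_{4,5}$ and $\Gamma_{6,6}$ I would obtain such infiniteness from the distinguished subgroups. Indeed, by Proposition~\ref{prop:BMW-basic}(vii) the Cayley graph $T_{\{a,b,c\}}\times T_{\{x,y,z\}}$ of $\Gamma_{3,3}$ embeds as a $\Gamma_{3,3}$-invariant convex subgraph of the Cayley graph of $\Gamma_{4,5}$, with $T_{\{x,y,z\}}$ embedded as an invariant subtree of $T_{\{x,y,z,t,u\}}$; restriction to that subtree therefore maps the projection of $\langle a,b,c\rangle$ into $\Aut(T_{\{x,y,z,t,u\}})$ onto its projection into $\Aut(T_{\{x,y,z\}})$ computed inside $\Gamma_{3,3}$, so the former is infinite whenever the latter is, and then so is the projection of the full $\langle A\rangle=\langle a,b,c,d\rangle$ in $\Gamma_{4,5}$. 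The same argument applied to $\Gamma_{\mathrm{JW}}\leq\Gamma_{6,6}$, using that $\Gamma_{\mathrm{JW}}$ is already known to be irreducible (Example~\ref{ex:4,4}), settles $\Gamma_{6,6}$. Everything thus comes down to showing that in $\Gamma_{3,3}$ at least one of $\langle a,b,c\rangle\to\Aut(T_{\{x,y,z\}})$ and $\langle x,y,z\rangle\to\Aut(T_{\{a,b,c\}})$ has infinite image.

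\emph{The main obstacle.} This is the heart of the proof and the step I expect to be hardest. Unlike several of the other examples in this section, these groups are not exhibited as lattices with dense projections in a product of algebraic groups over local fields, so the Margulis Normal Subgroup Theorem is not available as a shortcut; irreducibility must be established by hand. From relators such as $axax$, $ayay$, $azbz$, $bxbx$, $bycy$, $cxcz$ one reads off, for each generator, the permutation it induces on the neighbours of the base vertex in the opposite tree, together with the elements of the vertex-transitive subgroup governing its action on the subtrees hanging below — that is, a self-similar, finite-state description of each of the two projections. The subtlety is that a finite-state self-similar group can nevertheless be \emph{finite} (level-transpositions are the standard warning), so acting nontrivially on every sphere is not sufficient: one must genuinely certify infiniteness. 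I would do this either by exhibiting an explicit element $w=w(a,b,c)\in\langle a,b,c\rangle$ acting with infinite order on $T_{\{x,y,z\}}$ — proving $w^n\neq 1$ in $\Aut(T_{\{x,y,z\}})$ by tracking, for each $n$, a vertex moved by $w^n$ whose distance to the base vertex grows with $n$ — or, if the set of states stays small enough to identify, by recognising the image abstractly, as could be done for $\Gamma_{\mathrm{BDR}}$ in Proposition~\ref{prop:Lamplighter(6,6)}, whose analogous projections are lamplighter groups. Once infiniteness is established for $\Gamma_{3,3}$, Theorem~\ref{thm:IrredInsep} together with the reduction above delivers irreducibility for all three groups.
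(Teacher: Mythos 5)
Your bookkeeping for the degrees, the torsion observations for $\Gamma_{6,6}$, and the subgroup embeddings via Proposition~\ref{prop:BMW-basic}(vii) are correct, and your reduction of the irreducibility of $\Gamma_{4,5}$ (resp.\ $\Gamma_{6,6}$) to that of $\Gamma_{3,3}$ (resp.\ $\Gamma_{\mathrm{JW}}$) through the convex embedding of Cayley graphs and Theorem~\ref{thm:IrredInsep} is exactly the paper's argument.

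The genuine gap is that you never establish the irreducibility of $\Gamma_{3,3}$, which you correctly flag as the crux. The two routes you sketch — exhibiting an explicit infinite-order element in the projection, or identifying the image abstractly as in Proposition~\ref{prop:Lamplighter(6,6)} — are not carried out, and they would in effect require tracking the action to unbounded depth, which is precisely the obstacle you are worried about. The tool you are missing, and the point of Section~\ref{sec:LocalAction}, is Corollary~\ref{cor:TrofimovWeiss}: the local action of $\Gamma_{3,3}$ on $T_A$ is $\Sym(3)\cong\PGL_2(\FF_2)$ (see Table~\ref{tab:ExamplesLocalActions}), which is $2$-transitive, so by the Trofimov--Weiss theorem (Theorem~\ref{thm:TrofimovWeiss}) the non-discreteness of the projection $\Gamma_{3,3}\to\Aut(T_A)$ is equivalent to a \emph{bounded-depth} condition: that some pointwise ball-stabilizer $\Gamma_v^{[k]}$ fails to be contained in $\Gamma_v^{[k+1]}$ for a small explicit $k$ (here $k=4$ via case~(iii) of the corollary, since $\Sym(3)$ is $\PGL_2(\FF_2)$ on the projective line, or $k=6$ via case~(i) unconditionally). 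This converts the ``genuinely certify infiniteness'' problem into a finite check against the defining relators, using deep finite group theory (classification of finite $2$-transitive groups) to bound the depth. The paper cites \cite[Proposition~5.4]{Radu_SimpleLatt} for this step and notes Corollary~\ref{cor:TrofimovWeiss} suffices; your proposal leaves the essential computation open and does not invoke the machinery that makes it tractable.
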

\begin{proof}
The irreducibility of $\Gamma_{3, 3}$ is established in \cite[Proposition~5.4]{Radu_SimpleLatt}; that assertion can actually be deduced from Corollary~\ref{cor:TrofimovWeiss} below. That $\Gamma_{3, 3}$ (resp. $\Gamma_{\mathrm{JW}}$) embeds naturally as a subgroup of $\Gamma_{4, 5}$ (resp. $\Gamma_{6, 6}$) follows from Proposition~\ref{prop:BMW-basic}(vii). 
The irreducibility of $\Gamma_{4, 5}$  and $\Gamma_{6,6}$ can then be deduced from the irreducibility of $\Gamma_{3, 3}$ and $\Gamma_{\mathrm{JW}}$ using Theorem~\ref{thm:IrredInsep} below. 
\end{proof}

The group $\Gamma_{4, 5}$ is denoted by $\Gamma_{4, 5;9}$ in \cite[\S 5.2]{Radu_SimpleLatt}, while $\Gamma_{6, 6}$ appears as  $\Gamma_{6, 6;2}$ in \cite[\S 5.1]{Radu_SimpleLatt}. 
We will come back to those lattices in Section~\ref{sec:SimpleBMW} below. Although the generators of $\Gamma_{6, 6}$ are of infinite order, the elements $bz^{-1}$ and $c^{-1} y \in \Gamma_{6, 6}$ have order~$2$, as follows clearly from the defining relations.


It is important to remark that the number of homeomorphism types of BMW-complexes grows quickly with the degree. A lower bound on that number is given by an explicit formula in \cite[Formula (2.4) in \S 2.3]{StixVdovina}\footnote{The number appearing in the formula (2.4) in \S 2.3 of \cite{StixVdovina} indeed yields a lower bound on  the number of those homeomorphism types.}. With the help of a computer, N.~Radu \cite{Radu_phd} has enumerated all BMW-presentations of small degree. In particular, he has shown that there are $1001$ BMW-complexes of degree $(4,6)$. Among them, at least $890$ have a reducible fundamental group, while at least $16$ are irreducible. Moreover, there are  $32062$ BMW-complexes of degree $(6,6)$, among which at least $18426$ are reducible and at least $8227$ are irreducible, see \cite{Radu_phd}. In either case, the exact number of irreducible ones is unknown; neither is the number of those with a linear or residually finite fundamental group.  The difficulty is that there is no known necessary and sufficient  condition determining whether a BMW-group is irreducible (or linear, or residually finite, or just-infinite) that can be checked algorithmically on the BMW-presentation. Problems of that nature are recorded in \cite[Section~10]{Wise_CSC}. In the next sections, we shall discuss \emph{sufficient} (but not necessary!) conditions   that can be used to check some of those properties algorithmically.

\subsection{Inseparability and irreducibility}

A fundamental early discovery of D.~Wise \cite{Wise_phd} is that the irreducibility of a BMW-group $\Gamma = \langle A \cup X \mid R\rangle$ is related to the inseparability of the   subgroups $\langle A \rangle$ and $\langle X \rangle$ in $\Gamma$. This phenomenon was first highlighted by him in the case of the Wise lattice in \cite{Wise_phd} (see also \cite[Corollary~6.4]{Wise_CSC}). We shall present a general statement recently established in \cite[Corollary~32]{CKRW} and inspired by Wise's work \cite{Wise_CSC, Wise_fig8}. The statement requires the following terminology. 

A subgroup $H$ of a group $G$ is called \textbf{separable} if it is an intersection of finite index subgroups of $G$. Equivalently $H$ is separable if and only if for every $g \in G$, if $g \not \in H$ then there exists a finite quotient $\varphi \colon G \to Q$ with $\varphi(g) \not \in \varphi(H)$. The set of separable subgroups of $G$ is closed under intersections.  
The \textbf{profinite closure} of a subgroup $H$ in $G$, denoted by $\overline H$, is the smallest separable subgroup of $G$ containing $H$. It coincides with the closure of $H$ with respect to the profinite topology on $G$. 

A subgroup $H$ of $G$ is called \textbf{virtually normal} if it has a finite index subgroup which is normal in $G$. It is \textbf{weakly separable} if it is an intersection of virtually normal subgroups of $G$. Equivalently $H$ is weakly separable if and only if for every $g \in G$, if $g \not \in H$ then there exists a (possibly infinite)  quotient $\varphi \colon G \to Q$ such that $\varphi(H)$ is finite and $\varphi(g) \not \in \varphi(H)$. Clearly, every separable subgroup is weakly separable, but not conversely: Indeed, in an infinite simple group, any finite subgroup (including the trivial one) is weakly separable but not separable. 

\begin{thm}[{\cite[Corollary~32]{CKRW}}] \label{thm:IrredInsep}
	Let $T_1 , T_2$ be locally finite trees without vertices of degree~$1$, and let $\Gamma \leq \Aut(T_1) \times \Aut(T_2)$ be a discrete subgroup acting cocompactly on $T_1 \times T_2$. Then the following assertions are equivalent. 
\begin{enumerate}[(i)]
	\item There exists  $i \in \{1, 2\}$ such that the projection $\mathrm{pr}_i(\Gamma) \leq \Aut(T_i)$ is discrete. 
	
	\item There exists $i \in \{1, 2\}$ and a vertex or an edge $y \in VT_i \cup ET_i$ such that the stabilizer $\Gamma_{y}$ is a weakly separable subgroup of $\Gamma$.
	
	\item For all $i \in \{1, 2\}$ and all $y \in VT_i \cup ET_i$, the stabilizer $\Gamma_{y}$ is a separable subgroup of $\Gamma$.
	
	\item The groups 
	$$K_1 = \{g \in \Aut(T_1) \mid (g,1) \in \Gamma\}$$ 
	and 
	$$K_2 = \{g \in \Aut(T_2) \mid (1, g) \in \Gamma\}$$ 
	act cocompactly on $T_1$ and $T_2$ respectively, and the product $K_1 \times K_2$ is of finite index in $\Gamma$.
\end{enumerate}	
\end{thm}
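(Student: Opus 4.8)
The plan is to prove the cyclic chain of implications $(i)\Rightarrow(iv)\Rightarrow(iii)\Rightarrow(ii)\Rightarrow(i)$. The implications $(iii)\Rightarrow(ii)$ is immediate, since a separable subgroup is weakly separable, and a vertex or edge stabilizer is certainly one of the stabilizers quantified over in $(ii)$; and $(iv)\Rightarrow(iii)$ should follow from the fact that in a virtual direct product $K_1\times K_2$ acting on $T_1\times T_2$, the stabilizer of $y\in VT_i\cup ET_i$ is essentially $(\text{stabilizer in }K_i)\times K_j$ up to finite index, and the stabilizer of $y$ in $K_i$ is a finite-index subgroup of a vertex/edge stabilizer of the \emph{discrete} cocompact group $K_i\leq\Aut(T_i)$, hence separable in $K_i$ by standard Bass--Serre / residual finiteness of finitely generated virtually free groups (finite subgroups of virtually free groups are separable because such groups are locally extended residually finite). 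Passing separability from $K_i$ up to $\Gamma$ uses that $K_1\times K_2$ has finite index in $\Gamma$ and that separability is inherited by overgroups of finite index (intersect the separating finite-index subgroups of $K_1\times K_2$ with suitable finite-index subgroups of $\Gamma$, or use that a finite-index subgroup of $\Gamma$ containing the given stabilizer can be separated inside $\Gamma$ by coset enumeration).

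For $(i)\Rightarrow(iv)$: assume $\mathrm{pr}_1(\Gamma)$ is discrete. Since $\Gamma$ acts cocompactly on $T_1\times T_2$, the projection $\mathrm{pr}_1(\Gamma)$ acts cocompactly on $T_1$, and the kernel of $\mathrm{pr}_1$ on $\Gamma$ is precisely $K_2$; because $\Gamma$ is discrete in $\Aut(T_1)\times\Aut(T_2)$ and $\mathrm{pr}_1(\Gamma)$ is discrete, the kernel $K_2=\Gamma\cap(\{1\}\times\Aut(T_2))$ is a closed, hence discrete, normal subgroup of $\Gamma$ with $\Gamma/K_2\cong\mathrm{pr}_1(\Gamma)$ acting cocompactly on $T_1$. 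One then argues that $K_2$ acts cocompactly on $T_2$: the $\Gamma$-action on $T_1\times T_2$ being cocompact and $\Gamma/K_2$ being (essentially) the action on $T_1$, a fundamental domain argument gives that $K_2\backslash T_2$ is finite. Symmetrically, if additionally $\mathrm{pr}_2(\Gamma)$ is discrete one gets $K_1$ cocompact on $T_1$; but even if only $\mathrm{pr}_1(\Gamma)$ is assumed discrete, one should be able to deduce $\mathrm{pr}_2(\Gamma)$ discrete as well — here I would invoke that a closed cocompact subgroup $K_2\leq\Aut(T_2)$ which is normalized by the cocompact group $\Gamma$ forces $\mathrm{pr}_2(\Gamma)$ into the commensurator-type constraints, or more directly: $\Gamma/K_2$ discrete and cocompact on $T_1$ plus $\Gamma$ discrete gives that $K_1=\ker(\mathrm{pr}_2|_\Gamma)$ has finite index issues controlled by the finiteness of $K_2\backslash T_2$. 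The cleanest route is: $\Gamma$ discrete, $K_2$ discrete normal cocompact on $T_2$, $\Gamma/K_2$ discrete cocompact on $T_1$, so $K_1\times K_2\leq\Gamma$ and $[\Gamma:K_1\times K_2]$ divides the (finite) number of orbits appearing in the two quotient graphs — giving $(iv)$.

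The main obstacle will be the implication $(ii)\Rightarrow(i)$, which is the substantive direction and the reason the theorem is cited from \cite{CKRW}. Suppose $\Gamma_y$ is weakly separable for some $y\in VT_i\cup ET_i$, say $i=1$. Weak separability gives, for each $g\in\Gamma\setminus\Gamma_y$, a quotient $\varphi\colon\Gamma\to Q$ with $\varphi(\Gamma_y)$ finite and $\varphi(g)\notin\varphi(\Gamma_y)$. The goal is to build from these quotients a discrete image of $\Gamma$ in $\Aut(T_1)$, i.e. to show $\mathrm{pr}_1(\Gamma)$ is discrete, equivalently that pointwise stabilizers of large finite subtrees of $T_1$ become trivial in $\mathrm{pr}_1(\Gamma)$. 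The idea is that $\Gamma_y$ weakly separable means $\Gamma_y$ is "profinitely visible up to finite index," and since $T_1$ is built from the coset geometry of the family of stabilizers $\{\Gamma_z : z\in VT_1\cup ET_1\}$, weak separability of one such stabilizer propagates (using that $\Gamma$ acts cocompactly, so there are finitely many orbits of vertices/edges, and stabilizers of adjacent cells are commensurable) to show that the $\mathrm{pr}_1$-image of the pointwise stabilizer of a ball in $T_1$ is trivial. Concretely I would: (a) reduce to the case $y$ a vertex and show $\bigcap_{n}\Gamma_{B_n(y)}=K_2$ where $B_n(y)$ is the ball of radius $n$ — this is where one needs that $\mathrm{pr}_1(\Gamma_{B_n(y)})\to 1$; (b) use weak separability to find a normal-up-to-finite subgroup $N\trianglelefteq\Gamma_y$ (finite index) with $N\cap(\text{a given finite set})$ controlled, and intersect over a generating set to get a single finite-index $N_0$ in $\Gamma_y$ that is virtually normal in $\Gamma$; (c) observe $N_0$, being virtually normal and of finite index in a vertex stabilizer which is (virtually) free, forces the ambient projection to be discrete via a counting/Bass--Serre argument on the induced action on $T_1$. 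Pinning down step (b)–(c) rigorously — in particular extracting discreteness of the whole projection from weak separability of a single cell stabilizer — is the crux, and is exactly the content of \cite[Corollary~32]{CKRW}, so in these notes I would present the argument at the level of this strategy and refer to \emph{loc.\ cit.} for the full verification.
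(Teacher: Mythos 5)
The paper itself does not prove Theorem~\ref{thm:IrredInsep}: the text immediately following the statement attributes the equivalence (i)$\Leftrightarrow$(iv) to Burger--Mozes \cite[Proposition~1.2]{BuMo2}, the equivalence with separability of cell stabilizers to Wise \cite[Lemmas~5.7 and~16.2]{Wise_fig8}, and the full statement (in particular the addition of condition (ii)) to \cite[Corollary~32]{CKRW}. So there is no in-paper argument to compare line by line, and your overall plan — a cyclic chain $(i)\Rightarrow(iv)\Rightarrow(iii)\Rightarrow(ii)\Rightarrow(i)$ — is a sensible way to organize the cited results. The implications $(iv)\Rightarrow(iii)\Rightarrow(ii)$ are correctly disposed of: $(iii)\Rightarrow(ii)$ is trivial, and for $(iv)\Rightarrow(iii)$ the key points you identify (stabilizers in $K_1\times K_2$ are virtually products of a finite group with a whole factor, finitely generated virtually free groups are LERF so finite subgroups are separable, and separability passes from a finite-index subgroup of $\Gamma$ up to $\Gamma$) are all valid.

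There are two genuine gaps. First, in $(i)\Rightarrow(iv)$ you correctly get that $K_2=\ker(\mathrm{pr}_1|_\Gamma)$ is a discrete cocompact subgroup of $\Aut(T_2)$, but the step showing that $K_1$ is \emph{also} cocompact on $T_1$ is where the argument is incomplete. The standard route is to observe that $\mathrm{pr}_2(\Gamma)$ normalizes the cocompact lattice $K_2\leq\Aut(T_2)$, and then to use the lemma that the normalizer of a discrete cocompact subgroup of $\Aut(T_2)$ is itself discrete (essentially because the outer automorphism group of a finitely generated virtually free group is countable and the centralizer of a cocompact lattice in $\Aut(T_2)$ is trivial). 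This forces $\mathrm{pr}_2(\Gamma)$ to be discrete, and then the covolume/fundamental domain count you sketch does finish. As written, your ``cleanest route'' paragraph gestures at this but does not actually supply the needed lemma.

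Second, and more seriously, the implication $(ii)\Rightarrow(i)$ is the substantive content of \cite[Corollary~32]{CKRW}, and your paragraph on it is a list of intended steps (a)--(c) rather than an argument: in particular step (c), extracting discreteness of $\mathrm{pr}_1(\Gamma)$ from the existence of a virtually normal finite-index subgroup of a single cell stabilizer, is precisely where all the work lies, and you explicitly defer it to the reference. That is an honest acknowledgement, but it means the proposal as a whole does not constitute a proof of the theorem; it is a correct high-level roadmap with the central bridge left to the citation, which is in effect what the paper itself does.
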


If any of those conditions is satisfied, we say that $\Gamma$ is \textbf{reducible}. Otherwise it is called \textbf{irreducible}.  In the special case where $\Gamma$ is a BMW-group, this terminology coincides with the notion of (ir)reducibility introduced above. 

The equivalence between (i) and (iv) is due to M.~Burger and S.~Mozes  \cite[Proposition~1.2]{BuMo2} and follows from general results on lattices in locally compact groups.  The fact that the separability of the edge-stabilizers is equivalent to the reducibility of $\Gamma$ is due to D.~Wise, see \cite[Lemmas~5.7 and~16.2]{Wise_fig8}. In fact, D.~Wise's results allow one to derive more precise information in the case of BMW-groups. The following assertion will be relevant to our purposes (see \cite[Theorem~9]{BondarenkoKivva} for a related statement). We denote by $A^{*2}$ the set of all words of the form $ab$ with $a, b \in A \cup A^{-1}$. Thus in a BMW-group $\Gamma = \langle A \cup X \mid R\rangle $, the group $ {\langle A^{*2} \rangle}$ is the index~$2$ subgroup of   ${\langle A \rangle}$ consisting of the elements of even word length. 

\begin{prop}\label{prop:ProfClosureBMW}
	Let $\Gamma = \langle A \cup X \mid R\rangle $ be a BMW-presentation of an irreducible BMW-group. 
	Then the profinite closure $\overline{\langle A^{*2} \rangle}$ contains an element of the form $xy^{-1}$, for some distinct $x, y \in X \cup X^{-1}$.  
	
	Similarly the profinite closure $\overline{\langle X^{*2} \rangle}$ contains an element of the form $ab^{-1}$, with $a \neq b$ and $a, b \in A \cup A^{-1}$. 
\end{prop}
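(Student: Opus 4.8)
By symmetry it suffices to prove the first assertion, and I plan to do so as follows. Write $L := \langle A^{*2}\rangle$. Condition (BMW1) forces every defining relator to have even $A$-length and even $X$-length, so the parities of these two lengths define homomorphisms $\sigma_A,\sigma_X\colon\Gamma\to C_2$; put $\Gamma^+:=\ker\sigma_A\cap\ker\sigma_X$, a normal subgroup of index~$4$. Since $\sigma_X$ vanishes on $\langle A\rangle$, we have $\langle A\rangle\cap\Gamma^+=\langle A\rangle\cap\ker\sigma_A=L$; and by Proposition~\ref{prop:BMW-basic}(iv) together with the freeness and transitivity of the $\Gamma$-action on the vertices of $T_A\times T_X$, the subgroup $\langle A\rangle$ is the stabiliser $\Gamma_v$ of the vertex $v\in VT_X$ it fixes, so that $L$ is precisely the stabiliser of $v$ in $\Gamma^+$. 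Now $\Gamma^+$ is a discrete cocompact subgroup of $\Aut(T_A)\times\Aut(T_X)$, and it is irreducible because $\Gamma$ is (irreducibility is inherited by, and from, finite-index subgroups, for instance through condition~(iv) of Theorem~\ref{thm:IrredInsep}). Every finite-index subgroup of $\Gamma^+$ has finite index in $\Gamma$, so the profinite topology of $\Gamma$ restricts on $\Gamma^+$ to its own; hence $\Delta:=\overline{\langle A^{*2}\rangle}\cap\Gamma^+$ is the profinite closure of $L$ computed inside $\Gamma^+$, and the failure of weak separability — hence of separability — of the vertex stabiliser $L=(\Gamma^+)_v$ in $\Gamma^+$, supplied by the equivalence (i)$\Leftrightarrow$(ii) of Theorem~\ref{thm:IrredInsep} applied to $\Gamma^+$, tells us that $\Delta\supsetneq L$. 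Since $\Delta\subseteq\overline{\langle A^{*2}\rangle}$, it is enough to find in $\Delta$ an element of the form $xy^{-1}$ with $x,y\in X\cup X^{-1}$ distinct; as $\langle X\rangle$ acts simply transitively on $VT_X$ with the neighbours of $v$ being the vertices $xv$ ($x\in X\cup X^{-1}$), this is the same as producing a reduced word of length~$2$ over $X\cup X^{-1}$ lying in $\Delta$ (write it as $s_1s_2$ with $s_1\ne s_2^{-1}$ and put $x=s_1$, $y=s_2^{-1}$).

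\emph{A bookkeeping lemma.} For $\gamma\in\Delta$, let $v=p_0,p_1,\dots,p_n=\gamma v$ be the $T_X$-geodesic and write $p_i=x_1\cdots x_i\,v$ with $x_i\in X\cup X^{-1}$. Then the word $x_1\cdots x_n$ itself lies in $\Delta$: indeed $(x_1\cdots x_n)^{-1}\gamma$ fixes $v$, hence equals some $a\in\langle A\rangle$, and since $\sigma_A$ vanishes both on $\Delta\subseteq\Gamma^+$ and on the $X$-word $x_1\cdots x_n$ we get $\sigma_A(a)=0$, i.e. $a\in L\subseteq\Delta$, whence $x_1\cdots x_n=\gamma a^{-1}\in\Delta$. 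Because $\Delta\subseteq\ker\sigma_X$, the length $n$ is even; so the minimal displacement $n=d_{T_X}(v,\gamma v)>0$ achieved by elements $\gamma\in\Delta$ is an even number $\ge 2$, realised by a reduced word $w=x_1\cdots x_n\in\Delta$.

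\emph{Reaching distance~$2$, and the main obstacle.} It remains to show this minimal displacement equals~$2$. This cannot be proved by tree combinatorics alone — for a \emph{reducible} group (think of the infinite dihedral group acting on a line) a finite-index subgroup containing the stabiliser of a vertex may have an orbit avoiding every vertex at distance~$2$ — so irreducibility of $\Gamma^+$ must enter, in the guise that $\mathrm{pr}_X(\Gamma^+)$ is non-discrete. Suppose the minimal displacement were $2k\ge 4$, realised by $w=x_1\cdots x_{2k}\in\Delta$ along $p_0,\dots,p_{2k}$. Non-discreteness of $\mathrm{pr}_X(\Gamma^+)$ and density of $\mathrm{pr}_X(L)$ in the compact open stabiliser of $v$ in $\overline{\mathrm{pr}_X(\Gamma^+)}$ furnish an element $c\in L$ acting non-trivially on $T_X$ and fixing the ball $B(v,2k-1)$ pointwise. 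Since $c\in\langle A\rangle$ fixes $v,p_0,\dots,p_{2k-1}$, the conjugate $cwc^{-1}\in\Delta$ sends $v$ to a neighbour of $p_{2k-1}$ distinct from $p_{2k-2}$, hence again to a vertex at distance $2k$ from $v$, of the form $x_1\cdots x_{2k-1}z\,v$; the bookkeeping lemma gives $x_1\cdots x_{2k-1}z\in\Delta$, and multiplying by $w^{-1}$ yields $x_{2k}^{-1}z\in\Delta$. If $z\ne x_{2k}$ this is a reduced length-$2$ word in $\Delta$ and we are done; so, as the minimal displacement is assumed to exceed~$2$, we must have $z=x_{2k}$, i.e. $c$ fixes $wv$, and by the same reasoning applied to any $c'\in L$ fixing $B(v,2k-1)$ pointwise, every such $c'$ fixes $wv$. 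I expect that this degenerate configuration cannot occur, and that ruling it out is the main obstacle. The plan for that step is to argue along the lines of Wise's analysis of canonical completions of non-positively curved square complexes (\cite{Wise_CSC}, \cite{Wise_fig8}) as carried out in \cite{CKRW}: one plays the complete-bipartiteness of the link (condition~(BMW2)) against the non-triviality of the ball fixators (non-discreteness of $\mathrm{pr}_X(\Gamma^+)$) to extract from such a configuration a vertex- or edge-stabiliser of the $T_X$-action that is virtually normal in a finite-index subgroup of $\Gamma^+$, contradicting the failure of condition~(ii) of Theorem~\ref{thm:IrredInsep} for the irreducible group $\Gamma^+$. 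The second assertion of the Proposition follows by interchanging the roles of $A$ and $X$ throughout.
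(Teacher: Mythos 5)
Your approach is genuinely different from the paper's and, as you acknowledge yourself, it is incomplete: after the conjugation step you are left with the dichotomy that either $x_{2k}^{-1}z$ is a reduced length-two word in $\Delta$ or else $z=x_{2k}$, i.e.\ every $c\in L$ that fixes $B(v,2k-1)$ pointwise also fixes $wv$. You write that you ``expect that this degenerate configuration cannot occur,'' but no argument is offered, and it is not a routine tree-combinatorics exercise: the set of vertices at distance $2k$ that must be fixed depends on all of $\Delta$, and one would have to iterate the conjugation over a shrinking neighbourhood basis and over all minimal-displacement elements simultaneously while quantifying how the local action and condition~(BMW2) constrain the ball-fixators. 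That is essentially redoing the technical core of \cite{Wise_CSC} and \cite{Wise_fig8} from scratch, which is exactly the step you have not carried out. So the proposal, as written, has a genuine gap.

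The paper sidesteps that geometric analysis entirely. It cites the key inseparability statement already available in Wise's work (\cite[Corollary~6.4]{Wise_CSC}, or \cite[Lemmas~5.5, 5.7, 16.2]{Wise_fig8}): irreducibility of $\Gamma$ forces that \emph{in every finite quotient} $\varphi\colon\Gamma\to Q$ there exist distinct $x,y\in X\cup X^{-1}$ with $\varphi(xy^{-1})\in\varphi(\langle A^{*2}\rangle)$. It then runs a finiteness/pigeonhole argument over the finitely many pairs $(x,y)$: if for every such pair one could manufacture a finite quotient separating $xy^{-1}$ from $\langle A^{*2}\rangle$, the direct product of these finitely many quotients would produce a single finite quotient contradicting Wise's statement; hence some fixed pair $(x,y)$ has $\varphi(xy^{-1})\in\varphi(\langle A^{*2}\rangle)$ for \emph{all} finite quotients, i.e.\ $xy^{-1}\in\overline{\langle A^{*2}\rangle}$. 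Note that the paper does not choose a specific pair geometrically; the existence of the pair is extracted abstractly from the negation. Your setup with $\Gamma^+$, the bookkeeping lemma, and the identification $\langle A^{*2}\rangle=(\Gamma^+)_v$ is a reasonable and correct preamble, but the crucial step you flag as ``the main obstacle'' is precisely the content that the paper outsources to Wise's results and then closes with the product-of-quotients trick; your draft does not reach the finish line.
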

\begin{proof}
	The arguments in the proof of \cite[Corollary~6.4]{Wise_CSC} (or alternatively the combination of Lemmas~5.5, 5.7 and 16.2 in \cite{Wise_fig8}) imply that if $\Gamma$ is irreducible, then for any finite quotient $\varphi \colon \Gamma \to Q$, there exist distinct elements $x,  y \in X \cup X^{-1}$ such that $\varphi(xy^{-1}) \in \varphi(\langle A^{*2} \rangle)$.
	
	Let us now consider the set $P$ of all pairs of distinct elements $x,  y \in X \cup X^{-1}$ such that there is a finite group $Q_{(x, y)}$ and a homomorphism $\varphi_{(x, y)} \colon \Gamma \to Q_{(x, y)}$ with $\varphi_{(x, y)}(xy^{-1}) \not \in \varphi_{(x, y)}(\langle A^{*2} \rangle)$. Denoting by $Q$ the direct product of the groups $Q_{(x, y)}$, and by $\varphi$ the product homormorphism of the $\varphi_{(x, y)}$, taken over all $(x, y) \in P$, we obtain a homormorphism $\varphi \colon \Gamma \to Q$ of $\Gamma$ to a finite group, such that $\varphi(xy^{-1}) \not  \in \varphi(\langle A^{*2} \rangle)$ for all $(x, y) \in P$. In view of the preceding paragraph, there exists distinct  elements $x,  y \in X \cup X^{-1}$ such that $(x, y) \not \in P$.   The conclusion follows. 	A similar argument may be applied exchanging the roles of $A$ and $X$. 
\end{proof}	

We emphasize that $\langle A \rangle \cap \langle X \rangle = \{1\}$, see Proposition~\ref{prop:BMW-basic}, so that Proposition~\ref{prop:ProfClosureBMW} indeed witnesses the inseparability of $\langle A \rangle $ and $\langle X \rangle$ in $\Gamma$.

\subsection{Anti-tori and irreducibility}\label{sec:Anti-tori}

Another discovery of D.~Wise \cite[Section~II.4]{Wise_phd} is that the irreducibility of a BMW-group $\Gamma = \langle A \cup X \mid R\rangle$ can sometimes be established by highlighting what he called an \textit{anti-torus} in $\Gamma$. By definition, an \textbf{anti-torus} in a group $\Gamma$ of automorphisms of a product $T_1 \times T_2$ of two trees is a pair $g_1, g_2 \in \Gamma$ preserving the product decomposition and satisying the following conditions:
\begin{enumerate}
\item[(AT1)] For $i = 1, 2$, there is a point $p_i \in T_i$ and a geodesic line $\ell_i \subset T_i$ containing $p_i$ such that $\ell_1\times \{p_2\}$ is invariant under $g_1$ and $\{p_1\} \times \ell_2$ is invariant under $g_2$. 

\item[(AT2)] No non-zero powers of $g_1$ and $g_2$ commute. 
\end{enumerate}

The choice of terminology is motivated by considering the natural CAT($0$) realization of the product $T_1 \times T_2$. Given a anti-torus $\{g_1, g_2\}$ in $\Gamma$, the lines $\ell_1\times \{p_2\}$  and $\{p_1\} \times \ell_2$ span  a flat plane whose stabilizer  in $\Gamma$ does not act cocompactly. In other words that flat is not periodic.  

The following result is due to D.~Wise. 

\begin{prop} \label{prop:AntiTorus}
Let $T_1 , T_2$ be locally finite trees without vertices of degree~$1$, and let $\Gamma \leq \Aut(T_1) \times \Aut(T_2)$ be a discrete subgroup acting cocompactly on $T_1 \times T_2$. If $\Gamma$ contains an anti-torus, then it is irreducible. 
\end{prop}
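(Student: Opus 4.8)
The plan is to prove the contrapositive: if $\Gamma$ is reducible, then $\Gamma$ contains no anti-torus. The engine is Theorem~\ref{thm:IrredInsep}. Reducibility of $\Gamma$ means exactly that assertion~(iv) of that theorem holds, so the subgroup $K_1\times K_2$ has finite index in $\Gamma$, where $K_1=\{g\in\Aut(T_1)\mid (g,1)\in\Gamma\}$ and $K_2=\{g\in\Aut(T_2)\mid (1,g)\in\Gamma\}$. Note that $(a,b)\in K_1\times K_2$ means precisely that $(a,1)$ and $(1,b)$ both belong to $\Gamma$.

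So suppose towards a contradiction that $\Gamma$ contains an anti-torus $\{g_1,g_2\}$, with associated points $p_1\in T_1$, $p_2\in T_2$ and lines $\ell_1\subset T_1$, $\ell_2\subset T_2$ as in~(AT1). Since $K_1\times K_2$ has finite index, there is an integer $N\geq 1$ with $g_1^{N},g_2^{N}\in K_1\times K_2$. Write $g_1^{N}=(u_1,v_1)$ with $u_1\in\Aut(T_1)$ and $v_1\in\Aut(T_2)$; then $(1,v_1)\in\Gamma$ by the previous paragraph, and $v_1=\mathrm{pr}_2(g_1)^{N}$. Now $g_1$ acts diagonally on $T_1\times T_2$ and preserves $\ell_1\times\{p_2\}$ by~(AT1), which forces $\mathrm{pr}_2(g_1)(p_2)=p_2$; hence $v_1$ fixes $p_2$, and so the element $(1,v_1)\in\Gamma$ fixes the point $(w,p_2)$ of $T_1\times T_2$ for every vertex $w$ of $T_1$. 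The stabiliser of such a point in $\Aut(T_1)\times\Aut(T_2)$ is compact, and $\Gamma$ is discrete, so its stabiliser in $\Gamma$ is finite; therefore $(1,v_1)$, and hence $v_1$, has finite order, say $d_1$. It follows that $g_1^{Nd_1}=(u_1,v_1)^{d_1}=(u_1^{d_1},1)$ lies in $\Aut(T_1)\times\{1\}$. Running the symmetric argument with $g_2$ --- whose first coordinate fixes $p_1$ because $g_2$ preserves $\{p_1\}\times\ell_2$ --- produces an integer $d_2\geq 1$ with $g_2^{Nd_2}=(1,v_2^{d_2})\in\{1\}\times\Aut(T_2)$.

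To finish, observe that the subgroups $\Aut(T_1)\times\{1\}$ and $\{1\}\times\Aut(T_2)$ of $\Aut(T_1)\times\Aut(T_2)$ centralise each other, so $g_1^{Nd_1}$ and $g_2^{Nd_2}$ commute. Since $Nd_1$ and $Nd_2$ are non-zero, this contradicts condition~(AT2). Hence $\Gamma$ has no anti-torus, which is the contrapositive of the assertion, and $\Gamma$ is irreducible whenever it does contain one.

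Essentially all the substance is imported through Theorem~\ref{thm:IrredInsep}; once the finite-index product decomposition $K_1\times K_2\leq\Gamma$ is available, the rest is a short manipulation of powers. The one step deserving a careful word is the passage from ``$(1,v_1)\in\Gamma$ with $v_1$ fixing a point of $T_2$'' to ``$(1,v_1)$ has finite order'': this is exactly where discreteness of $\Gamma$ enters, via compactness of point-stabilisers in $\Aut(T_1)\times\Aut(T_2)$, and one should bear in mind that the points $p_i$ of~(AT1) need not be vertices --- but this is harmless, since point-stabilisers in $\Aut(T_1)\times\Aut(T_2)$ are compact for \emph{any} point of the geometric realisation.
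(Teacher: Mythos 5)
Your proof is correct, and it proceeds by essentially the same mechanism as the paper's own argument: in both cases the contradiction with~(AT2) comes from producing non-zero powers of $g_1$ and $g_2$ that land in the commuting factors $\Aut(T_1)\times\{1\}$ and $\{1\}\times\Aut(T_2)$. The only difference is the framing. The paper argues directly: it observes that if $\mathrm{pr}_2(g_1)$ and $\mathrm{pr}_1(g_2)$ both had finite order then suitable powers of $g_1,g_2$ would lie in the two factors and hence commute, violating~(AT2); so one of these projections has infinite order while fixing a point, forcing $\mathrm{pr}_1(\Gamma)$ or $\mathrm{pr}_2(\Gamma)$ to be non-discrete, and one concludes by characterisation~(i) of Theorem~\ref{thm:IrredInsep}. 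You instead argue the contrapositive, assuming characterisation~(iv) (the finite-index product $K_1\times K_2$) and extracting from it the finite order of $v_1$ and $v_2$ via discreteness of $\{1\}\times K_2$ and $K_1\times\{1\}$ inside compact point-stabilisers, which yields the same commuting powers. These are two entries into the same circle of ideas through Theorem~\ref{thm:IrredInsep}, and both avoid any torsion-freeness assumption, so I would regard them as the same proof set up from opposite ends.
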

\begin{proof}
See \cite[Proposition~9]{Rattaggi_GeomDed} for a proof in the context of torsion-free BMW-groups, and \cite[Lemma~18]{JW} for a proof in the more general context of torsion-free lattices in products of trees. Alternatively one may establish the claim without any requirement of torsion-freeness on $\Gamma$ as follows. 

Given an anti-torus 	$\{g_1, g_2\}$ in $\Gamma$ , let $p_i$ and $\ell_i$ be as in (AT1). Then $g_1$ fixes $p_2$ and $g_2$ fixes $p_1$. The condition (AT2) implies that the projections of $g_1$ on $\Aut(T_2)$ and of $g_2$ on $\Aut(T_1)$ cannot both have finite order. It follows that the projection of $\Gamma_{p_1}$ on $\Aut(T_1)$ or the projection of $\Gamma_{p_2}$ on $\Aut(T_2)$ has infinite image, and is thus non-discrete. Therefore $\Gamma$ is irreducible by Theorem~\ref{thm:IrredInsep}. 	
\end{proof}

I do not know whether the converse assertion holds. This amounts to asking the following: Given a cocompact lattice $\Gamma \leq \Aut(T_1) \times \Aut(T_2)$ and vertices $v_1 \in V(T_1)$ and $v_2 \in V(T_2)$, is it possible that the projection of the stabilizer $\Gamma_{v_1}$ to  $ \Aut(T_1)$ and the projection of  $\Gamma_{v_2}$ to  $\Aut(T_2)$ are both infinite torsion groups? 

\subsection{Local actions and irreducibility}\label{sec:LocalAction}

By Theorem~\ref{thm:IrredInsep}, in order to prove the irreducibility of a BMW-group, it suffices to show that its projection to the automorphism group of one of the tree factors of its Cayley graph is non-discrete. 
An idea developed by M.~Burger and S.~Mozes \cite{BuMo1, BuMo2} in order to check that condition  is to use some of the geometric aspects of finite group theory. Let us describe that in detail.

Let $\mathfrak g= (V, E)$ be a connected locally finite (undirected, unlabeled, simple) graph and $\Gamma \leq \Aut(\mathfrak g)$ be a group of automorphisms of $\mathfrak g$. Given a vertex $v \in V$, the \textbf{local action} of $\Gamma$ at $v$ is the finite permutation group induced by the action of the stabilizer $\Gamma_v$ on the sphere $S(v, 1)$ of radius~$1$ around $v$. Given an integer $r \geq 0$ and a vertex $v \in V$, we set 
$$\Gamma_v^{[r]} = \bigcap_{w \in V,\;  d(v, w) \leq r} \Gamma_w.$$
Thus $\Gamma_v^{[r]}$ is the pointwise stabilizer of the $r$-ball around $v$. 

The proof of the following fundamental result relies on the classification of the finite $2$-transitive groups, which relies in turn on the CFSG. 

\begin{thm}\label{thm:TrofimovWeiss}
Let $\Gamma \leq \Aut(\mathfrak g)$ be a vertex-transitive automorphism group of a connected locally finite graph  $\mathfrak g$. Let $\{v, w\}$ be an edge of $\mathfrak g$. Suppose that the local action at  $v$ is $2$-transitive. If the stabilizer $\Gamma_v$ is finite, then:
\begin{enumerate}[(i)]
\item {\upshape (Trofimov--Weiss \cite[Theorem~1.4]{TrofimovWeiss})} We have
$$\Gamma_v^{[5]} \cap \Gamma_v^{[5]} = \{1\}.$$ 
In particular  $\Gamma_v^{[6]}= \{1\}$. 

\item {\upshape (Trofimov--Weiss \cite[Theorem~1.3]{TrofimovWeiss})} If   $\Gamma_v^{[1]} \cap \Gamma_w^{[1]} \neq  \{1\}$, then the local action at $v$ contains a normal subgroup isomorphic to $\PSL_n(\FF_q)$ in its natural action on the points of the $n-1$-dimensional projective space over $\FF_q$. 

\item  {\upshape (R.~Weiss \cite[Theorem~1.1]{Weiss79})} If  the local action at $v$ contains a normal subgroup isomorphic to $\PSL_2(\FF_q)$ in its natural action on the points of the projective line   over $\FF_q$, then 
$$\Gamma_v^{[3]}  \cap \Gamma_w^{[3]}  = \{1\}.$$ 
In particular  $\Gamma_v^{[4]} = \{1\}$.
\end{enumerate}

\end{thm}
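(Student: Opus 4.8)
The plan is to treat the three displayed triviality statements as quotations and to supply only the two ``in particular'' reductions. Stripped of the ``in particular'' clauses, assertions (i), (ii), (iii) are respectively \cite[Theorem~1.4]{TrofimovWeiss}, \cite[Theorem~1.3]{TrofimovWeiss} and \cite[Theorem~1.1]{Weiss79}: the standing hypotheses here --- $\Gamma$ vertex-transitive on the connected locally finite graph $\mathfrak g$, the vertex stabilizer $\Gamma_v$ finite, and the local action at $v$ $2$-transitive --- are exactly what those references require, the only point worth flagging being that $2$-transitivity of the local action makes $\Gamma_v$ transitive on the neighbours of $v$, so that the choice of the neighbour $w$ in each statement is immaterial. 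For (ii), which carries no ``in particular'' clause, there is nothing more to do.

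For the two corollaries I would first record the elementary observation that, since $d(v,w)=1$, every vertex within distance~$r$ of $w$ lies within distance~$r+1$ of $v$; hence
$$\Gamma_v^{[r+1]} \leq \Gamma_v^{[r]} \cap \Gamma_w^{[r]}$$
for every $r \geq 0$. Taking $r = 5$ and combining with (i) yields $\Gamma_v^{[6]} \leq \Gamma_v^{[5]} \cap \Gamma_w^{[5]} = \{1\}$, while taking $r = 3$ and combining with (iii) yields $\Gamma_v^{[4]} \leq \Gamma_v^{[3]} \cap \Gamma_w^{[3]} = \{1\}$. This completes the deduction.

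The real content, of course, sits in the quoted theorems and is not something one can reprove within these notes: both the Trofimov--Weiss results and Weiss's theorem rest on the classification of the finite $2$-transitive permutation groups, hence on the CFSG. Accordingly, the ``hard part'' of writing this up is expository rather than mathematical --- one must check that the conventions for ``local action'' and for $s$-transitivity used in \cite{TrofimovWeiss} and \cite{Weiss79} match those adopted here, and pinpoint in those papers where each of (i), (ii), (iii) is established --- whereas the genuinely self-contained step is just the one-line ball-inclusion computation above.
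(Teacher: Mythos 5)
Your proposal is correct and matches the paper's treatment: the paper offers no proof of this theorem, presenting it purely as a citation of Trofimov--Weiss and Weiss, and your ball-inclusion $\Gamma_v^{[r+1]}\leq\Gamma_v^{[r]}\cap\Gamma_w^{[r]}$ is exactly the routine step that supplies the two ``in particular'' clauses. One small remark worth flagging: the displayed identity in part~(i) of the paper reads $\Gamma_v^{[5]}\cap\Gamma_v^{[5]}=\{1\}$, which is surely a typo for $\Gamma_v^{[5]}\cap\Gamma_w^{[5]}=\{1\}$ (parallel to part~(iii)); you silently used the corrected form, which is the right reading and the one needed for the deduction $\Gamma_v^{[6]}=\{1\}$ to be non-vacuous.
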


The hypothesis that the stabilizer $\Gamma_v$ be finite is equivalent to the condition that $\Gamma$ is a discrete subgroup of $\Aut(\mathfrak g)$ endowed with the topology of pointwise convergence on the vertex set $V(\mathfrak g)$, which is second countable, totally disconnected and locally compact. Notice  that for any vertex-transitive automorphism group $\Gamma$ of a connected locally finite graph $\mathfrak g$, and for any $n \geq 0$, we have 
$$\Gamma_v^{[n]} = \{1\} \hspace{1cm}\text{if and only if} \hspace{1cm} \Gamma_v^{[n]} \leq \Gamma_v^{[n+1]}.$$ 
Thus Theorem~\ref{thm:TrofimovWeiss} has the following discreteness criterion as an immediate consequence. 

\begin{cor}\label{cor:TrofimovWeiss}
Let $\Gamma \leq \Aut(\mathfrak g)$ be a vertex-transitive automorphism group of a connected locally finite graph  $\mathfrak g$. Suppose that the local action at  $v$ is $2$-transitive. If any of the following conditions holds, then $\Gamma$ is an indiscrete subgroup of  $\Aut(\mathfrak g)$:
\begin{enumerate}[(i)]
\item $\Gamma_v^{[6]}\not \leq \Gamma_v^{[7]} $. 

\item $\Gamma_v^{[2]}\not \leq \Gamma_v^{[3]} $ and the local action of $\Gamma$ at $v$ is not isomorphic to the action of a  linear  or semi-linear group of degree $n$ over $\FF_q$ on the points of the projective $n-1$-space over $\FF_q$. 

\item $\Gamma_v^{[4]} \not \leq \Gamma_v^{[5]}$ and the local action of $\Gamma$ at $v$ is   isomorphic to the action of a  linear  or semi-linear group of degree $2$ over $\FF_q$ on the points of the projective line over $\FF_q$. 
\end{enumerate}
\end{cor}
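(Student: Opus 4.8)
The plan is to derive Corollary~\ref{cor:TrofimovWeiss} directly from Theorem~\ref{thm:TrofimovWeiss} by contraposition, using the elementary observation recorded just before the corollary: for a vertex-transitive group $\Gamma$ acting on a connected locally finite graph, $\Gamma_v^{[n]} = \{1\}$ if and only if $\Gamma_v^{[n]} \leq \Gamma_v^{[n+1]}$. The point is that the hypothesis $\Gamma_v^{[n]} \not\leq \Gamma_v^{[n+1]}$ in each of the three cases says precisely that $\Gamma_v^{[n]} \neq \{1\}$, and then Theorem~\ref{thm:TrofimovWeiss} will force $\Gamma_v$ to be infinite, i.e. $\Gamma$ to be indiscrete by the remark following the theorem (discreteness of $\Gamma \leq \Aut(\mathfrak g)$ in the topology of pointwise convergence on $V(\mathfrak g)$ is equivalent to finiteness of the vertex stabilizers, when $\Gamma$ is vertex-transitive).

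First I would argue by contradiction: suppose $\Gamma$ is discrete, so $\Gamma_v$ is finite, and that the local action at $v$ is $2$-transitive; then Theorem~\ref{thm:TrofimovWeiss} applies. For case (i), Theorem~\ref{thm:TrofimovWeiss}(i) gives $\Gamma_v^{[6]} = \{1\}$, hence trivially $\Gamma_v^{[6]} \leq \Gamma_v^{[7]}$, contradicting the hypothesis. For case (iii), the hypothesis on the local action is exactly that it contains a normal subgroup isomorphic to $\PSL_2(\FF_q)$ in its natural action on the projective line (a linear or semi-linear group of degree $2$ over $\FF_q$ has this property, since its socle is $\PSL_2(\FF_q)$); then Theorem~\ref{thm:TrofimovWeiss}(iii) yields $\Gamma_v^{[4]} = \{1\}$, so $\Gamma_v^{[4]} \leq \Gamma_v^{[5]}$, again contradicting the hypothesis.

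Case (ii) is the one requiring a little more care, and is the main (minor) obstacle. The hypothesis $\Gamma_v^{[2]} \not\leq \Gamma_v^{[3]}$ means $\Gamma_v^{[2]} \neq \{1\}$, whence $\Gamma_v^{[1]} \neq \{1\}$ and in particular $\Gamma_v^{[1]} \cap \Gamma_w^{[1]} \supseteq \Gamma_v^{[2]} \neq \{1\}$ for an edge $\{v,w\}$ (since $\Gamma_v^{[2]}$ fixes pointwise the balls of radius $1$ about both $v$ and $w$). So Theorem~\ref{thm:TrofimovWeiss}(ii) applies and tells us that the local action at $v$ contains a normal subgroup isomorphic to $\PSL_n(\FF_q)$ in its natural action on projective $(n-1)$-space. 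If $n \geq 3$, this contradicts the stated hypothesis in (ii) that the local action is not of that form; if $n = 2$, we fall into the situation of Theorem~\ref{thm:TrofimovWeiss}(iii), which gives $\Gamma_v^{[3]} = \{1\}$, hence $\Gamma_v^{[2]} \leq \Gamma_v^{[3]}$ — but this also contradicts the hypothesis $\Gamma_v^{[2]} \not\leq \Gamma_v^{[3]}$. Either way we reach a contradiction, so $\Gamma$ is indiscrete. The only thing to double-check is the classification input — that a $2$-transitive local action which is not of linear/semi-linear projective type cannot contain $\PSL_n(\FF_q)$ as a normal subgroup acting on projective space — but this is exactly the content of the identification of the socle in the classification of finite $2$-transitive groups, which is already invoked in the statement of Theorem~\ref{thm:TrofimovWeiss}, so nothing new is needed.
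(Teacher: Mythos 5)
The paper presents this corollary as an ``immediate consequence'' of Theorem~\ref{thm:TrofimovWeiss} together with the observation stated just before it, and gives no explicit proof; your contrapositive argument (assume $\Gamma$ discrete so $\Gamma_v$ is finite, note each hypothesis says $\Gamma_v^{[n]} \neq \{1\}$ for the relevant $n$, then derive a contradiction from the corresponding part of Theorem~\ref{thm:TrofimovWeiss}) is precisely the intended one and is correct. The only remark worth making is that in case (ii) the sub-case $n = 2$ need not be handled separately: the exclusion in the hypothesis of (ii) ranges over all $n$, and a local action with a normal subgroup $\PSL_n(\FF_q)$ in its natural projective action is automatically contained in $\mathrm{P\Gamma L}_n(\FF_q)$ and hence is ``linear or semi-linear of degree $n$'' for $n = 2$ as well; but your detour through Theorem~\ref{thm:TrofimovWeiss}(iii) is harmless and also gives the required contradiction.
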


Given a BMW-presentation $\Gamma = \langle A \cup X \mid R\rangle $ and associated Cayley graph $T_A \times T_X$, the group $\Gamma$ acts vertex-transitively on $T_A \times T_X$. Moreover, that action preserves the product decomposition. By projecting, we obtain a vertex-transitive action of $\Gamma$ on $T_A$ and another one on $T_X$. In order to apply the criteria from Corollary~\ref{cor:TrofimovWeiss}, we need to compute the local action of $\Gamma$ at the base vertex of $T_A$ or $T_X$. This can be done effectively as follows. 

\begin{center}
	\begin{figure}[h]
		\includegraphics[width=8cm]{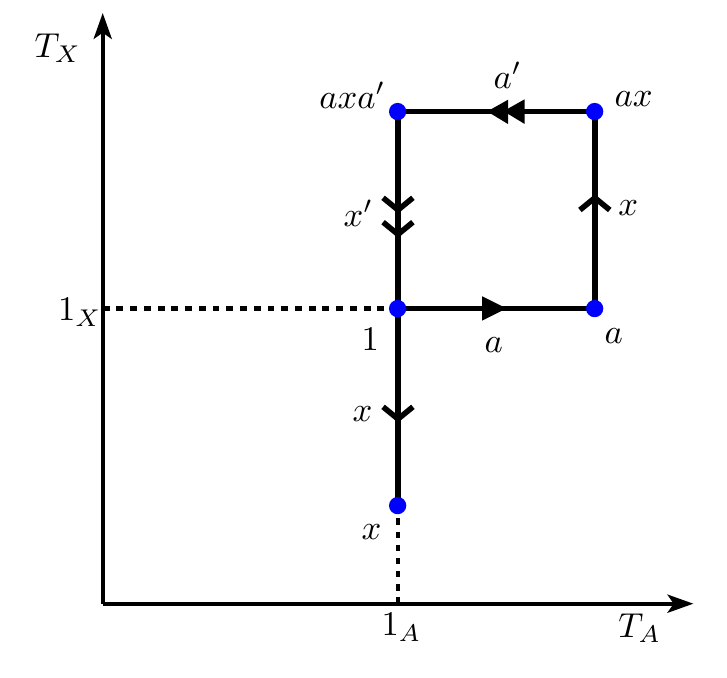}
			\caption{The local action of $a$ on $T_X$ at the base vertex $1_X$} \label{fig:LocalAction}
	\end{figure}
\end{center}

The directed edges of the Cayley graph $T_A \times T_X$ are labeled by the elements of $A \cup A^{-1} \cup X \cup X^{-1}$. Consider the base vertex $1$ and let $1_X$ be the projection of $1$ to the factor $T_X$ of the product graph $T_A \times T_X$,  (see Figure~\ref{fig:LocalAction}). Let $a \in A \cup A^{-1}$ and $x \in X \cup X^{-1}$.  The element $a \in A$ maps the vertex $1$ to $a$;  both of those vertices  project to $1_X$ in $T_X$. Indeed $\langle A \rangle$ fixes the vertex $1_X$. Moreover $a$ maps the directed edge $(1, x)$ to the directed edge $(a, ax)$, and both edges are labeled by $x$.  Now the presence of the relation $axa'x'$ in the set $R$ has the following geometric interpretation: the projection of the directed edge $(a, ax)$ to the $T_X$-fiber over $1_A$ in the product decomposition $T_A \times T_X$ coincides with the directed edge $(1, (x')^{-1})$. Thus, viewing $T_X$ as  a  directed graph whose edges are labeled by $X \cup X^{-1}$, we see that the local action of $a$ at $1_X$ maps the outgoing edge at $1_X$ labeled by $x$ to the ingoing edge at $1_X$ labeled by $x'$.

Proceeding in that way, one computes that for the BMW-group $\Gamma_{\mathrm{SV}}$ from Example~\ref{ex:4,4}, the local permutation induced by $a$ and $b$ around the base vertex $1_X$ of the tree $T_X$ are
$$a \colon (x, y^{-1}, y, x^{-1}) \hspace{1cm} \text{and} \hspace{1cm} b \colon (x, y, y^{-1}, x)$$
respectively. Thus the local action of $\Gamma_{\mathrm{SV}}$ at $1_X$ in $T_X$ is isomorphic to $\Sym(4) \cong \PGL_2(\FF_3)$. By similar arguments, one computes the local actions of the various examples introduced in the previous section. The information thus obtained is recorded in Table~\ref{tab:ExamplesLocalActions}.

\begin{center}
	\begin{table}[h]
		\def\arraystretch{1.5}
		$$\begin{array}{|c|c|c|c|c|}
		\hline
		\text{BMW-group} &\deg(T_A) & \text{Local action in } T_A & \deg(T_X) &\text{Local action in } T_X\\
		\hline
		\Gamma_{\mathrm{Rung}} & 3 &  \Sym(3) \cong \PGL_2(\FF_2) & 3 &  \Sym(3) \cong \PGL_2(\FF_2) \\
		\Gamma_{3, 3} & 3 &  \Sym(3) & 3 & C_2 \\
		\Gamma_{\mathrm{SV}}  & 4 & \Sym(4) \cong \PGL_2(\FF_3) & 4 & \Sym(4) \cong \PGL_2(\FF_3) \\
		\Gamma_{\mathrm{JW}} & 4 & \Alt(4) \cong \PSL_2(\FF_3) & 4 & D_8\\
		\Gamma_{4, 5} & 4 & \Sym(4) & 5 &  \Sym(5) \\
		\Gamma_{\mathrm{Wise}} & 4 & C_2 \times C_2  & 6 & \Sym(3) \times \Sym(3)\\
		\Gamma_{\mathrm{Ratt}} &  4 & \Sym(4) \cong \PGL_2(\FF_3) & 6 &  \PGL_2(\FF_5)   \\
		\Gamma_{\mathrm{BDR}}  & 6 & \Sym(3) \times C_3 & 6 & \Sym(3) \times C_3\\
		\Gamma_{6,6} & 6  & \Sym(6) & 6 & \Alt(6)\\
		\hline
		\end{array}$$
		
		\vspace{.5cm}
		
		\caption{Local actions for some small BMW-groups}\label{tab:ExamplesLocalActions}
	\end{table}
\end{center}

Notice that the local action of $\Gamma_{\mathrm{Wise}}$ on both tree factors is intransitive. Nevertheless $\Gamma_{\mathrm{Wise}}$  has non-discrete projections on both $\Aut(T_A)$ and $\Aut(T_X)$ by   \cite[Theorem~5.3]{Wise_CSC} and is thus irreducible by Theorem~\ref{thm:IrredInsep}. This shows that the irreducibility criterion for BMW-groups derived from Corollary~\ref{cor:TrofimovWeiss} and Theorem~\ref{thm:IrredInsep} provides a sufficient condition which is not necessary. However, that condition can be used to (re)check the irreducibility of the lattices $\Gamma_{3, 3}$, 	$\Gamma_{\mathrm{Rung}}$, $\Gamma_{\mathrm{SV}} $, $\Gamma_{\mathrm{JW}}$, $\Gamma_{4, 5}$,  $\Gamma_{\mathrm{Ratt}} $ and $\Gamma_{6,6}$.

\subsection{Residual finiteness}\label{sec:ResidualFiniteness}

In a Hausdorff topological group, the centralizer of any subset is closed. In particular, in a residually finite group, the centralizer of any subset is separable. This observation yields the following basic fact, where $G^{(\infty)}$ denotes the finite residual of $G$, as in the proof of Proposition~\ref{prop:hji}.

\begin{lem}
\label{lem:FiniteResidual}
Let $G$ be a group. For any $H \leq G$ we have $[C_G(H), \overline H] \subset G^{(\infty)}$. 
\end{lem}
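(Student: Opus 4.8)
The plan is to reduce the statement to finite quotients, where the interaction between a subset and its centralizer becomes transparent. First I would recall that the finite residual $G^{(\infty)}$ is the intersection of all finite index subgroups of $G$, and hence also the intersection of all finite index \emph{normal} subgroups of $G$, since every finite index subgroup contains its normal core, which is again of finite index. Thus it suffices to fix an arbitrary finite index normal subgroup $N \trianglelefteq G$ and prove that $[C_G(H), \overline H] \subseteq N$.

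So I would fix such an $N$ and let $\pi \colon G \to Q = G/N$ be the quotient map onto the finite group $Q$. The key observation is that $HN$ is a finite index subgroup of $G$ (of index at most $[G:N]$), hence separable — a finite index subgroup is trivially an intersection of finite index subgroups, namely of itself. Since $\overline H$ is by definition the smallest separable subgroup containing $H$, we get $H \subseteq \overline H \subseteq HN$, and therefore $\pi(\overline H) = \pi(H)$. On the other hand, if $c \in C_G(H)$ then for every $h \in H$ one has $\pi(c)\pi(h) = \pi(ch) = \pi(hc) = \pi(h)\pi(c)$, so $\pi(C_G(H)) \subseteq C_Q(\pi(H))$. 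Combining these, $\pi\big([C_G(H), \overline H]\big) = [\pi(C_G(H)), \pi(\overline H)] \subseteq [C_Q(\pi(H)), \pi(H)] = \{1\}$, the last equality being immediate from the definition of the centralizer. Hence every commutator $[c, h]$ with $c \in C_G(H)$ and $h \in \overline H$ lies in $\ker \pi = N$; since $N$ is a subgroup and $[C_G(H), \overline H]$ is generated by such commutators, we conclude $[C_G(H), \overline H] \subseteq N$. Intersecting over all finite index normal subgroups $N$ of $G$ then gives $[C_G(H), \overline H] \subseteq G^{(\infty)}$.

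I would also remark that this is exactly the concrete shadow of the topological principle quoted just before the lemma: in the residually finite quotient $G/G^{(\infty)}$, equipped with its profinite topology, the centralizer of the image of $H$ is closed, hence contains the closure of the image of $H$, which in turn contains the image of $\overline H$; pulling back gives the claim. The bare-hands argument above has the advantage of not requiring the identification of $\overline H$ with a topological closure.

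There is essentially no serious obstacle here; the only points demanding minor care are the two structural facts used as black boxes — that a finite index subgroup is separable, which forces $\overline H \subseteq HN$, and that $G^{(\infty)}$ may be computed using finite index normal subgroups rather than all finite index subgroups — together with the harmless remark that it suffices to check the generating commutators of $[C_G(H), \overline H]$, since the target $N$ is a subgroup.
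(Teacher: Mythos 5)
Your proof is correct, and it is the explicit, finite-quotient unpacking of exactly the argument the paper gestures at: the paper derives the lemma from the observation that in the residually finite group $G/G^{(\infty)}$ the centralizer of a subset is closed (hence separable) in the profinite topology, while you work the same idea out by hand over each finite index normal subgroup $N$, using $\overline{H} \subseteq HN$ and the trivial computation $[C_Q(\pi(H)), \pi(H)] = \{1\}$ in the finite quotient $Q = G/N$. You yourself note the correspondence at the end, and indeed the two routes are the same in substance; yours simply avoids invoking the topological vocabulary.
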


That observation was used by D.~Wise in \cite{Wise_phd, Wise_CSC} in order to construct BMW-groups that are not residually finite in the following way. 
If $G$ is a group and $H \leq G$ is an inseparable subgroup, then the \textbf{double of $G$ over $H$}, defined as the free amalgamated product $G *_H G$ of two copies of $G$, is not residually finite, since it admits an involutory automorphism swapping the two  factors, and whose centralizer coincides with $H$. 

Combining that idea with Proposition~\ref{prop:ProfClosureBMW}, we obtain the following. 

\begin{prop}[D.~Wise] \label{prop:Doubling}
	Let $\Gamma = \langle A \cup X \mid R \rangle$ be a BMW-presentation of an irreducible BMW-group of degree $(m, n)$. 
	Let $A \to \bar A : a \mapsto \bar a$ be a bijection between $A$ and a set $\bar A$, and set 
	$$\bar R_2 = \{  \bar a^2 \mid a \in A, \ a^2 \in R\},  $$ 
	$$\bar R_4 = \{  \bar a x \bar a' x' \mid a, a' \in A\cup A^{-1}, \ x, x' \in X \cup X^{-1}, \ axa'x' \in R\} $$
	and 
	$$\bar R = \bar R_2 \cup \bar R_4.$$
	Then $\Lambda = \langle A \cup \bar A \cup X \mid R \cup \bar R\rangle \cong \Gamma *_{\langle X \rangle} \Gamma$ is a BMW-presentation of an irreducible BMW-group of degree $(2m, n)$ which is not residually finite. More precisely, there exist $a \neq b \in A$ such that $ab^{-1}\bar b \bar a^{-1}$ lies in the intersection of all finite index subgroups of $\Lambda$. 
	\end{prop}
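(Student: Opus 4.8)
The plan is to verify that the given presentation of $\Lambda$ is a BMW-presentation, identify it as the double of $\Gamma$ over $\langle X\rangle$, and then invoke Wise's non-residual-finiteness mechanism via Lemma~\ref{lem:FiniteResidual} together with Proposition~\ref{prop:ProfClosureBMW}. First I would check the syntactic conditions (BMW1) and (BMW2) for $\Lambda = \langle A \cup \bar A \cup X \mid R \cup \bar R\rangle$. The generating set partitions as $(A \cup \bar A) \cup X$; the relations split as $(R_2 \cup \bar R_2) \cup (R_4 \cup \bar R_4)$. Condition (BMW2) for the original $\Gamma$ asserts, for each $(a,x)$ with $a \in A \cup A^{-1}$, $x \in X \cup X^{-1}$, the existence and uniqueness of a relation in $R_4$ of one of the four prescribed shapes; applying the bijection $a \mapsto \bar a$ verbatim shows the same holds for the pairs $(\bar a, x)$ using $\bar R_4$, and the two families of relations involve disjoint sets of $A$-type generators, so there is no interference. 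Hence $\Lambda$ is a BMW-presentation; by Proposition~\ref{prop:BMW-basic}(i) its degree is $(2m, n)$ since $|A \cup \bar A \setminus (A'\cup \bar A')| = 2(|A\setminus A'|)$ etc.

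Next I would identify $\Lambda$ algebraically. The subset $A \cup X$ generates a copy of $\Gamma$ (the inclusion-induced map $\Gamma \to \Lambda$ is injective by Proposition~\ref{prop:BMW-basic}(vii), since $A \subset A \cup \bar A$, $X \subset X$, $R \subset R \cup \bar R$), and likewise $\bar A \cup X$ generates a second copy $\bar\Gamma \cong \Gamma$ via $a \mapsto \bar a$, $x \mapsto x$; again by (vii). Since the only relations of $\Lambda$ involving $A$-generators are those of $R$ (involving only $A$) and those of $\bar R$ (involving only $\bar A$), and the two copies share exactly the subgroup $\langle X\rangle$, a standard Bass--Serre / normal form argument (or direct appeal to the universal property of amalgams applied to the presentation) gives $\Lambda \cong \Gamma *_{\langle X\rangle} \Gamma$. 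The map swapping $a \leftrightarrow \bar a$ and fixing $X$ pointwise respects $R \cup \bar R$, hence induces an involutory automorphism $\iota$ of $\Lambda$ whose fixed subgroup — equivalently its centralizer, which contains $\langle X\rangle$ and by the amalgam structure equals $\langle X\rangle$ — is $C_\Lambda(\langle X\rangle) = \langle X \rangle$. Actually the cleaner route: $\iota$ centralizes $\langle X\rangle$ by construction, so $\langle X \rangle \leq C_\Lambda(\iota)$; conversely an element fixed by $\iota$ lies in $\langle X\rangle$ by uniqueness of normal forms in the amalgam.

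For the non-residual-finiteness conclusion I would apply Proposition~\ref{prop:ProfClosureBMW} to the irreducible BMW-group $\Gamma$: there exist distinct $a, b \in A \cup A^{-1}$ with $ab^{-1} \in \overline{\langle X^{*2}\rangle}$, the profinite closure taken in $\Gamma$. Since $\langle X \rangle$ is a retract in neither factor but the inclusion $\Gamma \hookrightarrow \Lambda$ carries profinite-closure into profinite-closure only after care, I would instead argue directly in $\Lambda$: the element $xy^{-1}$-type relations coming from both copies of $\Gamma$ force, for any finite quotient $\varphi \colon \Lambda \to Q$, that $\varphi(ab^{-1})$ and $\varphi(\bar a \bar b^{-1})$ both lie in $\varphi(\langle X^{*2}\rangle) \leq C_Q(\varphi(\iota))$; more precisely, $ab^{-1}$ and $\bar b \bar a^{-1} = \iota(b a^{-1}) = \iota((ab^{-1})^{-1})$ have images that agree modulo $\varphi(\langle X\rangle)$ in every finite quotient. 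One then sets $g = ab^{-1}\bar b\bar a^{-1}$; since $\bar b \bar a^{-1} = \iota(ba^{-1})$ and $\iota$ acts trivially on the profinite closure of $\langle X\rangle$ while $ab^{-1}$ itself lies there, $g$ lies in the profinite closure of $\langle X \rangle$-related data in a way that makes $\varphi(g) = 1$ for every finite quotient. The clean formulation: $[C_\Lambda(\langle X\rangle), \overline{\langle X \rangle}] \subseteq \Lambda^{(\infty)}$ by Lemma~\ref{lem:FiniteResidual}, and $\iota$ induces an automorphism with $ab^{-1} \in \overline{\langle X \rangle}$ (inseparability), whence the commutator $[ab^{-1}, \iota]$-type element $(ab^{-1})\cdot \iota(ab^{-1})^{-1} = ab^{-1}\bar b\bar a^{-1}$ lies in $\Lambda^{(\infty)}$, and it is nontrivial by the normal-form description because $a \neq b$.

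The main obstacle I expect is the last point: proving $g = ab^{-1}\bar b \bar a^{-1} \neq 1$ and genuinely lies in every finite-index subgroup, as opposed to merely being killed by the quotients detecting the pair $(a,b)$. The nontriviality is a Bass--Serre normal-form computation in $\Gamma *_{\langle X\rangle}\Gamma$ (write $g = (ab^{-1})(\bar b \bar a^{-1})$ as an alternating product with syllables $ab^{-1} \in \Gamma \setminus \langle X\rangle$ and $\bar b\bar a^{-1} \in \bar\Gamma \setminus \langle X\rangle$, using $\langle A\rangle \cap \langle X\rangle = \{1\}$ from Proposition~\ref{prop:BMW-basic}), and the ``lies in $\Lambda^{(\infty)}$'' part needs the inseparability statement of Proposition~\ref{prop:ProfClosureBMW} to be upgraded from $\langle A^{*2}\rangle$ to a statement about $\overline{\langle X\rangle}$ in $\Lambda$; the cleanest way is to note that Proposition~\ref{prop:ProfClosureBMW}, applied to $\Gamma$, gives $ab^{-1}$ in the profinite closure of $\langle X^{*2}\rangle \leq \langle X\rangle$ inside $\Gamma$, hence inside $\Lambda$ (finite quotients of $\Lambda$ restrict to finite quotients of $\Gamma$), and then invoke Lemma~\ref{lem:FiniteResidual} with $H = \langle X\rangle$ and the observation that $\iota \in C_{\Aut}$ rather than $C_\Lambda$, so one should phrase it as: for every finite quotient $\varphi\colon\Lambda \to Q$ one has $\varphi(ab^{-1}) \in \varphi(\overline{\langle X\rangle}_\Lambda) = \overline{\varphi(\langle X\rangle)} = \varphi(\langle X\rangle)$ (the last since $Q$ is finite), and symmetrically $\varphi(\bar b\bar a^{-1}) = \varphi(\iota(ba^{-1})) \in \varphi(\langle X\rangle)$ with $\varphi(ba^{-1}) = \varphi(ab^{-1})^{-1}$ mod $\varphi(\langle X\rangle)$; combining, $\varphi(g) \in \varphi(\langle X\rangle)$ and in fact $\varphi(g)=1$ once one tracks that $\iota$ fixes $\varphi(\langle X\rangle)$ pointwise. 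Assembling these estimates carefully, rather than any single hard idea, is the real work.
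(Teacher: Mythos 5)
Your proposal follows essentially the same route as the paper: identify $\Lambda$ as the double $\Gamma *_{\langle X\rangle}\Gamma$, produce the factor-swapping involution $\iota$ fixing $\langle X\rangle$ pointwise, and combine the inseparability from Proposition~\ref{prop:ProfClosureBMW} with Lemma~\ref{lem:FiniteResidual} to place $ab^{-1}\bar b \bar a^{-1}$ in the finite residual. Two points deserve attention.

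First, your quotient-by-quotient variant has a silent gap: you write that for a finite quotient $\varphi\colon\Lambda\to Q$ one can ``track that $\iota$ fixes $\varphi(\langle X\rangle)$ pointwise'' to get $\varphi(ab^{-1}\bar b\bar a^{-1})=1$, but this presupposes that $\iota$ descends to $Q$, i.e.\ that $\ker\varphi$ is $\iota$-invariant, which an arbitrary finite-index normal subgroup of $\Lambda$ need not be. The paper sidesteps this by working in the extension $\hat\Lambda=\Lambda\rtimes\langle\iota\rangle$: apply Lemma~\ref{lem:FiniteResidual} there with $H=\langle X\rangle$, so $\iota\in C_{\hat\Lambda}(\langle X\rangle)$, and $ab^{-1}\in\overline{\langle X\rangle}_{\hat\Lambda}$ since finite quotients of $\hat\Lambda$ restrict to finite quotients of $\Gamma$; this yields $[ab^{-1},\iota]=ab^{-1}\bar b\bar a^{-1}\in\hat\Lambda^{(\infty)}=\Lambda^{(\infty)}$, the last equality because $\Lambda$ is finite-index in $\hat\Lambda$. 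Your direct version can be repaired by first replacing $\ker\varphi$ with its $\iota$-invariant core $\ker\varphi\cap\iota(\ker\varphi)$, but the extension is cleaner and is what the paper uses.

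Second, you never address irreducibility of $\Lambda$, which is part of the statement. It does follow quickly: the projection of $\Lambda$ to $\Aut(T_X)$ contains the non-discrete projection of $\Gamma$, and the vertex stabilizer $\langle X\rangle$ already acts with infinite image on the convex subtree $T_A\subset T_{A\cup\bar A}$ by Proposition~\ref{prop:BMW-basic}(vii) and the irreducibility of $\Gamma$, so both projections of $\Lambda$ are non-discrete and Theorem~\ref{thm:IrredInsep} applies. But the point should be made explicitly.
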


Applying that result to   $\Gamma_{\mathrm{Wise}}$, D.~Wise \cite{Wise_phd} obtained a non-residually finite BMW-group of degree $(8, 6)$, which was also the first example of a finitely presented non-residually finite small cancellation group and thereby answered negatively a question of P.~Schupp \cite{Schupp_survey} from 1973.

M.~Burger and S.~Mozes also constructed non-residually finite BMW-groups using another closely related method that they developed independently. The following result, which illustrates their method, was proved by them in the special case where the local action of $\Gamma$ on both trees $T_A$ and $T_X$ is quasi-primitive, see \cite[\S2.1, 2.2, 2.3 and 6.1]{BuMo2}. The proof of the present version will be discussed below. 

\begin{prop}\label{prop:FiberProduct}
 		Let $\Gamma = \langle A \cup X \mid R \rangle$ be a BMW-presentation of an irreducible BMW-group of degree $(M, N)=(2m+m', 2n+n')$, where the notation is as in Proposition~\ref{prop:BMW-basic}. 
 		Let 
 		\begin{align*}
 		\tilde R_2 = \bigg\{ (a_1, a_2)^2 \mid & a_1, a_2 \in A, \ a_1^2, a_2^2 \in R\bigg\} \cup
   \bigg\{ (x_1, x_2)^2 \mid x_1, x_2 \in X, \ x_1^2, x_2^2 \in R\bigg\}
 		\end{align*}
 		and
 		$$\tilde R_4=\bigg\{  (a_1, a_2)(x_1, x_2)(a'_1, a'_2)(x'_1, x'_2) \mid a_1x_1a'_1x'_1 \in R, \ a_2x_2 a'_2 x'_2  \in R\bigg\}.$$
 		Then 
 		$$\Gamma \boxtimes \Gamma = \langle (A \times A) \cup (X \times X) \mid \tilde R_2 \cup \tilde R_4\rangle$$
 		is an irreducible BMW-group of degree $\big(2m^2+ 4mm' + (m')^2, \ 2n^2 + 4nn'+ (n')^2\big)$ which is not residually finite. 
\end{prop}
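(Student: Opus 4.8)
The plan is to identify $\Gamma \boxtimes \Gamma$ explicitly as a subgroup of $\Gamma \times \Gamma$, namely the \emph{fibre product} of the two coordinate projections of $\Gamma \times \Gamma$ onto the automorphism group of one tree factor, and then to use Proposition~\ref{prop:BMW-basic}, Theorem~\ref{thm:IrredInsep} and Lemma~\ref{lem:FiniteResidual} to run the argument. Concretely, write $P = \mathrm{pr}_{T_A}(\Gamma) \leq \Aut(T_A)$ for the (non-discrete, by Theorem~\ref{thm:IrredInsep}) projection of $\Gamma$ to the first tree factor, so that $\Gamma$ acts on $T_A \times T_X$ with $\Gamma$ mapping \emph{onto} $P$ under projection to $T_A$. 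The group $\Gamma \boxtimes \Gamma$ defined by the presentation above should be verified to be isomorphic to the subgroup of $\Gamma \times \Gamma$ consisting of pairs $(\gamma_1, \gamma_2)$ whose $T_A$-components coincide in $P$ — equivalently, it is the preimage in $\Gamma \times \Gamma$ of the diagonal of $P \times P$. This acts on $T_A \times (T_X \times T_X)$: the first factor $T_A$ is shared, and the second factor is the product $T_X \times T_X$, on which the two copies of $\langle X\rangle$ act coordinatewise; one checks the action is vertex-transitive and free on $T_A \times (T_X \times T_X)$ viewed as the Cayley graph on the generating set $(A \times A) \cup (X \times X)$, giving a BMW-group. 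Counting orbits of generators of each type yields the degree: the $T_A$-factor degree is $2m^2 + 4mm' + (m')^2$ (the pairs from $(A \cup A^{-1})^{\times 2}$, accounting for the involutions in $A'$), and the $T_X \times T_X$-factor is a regular tree of degree $2n^2 + 4nn' + (n')^2$ — here one must check that $T_X \times T_X$ with the $\langle X\rangle \times \langle X\rangle$-action is again a regular tree, which is exactly the combinatorial content of (BMW2) applied to $\tilde R$, or can be read off from Proposition~\ref{prop:BMW-basic}(i) applied to the presentation $\langle (A\times A)\cup(X\times X)\mid \tilde R_2 \cup \tilde R_4\rangle$ directly.

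Next I would verify the presentation $\langle (A\times A)\cup(X\times X)\mid \tilde R_2 \cup \tilde R_4\rangle$ really satisfies (BMW1) and (BMW2): (BMW1) is immediate from the shape of $\tilde R_2$ and $\tilde R_4$, and (BMW2) follows coordinatewise from (BMW2) for $R$ — given $(a_1,a_2)$ and $(x_1,x_2)$, the unique $a'_i, x'_i$ supplied by (BMW2) for $\Gamma$ in each coordinate assemble into the required unique pair. Thus Proposition~\ref{prop:BMW-basic} applies and $\Gamma \boxtimes \Gamma$ is a BMW-group of the stated degree. For irreducibility: the projection of $\Gamma \boxtimes \Gamma$ to $\Aut(T_A)$ contains the image of $P$ (since $\Gamma \boxtimes \Gamma$ surjects onto the diagonal of $P \times P$), hence is non-discrete because $P$ is non-discrete; so $\Gamma \boxtimes \Gamma$ is irreducible by Theorem~\ref{thm:IrredInsep}.

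For the failure of residual finiteness, this is where the real content lies. The group $\Gamma \boxtimes \Gamma$ carries an involution $\tau$ swapping the two coordinates (it preserves $\tilde R_2 \cup \tilde R_4$ visibly, since those sets are symmetric in the two coordinates). Its centralizer contains the ``diagonal'' copy of $\Gamma$, or at least the diagonally embedded $\langle X\rangle$, namely $\{(x,x) : x \in X\}$ generates a subgroup $\Delta_X$ fixed pointwise by $\tau$, while $\tau$ acts on the two copies of $\langle A\rangle$ by interchange. The plan is to combine Proposition~\ref{prop:ProfClosureBMW} with Lemma~\ref{lem:FiniteResidual}: by Proposition~\ref{prop:ProfClosureBMW} applied to the irreducible $\Gamma$, the profinite closure $\overline{\langle X^{*2}\rangle}$ in $\Gamma$ contains some $ab^{-1}$ with $a \neq b$ in $A \cup A^{-1}$; lifting this into the diagonal, one gets that in $\Gamma\boxtimes\Gamma$ the profinite closure of the diagonal $\Delta_{X^{*2}}$ contains the diagonal element $(ab^{-1}, ab^{-1})$. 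Now $\tau$ centralizes $\Delta_{X^{*2}}$, hence centralizes its profinite closure, hence $[\tau, (ab^{-1},ab^{-1})] = 1$ would have to hold in every finite quotient — but in $\Gamma \boxtimes \Gamma$ the commutator $[\tau, (ab^{-1},ab^{-1})]$ computed in the group $\langle \Gamma\boxtimes\Gamma, \tau\rangle$ is the nontrivial element $(ab^{-1}, ba^{-1})\cdot(\text{something})$; more carefully, one realizes $\tau$ as an actual element by passing to $(\Gamma\boxtimes\Gamma)\rtimes C_2$ and shows the relevant commutator, which lies in $\Gamma\boxtimes\Gamma$ itself, is nontrivial (it is an element like $(ab^{-1}(ab^{-1})^{-1\tau}, \dots)$ which is nontrivial because $a \neq b$). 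By Lemma~\ref{lem:FiniteResidual} this nontrivial element lies in $(\Gamma\boxtimes\Gamma)^{(\infty)}$, so $\Gamma\boxtimes\Gamma$ is not residually finite.

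The main obstacle I anticipate is bookkeeping around the involution $\tau$ and its centralizer: one must pin down precisely which diagonal subgroup is centralized and produce an \emph{explicit} nontrivial element of $\langle X^{*2}\rangle$-type whose profinite closure meets a set on which $\tau$ acts nontrivially, so that the commutator witnessing non-residual-finiteness is genuinely nontrivial in $\Gamma\boxtimes\Gamma$ — the subtlety being that $\tau$ fixes $\Delta_X$ pointwise, so one needs the inseparability phenomenon of Proposition~\ref{prop:ProfClosureBMW} (which moves an element of $\langle X^{*2}\rangle$ into the profinite closure but produces a representative of the form $ab^{-1}$ living in $\langle A^{*2}\rangle$, on which $\tau$ does \emph{not} act trivially) to manufacture the obstruction. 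Getting the directions of $A$ versus $X$ straight here — i.e. applying Proposition~\ref{prop:ProfClosureBMW} with the roles of $A$ and $X$ swapped, so that $\overline{\langle X^{*2}\rangle}$ contains an $A$-word — is the crux, and the degree computation requires care in separately tracking the torsion generators $A', X'$ versus the infinite-order ones.
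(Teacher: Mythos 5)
There is a fatal gap in your non-residual-finiteness argument: the commutator $[\tau,(ab^{-1},ab^{-1})]$ you produce is trivial. The coordinate swap $\tau$ fixes every diagonal generator $(a,a)$ and $(x,x)$, hence fixes the \emph{entire} diagonal sub-BMW-group $\Delta_\Gamma\cong\Gamma$ pointwise, not just $\Delta_X$. The element you denote $(ab^{-1},ab^{-1})$ is the diagonal image $(a,a)(b,b)^{-1}$ of $ab^{-1}\in\Gamma$, so it lies in $\Delta_\Gamma$ and is $\tau$-fixed, giving $[\tau,(a,a)(b,b)^{-1}]=1$; Lemma~\ref{lem:FiniteResidual} then yields nothing. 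The analogy with $\Lambda=\Gamma*_{\langle X\rangle}\Gamma$ is misleading precisely here: the involution used for $\Lambda$ fixes $\langle X\rangle$ pointwise but sends $a\mapsto\bar a$, so it does \emph{not} fix $ab^{-1}$, and $[\sigma,ab^{-1}]=ab^{-1}\bar b\bar a^{-1}\neq 1$; in $\Gamma\boxtimes\Gamma$ the swap $\tau$ centralizes both $\Delta_X$ and $\Delta_A$ at once, so anything that Proposition~\ref{prop:ProfClosureBMW} places in $\overline{\Delta_{X^{*2}}}$ via a diagonal lift is automatically $\tau$-fixed. Separately, your geometric identification is off: $\Gamma\boxtimes\Gamma$ is not the fibre product $\Gamma\times_P\Gamma$ over $P=\mathrm{pr}_{T_A}(\Gamma)$ --- for distinct $a,b\in A$ the generator $(a,b)$ has coordinates $a\neq b$ in $\langle A\rangle$, and $\mathrm{pr}_{T_A}(a)\neq\mathrm{pr}_{T_A}(b)$ since $\langle A\rangle$ acts simply transitively on $T_A$ by Proposition~\ref{prop:BMW-basic}(iv), so the image in $\Gamma\times\Gamma$ is not contained in that fibre product --- and the Cayley graph of $\Gamma\boxtimes\Gamma$ is $T_{A\times A}\times T_{X\times X}$, two regular trees of the stated degrees as supplied by Proposition~\ref{prop:BMW-basic}(i), not $T_A\times(T_X\times T_X)$ (the latter is a product of three trees).

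The proof the paper has in mind is the same one it spells out in detail for Proposition~\ref{prop:Doubling} just before the present statement: establish irreducibility of $\Gamma\boxtimes\Gamma$ (it contains the diagonal $\Delta_\Gamma\cong\Gamma$ as a sub-BMW-group by Proposition~\ref{prop:BMW-basic}(vii), and reducibility of the ambient group would force separability of the $\Gamma$-stabilizers via Theorem~\ref{thm:IrredInsep}(iii), contradicting irreducibility of $\Gamma$), then show that the projection of $\Gamma\boxtimes\Gamma$ to the automorphism group of one tree factor of its own Cayley graph has non-trivial kernel, and conclude by Proposition~\ref{prop:NonRF}. Your degree count is correct, but the extension-by-an-involution device of Lemma~\ref{lem:FiniteResidual} cannot replace this: to run it one would need an automorphism of $\Gamma\boxtimes\Gamma$ centralizing $\Delta_X$ while moving the $A$-words that Proposition~\ref{prop:ProfClosureBMW} forces into the closure, and the coordinate swap moves neither.
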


The criterion developed by Burger--Mozes in order to prove their version of Proposition~\ref{prop:FiberProduct} ensures that, under suitable conditions on the local action, an irreducible lattice in $\Aut(T_1) \times \Aut(T_2)$ whose projections to at least one of the factors is not injective, cannot be residually finite, see \cite[Proposition~2.1]{BuMo2}. That criterion was subsequently generalized to lattices in products of CAT($0$) spaces \cite[Proposition~2.4]{CaMo_KM}, and then to irreducible lattices in products of locally compact groups in \cite[Corollary~33]{CKRW} and \cite[\S 5]{CapraceWesolek}. We record the following geometric version, where there is no condition on the local action. 

\begin{prop}[{Caprace--Monod \cite[Proposition~2.4]{CaMo_KM}}]\label{prop:NonRF}
 Let $n \geq 2$ and for each $i =1, \dots, n$, let $T_i$ be a locally finite tree with infinitely many ends and no vertex of degree~$1$. Let  also $\Gamma \leq \Aut(T_1) \times \dots \times \Aut(T_n)$ be a discrete subgroup acting cocompactly on $T_1 \times \dots \times T_n$. Assume that for each nonempty proper subset $I \subsetneq \{1, \dots, n\}$, the projection  $\Gamma \to \prod_{i \in I} \Aut(T_i)$ has non-discrete image. 
 
If there exists a nonempty proper subset $I \subsetneq \{1, \dots, n\}$ such that  the projection  $\Gamma \to \prod_{i \in I} \Aut(T_i)$ fails to be injective, then $\Gamma$ is not residually finite.
\end{prop}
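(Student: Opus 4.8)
The plan is to exhibit a nontrivial element of the finite residual $\Gamma^{(\infty)}$ using Lemma~\ref{lem:FiniteResidual}, exactly as in Wise's doubling trick, but now with the kernel of a non-injective projection playing the role of the centralized subgroup. Write $J = \{1,\dots,n\}\setminus I$ for the complementary subset, and let $\pi_I\colon \Gamma \to \prod_{i\in I}\Aut(T_i)$ and $\pi_J\colon \Gamma \to \prod_{i\in J}\Aut(T_i)$ be the two coordinate projections. Set $K_J = \ker(\pi_J)$, so that $K_J = \{\,g\in\Gamma : g \text{ acts trivially on each } T_i, i\in J\,\}$; equivalently $K_J = \Gamma \cap \bigl(\prod_{i\in I}\Aut(T_i)\bigr)$. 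Symmetrically $K_I = \ker(\pi_I) = \Gamma \cap \bigl(\prod_{i\in J}\Aut(T_i)\bigr)$. By hypothesis the projection $\pi_I$ fails to be injective, so $K_I \neq \{1\}$; pick $1 \neq g \in K_I$.

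The first key step is that $K_I$ and $K_J$ commute elementwise: if $g\in K_I$ and $h\in K_J$, then as automorphisms of the product $T_1\times\cdots\times T_n$, $g$ acts only on the factors indexed by $J$ and $h$ acts only on the factors indexed by $I$, so $[g,h]=1$. Hence $K_I \leq C_\Gamma(K_J)$. The second key step is to show that the profinite closure $\overline{K_J}$ is strictly larger than $K_J$ — in fact that it contains an element $h \notin K_J$. This is where the non-discreteness hypothesis enters. Since $\pi_J(\Gamma)$ has non-discrete image in $\prod_{i\in J}\Aut(T_i)$, Theorem~\ref{thm:IrredInsep} (applied, after a harmless reduction, with the product over $J$ regarded as a single tree factor, or directly via its generalization to products of locally compact groups cited in the paragraph preceding the statement) tells us that the stabilizer in $\Gamma$ of a suitable vertex or edge in one of the trees $T_i$, $i\in J$, fails to be separable; more precisely, the quantitative version says that $K_J$, which is the kernel of the projection away from those factors, is not separable, i.e. $\overline{K_J}\supsetneq K_J$. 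Choose $h \in \overline{K_J}\setminus K_J$.

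Now apply Lemma~\ref{lem:FiniteResidual} with $H = K_J$: since $g \in K_I \leq C_\Gamma(K_J)$ and $h \in \overline{K_J}$, the commutator $[g,h]$ lies in $\Gamma^{(\infty)}$. It remains to arrange that $[g,h]\neq 1$ for a suitable choice of $g$ and $h$; this is the main obstacle, since a priori $g$ could centralize all of $\overline{K_J}$. To handle it, note that the centralizer $C_\Gamma(K_I)$ is a closed subgroup of $\Gamma$ (centralizers in Hausdorff groups are closed, and here $\Gamma$ is discrete so this is automatic, but the point is that $C_\Gamma(\overline{K_J}) = C_\Gamma(K_J)$ because centralizing is a closed condition); hence if $g$ centralized $\overline{K_J}$ for every $g \in K_I$, then $K_I$ would centralize $\overline{K_J}$. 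One then argues that $K_I \leq C_\Gamma(\overline{K_J})$ forces $\overline{K_J}$ to be too small: concretely, $\overline{K_J}$ normalizes $K_I$ (being a subgroup of the direct factor $\prod_{i\in J}\Aut(T_i)$ up to the commuting relation) and, combined with the discreteness failure, one derives a contradiction with the non-discreteness of $\pi_I(\Gamma)$ — the argument is the geometric fixed-point analysis of Caprace--Monod \cite[Proposition~2.4]{CaMo_KM}, which shows that if every element of $K_I$ centralized the full profinite closure $\overline{K_J}$ then $\pi_I(\Gamma)$ would be discrete, against hypothesis. Therefore some $g\in K_I$ fails to centralize some $h\in\overline{K_J}$, giving $1\neq[g,h]\in\Gamma^{(\infty)}$, so $\Gamma$ is not residually finite. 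The delicate point throughout is the interplay between the two hypotheses: non-discreteness of $\pi_J(\Gamma)$ is needed to make $\overline{K_J}$ big, while non-discreteness of $\pi_I(\Gamma)$ is needed to prevent $K_I$ from being swallowed into the centralizer of that closure, and keeping track of which projection is doing which job is the crux of the bookkeeping.
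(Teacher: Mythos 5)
There is a genuine gap, in two places, and they are both serious.

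\textbf{The inseparability claim.} You set $K_I=\ker\pi_I$, $K_J=\ker\pi_J$ and then assert that the non-discreteness of $\pi_J(\Gamma)$ forces $\overline{K_J}\supsetneq K_J$, citing Theorem~\ref{thm:IrredInsep}. This does not follow. What Theorem~\ref{thm:IrredInsep} (and the product-of-CAT($0$)-spaces generalization from \cite{CaMo_KM}) yields, under the non-discreteness hypotheses, is that the \emph{vertex and edge stabilizers} for the $\Gamma$-action on $\prod_{j\in J}T_j$ fail to be (weakly) separable. Those stabilizers are groups $\Gamma_y$ that strictly contain $K_J$, and non-separability does not pass down to subgroups: a subgroup of a non-separable group can perfectly well be separable. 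In fact, $\overline{K_J}\supsetneq K_J$ is, unravelling definitions, exactly the statement that $\pi_J(\Gamma)\cong\Gamma/K_J$ is \emph{not residually finite}, and non-discreteness of a subgroup of $\prod_{j\in J}\Aut(T_j)$ does not imply failure of residual finiteness (dense finitely generated residually finite subgroups exist). The hypotheses of the proposition only force $K_I\neq\{1\}$; they say nothing about $K_J$, which may well be trivial. When $K_J=\{1\}$, your claim $\overline{K_J}\supsetneq K_J$ reads $\Gamma^{(\infty)}\neq\{1\}$, which is precisely the conclusion you are trying to prove, so the argument is circular.

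\textbf{The non-centralizing step.} The final paragraph is a black-box appeal: ``the argument is the geometric fixed-point analysis of Caprace--Monod \cite[Proposition~2.4]{CaMo_KM}, which shows that if every element of $K_I$ centralized the full profinite closure $\overline{K_J}$ then $\pi_I(\Gamma)$ would be discrete.'' But \cite[Proposition~2.4]{CaMo_KM} \emph{is} the proposition being proved. Invoking it as the engine for the key sub-claim is not a proof. A self-contained argument has to supply the actual geometric/topological input here; the correct route does not go through establishing that $K_J$ is inseparable, but rather through the contrapositive: assuming $\Gamma$ is residually finite, one shows that the centralizer $C_\Gamma(K_I)$ is separable (automatic), that $K_I$ is normalized by the closure $\overline{\pi_I(\Gamma)}$ and hence lies in the quasi-centre of a non-discrete tdlc group, and one then uses compact open subgroups of the closures of the projections to derive discreteness of $\pi_J(\Gamma)$. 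The Lemma~\ref{lem:FiniteResidual} framing is a reasonable instinct (and is exactly Wise's trick for the explicit doubles), but as written your two key steps each assume what must be proved.
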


This rather general criterion applies to numerous examples. First of all, Propositions~\ref{prop:Doubling} and~\ref{prop:FiberProduct} can both be derived  from it. Consider for example the set-up of Proposition~\ref{prop:Doubling}. Let $\pi \colon \Lambda =  \Gamma *_{\langle X \rangle}\Gamma  \to \Gamma$ be the natural homomorphism, which is injective on $\langle X\rangle$. Observe that $H = \langle X \rangle$ is a \textbf{commensurated subgroup} of   $\Lambda$, i.e. for every $g \in \Lambda$, we have $[H : H \cap gH g^{-1}] < \infty$. In particular, given $ g \in \Lambda$ and   $h \in H \cap g^{-1}H g$, we have $ghg^{-1}  = h'\in H$. If now $g \in  \Ker(\pi)$ and recalling  that $\pi$ is the identity on $H$, we see that $h' = \pi(h')= \pi(ghg^{-1}) = \pi(h) = h$. Thus any $g\in  \Ker(\pi)$ centralizes a finite index subgroup of $H = \langle X \rangle$. Therefore  $\Ker(\pi)$   acts trivially on the tree $T_X$, which is the Cayley graph of $\langle X \rangle$ by Proposition~\ref{prop:BMW-basic}. Thus $\Ker(\pi)$, which contains all elements of the form $a \bar a^{-1}$ with $a \in A$,  is contained in the kernel of the projection of $\Lambda$ to $\Aut(T_X)$. Proposition~\ref{prop:NonRF} ensures that $\Lambda$ is not residually finite. The proof of Proposition~\ref{prop:FiberProduct} follows by similar considerations. 

An alternative approach providing also explicit elements in  $ \Lambda^{(\infty)}$ is as follows. Using  Proposition~\ref{prop:ProfClosureBMW} and applying   Lemma~\ref{lem:FiniteResidual}   to the extension of $\Lambda$ by the automorphism of order~$2$ fixing $\langle X \rangle$ pointwise and mapping $a$ to $ \bar a$ for all $a \in A$, we see that there exist $a \neq b \in A$ such that $ab^{-1} \bar b \bar a^{-1} \in \Lambda^{(\infty)}$. 

Proposition~\ref{prop:NonRF} also applies to other situations. For instance, it implies that the group $\Gamma_{\mathrm{BDR}}$ is not residually finite: Indeed, the projection of the free group $\langle a, b, c\rangle$ on the automorphism group of the tree $T_{\{x, y, z\}}$ is not injective, since its image is isomorphic to a lamplighter group by Proposition~\ref{prop:Lamplighter(6,6)}. Similarly, a computation shows that in the group $\Gamma_{\mathrm{JW}}$ from Example~\ref{ex:4,4}, the elements $x^3$ and $y^3$ each centralize a subgroup of index~$4$ of the free group $\langle a, b\rangle$. It follows that $\langle x^3, y^3\rangle$ lies in the kernel of the action of $\Gamma_{\mathrm{JW}}$ on $T_A = T_{\{a, b\}}$. Hence $\Gamma_{\mathrm{JW}}$ is not residually finite. One can show in a similar way that the Wise lattice is not residually finite. The non-residual finiteness of $\Gamma_{\mathrm{JW}}$ and $\Gamma_{\mathrm{Wise}}$ was recently observed independently in \cite{BondarenkoKivva} and \cite{CapraceWesolek}. 

The  groups  $\Gamma_{\mathrm{BDR}}$, $\Gamma_{\mathrm{JW}}$ and $\Gamma_{\mathrm{Wise}}$ have small degree, and this was essential in the  verification  that they fulfill the non-injectivity hypothesis of Proposition~\ref{prop:NonRF}. The following result provides a general criterion in terms of local actions that allows one to check the hypothesis of Proposition~\ref{prop:NonRF} on BMW-groups of arbitrary degree. 

\begin{prop}[{Caprace--Wesolek}]\label{prop:NilpotentLocalAction}
Let $n \geq 2$ and for each $i =1, \dots, n$, let $T_i$ be a locally finite tree with infinitely many ends and no vertex of degree~$1$. Let  also $\Gamma \leq \Aut(T_1) \times \dots \times \Aut(T_n)$ be a discrete subgroup acting cocompactly on $T_1 \times \dots \times T_n$. Then the  following assertions hold for any   $i \in \{1, \dots, n\}$. 
 \begin{enumerate}[(i)]
 \item If the  $\Gamma$-action on $T_i$ is vertex-transitive, and the local action of $\Gamma$ in $T_i$ is nilpotent, then the projection  $\Gamma \to \prod_{j \neq i} \Aut(T_j)$ is not injective. 
 
 \item If in addition the projection  $\Gamma \to \prod_{j \neq i} \Aut(T_j)$  has a non-discrete image, then $\Gamma$ is not residually finite. 
 \end{enumerate}

\end{prop}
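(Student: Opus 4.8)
The plan is to deduce Proposition~\ref{prop:NilpotentLocalAction} from Proposition~\ref{prop:NonRF}, so that the only real work is establishing part~(i): that the projection $\Gamma \to \prod_{j \neq i} \Aut(T_j)$ has non-trivial kernel whenever the $\Gamma$-action on $T_i$ is vertex-transitive with nilpotent local action. Once (i) is proved, part~(ii) is immediate: if moreover $\Gamma \to \prod_{j \neq i} \Aut(T_j)$ has non-discrete image, then the hypotheses of Proposition~\ref{prop:NonRF} are met with $I = \{1,\dots,n\}\setminus\{i\}$ (the discreteness of the other partial projections being automatic, since $\Gamma$ is a cocompact lattice and, by the irreducibility packaged in Theorem~\ref{thm:IrredInsep}, each partial projection that is not discrete must in fact be non-discrete — and here we are assuming the one relevant projection is non-discrete, while the full projection to $\prod_{i} \Aut(T_i)$ is discrete as $\Gamma$ itself is), and the non-injectivity hypothesis is exactly conclusion (i). So $\Gamma$ is not residually finite.

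For part~(i), the key point is a local-to-global statement about stabilizers. Fix a vertex $v \in V(T_i)$ and consider the pointwise ball stabilizers $\Gamma_v^{[r]}$ as in Section~\ref{sec:LocalAction}. Since $\Gamma$ acts cocompactly, and the action on $T_i$ is vertex-transitive, the successive quotients $\Gamma_v^{[r]}/\Gamma_v^{[r+1]}$ embed into a product of copies of the point stabilizer of the local action at the relevant vertices; because the local action is nilpotent — hence its point stabilizers, and iterated such — these quotients are all nilpotent, and in fact one gets that the image of $\Gamma_v$ in $\Aut(T_i)$ (equivalently the projection $\mathrm{pr}_i(\Gamma)_v$, using cocompactness to pass between $\Gamma$ and its projection) is a pro-nilpotent, hence residually finite-nilpotent, profinite group acting on the rooted tree $T_i$ from $v$. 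The crucial consequence of nilpotency of the local action is that the \emph{global} stabilizer in $\mathrm{pr}_i(\Gamma)$ of the edge (or half-tree) emanating from $v$ cannot act transitively on the frontier in the way a non-nilpotent $2$-transitive group would; more precisely, the subgroup of $\mathrm{pr}_i(\Gamma)$ fixing a ray based at $v$ pointwise can be shown, via the nilpotency, to be trivial is \emph{false} in general, so instead the argument must go the other way: one shows the projection $\Gamma \to \prod_{j\neq i}\Aut(T_j)$ restricted to $\Gamma_v^{[1]}$ (or some $\Gamma_v^{[r]}$) is forced to have a kernel because the relevant finite quotients controlling the action on the other factors are constrained by the nilpotent local data. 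The cleanest route is: nilpotency of the local action implies the local action is \emph{not} $2$-transitive of the type occurring in Theorem~\ref{thm:TrofimovWeiss}, so one does not get the strong triviality conclusions $\Gamma_v^{[k]} = \{1\}$; instead, one exhibits a non-trivial element $g \in \Gamma$ fixing $T_i$ pointwise in a neighbourhood and argues, using the structure of the finite nilpotent local action together with cocompactness, that $g$ can be taken to act trivially on \emph{all} of $T_i$, i.e. $g \in \ker(\Gamma \to \Aut(T_i))$, which is precisely $\ker(\Gamma \to \prod_{j \neq i}\Aut(T_j))$ being nontrivial is the wrong kernel — one wants $\ker$ of the projection \emph{forgetting} $T_i$ to be nontrivial, equivalently an element acting trivially on all $T_j$ with $j \ne i$ but not on $T_i$. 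This is obtained by the dual argument: since the local action on $T_i$ is nilpotent, the $\mathrm{pr}_i$-image is discrete-like enough that there is an element of $\Gamma$ acting trivially on the other factors yet nontrivially on $T_i$; concretely this comes from the finiteness of the nilpotent local action forcing some $\Gamma_v^{[r]}$ (for $r$ bounded in terms of the nilpotency class and the degree) to act trivially on the other factors while still acting nontrivially on $T_i$.

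The main obstacle I expect is pinning down this last step rigorously: one must convert "the local action on $T_i$ is nilpotent" into a uniform bound after which the ball stabilizers in $T_i$ act trivially on the complementary product of trees. The natural tool is a result à la Burger--Mozes on quasi-primitive/prime-content local actions, adapted to the nilpotent (far from primitive) case, combined with the fact that a vertex-transitive cocompact action with nilpotent local action has all the groups $\Gamma_v^{[r]}$ eventually contained in a single conjugacy-controlled family — so that the induced action on each $T_j$, $j \ne i$, factors through a finite nilpotent quotient and hence, by taking $r$ large, becomes trivial on a fixed ball, while transitivity propagates triviality to all of $T_j$. Verifying that the projection image $\mathrm{pr}_i(\Gamma)$ is genuinely controlled by the nilpotent local action (this is where cocompactness and Theorem~\ref{thm:IrredInsep}-type reasoning enter, to rule out the image being all of $\Aut(T_i)$ in an uncontrolled way) is the delicate part; everything else is bookkeeping with ball stabilizers and an invocation of Proposition~\ref{prop:NonRF}.
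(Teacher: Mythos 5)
The paper's own proof of this proposition consists of two citations to Caprace--Wesolek, so your proposal is necessarily an attempt to reconstruct that argument from first principles. The attempt does not succeed, and the gaps are substantive rather than cosmetic.

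For part (ii), the reduction to Proposition~\ref{prop:NonRF} is a sensible idea, but the hypothesis of that proposition requires that \emph{every} nonempty proper partial projection $\Gamma \to \prod_{j\in I}\Aut(T_j)$ have non-discrete image, whereas you are given only that the single projection to $\prod_{j\neq i}\Aut(T_j)$ is non-discrete. Your parenthetical attempt to dispose of the remaining hypotheses is incoherent as written (``the discreteness of the other partial projections being automatic \ldots each partial projection that is not discrete must in fact be non-discrete'' is circular and asserts the wrong thing). For $n=2$ this can in fact be repaired, since the equivalence in Theorem~\ref{thm:IrredInsep} forces $\mathrm{pr}_i(\Gamma)$ to be non-discrete as well, but for $n>2$ nothing you say addresses the intermediate subsets $I$, and one would need either a genuine verification or an appeal to a $2$-factor version over CAT($0$) spaces (treating $\prod_{j\neq i}T_j$ as a single factor), which you do not invoke.

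For part (i), the argument is not a proof. You explicitly flag the main step as unresolved (``The main obstacle I expect is pinning down this last step rigorously''), and that step is precisely where the content lies. The claim you need --- that some ball stabilizer $\Gamma_v^{[r]}$ in $T_i$ intersects $K_i = \ker\bigl(\Gamma \to \prod_{j\neq i}\Aut(T_j)\bigr)$ nontrivially --- does not follow from anything you write: nilpotency of the local action on $T_i$ constrains how $\Gamma_v$ acts on spheres in $T_i$, while $K_i$ is defined by triviality of the action on the \emph{other} trees, and you give no mechanism linking the two. The passage where you write ``which is precisely $\ker(\Gamma\to\prod_{j\neq i}\Aut(T_j))$ being nontrivial is the wrong kernel --- one wants $\ker$ of the projection forgetting $T_i$'' shows the confusion: those are the same kernel, and the back-and-forth signals that the direction of the argument was never nailed down. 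This step is exactly what the cited Caprace--Wesolek results (structural consequences of a nilpotent local action for the closure of the projection to $\Aut(T_i)$, combined with discreteness of $\Gamma$ in the product) supply, and your sketch contains no substitute for them.
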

\begin{proof}
Assertion (i) follows from \cite[Corollary~5.3, Lemma~5.10 and Proposition~6.3]{CapraceWesolek}. Assertion (ii) follows from \cite[Corollary~6.4]{CapraceWesolek}. 
\end{proof}

The following consequence is immediate (see  Table~\ref{tab:ExamplesLocalActions}). 

\begin{cor}\label{cor:BMW:NilpotentLocal}
Let $\Gamma = \langle A \cup X \mid R \rangle$ be a BMW-presentation. If $\Gamma$ is irreducible and if the local action of $\Gamma$ on $T_A$ or on $T_X$ is nilpotent, then $\Gamma$ is not residually finite. In particular, the BMW-groups $\Gamma_{\mathrm{JW}}$, $\Gamma_{\mathrm{Wise}}$ and  $\Gamma_{3, 3}$ are not residually finite. 
\end{cor}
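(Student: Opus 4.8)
The plan is to derive Corollary~\ref{cor:BMW:NilpotentLocal} directly from Proposition~\ref{prop:NilpotentLocalAction}, which is essentially tailor-made for this purpose, together with the basic structure theory of BMW-groups recorded in Proposition~\ref{prop:BMW-basic} and the irreducibility criterion of Theorem~\ref{thm:IrredInsep}. First I would set the stage: given a BMW-presentation $\Gamma = \langle A \cup X \mid R\rangle$, Proposition~\ref{prop:BMW-basic}(i) tells us that the Cayley graph of $\Gamma$ is the Cartesian product $T_A \times T_X$ of two regular trees of degree $M = 2m+m'$ and $N = 2n+n'$ respectively, and that $\Gamma$ embeds in $\Aut(T_A) \times \Aut(T_X)$ acting properly cocompactly, indeed vertex-transitively, on $T_A \times T_X$. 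Since $M, N \geq 1$, each tree factor has no vertex of degree~$1$ (and in the only degenerate case $M = N = 1$ the group $C_2 \times C_2$ is reducible, so the irreducibility hypothesis excludes it); hence the trees have infinitely many ends as soon as $\Gamma$ is irreducible. So the hypotheses of Proposition~\ref{prop:NilpotentLocalAction} are met with $n = 2$, $T_1 = T_A$, $T_2 = T_X$.

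Next I would invoke irreducibility. By Theorem~\ref{thm:IrredInsep}, since $\Gamma$ is irreducible, neither projection $\mathrm{pr}_A(\Gamma) \leq \Aut(T_A)$ nor $\mathrm{pr}_X(\Gamma) \leq \Aut(T_X)$ is discrete; equivalently the projection $\Gamma \to \Aut(T_X)$ to the ``other'' factor has non-discrete image, and symmetrically. Now suppose, without loss of generality, that the local action of $\Gamma$ on $T_A$ is nilpotent. Since $\Gamma$ acts vertex-transitively on $T_A$ (as the projection of a vertex-transitive action on the product), Proposition~\ref{prop:NilpotentLocalAction}(i) applied with $i$ the index corresponding to $T_A$ gives that the projection $\Gamma \to \Aut(T_X)$ is not injective. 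Combined with the non-discreteness of that projection from the previous paragraph, Proposition~\ref{prop:NilpotentLocalAction}(ii) yields that $\Gamma$ is not residually finite, which is the assertion. The case where the local action on $T_X$ is nilpotent is identical with the roles of $A$ and $X$ exchanged.

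Finally, for the concrete examples I would simply read off Table~\ref{tab:ExamplesLocalActions}. The group $\Gamma_{3,3}$ has local action $C_2$ on $T_X$, which is abelian hence nilpotent; $\Gamma_{\mathrm{JW}}$ has local action $D_8$ on $T_X$, a $2$-group hence nilpotent; and $\Gamma_{\mathrm{Wise}}$ has local action $C_2 \times C_2$ on $T_A$, again nilpotent. All three are irreducible (for $\Gamma_{3,3}$ and $\Gamma_{\mathrm{JW}}$ by Proposition~\ref{prop:GammaRadu}, reproved via Corollary~\ref{cor:TrofimovWeiss}; for $\Gamma_{\mathrm{Wise}}$ as recorded just before Proposition~\ref{prop:BMW-basic} and in Section~\ref{sec:LocalAction}), so the first part of the corollary applies to each of them.

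I do not expect a genuine obstacle here: the corollary is a direct bookkeeping consequence of Proposition~\ref{prop:NilpotentLocalAction} once one notes that a BMW-group acts vertex-transitively on each tree factor of its Cayley graph and that irreducibility supplies the non-discreteness of the complementary projection. The only point requiring a moment's care is ensuring the trees satisfy the standing hypotheses of Proposition~\ref{prop:NilpotentLocalAction} (infinitely many ends, no vertex of degree~$1$), which is where one must either invoke irreducibility to rule out the finite-tree degeneracies or observe that a regular tree of degree~$\geq 2$ already has infinitely many ends while degree~$1$ is incompatible with a non-trivial BMW-presentation — and for the stated examples this is transparent from Table~\ref{tab:ExamplesLocalActions}.
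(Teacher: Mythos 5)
Your proof is correct and follows the same route the paper indicates: the paper declares the corollary ``immediate'' from Proposition~\ref{prop:NilpotentLocalAction} and Table~\ref{tab:ExamplesLocalActions}, and you have simply spelled out the bookkeeping — vertex-transitivity of the factor actions, irreducibility supplying non-discreteness of both projections via Theorem~\ref{thm:IrredInsep}, Proposition~\ref{prop:NilpotentLocalAction}(i) then (ii), and a table look-up for the three named examples. The only slight misattribution is that the irreducibility of $\Gamma_{\mathrm{JW}}$ is established by Janzen--Wise rather than by Proposition~\ref{prop:GammaRadu} (which only records that $\Gamma_{\mathrm{JW}}$ embeds in $\Gamma_{6,6}$), but since you also note that Corollary~\ref{cor:TrofimovWeiss} rechecks it via the $\Alt(4)$ action on $T_A$, there is no gap.
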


\begin{rmk}\label{rem:Gamma2}
An explicit non-trivial element of the finite residual of $\Gamma_{\mathrm{JW}}$ can be obtained as follows. We mentioned above that in the group $\Gamma_{\mathrm{JW}}$, the elements $x^3$ and $y^3$ each commute with a subgroup of index~$4$ of $\langle A\rangle$, where $A = \{a, b\}$. More precisely $x^3$ centralizes the stabilizer $\langle A \rangle_x \leq \langle A \rangle$ of the edge labelled by $x$ emanating from $1_X$ in the tree $T_X$, while  $y^3$ centralizes the stabilizer $\langle A \rangle_y \leq \langle A \rangle$ of the edge labelled by $y$.  Moreover the groups $\langle A \rangle_x$ and $\langle A \rangle_y$ are both contained in  $\langle A^{*2} \rangle$, which is a subgroup of index~$2$ in $\langle A \rangle$.  On the other hand, Proposition~\ref{prop:ProfClosureBMW} ensures that the profinite closure $\overline{\langle A^{*2} \rangle}$ contains a non-trivial element of $X^{*2}$. Analyzing the local action of $\Gamma_{\mathrm{JW}}$ on $T_X$, one checks that this element is $x^2$ or $y^2$. In view of the presentation of $\Gamma_{\mathrm{JW}}$, the assignments $a \mapsto a^{-1}, b \mapsto b^{-1}, x\mapsto y^{-1}$ and $y \mapsto x^{-1}$ extend to an automomorphism of $\Gamma_{\mathrm{JW}}$. It follows that  the profinite closure $\overline{\langle A^{*2} \rangle}$ contains both $x^2$ and $y^2$. Since $\langle A \rangle_x$ and $\langle A \rangle_y$ are both index~$2$ subgroups of $\langle A^{*2} \rangle$, we have $x^4, y^{4}  \in \overline{\langle A \rangle_x} \cap \overline{\langle A \rangle_y}$. By Lemma~\ref{lem:FiniteResidual}, we have $[x^3, \overline {\langle A \rangle_x}] \subset \Gamma_{\mathrm{JW}}^{(\infty)}$ and $[y^3, \overline {\langle A \rangle_y}] \subset \Gamma_{\mathrm{JW}}^{(\infty)}$. We deduce that the commutators $[x^3, y^4]$ and $[y^3, x^4]$ are both non-trivial elements of $\Gamma_{\mathrm{JW}}^{(\infty)}$.
\end{rmk}

\subsection{The Normal Subgroup Theorem, after U.~Bader and Y.~Shalom}

We have already seen examples of BMW-groups that are hereditarily just-infinite, see Propositions~\ref{prop:StixVdovina}, \ref{prop:Rattaggi} and~\ref{prop:Rungtana}. Those groups are irreducible  arithmetic lattices in products of two simple algebraic groups of rank~$1$ (in characteristic~$3$,~$0$ and~$2$ respectively), and the just-infinite property was established using the Margulis Normal Subgroup Theorem. A remarkable breakthrough due to M.~Burger and S.~Mozes \cite[Theorem~4.1]{BuMo2} was to extend the scope of the Margulis Normal Subgroup Theorem so that it applies to a much larger class of BMW-groups, including non-arithmetic (and even non-linear) ones. The Burger--Mozes Normal Subgroup Theorem applies to   irreducible cocompact lattices in products of certain virtually simple locally compact groups of automorphisms of trees. It was   generalized later by U.~Bader and Y.~Shalom~ \cite{BaSha}, who obtained a fundamental result whose   level of generality is absolutely stunning. In order to present it, we first recall that a locally compact group is called \textbf{just-non-compact} if it is not compact and if all its  proper Hausdorff quotients are compact.  It is \textbf{hereditarily just-non-compact} if every finite index open subgroup is just-non-compact. The main example of a hereditarily just-non-compact group to keep in mind is that of a \textbf{topologically simple} group, i.e. a non-trivial locally compact group whose only closed normal subgroups are the trivial ones. An example of a just-non-compact locally compact group which is not hereditarily so is provided by the wreath product $S \wr C_2$ of a topologically simple group $S$ with a cyclic group of order~$2$.  A general result on the structure of a compactly generated just-non-compact locally compact group  can be found in \cite[Theorem~E]{CaMo_cpt} (see also \cite[Proposition~2.7]{CapLB}).

 The following theorem  is a slight strengthening of the original formulation of Bader--Shalom in \cite{BaSha}, taking advantage of recent results from \cite{CapLB} concerning the case where the number of factors of the ambient product group is at least~$3$. 

\begin{thm}[U.~Bader and Y.~Shalom]\label{thm:BaderShalom}
Let $n \geq 2$ and for each $i =1 , \dots, n$, let $G_i$ be a non-discrete compactly generated locally compact group which is just-non-compact and contains no abelian normal subgroup other than the identity subgroup. 
Let $\Gamma < G_1 \times \dots \times G_n$   be a cocompact lattice such that  for all $j$, the projection  $\Gamma \to   G_j$ has dense image. If $n > 3$, assume in addition that for all $j$, the projection  $\Gamma \to \prod_{i \neq j} G_i$ has non-discrete image. Then $\Gamma$ is   just-infinite. 

If in addition $G_i$ is hereditarily just-non-compact for all $i$, then $\Gamma$ is hereditarily just-infinite. 
\end{thm}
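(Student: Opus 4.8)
The strategy is to prove the first (just-infinite) assertion and then deduce the hereditary refinement. For the first part, the key reduction is to the case $n = 2$: if $n \geq 3$, the non-discreteness hypothesis on all proper sub-projections lets us group the factors. Concretely, write $G_1 \times \dots \times G_n = G_1 \times H$ with $H = G_2 \times \dots \times G_n$. One must check that $\Gamma$ is a cocompact lattice in $G_1 \times H$ with dense projection to each of the two factors $G_1$ and $H$, and that the relevant non-discreteness of sub-projections is inherited; here the extra hypothesis for $n > 3$ is exactly what one needs to iterate this bracketing. For this bracketing to be legitimate one should invoke the structure theory of compactly generated just-non-compact groups from \cite[Theorem~E]{CaMo_cpt}: one needs to know that a product of just-non-compact groups with no abelian normal subgroups, or more precisely the relevant closed subgroups of $H$, behaves well enough — this is where I expect to lean on \cite{CapLB}, which is cited as furnishing precisely the $n \geq 3$ input that the original Bader--Shalom argument lacked.

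**The core $n=2$ argument.** Assume now $n = 2$, so $\Gamma < G_1 \times G_2$ is a cocompact lattice with dense projections $p_i \colon \Gamma \to G_i$. Let $N \trianglelefteq \Gamma$ be a nontrivial normal subgroup; we must show $[\Gamma : N] < \infty$. The Bader--Shalom machinery proceeds by studying the projections $\overline{p_i(N)} \trianglelefteq \overline{p_i(\Gamma)} = G_i$. Since $N \neq \{1\}$, at least one projection, say $\overline{p_1(N)}$, is a nontrivial closed normal subgroup of $G_1$; as $G_1$ is just-non-compact, $\overline{p_1(N)}$ is cocompact in $G_1$, so $G_1/\overline{p_1(N)}$ is compact. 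The two genuinely hard inputs are: (a) a \emph{property (T)-flavoured} or \emph{bounded cohomology} argument (the heart of \cite{BaSha}) showing that the $\Gamma$-action on a suitable boundary, or an induced unitary representation, forces the quotient $\Gamma/N$ to be amenable; and (b) combining amenability of $\Gamma/N$ with the density of the projections and the no-abelian-normal-subgroup hypothesis to rule out infinite quotients. Step (a) is where the real analytic/ergodic-theoretic work lives — Bader--Shalom's intertwining of amenable actions of the factors, the Howe--Moore-type mixing, and the superrigidity-style cocycle untwisting — and I would treat it as a black box citation to \cite{BaSha}. Step (b) then runs roughly: if $\Gamma/N$ were infinite and amenable, pulling back through the dense projections would produce an amenable (hence, after passing to a suitable closed normal subgroup, abelian) normal piece inside one of the $G_i$, contradicting the standing hypothesis that $G_i$ has no nontrivial abelian normal subgroup. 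One must be slightly careful that ``amenable closed normal subgroup of a just-non-compact group'' is either trivial or cocompact, and that cocompactness plus amenability propagates an abelian normal subgroup via the structure theorem — again \cite[Theorem~E]{CaMo_cpt}.

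**The hereditary refinement.** Suppose now in addition each $G_i$ is hereditarily just-non-compact, and let $\Gamma_0 \leq \Gamma$ be a finite-index subgroup; we want $\Gamma_0$ just-infinite. The natural move is to replace $\Gamma$ by $\Gamma_0$ and the ambient group $\prod G_i$ by the closures $\overline{p_i(\Gamma_0)}$, which are \emph{open} finite-index subgroups of the $G_i$ (open because $\Gamma$ is cocompact and $\Gamma_0$ has finite index, so $p_i(\Gamma_0)$ is still cocompact and its closure has finite index in $G_i$). Each such open finite-index subgroup is just-non-compact by the hereditarily-just-non-compact hypothesis, still non-discrete, still compactly generated, and still has no nontrivial abelian normal subgroup (a closed subgroup of finite index inherits this — any abelian normal subgroup of the finite-index subgroup has finitely many $\Gamma$-conjugates whose product is an abelian normal subgroup of $G_i$, hence trivial). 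One must also check the density and non-discreteness hypotheses transfer: $\Gamma_0 < \overline{p_1(\Gamma_0)} \times \dots \times \overline{p_n(\Gamma_0)}$ is still a cocompact lattice with dense projections by construction, and the sub-projection non-discreteness for $n > 3$ is inherited because a finite-index subgroup of a non-discrete group is non-discrete. Then the first part of the theorem, applied to $\Gamma_0$ inside $\prod_i \overline{p_i(\Gamma_0)}$, gives that $\Gamma_0$ is just-infinite, as required.

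**Main obstacle.** The genuine difficulty — and the reason the proof is ``of absolutely stunning generality'' — is step (a) above: the boundary-theoretic / bounded-cohomology mechanism by which Bader--Shalom convert the product structure and density of projections into amenability of $\Gamma/N$. I would not reprove it; the proposal is to cite \cite{BaSha} for that core and to spend the write-up's effort on the two genuinely local pieces: the $n \geq 3$ bracketing using \cite{CapLB} and \cite{CaMo_cpt}, and the finite-index bookkeeping for the hereditary statement.
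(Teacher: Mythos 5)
Your reduction to $n=2$ by bracketing $G_1 \times \cdots \times G_n$ as $G_1 \times H$ with $H = G_2 \times \cdots \times G_n$ does not work, and it is not what the paper does. The factor $H$ is \emph{not} just-non-compact when $n \geq 3$ (it has $G_2, G_3, \ldots$ as non-compact proper Hausdorff quotients), so the $n=2$ case of the theorem simply does not apply to $\Gamma < G_1 \times H$. Moreover, density of each projection $\Gamma \to G_j$ does not imply density of the projection $\Gamma \to H = G_2 \times \cdots \times G_n$. The paper's proof does not iterate a two-factor statement: it handles all $n$ at once. The only role of the extra hypothesis for $n > 3$ (together with the cited \cite[Corollary~I]{CapLB} to deal with $n=3$) is to guarantee that \emph{every} sub-projection $\Gamma \to \prod_{i \neq j} G_i$ has non-discrete image, uniformly in $n$.

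You are also missing a key step that the paper makes explicit: after establishing the non-discreteness of all sub-projections, the paper invokes \cite[Theorem~H]{CapLB} to conclude that each projection $p_j \colon \Gamma \to G_j$ is \emph{injective}. This is precisely what lets one assert that for a non-trivial normal subgroup $N$, the closure $\overline{p_j(N)}$ is a non-trivial (hence cocompact) closed normal subgroup of $G_j$ \emph{for all} $j$, not merely ``at least one $j$'' as you wrote. The cocompactness of $\overline{p_j(N)}$ in every factor is exactly the hypothesis under which the Bader--Shalom machinery applies; having it for only one index is not enough. Finally, your step (b) is not how the argument concludes. The conclusion that $\Gamma/N$ is finite comes from combining two citations: \cite[Theorem~3.7(iv)]{BaSha} (the amenability half) and \cite[Theorem~0.1]{Shalom00} (the property (T) half), whence $\Gamma/N$ is both amenable and Kazhdan, so finite. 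Your proposed alternative — pulling amenability of $\Gamma/N$ back to produce an abelian normal subgroup of some $G_i$ — is not a correct deduction (amenable closed normal subgroups of non-discrete groups need not be abelian nor even contain non-trivial abelian normal subgroups), and it would anyway only rule out amenable quotients, not all infinite quotients. Your treatment of the hereditary refinement (replacing each $G_i$ by the closure $\overline{p_i(\Gamma_0)}$ and rerunning the first part) is correct and agrees with the paper.
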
 
\begin{proof}
	Observe that whatever the value of $n \geq 2$ is,  the projection  $\Gamma \to \prod_{i \neq j} G_i$ has non-discrete image for all $j$. This is clear if $n=2$ or $n \geq 4$ in view of the hypotheses. In the case where $n=3$, the same assertion holds by   \cite[Corollary~I]{CapLB}. Therefore, in all cases, we may invoke \cite[Theorem~H]{CapLB}. This  ensures that   for all $j$, the projection $p_j \colon \Gamma \to   G_j$ is injective. 

Let now $N \neq \{1\}$ be a normal subgroup of $\Gamma$. Then for all $j$, the group $\overline{p_j(N)}$ is a non-trivial closed normal subgroup of $G_j$. It is thus cocompact in $G_j$ since the latter is just-non-compact. It then follows from \cite[Theorem~3.7(iv)]{BaSha} and \cite[Theorem~0.1]{Shalom00} that the quotient $\Gamma/N$ is finite. Hence $\Gamma$ is just-infinite. 

Assume now that  $G_i$ is hereditarily just-non-compact for all $i$, and let $\Lambda \leq \Gamma$ be a finite index subgroup of $\Gamma$. We may then replace $G_i$ by $\overline{p_i(\Lambda)}$ for all $i$ and apply the first part of the proof. This shows that $\Lambda$ is just-infinite. Hence $\Gamma$ is hereditarily just-infinite. 
\end{proof}

The condition excluding abelian normal subgroups in $G_i$ in Theorem~\ref{thm:BaderShalom} is necessary: it is easy to see that the isometry group $G = \Isom(\mathbf R)$ is just-non-compact, and that the discrete group $\Gamma = (\mathbf Z \times \mathbf Z) \rtimes  C_2$, where the generator of $C_2$ acts via $(a, b) \mapsto (-a, -b)$, is not just-non-compact, but embeds as a  cocompact lattice with dense projections in $G\times G$. 

Similarly, the condition of compact generation is also necessary. In order to see that, consider the group $G_1 =   \PSL_2(\overline{\FF_p}) \rtimes \Aut(\overline{\FF_p})$, where $\overline{\FF_p}$ denotes the algebraic closure of the finite field of order $p$. The group $G_1$ carries a second countable locally compact group topology which induces the discrete topology on the countable subgroup $\PSL_2(\overline{\FF_p}) $, and the Krull topology (which is compact) on the Galois group $\Aut(\overline{\FF_p})$. Since the discrete group $\PSL_2(\overline{\FF_p}) $ is locally finite, it follows that every compactly generated closed subgroup of $G_1$ is compact. Since $G_1$ is not compact, it follows that it cannot be compactly generated. On the other hand, it is not difficult to show that every non-trivial normal subgroup of $G_1$ contains $\PSL_2(\overline{\FF_p}) $, so that $G_1$ is just-non-compact (and hereditarily so). We also set $G_2 = \ZZ$ and $\Gamma = \PSL_2(\overline{\FF_p}) \rtimes_\alpha \ZZ$, where $\alpha$ is a generator of the pro-cyclic group  $\Aut(\overline{\FF_p})$. The group $\Gamma$ embeds as a cocompact lattice with dense projections in $G_1 \times G_2$. Both factors in that product are hereditarily just-non-compact,  but $\Gamma$  maps onto $\ZZ$ and thus fails to be just-infinite. 

Theorem~\ref{thm:BaderShalom} also holds for some non-uniform lattices under a technical hypothesis called \emph{integrability}, see  \cite{BaSha}.   This was exploited  in \cite{CaRe} to exhibit another family of finitely presented infinite simple groups coming from Kac--Moody theory; we will not pursue that direction here. 

The proof of Theorem~\ref{thm:BaderShalom} follows the scheme designed by Margulis in the proof of his own Normal Subgroup Theorem. The required conclusion is obtained by combining two independent results,   proved separately with completely different methods: the first ensures that every proper quotient of $\Gamma$ has Kazhdan's property (T), the second ensures that every proper quotient of $\Gamma$ is amenable (see \cite{BHV} for a detailed exposition of  those important notions). The conclusion follows since the only discrete groups that satisfy both property (T) and amenability are finite. In fact, some parts of Margulis' original proof were already formulated  for a rather general class of  locally compact groups without any hypothesis requiring that those are algebraic over local fields, see \cite[Theorem~1.3.2]{Margulis79}. The property (T) half of the scheme above was achieved at the greatest level of generality by Y.~Shalom \cite{Shalom00}, while the amenability half is the work of Bader--Shalom \cite{BaSha}.

\subsection{Alternating and fully symmetric local actions}

In order to apply the Normal Subgroup Theorem~\ref{thm:BaderShalom} to a BMW-group given by its BMW-presentation, one should check that the closure of its projections on the automorphism groups of both tree factors are just-non-compact. Once again, there is no general tool allowing one  to describe the closure of a non-discrete subgroup of the automorphism group of a tree or any other connected locally finite graph.  In addressing that issue, M.~Burger and S.~Mozes highlighted a very striking phenomenon, allowing them to control the closure of a non-discrete vertex-transitive automorphism group of a tree under the hypothesis that the local action has simple (or almost simple) point stabilizers. In order to give a precise statement, we need to recall the definition of the Burger--Mozes  {universal group} of automorphisms of the $d$-regular tree $T$ with local action prescribed by a permutation group $ F \leq \Sym(d)$, introduced in \cite[\S 3.2]{BuMo1}. Fix a map $i \colon E(T) \to \{1, \dots, d\}$ such that for every vertex $v \in V(T)$, the restriction $i |_{E(v)}$ of $i$ to the set of edges containing $v$ is a bijection. Given $g \in \Aut(T)$ and $v \in V(T)$, we set
$$\sigma(g, v) = i |_{E(gv)} \circ g \circ (i|_{E(v)})^{-1}.$$
Notice that $\sigma(g, v) \in \Sym(d)$ for all $g$ and $v$. 
Given any $F \leq \Sym(d)$, the  Burger--Mozes \textbf{universal group} of automorphisms of $T$ with local action prescribed by $F$ is defined as
$$U(F) = \{g \in \Aut(T) \mid \sigma(g, v) \in F \text{ for all } v \in V(T)\}.$$
One checks that, up to conjugation in $\Aut(T)$, it is independent of choice of the map $i$. If $d \geq 3$ and if the group $F$ is transitive and generated by its point stabilizers, then the index~$2$ subgroup $U(F)^+$ of $U(F)$ preserving the canonical bipartition of $T$ is a simple compactly generated locally compact group, see  \cite[Proposition~3.2.1]{BuMo1}. 

\begin{thm}[{Burger--Mozes \cite[Propositions~3.1.2, 3.3.1 and~3.3.2]{BuMo1}}] \label{thm:U(alt)}
Let $T$ be a $d$-regular tree with $d \geq 6$ and  $\Gamma \leq \Aut(T)$ be a non-discrete vertex-transitive subgroup.

\begin{enumerate}[(i)]
\item  If the local action of $\Gamma$ in $\Aut(T)$ contains the full alternating group $\Alt(d)$, then the closure $\overline \Gamma$ is hereditarily just-non-compact, without abelian normal subgroup other than $\{1\}$. 

\item If the local action of $\Gamma$ in $\Aut(T)$ coincides with the full alternating group $\Alt(d)$, then the closure $\overline \Gamma$ is conjugate to the universal group $U(\Alt(d))$ in $\Aut(T)$. In particular $\overline \Gamma$ has a simple subgroup of index~$2$. 

\end{enumerate}

\end{thm}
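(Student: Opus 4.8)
The statement is due to Burger--Mozes, and the plan follows their original strategy. The two assertions are proved by a careful analysis of the point stabilizers in the local action $F$ and how the hypotheses propagate along the tree. I would organize the argument around the key structural fact that, for a non-discrete vertex-transitive $\Gamma \leq \Aut(T)$, the closure $\overline\Gamma$ is again vertex-transitive, closed, and has the \emph{same} local action $F$ at each vertex as $\Gamma$, so we may as well work with $\overline\Gamma$ from the outset and assume $\Gamma$ itself is closed (hence a compactly generated t.d.l.c. group acting cocompactly on $T$).

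For part (i): the goal is to show $\overline\Gamma$ is hereditarily just-non-compact with no nontrivial abelian normal subgroup, assuming only that $\Alt(d) \leq F$. The first step is to rule out abelian normal subgroups: a nontrivial closed normal subgroup $N \trianglelefteq \overline\Gamma$ acts on $T$, and using that $F \geq \Alt(d)$ is $2$-transitive (so in particular primitive) for $d \geq 6$, one shows $N$ cannot fix an end or a vertex/edge without being trivial; hence $N$ acts with unbounded orbits, which — combined with the fact that a nontrivial abelian group acting on a tree with unbounded orbits fixes an end or a pair of ends — forces any abelian $N$ to be trivial. The second, main step is to show that \emph{every} nontrivial closed normal subgroup $N$ of $\overline\Gamma$ is cocompact, i.e. acts cocompactly on $T$; the mechanism here is that the stabilizer $\Gamma_v^{[1]}$ of the ball of radius $1$, which surjects onto a product of copies of the point stabilizer $F_\omega \geq \Alt(d-1)$ at the neighbours, is forced by the transitivity and simplicity of $\Alt(d-1)$ (for $d \geq 6$, so $d - 1 \geq 5$) to be contained in $\overline{N}$ once $N$ is nontrivial and normal — this is the heart of the "independence property"/Tits-type argument, and it shows $N$ contains $\Gamma_v^{[1]}$, whence $N$ is cocompact. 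Finally, to get \emph{hereditary} just-non-compactness one passes to a finite-index open subgroup $\Gamma' \leq \overline\Gamma$; such a $\Gamma'$ still acts on $T$ cocompactly and, after passing to a subtree or re-coordinatizing, one checks its local action still contains $\Alt(d)$ on the relevant sphere, so the same argument applies.

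For part (ii): assume now the local action is \emph{exactly} $\Alt(d)$. The inclusion $\overline\Gamma \leq U(\Alt(d))$ (up to conjugacy) is almost immediate: fixing a legal coloring $i$ as in the definition, vertex-transitivity of $\overline\Gamma$ lets one conjugate so that the local permutations $\sigma(g,v)$ all lie in $\Alt(d)$, which is exactly membership in $U(\Alt(d))$. The substantive direction is the reverse inclusion $U(\Alt(d)) \leq \overline\Gamma$, i.e. $\overline\Gamma$ is \emph{all} of the universal group. Here the strategy is to show that $\overline\Gamma$ contains enough elements to generate a dense — hence, being closed, full — subgroup of $U(\Alt(d))$: one uses vertex-transitivity to move around $T$, the full local action $\Alt(d)$ at each vertex, and an inductive argument on balls of increasing radius showing that the image of $\overline\Gamma_v$ in $\mathrm{Aut}$ of the ball of radius $r$ is as large as that of $U(\Alt(d))_v$; the inductive step again rests on the fact that $\Gamma_v^{[r]}$ surjects onto products of copies of $\Alt(d-1)$-type stabilizers, which by simplicity leaves no room for a proper closed subgroup. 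The statement that $\overline\Gamma$ has a simple subgroup of index $2$ then follows from \cite[Proposition~3.2.1]{BuMo1} applied to $F = \Alt(d)$, since $\Alt(d)$ for $d \geq 3$ is transitive and generated by its point stabilizers, so $U(\Alt(d))^+$ is simple of index $2$ in $U(\Alt(d)) = \overline\Gamma$.

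The main obstacle, in both parts, is the propagation argument: controlling the pointwise ball-stabilizers $\Gamma_v^{[r]}$ and exploiting the simplicity of the alternating point-stabilizer $\Alt(d-1)$ (which is why $d \geq 6$ is imposed — one needs $d - 1 \geq 5$) to conclude that a normal subgroup, or the closure itself, must already be as large as possible locally. This "local-to-global" rigidity is precisely the phenomenon Burger--Mozes isolated, and it is where all the finite group theory enters; everything else is bookkeeping about actions on trees.
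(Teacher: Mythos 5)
The paper states this result by citation to Burger--Mozes and gives no proof of its own, so the comparison is against the cited argument. Your high-level outline --- reduce to $\overline\Gamma$ closed; prove an independence-type property using the simplicity of $\Alt(d-1)$ for $d \geq 6$; deduce the normal subgroup structure via a Tits-style argument; split part (ii) into a coloring bookkeeping inclusion and a rigidity step --- is indeed the Burger--Mozes strategy, and you correctly flag the independence/rigidity phenomenon as the heart of the matter. However, the hereditary step as you wrote it would not go through: a finite-index open subgroup $\Gamma' \leq \overline\Gamma$ is in general neither vertex-transitive nor does its local action automatically contain $\Alt(d)$. Since $\Alt(d)$ is a \emph{finite} simple group, ``finite index in $\overline\Gamma$'' imposes no a priori constraint on the image of $\Gamma'_v$ in $\Sym(S(v,1))$, and ``re-coordinatizing'' or ``passing to a subtree'' cannot restore a full alternating local action. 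The working argument, and what Burger--Mozes actually do, is to first exhibit a cocompact open topologically simple subgroup of finite index in $\overline\Gamma$ (via Tits' simplicity theorem applied to the subgroup generated by half-tree fixators, once the independence property is in hand); a non-discrete topologically simple group has no proper open finite-index subgroup, so this simple piece is contained in every finite-index open $\Gamma'$, which then yields hereditary just-non-compactness essentially for free.

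A smaller but real misplacement: in your ``heart'' step of (i), the simplicity of $\Alt(d-1)$ is used to \emph{establish} the independence property for $\overline\Gamma$; it does not by itself force a nontrivial closed normal $N$ to contain $\Gamma_v^{[1]}$. That deduction is Tits' simplicity theorem and needs, separately, the input that $N$ contains a hyperbolic element --- which is exactly where the ruling-out of fixed vertices and fixed ends (and hence of abelian normal subgroups) genuinely enters, rather than being a preliminary step independent of the rest as your outline suggests. And $N \supseteq \Gamma_v^{[1]}$ for a single $v$ does not directly give cocompactness of $N$: one passes via $N \supseteq \overline\Gamma^+$ (which holds by normality and the independence property) and the fact that $\overline\Gamma^+$ is open of finite index. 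So the skeleton is right, but several load-bearing implications are attached to the wrong joints.
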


One should keep in mind that, in view of Corollary~\ref{cor:TrofimovWeiss}, the non-discreteness of $\Gamma$ can be checked in a ball of radius~$3$ under the assumption that the local action contains the alternating group of degree~$\geq 6$. The uniqueness (up to conjugacy) of the non-discrete vertex-transitive closed subgroup of $\Aut(T)$ established in Theorem~\ref{thm:U(alt)}(ii) is rather surprising. The complete classification of the non-discrete vertex-transitive closed subgroups of $\Aut(T)$ whose local action is the full symmetric group was recently achieved by N.~Radu. In particular, the following result of his is an important refinement of Theorem~\ref{thm:U(alt)}(i). 

\begin{thm}[{N.~Radu \cite[Theorem~B and Corollary~E]{Radu_InvMath}}] \label{thm:Radu}
Let $T$ be a $d$-regular tree with $d \geq 6$ and  $\Gamma \leq \Aut(T)$ be a non-discrete vertex-transitive subgroup.

If the local action of $\Gamma$ in $\Aut(T)$ contains the full alternating group $\Alt(d)$, then the closure $\overline \Gamma$ has a simple subgroup of index~$\leq 8$, and belongs to an explicit  infinite list of  examples. 
\end{thm}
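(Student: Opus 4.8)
The plan is to dichotomise according to the local action. Since the index of $\Alt(d)$ in $\Sym(d)$ is $2$ and no subgroup lies strictly between them, the local action $F$ of $\Gamma$ at a vertex is either $\Alt(d)$ or $\Sym(d)$. If $F = \Alt(d)$, then Theorem~\ref{thm:U(alt)}(ii) already settles everything: $\overline\Gamma$ is conjugate to $U(\Alt(d))$, which contains the topologically simple subgroup $U(\Alt(d))^+$ of index $2 \leq 8$ (simplicity of $U(F)^+$ being the Burger--Mozes fact recalled before Theorem~\ref{thm:U(alt)}), and $U(\Alt(d))$ is one entry of the explicit list. So the whole difficulty is concentrated in the case $F = \Sym(d)$, where the point is to classify the closed, non-discrete, vertex-transitive subgroups $G = \overline\Gamma \leq \Aut(T)$ whose $1$-local action is $\Sym(d)$.

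For such a $G$ I would first record the structural facts that are already available. Using non-discreteness together with $2$-transitivity of the local action, $G$ acts $2$-transitively on the boundary $\partial T$; and since $F \supseteq \Alt(d)$ with $d \geq 6$, Theorem~\ref{thm:U(alt)}(i) gives that $G$ is hereditarily just-non-compact and has no non-trivial abelian normal subgroup. Combined with the structure theory of compactly generated just-non-compact groups quoted before Theorem~\ref{thm:BaderShalom}, $G$ has a topologically simple open normal subgroup with finite quotient. The remaining task is to pin down that finite quotient explicitly and to bound its order by $8$.

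The heart of the matter is a rigidity statement in the spirit of Trofimov--Weiss (Theorem~\ref{thm:TrofimovWeiss}), but valid for the non-discrete group $G$: once the $1$-local action contains $\Alt(d)$, the group $G$ is determined by its action on balls of some bounded radius $k = k(d)$ (it is ``$k$-closed''), hence by its finite $k$-local data. I would then analyse the $2$-local action of $G$ — the permutation group induced by $G_v$ on the ball of radius $2$ — and show that, for $1$-local action $\Sym(d)$, it lies in an explicit finite list: the possibilities are parametrised by a legal colouring of the edges around $v$ and by the sign homomorphism $\Sym(d) \to C_2$, which constrains how the parities of local permutations at adjacent vertices may be linked. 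For each admissible radius-$2$ datum one checks that $G$ coincides with the universal group built from that datum, in the manner of the Burger--Mozes construction recalled before Theorem~\ref{thm:U(alt)}. Enumerating the admissible data yields, for each fixed $d \geq 6$, a finite list of groups, whose union over $d$ is the explicit infinite list of the statement.

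Finally, for each group $G$ on the list I would extract the promised simple subgroup: one forms the intersection $G^{++}$ of the kernels of the (at most three) natural homomorphisms $G \to C_2$ — the parity of $g$ on the canonical bipartition of $T$, together with the two invariants coming from the radius-$2$ structure and the sign homomorphism of $\Sym(d)$ — so that $[G : G^{++}]$ divides $8$ and $G^{++}$ is open and normal. One then shows $G^{++}$ is topologically simple: when $G$ satisfies Tits' independence property $(P)$ this follows from the commutator argument of Burger--Mozes used to prove that $U(F)^+$ is simple (\cite[Proposition~3.2.1]{BuMo1}), since boundary $2$-transitivity lets one generate $G^{++}$ by fixators of half-trees, whence any non-trivial closed normal subgroup absorbs such a fixator and hence all of $G^{++}$; for the listed groups failing $(P)$ one argues instead through a finite-index identification with a group of the former type or with an already known simple group. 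The principal obstacle is the rigidity/classification step: proving that the a priori ``infinite-dimensional'' collection of closed vertex-transitive overgroups with $1$-local action $\Sym(d)$ collapses onto finitely many bounded-radius models requires controlling the possible failure of Tits' independence property for these groups and ruling out exotic extensions at every level, which is the technical core of Radu's argument. A secondary difficulty is that the simplicity proof for $G^{++}$ is not uniform across the list, precisely because some of the listed groups do not satisfy property $(P)$.
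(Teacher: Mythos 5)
The paper does not prove this theorem: it is quoted as a black-box citation of Radu's Inventiones paper \cite{Radu_InvMath}, just as Theorem~\ref{thm:U(alt)} is quoted from Burger--Mozes. So there is no in-text proof to compare against; what I can do is compare your sketch against what Radu's argument actually looks like. Your outline is accurate in its broad strokes. The dichotomy is legitimate ($\Alt(d)$ is maximal in $\Sym(d)$ for $d\geq 5$, so the local action is exactly $\Alt(d)$ or $\Sym(d)$), and the $\Alt(d)$ branch is indeed disposed of by Theorem~\ref{thm:U(alt)}(ii) with the index-$2$ simple subgroup $U(\Alt(d))^+$. For the $\Sym(d)$ branch you correctly describe the shape of Radu's work: establish boundary $2$-transitivity, prove a rigidity statement forcing $G$ to be determined by bounded-radius local data, enumerate the resulting Burger--Mozes-style universal models parametrised by sign/parity data, and cut out the simple subgroup as the common kernel of a small collection of natural $C_2$-valued invariants, which is what produces the bound $2^3=8$. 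You also identify where the proof actually lives, namely the rigidity/classification step, and that the simplicity argument is not uniform because Tits' independence property fails for some of the listed groups. That assessment is correct: this step occupies the bulk of a full Inventiones paper, and your sketch leaves it entirely open. In short, your proposal is a faithful high-level outline of the correct strategy, but it is not, and could not plausibly be, a self-contained proof of a theorem of this depth; the gap you flag is real and is the theorem.
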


We refer to \cite{Radu_InvMath} for a description of those examples, and for a more general classification result that does not require any hypothesis of vertex-transitivity. We emphasize the contrast between Theorem~\ref{thm:U(alt)}(ii) and Theorem~\ref{thm:Radu}: if the local action is $\Alt(d)$, then the non-discrete group $\overline{\Gamma}$ is uniquely determined, whereas if the local action is $\Sym(d)$, there are infinitely many pairwise non-conjugate possibilities for $\overline{\Gamma}$.

\subsection{Virtually simple BMW-groups  of small degree}\label{sec:SimpleBMW}

By way of illustration, let us consider the BMW-group $\Gamma_{6,6}$ introduced in Proposition~\ref{prop:GammaRadu}. It is irreducible of degree $(6,6)$, and in view of Table~\ref{tab:ExamplesLocalActions}, its local action on the two tree factors $T_A$ and $T_X$ is $\Sym(6)$ and $\Alt(6)$ respectively. In view of Theorems~\ref{thm:BaderShalom} and~\ref{thm:U(alt)}, it follows that $\Gamma_{6,6}$ is hereditarily just-infinite.  
From Theorem~\ref{thm:U(alt)} we deduce moreoveor that the closure of the projection of $\Gamma$ to $\Aut(T_X)$ is isomorphic to $U(\Alt(6))$. The closure of the projection of $\Gamma$ to $\Aut(T_A)$ has also been identified by N.~Radu \cite{Radu_phd}: it is isomorphic to the subgroup $G_{(i)}(\{1\}, \{1\}) \leq \Aut(T_A)$ in the notation of \cite[Definition~4.9]{Radu_InvMath}. It has a simple subgroup of index~$4$. 

Since $\Gamma_{6,6}$ also contains the non-residually finite group $\Gamma_{\mathrm{JW}}$ as a subgroup (see Proposition~\ref{prop:GammaRadu}), it follows from Proposition~\ref{prop:hji} that $\Gamma_{6,6}$ is virtually simple. In fact, the following more precise assertion holds. 

\begin{prop}[N.~Radu \cite{Radu_SimpleLatt}]\label{prop:Index4}
The finite residual of BMW-group $\Gamma_{6,6}$ coincides with its subgroup $\Gamma_{6,6}^+$ of index~$4$. In particular   $\Gamma_{6,6}^+$ is a finitely presented torsion-free simple group which splits as an amalgamated free product of the form $F_5 *_{F_{25}} F_5$. 

Similarly, the index~$4$ subgroup $\Gamma_{4, 5}^+ \leq \Gamma_{4, 5}$ is simple, and splits as an amalgamated free product of the form $F_3 *_{F_{11}} F_3$. 
\end{prop}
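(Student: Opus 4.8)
The plan is to combine three facts already at our disposal: that $\Gamma_{6,6}$ (resp. $\Gamma_{4,5}$) is hereditarily just-infinite; that $\Gamma_{6,6}^+$ (resp. $\Gamma_{4,5}^+$) is the torsion-free index-$4$ subgroup furnished by Proposition~\ref{prop:BMW-basic}(v); and that $\Gamma_{6,6}$ contains the non-residually-finite subgroup $\Gamma_{\mathrm{JW}}$. By Corollary~\ref{cor:BMW:NilpotentLocal}, $\Gamma_{\mathrm{JW}}$ is not residually finite, so it has a non-trivial finite residual; since $\Gamma_{\mathrm{JW}} \leq \Gamma_{6,6}$ and the finite residual of a subgroup is contained in the finite residual of the ambient group, $\Gamma_{6,6}$ is itself not residually finite. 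By Proposition~\ref{prop:hji} (applied, if needed, after passing to $\Gamma_{6,6}^+$, which is also hereditarily just-infinite since it has finite index), the finite residual $\Gamma_{6,6}^{(\infty)}$ is simple and of finite index. It remains to pin down \emph{which} finite-index subgroup it is.

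The key identification is that $\Gamma_{6,6}^{(\infty)} = \Gamma_{6,6}^+$. First, since $\Gamma_{6,6}^+$ has index $4$ and $\Gamma_{6,6}/\Gamma_{6,6}^+ \cong C_2 \times C_2$ is residually finite, one has $\Gamma_{6,6}^{(\infty)} \subseteq \Gamma_{6,6}^+$. For the reverse inclusion it suffices to show that $\Gamma_{6,6}^+$ admits no proper finite-index subgroup, equivalently (being just-infinite) that $\Gamma_{6,6}^+$ has no non-trivial finite quotient, i.e.\ $\Gamma_{6,6}^+$ is perfect and residually-finite-free. Here is where I would use the structure of the closures of the two projections: by Theorem~\ref{thm:U(alt)}(ii) the closure of the projection of $\Gamma_{6,6}$ to $\Aut(T_X)$ is $U(\Alt(6))$, whose subgroup of index $2$ preserving the bipartition is topologically simple; and by Radu's identification the closure of the projection to $\Aut(T_A)$ has a simple subgroup of index $4$. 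The point is that the projection $\Gamma_{6,6}^+ \to \Aut(T_A)$ (resp. $\to \Aut(T_X)$) is injective (this follows from \cite[Theorem~H]{CapLB} as in the proof of Theorem~\ref{thm:BaderShalom}, or directly since $\Gamma_{6,6}$ is irreducible) with dense image in a group whose finite quotients are all trivial once one passes to the appropriate finite-index subgroup — hence a finite quotient of $\Gamma_{6,6}^+$ would yield a finite quotient of that closure, forcing it to be trivial. Thus $\Gamma_{6,6}^+$ has no proper finite-index subgroup, so $\Gamma_{6,6}^+ \subseteq \Gamma_{6,6}^{(\infty)}$, and equality holds; in particular $\Gamma_{6,6}^+$ is simple. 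Being torsion-free of index $4$ with $\Gamma_{6,6}^+$-action on $T_A$ edge-transitive (which one checks from the presentation, or after a further finite-index refinement), Proposition~\ref{prop:BMW-basic}(vi) gives the splitting $\Gamma_{6,6}^+ \cong F_{N-1} *_{F_{MN-2M+1}} F_{N-1} = F_5 *_{F_{25}} F_5$ when $M=N=6$.

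The case of $\Gamma_{4,5}$ is handled identically: $\Gamma_{4,5}$ contains $\Gamma_{3,3}$ (Proposition~\ref{prop:GammaRadu}), which is not residually finite by Corollary~\ref{cor:BMW:NilpotentLocal} since its local action on one factor is $C_2$ (hence nilpotent); $\Gamma_{4,5}$ is hereditarily just-infinite because its local actions $\Sym(4)$ and $\Sym(5)$ on the two factors contain $\Alt(4)$ and $\Alt(5)$ (for the degree-$4$ factor one needs the lower-degree analogue of Theorem~\ref{thm:U(alt)}(i), which is available in \cite{BuMo1}, or one argues via the arithmetic origin); and the same projection-injectivity plus closure-simplicity argument shows $\Gamma_{4,5}^{(\infty)} = \Gamma_{4,5}^+$. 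With $M=4$, $N=5$ and edge-transitivity on $T_X$, Proposition~\ref{prop:BMW-basic}(vi) yields $\Gamma_{4,5}^+ \cong F_{M-1} *_{F_{MN-2N+1}} F_{M-1} = F_3 *_{F_{11}} F_3$. The main obstacle, and the part requiring genuine input rather than bookkeeping, is the identification of the finite residual with exactly the index-$4$ subgroup $\Gamma^+$: a priori Proposition~\ref{prop:hji} only says it is \emph{some} finite-index simple subgroup, and ruling out that it is a smaller subgroup of $\Gamma^+$ is precisely what forces one to understand the finite quotients of the closures of the projections; the degree-$6$ and degree-$5$ bounds in Theorems~\ref{thm:BaderShalom}, \ref{thm:U(alt)} and Radu's results are exactly what make this feasible, and this is likely where Radu's argument in \cite{Radu_SimpleLatt} does its real work (plausibly via an explicit computer-assisted verification that $\Gamma^+$ is perfect and that the abelianization of every finite-index subgroup control forces no proper finite-index subgroup).
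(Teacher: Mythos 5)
Your overall framework matches the paper's: use Proposition~\ref{prop:hji} to deduce that $\Gamma_{6,6}^{(\infty)}$ is simple and of finite index, observe that $\Gamma_{6,6}^{(\infty)}\subseteq\Gamma_{6,6}^+$ because $\Gamma_{6,6}/\Gamma_{6,6}^+\cong C_2\times C_2$ is finite, reduce the problem to pinning down the index, and then read off the amalgam decomposition from Proposition~\ref{prop:BMW-basic}(vi). However, the step you call ``the key identification'' contains a genuine error. You argue that a non-trivial finite quotient of $\Gamma_{6,6}^+$ would ``yield a finite quotient of the closure'' of the projection to $\Aut(T_A)$ or $\Aut(T_X)$, forcing triviality. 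This inference is false: a finite-index normal subgroup $K\lhd\Gamma_{6,6}^+$ has a closure $\overline{\pi(K)}$ in the (topologically simple or virtually topologically simple) closure $G=\overline{\pi(\Gamma_{6,6}^+)}$ that is normal, hence equal to $G$ as soon as $K\neq\{1\}$ — but that only says $\pi(K)$ is dense, not that $\Gamma_{6,6}^+/K$ is trivial. Finite-index subgroups of a dense subgroup need not have open closures, and indeed the whole subtlety of whether a lattice with dense projections is or is not residually finite is precisely that this transfer fails. If the argument you propose were valid, every such $\Gamma^+$ would automatically have trivial finite residual, and the paper would have no need of the non-residual-finiteness of $\Gamma_{\mathrm{JW}}$ or of any computation.

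The paper's actual argument is different and more concrete. Remark~\ref{rem:Gamma2} exhibits the explicit non-trivial element $[x^3,y^4]\in\Gamma_{\mathrm{JW}}^{(\infty)}\subseteq\Gamma_{6,6}^{(\infty)}$. Since $\Gamma_{6,6}$ is hereditarily just-infinite and not residually finite, Proposition~\ref{prop:hji} shows $\Gamma_{6,6}^{(\infty)}$ is simple of finite index, hence it is the \emph{unique smallest} non-trivial normal subgroup of $\Gamma_{6,6}$; therefore $\Gamma_{6,6}^{(\infty)}$ equals the normal closure of $[x^3,y^4]$. One then feeds the finite presentation and this normal closure to GAP and verifies directly that the quotient has order $4$. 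That computer check is the genuine input you correctly suspected was needed, and it is irreplaceable by the soft ``closure has no finite quotients'' argument. For $\Gamma_{4,5}$ the outline is the same, but there is an extra wrinkle you gesture at without resolving: $\Gamma_{4,5}$ is not arithmetic and the degrees $4,5$ are too small for Theorem~\ref{thm:U(alt)}, so the hypotheses of the Bader--Shalom Normal Subgroup Theorem~\ref{thm:BaderShalom} have to be checked by other means; the paper defers this to \cite[Theorem~5.5 and Corollary~5.6]{Radu_SimpleLatt}.
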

\begin{proof}
By Remark~\ref{rem:Gamma2}, we have  $[x^3, y^4] \in  \Gamma_{\mathrm{JW}}^{(\infty)}$. Moreover $\Gamma_{\mathrm{JW}}$  is a subgroup of $\Gamma_{6,6}$  by Proposition~\ref{prop:BMW-basic}, so that  the finite residual of $\Gamma_{6,6}$ also contains $[x^3, y^4]$. Since  $\Gamma_{6,6}$ is hereditarily just-infinite and non-residually finite, its finite residual is simple and coincides with its smallest non-trivial normal subgroup, see Proposition~\ref{prop:hji}. In particular it coincides with the normal closure of  $[x^3, y^4]$ in  $\Gamma_{6,6}$. Using a computer algebra software like GAP, it takes a couple of seconds to check that the quotient of the just-infinite finitely presented group $\Gamma_{6,6}$ by the normal closure of $[x^3, y^4]$ is of order~$4$. This completes the proof in the case of $\Gamma_{6, 6}$ in view of Proposition~\ref{prop:BMW-basic}.

The proof for   $\Gamma_{4, 5}$ follows a similar outline. However,  additional arguments are required to check that the hypotheses of Theorem~\ref{thm:BaderShalom} are satisfied: indeed,  the degree is too small for  Theorem~\ref{thm:U(alt)} to apply. We refer to  \cite[Theorem~5.5 and Corollary~5.6]{Radu_SimpleLatt} for the details. 
\end{proof}

The original paper of Burger--Mozes \cite{BuMo2} provided the first examples of virtually simple BMW-groups. The degrees of those examples are   rather large, and the finite index simple subgroup of the smallest example decomposes as an amalgamated free product of the form $F_{217} *_{F_{75601}} F_{217}$. The reason why the degree of those examples  is large is that Burger--Mozes relied on Proposition~\ref{prop:FiberProduct} applied to an arithmetic BMW-group $\Gamma$ to build a non-residually finite BMW-group. The smallest example produced in that way already has a rather large degree. The degree was then increased in order to build an example of a BMW-group containing that non-residually finite one, and which moreover has alternating local actions on both tree factors, and is thus also hereditarily just-infinite by Theorems~\ref{thm:BaderShalom} and~\ref{thm:U(alt)}. Much smaller examples were constructed by D.~Rattaggi \cite{Rattaggi_phd, Rattaggi_JGT} using the non-residual finiteness of the double of the Wise lattice. In that way, he obtained a   torsion-free simple group which decomposes as $F_7 *_{F_{73}} F_7$. Exploiting the fact that the Wise lattice itself is non-residually finite, an example decomposing as $F_7 *_{F_{49}} F_7$ is constructed in \cite{BondarenkoKivva}. We refer to \cite{Radu_phd} for a list of over a hundred other  examples of BMW-groups of degree~$(4, 5)$ and $(6,6)$ similar to $\Gamma_{4, 5}$ and  $\Gamma_{6,6}$,  that are virtually isomorphic to a simple group of the form  $F_3 *_{F_{11}} F_3$ or $F_5 *_{F_{25}} F_5$.   
By  \cite[ Corollary~IV]{Radu_SimpleLatt}, the group $\Gamma_{4, 5}^+$ admits the following presentation, which witnesses its amalgam decomposition as $F_3 *_{F_{11}} F_3$:
\begin{align*}
\Gamma_{4, 5}^+ \cong  \langle a, b, c, x, y, z \; | \;  & 
a = x, \\
& b^2 = yx^{-1} y,\\
& c^2 =z^2,\\
& c^{-1}ac = z^{-1}yz,\\
&c^{-1}bc = z^{-1}xz,\\
&b^{-1}ab = y^{-1}x^{-1}y,\\
&b^{-1}c^{-2}b = y^{-1}xz^{-2}y,\\
&b^{-1}c^{-1}b^{-1}acb = y^{-1}xz^{-1}x^{-1}yzx^{-1}y, \\
&b^{-1}c^{-1}abcb = y^{-1}xz^{-1}yz^{-1}xzx^{-1}y, \\
&b^{-1}c^{-1}b^2cb = y^{-1}xz^{-1}xz^{-1}xzx^{-1}y, \\
&b^{-1}c^{-1}b^{-1}cbcb = y^{-1}xz^{-1}x^{-1}z^{-1}xzx^{-1}y \rangle.
\end{align*}

\subsection{The hyperbolic manifold analogy}
\

\bigskip
\begin{flushright}
	\begin{minipage}{.8\linewidth}\parindent24pt
\itshape \small
\indent Nous voyons donc d\'ej\`a que les analystes ne sont pas de simples faiseurs de syllogismes \`a la fa\c con des scolastiques. Croira-t-on, d'autre part, qu'ils ont toujours march\'e pas \`a pas sans avoir la vision du but qu'ils voulaient atteindre? Il a bien fallu qu'ils devinassent le chemin qui y conduisait, et pour cela ils ont eu besoin d'un guide. 

\indent Ce guide, c'est d'abord l'analogie.\upshape

\smallskip

\hfill Henri Poincar\'e, \emph{La Valeur de la Science}, 1905
\end{minipage}

\end{flushright}
\bigskip

As we have seen, the proof of existence of virtually simple BMW-groups by Burger--Mozes elaborates on techniques initially developed by Margulis in his seminal study of irreducible lattices in semi-simple groups of  rank~$\geq 2$. In this speculative section, we discuss an analogy between BMW-complexes and real hyperbolic closed manifolds which suggests that BMW-groups are also strongly related to lattices in simple Lie  groups of rank~$1$. That analogy could serve as an invitation  for further research.

Every rank~$1$ simple Lie group is locally isomorphic to one of the following groups:
\begin{itemize}
\item The isometry group of the real hyperbolic $n$-space, denoted  $O(n, 1)$. 

\item The isometry group of the complex hyperbolic $n$-space, denoted  $U(n, 1)$. 

\item The isometry group of the quaternionic hyperbolic $n$-space, denoted  $\mathrm{Sp}(n, 1)$. For $n =1$, that space is isometric to the real hyperbolic $4$-space. 

\item The isometry group of the octonionic hyperbolic plane,  denoted  $F_4^{-20}$. 
\end{itemize}

Fundamental results of K.~Corlette \cite{Corlette} and Gromov--Schoen \cite{GromovSchoen} ensure that every lattice in $\mathrm{Sp}(n, 1)$ (with $n \geq 2$) and in $F_4^{-20}$ is arithmetic. We refer to \cite{Margulis_book} for the formal definition of arithmeticity. Let us merely mention Margulis' criterion \cite[Theorem~IX.1.10]{Margulis_book} according to which  a lattice in a simple Lie group is arithmetic if and only if its commensurator is discrete. The \textbf{commensurator} of a subgroup $\Gamma$ of a group $G$ is the set 
$$\Comm_G(\Gamma) = \{ g \in G \mid \  [\Gamma: \Gamma \cap g\Gamma g^{-1}] < \infty, \ [\Gamma: \Gamma \cap g^{-1}\Gamma g] < \infty\}, $$  
which is a subgroup of $G$. 

Lattices in $O(n,1)$ and $U(n, 1)$ can be non-arithmetic. While infinitely many examples of non-arithmetic lattices in  $O(n,1)$ are known for all $n \geq 2$, only finitely many non-arithmetic lattices in $U(n, 1)$ are known, see \cite{DPP} and references therein for the current state of the art. Finding infinite families of non-arithmetic lattices in $U(n,1)$ is a major challenge. The case of  $O(n,1)$ is much better understood. In particular, it is known that a huge majority of the lattices in $O(n, 1)$ are non-arithmetic. This is made precise by the following  result, which combines important contributions of various authors. 

\begin{thm} \label{thm:GelanderLevit}
Let $n \geq 4$ and for each $v > 0$, let $C^c_n(v)$ be the number of commensurability classes of real hyperbolic closed manifolds admitting a representative of volume $\leq  v$, which is finite by a classical result of H.~C.~Wang \cite{Wang}. Let also $C^{\mathrm{arith}}_n(v)$  be the number of commensurability classes of real hyperbolic closed manifolds admitting a representative of volume $\leq  v$ whose fundamental group is arithmetic. Then there exist positive constants $a,b, c, \varepsilon$ such that the following inequalities hold for all sufficiently large $v$:
\begin{enumerate}[(i)]
\item {\upshape (Burger--Gelander--Lubotzky--Mozes \cite{BGLM})} $C^c_n(v) \leq v^{bv}$. 

\item {\upshape (Gelander--Levit \cite{GelanderLevit})} $C^c_n(v) \geq v^{av}$.

\item {\upshape (M.~Belolipetsky \cite{Belol})} $C^{\mathrm{arith}}_n(v) \leq v^{c(\log v)^\varepsilon}$.
\end{enumerate}
 
\end{thm}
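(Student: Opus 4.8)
The statement is an amalgam of three independent results, each proved by a completely different method, so the plan is to treat the three inequalities separately and appeal to the cited works; the ambient finiteness of $C^c_n(v)$ for $n \geq 4$ is Wang's theorem, which rests on the Kazhdan--Margulis lower bound on the covolume of a lattice in $O(n,1)$ together with Weil local rigidity (available precisely for $n \geq 4$, which is where the dimension hypothesis comes from), and I would take it as given.

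For the upper bound (i) I would exploit the bounded geometry of a closed hyperbolic $n$-manifold $M$ of volume $\leq v$. Fixing the Margulis constant $\varepsilon = \varepsilon(n)$ and passing to the thick--thin decomposition, the thin part is a disjoint union of tubes around short closed geodesics, with virtually cyclic fundamental groups, while the thick part $M_{\geq \varepsilon}$ has injectivity radius $\geq \varepsilon/2$ and can therefore be covered by at most $\kappa(n)v$ metric balls of radius $\varepsilon/2$. Choosing such an $(\varepsilon/2)$-net produces a presentation of $\pi_1(M)$ with $O(v)$ generators and relations of length $O(v)$, each relation recording how two overlapping balls intersect; attaching the boundedly many solid-torus thin pieces only changes the count by a bounded factor. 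Counting finite presentations of this bounded complexity gives at most $v^{bv}$ possibilities for the homotopy type, hence for the commensurability class. This is the Burger--Gelander--Lubotzky--Mozes argument.

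For the lower bound (ii), the plan is to build at least $v^{av}$ pairwise non-commensurable closed manifolds via the interbreeding construction of Gromov and Piatetski-Shapiro: starting from two commensurable arithmetic hyperbolic $n$-manifolds of the simplest (quadratic-form) type, each cut along a common totally geodesic separating hypersurface $\Sigma$ into isometric pieces, one glues copies of the resulting pieces along isometric copies of $\Sigma$ according to a prescribed combinatorial pattern, encoded by a finite labelled graph. The volume of the resulting hybrid is the sum of the volumes of the pieces used, so the number of admissible patterns of total volume $\leq v$ already grows super-exponentially, of order $v^{av}$, and hybrids built from sufficiently different patterns are generically non-arithmetic. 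To pass from ``many manifolds'' to ``many commensurability classes'' one uses that a non-arithmetic lattice has discrete commensurator (Margulis' arithmeticity criterion), so each commensurability class meets the constructed family in only boundedly many members of volume $\leq v$; dividing, one still obtains at least $v^{a'v}$ classes. This last point --- guaranteeing non-commensurability rather than merely producing many manifolds, and arranging that the arithmetic building blocks really do contain the required totally geodesic hypersurfaces --- is the subtle step of the whole statement, and is the heart of the Gelander--Levit theorem.

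For the arithmetic upper bound (iii) I would invoke Prasad's volume formula: up to commensurability a cocompact arithmetic lattice in $O(n,1)$ is determined by its field of definition $k$ (a totally real number field, together with a quadratic extension when $n$ is even) and a finite amount of local data, and Prasad's formula expresses its covolume as an explicit product involving the discriminant $d_k$ and local factors. A covolume bound $\leq v$ therefore forces $[k:\mathbf{Q}]$, $d_k$, and the ramification data to be bounded by a power of $v$. One then counts: the sharpest available bounds on the number of number fields of bounded degree and discriminant $\leq D$ have exponent $o(1)$ in $D$, which upgrades the naive estimate $v^{O(\log v)}$ to $v^{c(\log v)^\varepsilon}$, and for each admissible field the number of commensurability classes of covolume $\leq v$ is only polynomial in $v$; summing over fields yields Belolipetsky's bound $C^{\mathrm{arith}}_n(v) \leq v^{c(\log v)^\varepsilon}$. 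Combining (i), (ii) and (iii) gives the theorem, with the main obstacle being, as noted, the non-commensurability control in step~(ii).
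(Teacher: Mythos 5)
The paper states this theorem as a compilation of the three cited results and gives no proof of its own. Your three-part sketch correctly reflects the strategies of those references --- the BGLM bounded-geometry and presentation-counting argument for (i), the Gelander--Levit refinement of the Gromov--Piatetski-Shapiro interbreeding construction, with commensurability separated via Margulis' discrete-commensurator criterion, for (ii), and Belolipetsky's use of Prasad's volume formula together with number-field counting for (iii).
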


In particular, we see that the proportion of non-arithmetic  real hyperbolic closed manifolds becomes strikingly overwhelming  as the volume tends to infinity. It is conjectured that the  upper bounded on $C^{\mathrm{arith}}_n(v)$ given in Theorem~\ref{thm:GelanderLevit}(iii) can be improved to a  polynomial bound (i.e. $\varepsilon = 0$). This has in fact been proved by  M.~Belolipetsky for non-compact manifolds;  it is open in the compact case.

The enumeration of BMW-presentations of small degree in \cite{Rattaggi_phd} and \cite{Radu_phd} suggests a similar counting problem. The direct analogue would be to fix the degree $(m, n)$ and count the number of commensurability classes of cocompact lattices in $\Aut(T_m) \times \Aut(T_n)$ as a function of the covolume. However, formulating any conjecture about that number is premature, since there is currently no known evidence that the number of those commensurability classes actually grows at all for \emph{all} values of $m$ and $n$. Instead, we focus on BMW-groups of degree $(m, n)$ (so that the covolume is fixed, once the Haar measure on $\Aut(T_m) \times \Aut(T_n)$ is normalized so as to give measure~$1$ to the vertex stabilizers) and address the counting problem of  the number of their commensurability classes  as a function of the degree. A BMW-group is called \textbf{arithmetic} if the closure  of its projections to the automorphism groups of the tree factors of its Cayley graph are both rank~$1$ simple algebraic groups over local fields.

\begin{prob}\label{prob:BMW}
Let $\mathrm{BMW}(m, n)$ be the number of commensurability classes of  BMW-groups of degree $(m, n)$, and let moreover  $\mathrm{BMW}^{\mathrm{arith}}(m, n)$ be the number of those classes that have an arithmetic representative. Determine the asymptotic growth type of $\mathrm{BMW}(m, n)$ and $\mathrm{BMW}^{\mathrm{arith}}(m, n)$  as functions of $(m, n)$. In particular, determine whether there exist constants $a, b, c$ such that the following inequalities hold for all sufficiently large  $m, n \in \mathbf N$, where $w = m+n$: 
\begin{enumerate}[(i)]
\item $w^{aw} \leq \mathrm{BMW}(m, n) \leq w^{bw}$. 

\item $\mathrm{BMW}^{\mathrm{arith}}(m, n) \leq w^{c}$. 

\end{enumerate}

\end{prob}

We note that any two finitely generated free groups are commensurable, so that the \emph{reducible} BMW-complexes only contribute one commensurability class per degree.  We also note that the  Buhat--Tits tree of a rank~$1$ simple algebraic group over a local field is always semi-regular of bidegree $(q+1, q'+1)$, where $q$ and $q'$ are both powers of the same prime. Thus for most values of $(m, n)$, there is no  arithmetic BMW-group of degree $(m,n)$ whatsoever.  As pointed out to me by A.~Vdovina, the results by Stix--Vdovina \cite{StixVdovina} ensures that if $2n-1$ is a sufficiently large prime power, then the number of commensurability classes of arithmetic torsion-free BMW-groups of degree $(2n, 2n)$ is bounded \emph{below}  by a linear function of $n$. 

The difficulty in controlling the commensurability classes is that the currently known tools do not provide detailed enough information on the finite quotients of a BMW-group. We have pointed out the existence of BMW-groups of degree $(4, 5)$ and  $(6,6)$ that are virtually simple. The proof of non-residual finiteness relied on a very specific property, namely the existence of a BMW-subgroup that is not residually finite.  

The results of N.~Radu \cite{Radu_phd} show that there are $225145$ isomorphism classes of BMW-presentations of degree $(6,6)$ whose generators have infinite order. Among them, $23225$ yield a BMW-group whose  local action on both tree factors  is isomorphic to $\Alt(6)$ or $\Sym(6)$; these groups are all hereditarily just-infinite, see Proposition~\ref{prop:NonExistence666} below. I expect that they are \emph{all} virtually simple. However, this has only been checked for $96$ of them (and these $96$ groups have moreover been proved to be pairwise non-commensurable). Those are precisely the members of that list of $23225$ BMW-groups that contain an isomorphic copy of the non-residually finite group $\Gamma_{\mathrm{JW}}$. The very small size of the ratio $96/23225\simeq 0.4\%$ illustrates our lack  of understanding: approaching Problem~\ref{prob:BMW}  requires new and much finer criteria to check   non-residual finiteness. 

The following question, which overlaps several problems posed by D.~Wise in \cite[Section~10]{Wise_CSC}, provides further illustrations of the limitations of  the currently known tools. 

\begin{prob}\label{pb:BMW}
Is there an algorithm which determines whether a BMW-group given by a BMW-presentation is irreducible? Residually finite? Linear? Arithmetic? Just-infinite? Virtually simple? Has a discrete commensurator in the full automorphism group of the associated product of trees? Is there  an algorithm which determines whether two such groups are isomorphic? Commensurable? 
\end{prob}

A complete solution to Problem~\ref{pb:BMW} would of course be spectacular, but such a complete result is not necessary to tackle Problem~\ref{prob:BMW}. Indeed, it could be that the BMW-groups with a locally alternating or fully symmetric action on both tree factors already contribute enough commensurability classes to dominate the growth function from the conjecture. 

\subsection{Local actions of just-infinite groups acting on trees}

We have seen how the combination of Theorems~\ref{thm:BaderShalom} and~\ref{thm:U(alt)} (and its companion Theorem~\ref{thm:Radu}) can be used to build cocompact lattices $\Gamma \leq \Aut(T) \times \Aut(T')$ in the automorphism group of a product of two regular locally finite trees of degrees $d$ and $d'$ respectively, that are (hereditarily) just-infinite. A specific feature of those examples, coming from the hypotheses of Theorem~\ref{thm:U(alt)}, is that the local actions of $\Gamma$ on both $T$ and $T'$  contain $\Alt(d)$ and $\Alt(d')$. We have also seen that other just-infinite lattices in products of trees arise as arithmetic groups (see Propositions~\ref{prop:StixVdovina},~\ref{prop:Rattaggi} and~\ref{prop:Rungtana}); in those examples, the local actions on both tree factors are finite Lie type groups of rank~$1$ acting on projective lines over finite fields (see Table~\ref{tab:ExamplesLocalActions}). In \emph{all} known examples of just-infinite lattices in products of trees, the local action on each tree factor is $2$-transitive. This observation suggests the following. 

\begin{prob}\label{prob:LocalAction2trans}
	Let $T_1, \dots, T_n$ be locally finite trees all of whose vertices have degree~$\geq 3$,  and   $\Gamma \leq \Aut(T_1) \times \dots \times \Aut(T_n)$ be a discrete subgroup acting cocompactly on  $T_1  \times \dots \times T_n$. 
	
	Assume that $\Gamma$ is just-infinite. Must the local action of $\Gamma$ on $T_i$ be $2$-transitive for all $i =1, \dots, n$?
\end{prob}

The condition that $\Gamma$ be just-infinite implies that $n \geq 2$, since a discrete cocompact automorphism group of a single infinitely-ended tree is virtually a non-abelian free group. One could also ask the following more general question. 

\begin{prob}\label{prob:LocalAction:SQ}
	Let $T$ be a locally finite tree all of whose vertices have degree~$\geq 3$, and $\Gamma  \leq \Aut(T)$ be a (not necessarily discrete)  subgroup acting cocompactly. 
	
What are the possible local actions of $\Gamma$ at vertices of $T$ if $\Gamma$ is finitely generated and just-infinite? 
\end{prob}

As an illustration of this problem, let us mention that  if $\Gamma$ is finitely generated and its local action at every vertex of $T$ is nilpotent, then $\Gamma$ is virtually indicable (see \cite[Corollary~1.2]{CapraceWesolek}); in particular $\Gamma$ cannot be just-infinite if $T$ has infinitely many ends. Problem~\ref{prob:LocalAction:SQ} is closely related to the statement (2) given without proof in \cite[Section~9.15]{BassLubotzky} and attributed to E.~Rips, according to which a group $\Gamma$ of the form $A *_C B$ with $C \neq B$ and such that $|C\backslash A / C| \geq 3$ must be SQ-universal. In other words, that claim would imply that if $\Gamma$ is edge-transitive on $T$ and if every vertex of  $T$ has valency~$\geq 2$, then $\Gamma$ is SQ-universal as soon as its local action at one vertex of valency~$\geq 3$ fails to be $2$-transitive. However, that statement must be amended: indeed, the following result of A.~Le Boudec provides a counterexample. 

\begin{thm}[A. Le Boudec] \label{thm:LeBoudec}
There is a finitely generated infinite simple group $\Gamma$ acting edge-transitively on the regular tree $T$ of degree~$20$, and whose local action at every vertex is isomorphic to the permutational wreath product $\Alt(4) \wr \Alt(5)$. In particular the local action of $\Gamma$ at every vertex of  $T$ is not primitive.
\end{thm}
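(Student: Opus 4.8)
The plan is to construct $\Gamma$ as a lattice in the Burger--Mozes universal group $U(F)$ acting on the $20$-regular tree $T$, where $F = \Alt(4)\wr\Alt(5) \leq \Sym(20)$ is the permutational wreath product acting imprimitively on the $20 = 4\times 5$ leaves, with the $5$ blocks of size $4$. First I would verify the group-theoretic input on $F$: it is transitive, generated by its point stabilizers, but \emph{not} $2$-transitive (it preserves the block system), so $U(F)^+$ is a compactly generated topologically simple locally compact group by \cite[Proposition~3.2.1]{BuMo1}, yet the associated BMW-type machinery forcing $2$-transitivity does not apply. The key realization is that $U(F)$ is \emph{not} discrete (its local action is prescribed to be all of $F$ at every vertex, which is far larger than a rigid stabilizer could allow), so it contains interesting discrete cocompact subgroups.

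The core construction is to build inside $U(F)$ a finitely generated group $\Gamma$ that is edge-transitive on $T$ and still has full local action $F$ at every vertex. The natural device is a Le Boudec-style group $G(F,F')$ of \emph{almost automorphisms} / finitely supported local failures, or more directly an amalgam: write $\Gamma = A *_C B$ over the Bass--Serre tree $T$, with $A, B$ the vertex stabilizers and $C$ the edge stabilizer, chosen so that the local action at the two vertex-orbits is $F$, and arrange $C$ to have index $20$ in both $A$ and $B$ with the coset actions conjugate into $F$. To make $\Gamma$ finitely generated one takes $A$, $B$ finite extensions — e.g. take $A = B$ finite with $A/C$-action being $F$ on $20$ points is impossible since $|F|$ is already large, so instead one uses the subgroup of $U(F)$ generated by a vertex stabilizer together with one edge-flip element; finite generation then follows from cocompactness of the action and compact generation of $U(F)$.

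Next I would establish simplicity. Since $\Gamma \leq U(F)$ acts cocompactly and has dense image in $U(F)$ up to the bipartition subgroup (because its local action is the full $F$ everywhere, one checks density in $U(F)^+$ by an inductive extension argument as in \cite[\S3.2]{BuMo1}), the finite residual and normal subgroup structure of $\Gamma$ can be controlled: any non-trivial normal subgroup $N\trianglelefteq\Gamma$ has closure a non-trivial normal subgroup of $U(F)^+$, hence equals $U(F)^+$ by topological simplicity, so $N$ is dense and in particular acts with unbounded orbits on $T$; a double-commutator / independence argument (using that $U(F)$ has Tits' independence property) then forces $N$ to contain all the rigid vertex stabilizers of $\Gamma$, and finally a direct computation shows these generate $\Gamma$, so $N = \Gamma$. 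One then exhibits $\Gamma$ as infinite (it surjects onto $\mathbf Z$? no — rather it contains a non-abelian free group via ping-pong on $T$, hence is infinite) and torsion-containing is irrelevant. The statement that the local action at every vertex is $\Alt(4)\wr\Alt(5)$ and hence imprimitive is immediate from the construction since $F$ preserves the block system by definition.

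The main obstacle I expect is reconciling \emph{finite generation} with \emph{full local action $F$ at every vertex} together with \emph{simplicity}: a discrete group with full local action at every vertex of a tree is typically far from finitely generated (the universal group $U(F)$ itself is not discrete precisely because that is so demanding), so the construction cannot be a plain amalgam of finite groups; one genuinely needs Le Boudec's insight that allowing a \emph{bounded} (or suitably controlled) failure of the automorphism property away from a compact core yields a finitely generated group that is simple while keeping the prescribed local action. Getting the bookkeeping of these "local failures" right — ensuring they propagate to make the group finitely generated, that they do not destroy the edge-transitivity, and that the commutator computations still close up to prove simplicity — is the delicate heart of the argument. I would therefore organize the proof around Le Boudec's framework of groups $G(F,F')$ with $F' = F$, verifying in order: (1) $G(F,F)$ is compactly generated and its discrete analogue $\Gamma$ is finitely generated; (2) $\Gamma$ is edge-transitive with local action $F$ everywhere; (3) $[G(F,F),G(F,F)]$ or the corresponding subgroup is simple via the independence property of $U(F)$; (4) $\Gamma$ meets this simple subgroup in a finite-index (indeed equal, after passing to the commutator) simple subgroup, which one renames $\Gamma$.
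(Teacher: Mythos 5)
Your proposal correctly lands on Le Boudec's family $G(F,F')$ as the mechanism, but the two permutation groups are misassigned: you conclude by setting $F'=F=\Alt(4)\wr\Alt(5)$, whereas with that choice $G(F,F)$ is simply the Burger--Mozes universal group $U(F)$, which is non-discrete, not finitely generated, and topologically rather than abstractly simple --- the opposite of what is wanted. The paper takes $F'=\Alt(4)\wr\Alt(5)$ to be the prescribed local action and $F$ to be a strictly smaller \emph{regular} (freely transitive) subgroup of $F'$, concretely $F = C_2\times C_2\times C_5$ of order~$20$, viewed inside $F' \leq \Sym(20)$. The two hypotheses of \cite[Theorem~1.3]{LeBoudec} one then checks are that $F$ acts freely and that $F'$ is generated by the \emph{derived subgroups} of its point stabilizers (a strictly stronger condition than being generated by its point stabilizers, which is the version you state; the stronger form is what yields simplicity of the index-two subgroup $\Gamma\leq G(F,F')$). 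This asymmetric pair is precisely the ``bounded failure'' idea you gesture at in your last paragraph: elements of $G(F,F')$ act locally like $F'$ everywhere but like the small free group $F$ outside a finite set, and the freeness of $F$ is what buys finite generation while keeping full local action $F'$.

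A second structural misstep is the lattice framing. $G(F,F')$ is \emph{not} a discrete subgroup of $\Aut(T)$: its vertex stabilizers are infinite locally finite groups, so $\Gamma$ is not a lattice in $U(F')$ and is not finitely presented --- the paper flags exactly this contrast immediately after the theorem, distinguishing $\Gamma$ from the tree lattices appearing elsewhere in the discussion. Consequently your intermediate constructions inside $U(F)$ --- the amalgam $A*_C B$ with finite vertex groups, or the appeal to cocompactness and density in $U(F)^+$ together with topological simplicity and a normal-subgroup-theorem-style argument --- would not produce the group at hand: they would either fail to be finitely generated or fail to have the prescribed local action, which is the very tension you correctly identify in your closing paragraph but do not actually resolve.
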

\begin{proof}
	We apply \cite[Theorem~1.3]{LeBoudec} to the group $F = C_2 \times C_2 \times  C_5$ of order~$20$, and the group $F' = \Alt(4) \wr \Alt(5)$, both viewed as transitive subgroups of $\Sym(20)$. Notice that the permutation group $F$ can naturally be viewed as a subgroup of $F'$. Since $F$ acts freely and $F'$ is generated by the derived subgroups of its point stabilizers, the existence of the required simple group $\Gamma \leq \Aut(T)$, arising as an index~$2$ subgroup of a vertex-transitive group denoted by $G(F, F')$, indeed follows from  \cite[Theorem~1.3]{LeBoudec}. 
\end{proof}

In particular, a finitely generated infinite simple group can act edge-transitively on a tree with an imprimitive local action at every vertex.  Note however that the group $\Gamma$ from Theorem~\ref{thm:LeBoudec} is   very different from the projection of a lattice as in Problem~\ref{prob:LocalAction2trans}, so that Theorem~\ref{thm:LeBoudec} should not be interpreted as evidence supporting a negative solution to Problem~\ref{prob:LocalAction2trans}. Indeed, in the set-up of Problem~\ref{prob:LocalAction2trans}, the group $\Gamma$ is finitely presented, and the vertex-stabilizers for the $\Gamma$-action on each tree factor $T_i$ are not torsion groups (they act properly and cocompactly on the product of the tree factors different from $T_i$). On the other hand, Le Boudec's group has locally finite vertex stabililizers  (see the discussion at the end of Section~3.1 in \cite{LeBoudec}); moreover it is  not finitely presented (see \cite[Proposition~5.4]{LeBoudec}). 

\subsection{Lattices in products of more than two trees}

We close this chapter with a discussion of another fascinating and natural problem. Products of two trees are part of the definition of a  BMW-group, but it is natural to consider also products of more than two factors. In view of the existence of virtually simple BMW-groups, the following problem is especially intriguing. 

\begin{prob}\label{pb:3factors}
Let $T_1, \dots, T_n$ be locally finite trees with infinitely many ends and   $\Gamma \leq \Aut(T_1) \times \dots \times \Aut(T_n)$ be a discrete subgroup acting cocompactly on  $T_1  \times \dots \times T_n$. 

Can $\Gamma$ be   simple if $n \geq 3$? 
\end{prob}

This problem may be viewed as a \textit{higher rank version} of P.~Neumann's Question~\ref{ques:Neumann}. 

The arithmetic constructions in Section~\ref{sec:Margulis_NST} provide examples of cocompact lattices  in products of an arbitrarily large number of factors that are hereditarily just-infinite. Explicit examples acting vertex-transitively on the associated product of trees may be found in \cite[Corollary~6.2]{ChinburgStover} and \cite{CFHKSSZZ}. All those groups are linear, hence residually finite. There is currently no known example of a lattice in a product of more than two \emph{non-linear} locally compact groups satisfying the hypotheses of  the Bader--Shalom Normal Subgroup Theorem~\ref{thm:BaderShalom}. A non-existence result for irreducible lattices in products of three or more factors has been established in the restricted class of certain locally compact Kac--Moody groups in \cite{CaMo_KM}. The techniques used in loc.~cit. are very specific to Kac--Moody theory, and yield no relevant information in the context of Problem~\ref{pb:3factors}. In the case of tree lattices, the following    result was   established by N.~Radu with the aid of a computer. 

\begin{prop}[{N.~Radu \cite[Theorem~VIII]{Radu_SimpleLatt}}]\label{prop:NonExistence666}
Let $T, T', T''$ be three copies of the regular tree of degree $6$. 
\begin{enumerate}[(i)]
\item There are $23225$ conjugacy classes of subgroups $\Gamma \leq \Aut(T) \times \Aut(T')$ acting simply transitively on the vertices of the product $T\times T'$, whose local action on both tree factors is $\Alt(6)$ or $\Sym(6)$, and such that $\Gamma_v$ and $\Gamma_{v'}$ are torsion-free for all $v \in V(T)$ and $v' \in V(T')$. All of them  are hereditarily just-infinite.  Among them, $2240$ are torsion-free. 

\item There is \textbf{no}  subgroup $\Gamma \leq \Aut(T) \times \Aut(T') \times \Aut(T'')$ acting simply transitively on the vertices of the product $T\times T' \times T''$,  such that the following  conditions hold, where $G$, $G'$ and $G''$ denote the closure of the respective projections of $\Gamma$ to $\Aut(T)$, $\Aut(T')$ and $\Aut(T'')$:
\begin{itemize}
	\item the groups $G$, $G'$ and $G''$ are non-discrete and their respective local actions at  every vertex of $T$, $T'$ and $T''$ is  $\Alt(6)$ or $\Sym(6)$;
    \item $\Gamma_{v, v'}$ and $\Gamma_{v', v''}$ are torsion-free  for all $(v, v', v'') \in V(T \times T' \times T'')$.
\end{itemize}
\end{enumerate}

\end{prop}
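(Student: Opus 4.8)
The plan is to reduce both statements to the classification of finitely many explicit combinatorial objects and then to extract the group-theoretic conclusions from the Normal Subgroup Theorem. The key observation is a dictionary: a discrete subgroup $\Gamma \leq \Aut(T_1) \times \dots \times \Aut(T_k)$ preserving the product decomposition and acting simply transitively on the vertices of $T_1 \times \dots \times T_k$, with each $T_j$ regular of degree $d_j$, is completely encoded by the compact quotient cube complex $Y = \Gamma \backslash (T_1 \times \dots \times T_k)$: it has a single vertex, at most $(d_1 + \dots + d_k)/2$ edges (those of order~$2$ being exactly the source of torsion in $\Gamma$), and a bounded number of squares and higher-dimensional cubes, subject to the requirement that the link of the vertex be the flag complex of the join of $k$ independent sets of sizes $d_1, \dots, d_k$. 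For $k = 2$, $Y$ is a BMW-complex and $\Gamma$ a BMW-group of degree $(d_1, d_2)$ as in Proposition~\ref{prop:BMW-basic}; for general $k$ it is its natural higher-dimensional analogue. Two such groups are conjugate in the ambient product precisely when the complexes $Y$ are isomorphic, i.e.\ when the corresponding presentations agree up to relabelling and inverting generators. Consequently, once the degrees are fixed there are only finitely many candidates, and they can be listed by an exhaustive backtracking search with isomorph rejection; this is the computer-assisted ingredient.

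For part~(i) I would run this enumeration for $k = 2$ and $d_1 = d_2 = 6$, restricting to BMW-presentations all of whose generators have infinite order. By Proposition~\ref{prop:BMW-basic}(iv) this last condition is equivalent to the torsion-freeness of the vertex stabilisers $\Gamma_v$ and $\Gamma_{v'}$, which are conjugates of the free groups $\langle X\rangle$ and $\langle A\rangle$; among the $225145$ such presentations I would keep only those whose local action on each tree factor, computed by the recipe of Figure~\ref{fig:LocalAction}, is $\Alt(6)$ or $\Sym(6)$, and the search returns $23225$ of them, a further inspection isolating the $2240$ that are torsion-free (the remaining ones carry torsion, as already does the example $\Gamma_{6, 6}$ of Proposition~\ref{prop:GammaRadu}). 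For each of the $23225$ groups, the local action on each factor contains $\Alt(6)$ with $6 \geq 6$, so by Corollary~\ref{cor:TrofimovWeiss} the non-discreteness of the projection $\mathrm{pr}_i(\Gamma) \leq \Aut(T_i)$ is certified inside a ball of radius~$3$; Theorem~\ref{thm:U(alt)}(i) then shows that $G_i = \overline{\mathrm{pr}_i(\Gamma)}$ is hereditarily just-non-compact with no non-trivial abelian normal subgroup, that $\Gamma$ is a cocompact lattice in $G_1 \times G_2$ with dense projections (hence irreducible by Theorem~\ref{thm:IrredInsep}), and finally Theorem~\ref{thm:BaderShalom} in the case $n = 2$ gives that $\Gamma$ is hereditarily just-infinite.

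For part~(ii) the strategy is a proof by contradiction reducing to the same kind of finite search, now with $k = 3$. Assume such a $\Gamma \leq \Aut(T) \times \Aut(T') \times \Aut(T'')$ existed (note that, by the $n=3$ case of Theorem~\ref{thm:BaderShalom}, it would in fact be hereditarily just-infinite, so the content of part~(ii) is that alternating or fully symmetric local actions cannot coexist with three tree factors). The stabiliser in $\Gamma$ of a vertex in any one factor acts simply transitively on the vertices of the product of the other two $6$-regular trees, and the hypotheses on the local actions and on the torsion-freeness of $\Gamma_{v, v'}$ and $\Gamma_{v', v''}$ constrain these two-factor structures to lie in finite lists closely related to the $23225$ complexes of part~(i). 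The cube complex $Y = \Gamma \backslash (T \times T' \times T'')$ is then exactly a way of gluing three such two-factor structures, on the three coordinate pairs, sharing a common generating set, consistently along their edges and squares, and it carries only a bounded number of cubes. I would enumerate all gluings compatible with the local-action and torsion constraints, exploiting that the three two-factor structures share the same generators (so fixing one already pins down much of the others) and that, by Corollary~\ref{cor:TrofimovWeiss} and Theorem~\ref{thm:U(alt)}, all relevant information lives in a ball of radius~$3$; the output of the search is empty, which is the desired contradiction.

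The hard part is part~(ii): making rigorous, and then computationally feasible, the reduction of ``a group acting simply transitively on $V(T \times T' \times T'')$ with the prescribed local actions'' to a \emph{finite} enumeration (the naive bound is essentially the cube of the number appearing in part~(i), far too large without aggressive pruning), and organising that enumeration so that the emptiness of its output is convincingly certified. By comparison, the enumeration in part~(i) is mainly an implementation matter, and the just-infiniteness conclusions in both parts are routine once Theorems~\ref{thm:U(alt)} and~\ref{thm:BaderShalom} are available.
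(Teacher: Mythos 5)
Your overall strategy — enumerate the combinatorial structures (BMW-complexes and their three-factor analogues) by computer, then invoke the Normal Subgroup Theorem to get just-infiniteness — is the right skeleton, and it does match the broad shape of what is described here. However, there are two substantive omissions relative to the ingredients the paper explicitly identifies as essential.

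First, for part~(i), you assert as part of the ``dictionary'' that two such subgroups of $\Aut(T)\times\Aut(T')$ are conjugate precisely when the quotient complexes $Y$ are isomorphic, and take this to be elementary. The paper is explicit that part~(i) relies on a version of Mostow rigidity for irreducible cocompact lattices in products of trees with primitive local action, due to Burger--Mozes--Zimmer (cited as \cite[Theorem~1.4.1]{BMZ}). Your argument does not use this at all, which means you have either missed a subtlety in the correspondence between combinatorial equivalence of presentations, isomorphism of quotient complexes, abstract isomorphism of the groups, and conjugacy inside $\Aut(T)\times\Aut(T')$, or you need to supply the (non-trivial) justification that you have silently assumed. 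Simply declaring the bijection ``conjugacy classes $\leftrightarrow$ isomorphism types of complexes'' is not a proof, and the paper signals that a deep rigidity theorem enters at precisely this point.

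Second, for part~(ii) you propose a purely combinatorial search: enumerate all three-factor cube complexes whose two-factor faces are compatible with the list from~(i), and verify the output is empty. The paper, by contrast, says that part~(ii) relies on part~(i) \emph{in an essential way} and in particular \emph{uses the Normal Subgroup Theorem in the case of two factors}. In your write-up the Normal Subgroup Theorem for $n=3$ appears only as a remark about what $\Gamma$ would be if it existed; the actual engine of your proof of non-existence is the exhaustive search, which you yourself acknowledge is ``far too large without aggressive pruning''. The paper's account indicates that the two-factor NST is used as a structural constraint that dramatically cuts down the problem before any search --- for instance, applied to the vertex stabilisers $\Gamma_v$, $\Gamma_{v'}$, $\Gamma_{v''}$ viewed as (hereditarily just-infinite) lattices in products of two trees, where just-infiniteness gives strong information about their possible normal subgroups and hence about the admissible gluings. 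Without identifying where and how this group-theoretic input enters, your plan for~(ii) is not yet an argument: it is a description of a computation whose feasibility and correctness are exactly the points at issue.
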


Part (i) of Proposition~\ref{prop:NonExistence666} relies on a version of Mostow rigidity for irreducible cocompact lattices in products of trees with primitive local action, due to Burger--Mozes--Zimmer, see \cite[Theorem~1.4.1]{BMZ}. Part (ii) relies in an essential way on (i), see \cite[Theorem~VIII]{Radu_SimpleLatt}. In particular, it uses the Normal Subgroup Theorem in the case of two factors. Notice that the condition on the local action hypothetized in Proposition~\ref{prop:NonExistence666} ensures that $G$, $G'$ and $G''$ are subjected to Theorem~\ref{thm:Radu}. 
That condition is rather natural, especially if one expects a positive solution to Problem~\ref{prob:LocalAction2trans} (bearing in mind that `almost all' finite $2$-transitive groups are $\Alt(d)$ or $\Sym(d)$; see \cite[Corollary~B.2]{Radu_InvMath} for a precise statement clarifying the latter claim). 

The  contrast between the case of~$2$ and~$3$ factors in Proposition~\ref{prop:NonExistence666} is   striking. I interpret it as experimental evidence for a negative answer to Problem~\ref{pb:3factors}.

\section{Quotients of hyperbolic groups and asymptotic properties of finite simple groups}

In this final chapter, the difference between the construction of finite and infinite quotients of finitely presented groups is further illustrated by the  discussion of a major open problem in Geometric Group Theory, namely the residual finiteness of hyperbolic groups. 

\subsection{Examples of hyperbolic groups}\label{sec:Examples_hyp}

Hyperbolic groups form a class of groups introduced and developed by M.~Gromov in \cite{Gromov_hyp}. Their definition can be seen as an axiomatization of the fundamental groups of hyperbolic closed manifolds. Extensive treatments of the basic theory can be consulted in  \cite[Chapter~III.H]{BH},   \cite{CDP} or \cite{GhysHarpe}. Let us record a (non-exhaustive!)  list of  examples. 

\begin{itemize}
\item Finite groups, and more generally virtually cyclic groups. Those form the so-called \textbf{elementary hyperbolic groups}. 

\item Virtually free groups. Those include free amalgamated products of finite groups, e.g. the free product $C_a * C_b$ of cyclic groups of order $a$  and $b$. 

\item Fundamental groups of closed surfaces of genus~$g \geq 2$. 

\item Hyperbolic triangle groups. Those are groups of the form   $T(p, q, r) = \langle x, y \mid x^p, y^q, (xy)^r \rangle$ with $\frac 1 p + \frac 1 q + \frac 1 r < 1$. They are commensurable with surface groups. 

\item One-relator groups with torsion. 

\item Coxeter groups which do not contain $\mathbf Z \times \mathbf Z$. The latter condition can be characterized in terms of the Coxeter presentation, see \cite[\S 12.6]{Davis_book}. 

\item Fundamental groups of closed Riemannian manifolds of negative sectional curvature. 
\end{itemize}

\subsection{Finite and infinite quotients of hyperbolic groups}

Hyperbolic groups enjoy numerous remarkable algebraic properties. Let us collect a few of those.

\begin{thm}\label{thm:hyperbolic-basic}
Every  hyperbolic group  $G$ satisfies the following. 
\begin{enumerate}[(i)]
\item {\upshape (\cite[Corollary~2.2.A]{Gromov_hyp})} $G$ is finitely presented. 

\item {\upshape (\cite[Chapter~8, Theorem~37]{GhysHarpe})} Every subgroup of $G$ is either virtually cyclic, or contains a non-abelian free subgroup. 

\item {\upshape (T.~Delzant {\cite[Theorem~3.5]{Delzant96}}; A. Olshanskii \cite{Olsh_SQ})} If $G$ is non-elementary, then $G$ is SQ-universal. 

\end{enumerate}
\end{thm}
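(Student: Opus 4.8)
The plan is to recover the three parts by the three standard routes suggested by the attributions.

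\emph{Part (i), after Gromov.} Fix a finite generating set, let $X$ be the corresponding Cayley graph and $\delta$ a hyperbolicity constant. I would pass to the \emph{Rips complex} $P_d(G)$: its vertex set is $G$, and a finite subset $\sigma\subseteq G$ spans a simplex precisely when $\operatorname{diam}(\sigma)\leq d$ in the word metric. The classical observation of Rips is that for $d$ sufficiently large (depending only on $\delta$) the complex $P_d(G)$ is contractible; in particular its $2$-skeleton is simply connected. The left-multiplication action of $G$ on $P_d(G)$ is simplicial, cocompact (finitely many orbits of simplices in each dimension) and proper, since the setwise stabiliser of a simplex $\sigma$ is a subgroup contained in the finite set $\sigma\sigma^{-1}$. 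Now a group acting properly and cocompactly on a simply connected simplicial complex is finitely presented: take generators from a compact fundamental domain for the $1$-skeleton and relators from the finitely many orbits of $2$-cells. Hence $G$ is finitely presented. (Alternatively one may quote the a priori stronger fact that hyperbolic groups satisfy a linear isoperimetric inequality with respect to some finite presentation.)

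\emph{Part (ii), after Ghys--de la Harpe.} Let $H\leq G$. If $H$ is finite it is trivially virtually cyclic, so assume $H$ is infinite. I would first use that every infinite subgroup of a hyperbolic group contains an element of infinite order (equivalently: torsion subgroups of hyperbolic groups are finite); such an element $g$ acts loxodromically on $X$, since hyperbolic groups have no parabolics, and it has two fixed points $g^{\pm}\in\partial G$. Let $E=E(g)$ be the stabiliser of the pair $\{g^{+},g^{-}\}$ in $G$; it is the unique maximal virtually cyclic subgroup containing $g$, with $\langle g\rangle$ of finite index in it. If $H\leq E$, then $H$ is virtually cyclic. Otherwise there is $h\in H$ with $h\{g^{+},g^{-}\}\neq\{g^{+},g^{-}\}$; after replacing $g$ and $hgh^{-1}$ by suitable conjugates and high powers one obtains two loxodromic elements whose attracting/repelling fixed-point pairs in $\partial G$ are disjoint, and the usual ping-pong argument, based on the North--South dynamics of loxodromics on $\partial G$, shows that sufficiently large powers of these two elements generate a free subgroup of rank $2$ inside $H$. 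This is essentially the argument in \cite[Chapter~8]{GhysHarpe}.

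\emph{Part (iii), after Delzant and Olshanskii.} Here I would follow the ``small cancellation over hyperbolic groups'' strategy (equivalently, Dehn filling of hyperbolically embedded subgroups). Step~1: by the Higman--Neumann--Neumann embedding theorem, every countable group $C$ embeds into a $2$-generated group $B=\langle x,y\mid\cdots\rangle$. Step~2: since $G$ is non-elementary, part (ii) together with the ping-pong construction yields a free subgroup of rank $2$ in $G$, and one can arrange it to be quasiconvex and almost malnormal --- e.g. by taking two loxodromic elements with independent axes, very large translation lengths, and in sufficiently general position. Call it $F=\langle a,b\rangle\leq G$; being quasiconvex and almost malnormal in a hyperbolic group, $F$ is hyperbolically embedded. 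Step~3: consider the surjection $F\twoheadrightarrow B$, $a\mapsto x$, $b\mapsto y$, with kernel $K\trianglelefteq F$; after first replacing $a,b$ by high powers so that every non-trivial element of $F$ is long in $G$, the small-cancellation / Dehn-filling theorem for hyperbolically embedded subgroups ensures that the natural map $B\cong F/K\to\overline{G}:=G/\langle\!\langle K\rangle\!\rangle_{G}$ is injective. Then $C\leq B\leq\overline{G}$ and $\overline{G}$ is a quotient of $G$, so $G$ is SQ-universal.

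The main obstacle is Step~3 of part (iii): the assertion that killing the appropriate normal closure does not collapse the embedded free group. This is precisely the hard geometric input --- Olshanskii's graded small-cancellation theory, Delzant's work, or its later reformulation via relatively hyperbolic Dehn filling --- whereas parts (i) and (ii) are comparatively soft. A secondary technical point, to be dealt with along the way, is to verify that the free subgroup $F$ in Steps~2--3 can be chosen quasiconvex and almost malnormal, and that the high-power (scaling) trick makes $K$ avoid the finite exceptional set arising in the filling theorem.
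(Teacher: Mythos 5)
The paper does not actually prove Theorem~\ref{thm:hyperbolic-basic}; it is stated purely as a survey of classical facts with references to Gromov, Ghys--de~la~Harpe, Delzant and Olshanskii. So there is no ``paper's own proof'' to compare with, and your sketches should be judged on their own terms.

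Parts (i) and (ii) are the standard arguments and are essentially correct as written. For (i), the Rips-complex route is exactly the usual proof; one small remark is that the simplex stabiliser being contained in $\sigma\sigma^{-1}$ is what gives finiteness of stabilisers, but one also needs to observe that the action has finitely many orbits of cells in each dimension (which follows from $G$ acting transitively on vertices and the Rips complex being locally finite), and then invoke the standard fact that a group acting properly cocompactly on a simply connected CW-complex is finitely presented. For (ii), the only imprecision is that you do not need extra conjugates or replacements to arrange disjoint fixed-point pairs: in a hyperbolic group, two loxodromic elements either share both endpoints or neither (because the stabiliser of an endpoint of a loxodromic axis coincides with the stabiliser of the pair, hence is virtually cyclic); so once $h$ moves $\{g^+,g^-\}$, the pairs $\{g^\pm\}$ and $\{hg^\pm\}$ are automatically disjoint and ping-pong on high powers applies directly.

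For (iii), your framing in terms of hyperbolically embedded subgroups and Dehn filling is a valid \emph{modern} argument, but it is anachronistic relative to the attributions: Olshanskii's proof in \cite{Olsh_SQ} uses his graded small-cancellation theory over hyperbolic groups (essentially what underlies Theorem~\ref{thm:Olshanskii} in the paper), and Delzant's proof in \cite{Delzant96} is a separate small-cancellation argument; the hyperbolically-embedded/Dehn-filling language you use is really the later Dahmani--Guirardel--Osin viewpoint, which the paper cites separately as \cite{DGO} and presents as a \emph{generalisation} of (iii) to acylindrically hyperbolic groups. Your Step~2 also glosses over a genuine subtlety: to get an \emph{almost malnormal} free subgroup one must cope with the finite groups $E_G(g_i)$ (hyperbolic groups may have torsion), which is precisely the role of the hypothesis $E_G(G)=\{1\}$ in Theorem~\ref{thm:Olshanskii}. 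You do flag the hard step (injectivity under filling), which is indeed where all the work lies; just be aware that this route re-proves (iii) via a stronger and later theorem rather than reproducing the cited proofs.
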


The SQ-universality of non-elementary hyperbolic groups is a vast generalization of the classical theorem of Higman--Neumann--Neumann~\cite{HNN} according to which the free group $F_2$ is SQ-universal. It was announced by M.~Gromov~\cite[\S5.6.E]{Gromov_hyp} (without proof). It underlines the huge supply of \emph{infinite} proper quotients that every non-elementary hyperbolic group has. Since the early days of the theory, the question of existence of \emph{finite} proper quotients  arose naturally. The following question is a major open problem in Geometric Group Theory. 

\begin{prob}\label{prob:hyp_RF}
Are all hyperbolic groups residually finite?
\end{prob}
 
In his foundational paper on hyperbolic groups, M.~Gromov suggested that the answer should be negative by writing the following.

\begin{rmk}[{M.~Gromov \cite[\S 5.3.B]{Gromov_hyp}}]
\textit{Probably, ``generic'' word hyperbolic groups admit no sequences of subgroups of finite index with trivial intersection.}
\end{rmk}

Gromov's remark has sometimes been  interpreted as a conjecture predicting the existence of a non-elementary hyperbolic group whose only finite quotient is the trivial one (see for example Olshanskii's comment following Theorem~2 in \cite{Olsh_2000}).  
While the latter conjecture clearly implies a negative answer to Problem~\ref{prob:hyp_RF}, it turns out that, conversely, the existence of a non-residually finite hyperbolic group would imply that the conjecture is true. This was observed independently by Kapovich--Wise \cite{KaWi} and A.~Olshanskii \cite{Olsh_2000}. 

Problem~\ref{prob:hyp_RF} has  motivated a tremendous amount of research; it goes beyond the scope of this article to survey it all. We will only emphasize specific results that we find most relevant to the general theme of these notes. The following statement illustrates the strength and scope that a positive solution to Problem~ \ref{prob:hyp_RF} would have. 

\begin{thm}\label{thm:if_hyp_RF}
If all hyperbolic groups are residually finite, then the following assertions hold. 
\begin{enumerate}[(i)]
\item {\upshape (Kapovich--Wise \cite[Theorem~5.1]{KaWi})} Every hyperbolic group is virtually torsion-free. 

\item {\upshape (Agol--Groves--Manning \cite{AGM})} Every quasi-convex subgroup of a every hyperbolic group is separable. 

\item {\upshape (A.~Lubotzky \cite[Remark~4.2]{Lubot})} Cocompact lattices in $\mathrm{Sp}(n, 1)$ (with $n \geq 2$) and $F_4^{-20}$ do not satisfy the Congruence Subgroup Property. 

\end{enumerate}
\end{thm}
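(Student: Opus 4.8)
The three assertions are logically independent implications, but each has the same shape: from the given data one manufactures an auxiliary hyperbolic group, feeds it into the standing hypothesis to extract a finite quotient, and pulls the conclusion back; I would treat them in order of increasing difficulty, handling (i) at once. \emph{Part (i).} A hyperbolic group has only finitely many conjugacy classes of finite subgroups; fix representatives $F_1, \dots, F_k \leq G$. Since $G$ is residually finite (by hypothesis), for each $i$ and each non-trivial $x \in F_i$ there is a finite quotient of $G$ not killing $x$, and the product of these finitely many quotients gives a single finite quotient $q \colon G \to Q$ with $q$ injective on every $F_i$. Then $\ker q$ is torsion-free: if $1 \neq g \in \ker q$ had finite order, the finite subgroup $\langle g\rangle$ would be conjugate into some $F_i$, say $h\langle g\rangle h^{-1} \leq F_i$, whence $hgh^{-1} \in F_i \cap \ker q = \{1\}$, a contradiction. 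So $\ker q$ is a torsion-free finite-index subgroup, proving (i).

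\emph{Part (ii).} Let $H$ be a quasiconvex subgroup of the hyperbolic group $G$ and $g \in G \setminus H$; one must find a finite quotient of $G$ separating $g$ from $H$. The plan is to collapse (a peripheral envelope of) $H$ to a finite group by a Dehn filling, thereby reducing to the separability of finite subgroups used in Part~(i). Since $H$ is quasiconvex it has finite height (Gitik--Mitra--Rips--Sageev); passing to commensurators of $H$ and of finitely many of its conjugates produces a finite almost malnormal family $\mathcal{P} = \{P_1, \dots, P_m\}$ of quasiconvex subgroups, with $H$ commensurable into some $P_j$, relative to which $G$ is relatively hyperbolic (Bowditch). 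Now invoke relatively hyperbolic Dehn filling (Osin, Groves--Manning): choose finite-index normal subgroups $N_i \normal P_i$ deep enough that the quotient $\bar G = G/\langle\langle N_1, \dots, N_m\rangle\rangle$ is hyperbolic (its new peripherals $P_i/N_i$ being finite) and that $g$ survives outside the image of $H$, the latter being possible because deep fillings preserve the status of the finitely many prescribed elements witnessing $g \notin H$. In $\bar G$ the image $\bar H$ lies in the finite group $P_j/N_j$, hence is finite, while $\bar g \notin \bar H$. As $\bar G$ is hyperbolic it is residually finite by hypothesis, and finite subgroups are separable in residually finite groups, so some finite quotient $\bar G \to R$ has $r(\bar g) \notin r(\bar H)$; composing $G \to \bar G \to R$ does the job.

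\emph{Part (iii).} A cocompact lattice $\Gamma$ in $\mathrm{Sp}(n,1)$ with $n \geq 2$, or in $F_4^{-20}$, acts properly and cocompactly on the corresponding pinched negatively curved symmetric space, so $\Gamma$ is a non-elementary hyperbolic group, hence SQ-universal by Theorem~\ref{thm:hyperbolic-basic}(iii). The quotients produced by the Delzant--Olshanskii small-cancellation method can be taken hyperbolic and can be arranged to contain a prescribed copy of any finite group, so for every $m$ there is a hyperbolic quotient $\Gamma \twoheadrightarrow Q_m$ with $\Alt(m) \leq Q_m$. By hypothesis $Q_m$ is residually finite, and finite subgroups are separable, so $Q_m$ (hence $\Gamma$) has a finite quotient still containing $\Alt(m)$; for $m \geq 5$ the simplicity of $\Alt(m)$ then forces it to embed into a composition factor of that finite quotient. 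On the other hand, $\Gamma$ is arithmetic by Corlette and Gromov--Schoen, say commensurable with $\mathbf{G}(\mathcal{O}_S)$ for an anisotropic absolutely almost simple group $\mathbf{G}$. If $\Gamma$ satisfied the Congruence Subgroup Property, its congruence kernel would be finite, so by Strong Approximation the profinite completion of $\Gamma$ would be a finite extension of $\prod_v \mathbf{G}(\mathcal{O}_v)$; consequently every non-abelian composition factor of every finite quotient of $\Gamma$ would either lie in a fixed finite set or be a finite simple group of Lie type of rank that of $\mathbf{G}$, and in particular would carry a faithful (projective) linear representation of bounded dimension $d = d(\mathbf{G})$. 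Since $\Alt(m)$ has no faithful linear representation of dimension less than $m-2$ over any field, this is impossible for $m$ large, contradicting the previous paragraph; hence $\Gamma$ fails CSP.

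\emph{Main obstacle.} Parts (i) and (iii) are, modulo standard structure theory and the already-recorded SQ-universality, essentially organizational. The genuine difficulty lies in the reduction in Part~(ii): promoting a mere quasiconvex subgroup to a peripheral structure and then running relatively hyperbolic Dehn filling so as to collapse it to a finite group while keeping the test element $g$ from being dragged along — controlling exactly which elements and subgroups survive a deep filling, and ensuring the filled group stays hyperbolic, is where the Wise / Agol--Groves--Manning machinery is indispensable.
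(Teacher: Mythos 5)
The paper does not actually prove assertions~(i) and~(ii): it cites Kapovich--Wise and Agol--Groves--Manning respectively, and for~(iii) it offers a one-line route, namely deduce it from~(ii) together with Proposition~\ref{prop:QCERF} (every finite group is a virtual quotient of every non-elementary hyperbolic group, which is incompatible with the congruence subgroup property). Your proposal reconstructs arguments for all three, so the relevant question is how faithfully and soundly it does so.

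Your Part~(i) is correct and is essentially the standard argument: a hyperbolic group has finitely many conjugacy classes of finite subgroups, and a residually finite group with that property is virtually torsion-free. Your Part~(ii) sketches the right Agol--Groves--Manning strategy (reduce to finite subgroups via relatively hyperbolic Dehn filling, using the standing hypothesis both to find deep finite-index filling kernels $N_i \normal P_i$ and to make the filled group $\bar G$ residually finite), and you rightly identify this as the technical crux. One imprecision worth flagging: the peripheral structure is not obtained by ``passing to commensurators'' of $H$ and its conjugates --- that operation does not in general preserve quasi-convexity or produce an almost malnormal family; the AGM construction uses the finite height/width of $H$ more delicately. But the overall shape of the argument is right.

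Part~(iii) is where you take a genuinely different route from the paper. The paper derives~(iii) from~(ii) and Proposition~\ref{prop:QCERF}. You instead bypass~(ii) entirely and argue via Olshanskii's theorem: produce hyperbolic quotients of $\Gamma$ containing $\Alt(m)$ (this is exactly the content of Corollary~\ref{cor:Olsh2} in the paper, and you should cite Theorem~\ref{thm:Olshanskii} rather than bare SQ-universality, which alone does not give \emph{hyperbolic} quotients), apply the hypothesis to get finite simple quotients containing $\Alt(m)$, observe that a simple subgroup of a finite group embeds in a composition factor, and contradict the bounded-rank constraint imposed on composition factors by the congruence subgroup property together with strong approximation and the known lower bounds on minimal degrees of representations of $\Alt(m)$. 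This is sound and in fact appears to be closer to Lubotzky's original argument than the paper's suggested derivation. The paper's route via QCERF is arguably cleaner in that it avoids arithmetic input about composition factors, instead contradicting CSP directly from the abundance of virtual quotients; your route has the virtue of not relying on the considerably harder assertion~(ii).
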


A \textbf{quasi-convex subgroup}  of a hyperbolic group $G$ is a finitely generated subgroup $H$ such that the inclusion of $H$ in $G$ is a quasi-isometric embedding. In particular, quasi-convex subgroups are themselves hyperbolic. 

One may interpret Theorem~\ref{thm:if_hyp_RF}(ii) as follows: if all hyperbolic groups are residually finite, then all hyperbolic groups have a tremendous amount of finite quotients. This assertion can be made more precise using the following. 

\begin{prop}\label{prop:QCERF}
Let $G$ be a group and $H$ be a subgroup. If every finite index subgroup of $H$ is separable in $G$, then every homomorphism  $\varphi  \colon H \to Q$ to a finite group extends to a homomorphism $\tilde \varphi \colon \tilde G \to Q$ defined on a finite index subgroup $\tilde G$ of $G$ containing $H$. 

In particular, if $G$ is a non-elementary hyperbolic group all of whose quasi-convex subgroups are separable, then every finite group is a quotient of a finite index subgroup of $G$. 
\end{prop}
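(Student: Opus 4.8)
The plan is to prove the two assertions separately: the first is the crux, and the second is a short deduction using the existence of quasi-convex free subgroups in non-elementary hyperbolic groups.

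For the first assertion, set $K = \Ker(\varphi)$, a finite index subgroup of $H$, which is separable in $G$ by hypothesis. First I would produce a finite index subgroup $L \leq G$ with $L \cap H = K$: choosing coset representatives $h_1 = 1, h_2, \dots, h_r$ of $K$ in $H$, separability gives, for each $i \geq 2$, a finite index subgroup $L_i \leq G$ containing $K$ but not $h_i$; then $L = \bigcap_{i \geq 2} L_i$ has finite index in $G$, contains $K$, and a one-line coset computation shows $L \cap H = K$. The key point is that $L$ need not be normal, so $HL$ need not be a subgroup; to repair this I would replace $L$ by its normal core $L_0 = \bigcap_{g \in G} gLg^{-1}$, which is still of finite index in $G$ and satisfies $H \cap L_0 \leq K$. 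Then $\tilde G := H L_0$ is a genuine subgroup of $G$ (because $L_0$ is normal), it contains $H$, and $[G : \tilde G] \leq [G : L_0] < \infty$. Finally, since $H \cap L_0 \leq K = \Ker(\varphi)$, the homomorphism $\varphi$ factors through $H/(H \cap L_0) \cong H L_0 / L_0 = \tilde G / L_0$, and composing $\tilde G \to \tilde G / L_0$ with the resulting map $\tilde G/L_0 \to Q$ yields $\tilde\varphi \colon \tilde G \to Q$; evaluating on elements of $H$ shows that $\tilde\varphi$ restricts to $\varphi$.

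For the second assertion, recall that a non-elementary hyperbolic group contains a quasi-convex free subgroup of rank~$2$ (by Theorem~\ref{thm:hyperbolic-basic}(ii) it has a non-abelian free subgroup, and the classical ping-pong argument on the boundary produces a quasi-convex one, generated by high powers of two independent loxodromic elements). Passing to finite index subgroups, $G$ therefore contains, for every $d \geq 2$, a quasi-convex subgroup $H_d \cong F_d$, since a finite index subgroup of a quasi-convex subgroup of a hyperbolic group is again quasi-convex. Every finite index subgroup of $H_d$ has finite index in the ambient rank-$2$ quasi-convex subgroup, hence is quasi-convex, hence separable in $G$ by hypothesis; so $H_d$ satisfies the hypothesis of the first assertion. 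Given an arbitrary finite group $Q$, pick $d \geq 2$ with $Q$ being $d$-generated and fix a surjection $\psi \colon H_d \to Q$. The first assertion extends $\psi$ to $\tilde\psi \colon \tilde G \to Q$ on a finite index subgroup $\tilde G \leq G$ containing $H_d$; since $\tilde\psi|_{H_d} = \psi$ is onto, $\tilde\psi$ is onto, so $Q$ is a quotient of the finite index subgroup $\tilde G$ of $G$.

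I expect the only genuinely delicate point to be the bookkeeping in the first assertion — in particular passing to the normal core so that $HL_0$ is a subgroup, and then checking $[G:\tilde G] < \infty$ and that $\varphi$ really descends to $\tilde G/L_0$. Everything else is formal: the substantive hypothesis (separability of quasi-convex subgroups) is imported, and the existence of a quasi-convex rank-$2$ free subgroup in a non-elementary hyperbolic group is classical. One small wrinkle worth flagging is that in the second part one must allow $d$ to exceed~$2$, since $F_2$ surjects only onto $2$-generated groups; this is harmless because $F_2$ contains finite index free subgroups of every rank $\geq 2$, all of them quasi-convex.
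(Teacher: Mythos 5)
Your proof is correct and matches the paper's approach. The paper does not spell out the first assertion but cites the proof of Theorem~4.0.7 in Long--Reid \cite{LongReid_2005}; the argument you give --- produce a finite index $L \leq G$ with $L \cap H = \Ker\varphi$ from separability, pass to the normal core $L_0$ so that $\tilde G := HL_0$ is a genuine finite index subgroup, then apply the second isomorphism theorem $H/(H\cap L_0) \cong HL_0/L_0$ to descend $\varphi$ --- is precisely the standard argument that reference contains. For the second assertion, the paper invokes ``quasi-convex subgroups that are free of arbitrarily large rank''; your route of taking a quasi-convex $F_2$ by ping-pong and then finite index free subgroups $F_d$ inside it (quasi-convex because finite index in a quasi-convex subgroup) is the same idea, and your remark that one must allow $d > 2$ correctly identifies the small wrinkle.
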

\begin{proof}
The first assertion follows from the proof of \cite[Theorem~4.0.7]{LongReid_2005}. The second follows from the first together with the fact that every non-elementary hyperbolic group admits quasi-convex subgroups that are free of arbitrarily large rank. 
\end{proof}

One way to establish Assertion~(iii) in Theorem~\ref{thm:if_hyp_RF} is to deduce it from Assertion~(ii) and Proposition~\ref{prop:QCERF}. Indeed, the property that all finite groups appear as virtual quotients is incompatible with the congruence subgroup property. 

We conclude this section by mentioning that a far-reaching generalization of Theorem~\ref{thm:hyperbolic-basic}(iii) on the SQ-universality of non-elementary hyperbolic groups was recently established by Dahmani--Guirardel--Osin \cite{DGO}, who indeed  showed that all the so-called \textbf{acylindrically hyperbolic groups} are SQ-universal. Acylindrically hyperbolic groups form an immensely vast class of groups, formalized by D.~Osin \cite{Osin_acyl}. They include all non-elementary hyperbolic groups, as well as numerous other examples of a very different nature, see \cite[\S 8]{Osin_acyl}. For example, it is proved in \cite[Corollary~4.26]{MinasyanOsin} that the Higman group from Theorem~\ref{thm:Higman} is acylindrically hyperbolic. Another illustration of this concept is provided by the following result, giving a rather general perspective on the fact that the Margulis' Normal Subgroup Theorem fails in the rank~$1$ case.  

\begin{thm}\label{thm:LatticeInHyp}
Let $G$ be a  locally compact group which is Gromov hyperbolic with respect to the word metric associated with a compact generating set. Assume that $G$ does not contain a discrete cocompact cyclic subgroup.  Then every lattice $\Gamma \leq G$ is acylindrically hyperbolic, hence SQ-universal. 
\end{thm}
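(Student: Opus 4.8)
The plan is to produce, for the lattice $\Gamma$, a non-elementary isometric action on a Gromov hyperbolic space possessing a loxodromic element that is WPD (weakly properly discontinuous), and then to invoke Osin's characterisation of acylindrically hyperbolic groups \cite{Osin_acyl} together with the fact that these are SQ-universal \cite{DGO}.

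First I would fix the geometry. Since $G$ is a compactly generated hyperbolic locally compact group, it acts continuously, properly and cocompactly by isometries on a proper geodesic hyperbolic space $X$, and this action extends to a $G$-equivariant homeomorphism of $\partial X$ onto the Gromov boundary $\partial G$; see \cite{CCMT}. Next I would invoke the structure theory of hyperbolic locally compact groups to see that the hypotheses force $G$ to be non-elementary, in the following precise sense. By \cite{CCMT} every hyperbolic locally compact group is compact, lineal, focal, or of general type (non-elementary). The compact and lineal cases are excluded by the assumption that $G$ contains no discrete cocompact cyclic subgroup (a compact group contains the trivial cocompact subgroup, and a lineal group always contains a discrete cocompact infinite cyclic subgroup). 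The focal case is excluded because focal hyperbolic locally compact groups are non-unimodular --- conjugation by the generator of the $\mathbf{Z}$- or $\mathbf{R}$-factor strictly contracts a compact open subgroup of the ``horospherical'' part and hence scales Haar measure by a factor $\neq 1$ --- whereas a non-unimodular group has no lattice and $\Gamma$ exists. Consequently $G$ is of general type; in particular $G$ is non-amenable, $|\partial G| \geq 3$, and --- the key input --- the stabiliser $\mathrm{Stab}_G(\xi)$ of any $\xi \in \partial G$ is an amenable closed subgroup of $G$ \cite{CCMT}.

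Then I would analyse the $\Gamma$-action on $X$. Because $\Gamma$ is discrete in $G$ and the $G$-action on $X$ is metrically proper, the $\Gamma$-action on $X$ is metrically proper; and since $\Gamma$ is a lattice in the non-compact group $G$ it is infinite, so properness rules out bounded orbits. By Gromov's classification of isometric actions on hyperbolic spaces, an action with unbounded orbits is lineal, horocyclic, focal, or of general type. In each of the first three cases $\Gamma$ --- after passing to a subgroup of index at most $2$ in the lineal case --- fixes a point $\xi \in \partial X = \partial G$, and therefore embeds in the amenable group $\mathrm{Stab}_G(\xi)$; but $\Gamma$ is non-amenable, being a lattice in the non-amenable group $G$, a contradiction. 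Hence the $\Gamma$-action on $X$ is of general type: it is non-elementary and $\Gamma$ contains an element $g$ acting loxodromically on $X$. Since the $\Gamma$-action is metrically proper, for each $x \in X$ and $\varepsilon > 0$ the set $\{h \in \Gamma : d(x, hx) \leq \varepsilon\}$ is already finite, so $g$ is automatically WPD. As $\Gamma$ is not virtually cyclic (it is non-amenable), \cite{Osin_acyl} now yields that $\Gamma$ is acylindrically hyperbolic, and \cite{DGO} gives that $\Gamma$ is SQ-universal.

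I expect the genuine input --- rather than an ``obstacle'' in the usual sense --- to be the structure theory of hyperbolic locally compact groups \cite{CCMT}: both the realisation of $G$ as a proper cocompact isometry group of a proper geodesic hyperbolic space with identifiable boundary, and the amenability of stabilisers of boundary points. Once these are available, everything else is soft: properness of the restricted action, Gromov's trichotomy, and the observation that in a metrically proper action every loxodromic is automatically WPD. A minor point requiring care is the bookkeeping by which the hypothesis ``no discrete cocompact cyclic subgroup'', combined with the mere existence of a lattice, eliminates the compact, lineal and focal cases --- the focal case being handled through non-unimodularity.
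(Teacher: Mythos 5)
Your proof is correct and reaches the same conclusions, but it takes a genuinely different route from the paper's at the two key steps. The paper first invokes \cite[Proposition~5.5]{CaFu} to obtain the dichotomy: if $\Gamma$ has no loxodromic element, then $\Gamma$ has either a bounded orbit in $X$ or a unique fixed point in $\partial X$. It then derives a contradiction by pushing the $G$-invariant probability measure on $G/\Gamma$ forward to a $G$-invariant probability measure on $X$ (resp.\ $\partial X$) and observing that this is incompatible with proper cocompactness (resp.\ with the contracting boundary dynamics). Your argument instead uses Gromov's classification of isometric actions on hyperbolic spaces combined with the amenability of boundary stabilizers in $G$ \cite{CCMT}: if $\Gamma$ had no loxodromic, then a finite-index subgroup of $\Gamma$ would fix a boundary point, hence embed in an amenable closed subgroup of $G$, contradicting the fact that a lattice in a non-amenable group is non-amenable.

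The other difference is that you make explicit the reduction to $G$ being of general type by going carefully through the CCMT trichotomy, noting in particular that the focal case is excluded because focal hyperbolic locally compact groups are non-unimodular and hence have no lattices. The paper compresses this into the phrase ``by the hypothesis that $G$ is non-elementary'', which silently bundles in both the stated hypothesis (ruling out compact and lineal) and the lattice hypothesis (ruling out focal); as you observe, a $G$-invariant probability measure on $\partial X$ is perfectly compatible with focal $G$, so the measure-theoretic contradiction genuinely needs the focal case to have been eliminated first. Your bookkeeping is therefore slightly more complete on this point. Both proofs then finish with \cite{Osin_acyl} and \cite{DGO}; your explicit remark that properness makes any loxodromic automatically WPD is a harmless expansion of what the paper leaves to the cited references.
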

\begin{proof}
By \cite[Proposition~2.1]{CCMT}, the group $G$ has a continuous proper cocompact action on a Gromov hyperbolic proper geodesic metric space $X$. Any discrete subgroup of $G$ thus acts properly on $X$. Now, if a discrete subgroup  $\Gamma \leq G$ does not contain any loxodromic elements, then by \cite[Proposition~5.5]{CaFu} either $\Gamma$ has a bounded orbit on $X$, or $\Gamma$ has a unique fixed point in the Gromov boundary $\partial X$. If $\Gamma$ is a lattice, then the former condition would imply that $X$ carries a $G$-invariant probability measure, hence that $G$ has a bounded orbit (since a sufficiently large ball in $X$ would have measure~$>\frac 1 2$), while the latter would imply that the Gromov boundary $\partial X$ carries a $G$-invariant probability measure, which is incompatible with the contracting dynamics of the $G$-action by the hypothesis that $G$ is non-elementary. Thus every lattice in $G$ acts properly on $X$ and contains a loxodromic element. The required conclusions now follow from \cite[Theorems~1.2 and~8.1]{Osin_acyl}. 
\end{proof}

\subsection{Hyperbolic quotients of hyperbolic groups, after A.~Olshanskii}

Our next goal is to emphasize the relation between Problem~\ref{prob:hyp_RF} and the asymptotic properties of finite simple groups. A relation of that kind was first highlighted by Ivanov--Olshanskii \cite[Problem~2]{IvanovOlshanskii}. 

Let us first recall from \cite[Corollary~8.2.C]{Gromov_hyp} that every infinite order element $g$ of a hyperbolic group $G$ is contained in a unique maximal elementary quasi-convex subgroup $E_G(g)$, called the \textbf{elementary closure} of $g$. Given a subgroup $H \leq G$, we define $E_G(H)$ as the intersection of $E_G(h)$ where $h$ runs over the set of all infinite order elements of $H$. By \cite[Proposition~1]{Olsh_residualing}, if $H$ is not an elementary quasi-convex subgroup of $G$, then $E_G(H)$ coincides with the largest finite subgroup of $G$ normalized by $H$. 

\begin{thm}[{A.~Olshanskii \cite[Theorem~2]{Olsh_residualing} and \cite[Lemma~5.1]{Olsh_2000}}] \label{thm:Olshanskii}
Let $G$ be a non-elementary hyperbolic group and $H_1, \dots, H_k$ be non-elementary subgroups with $E_G(G) = E_G(H_1) = \dots = E_G(H_k) = \{1\}$. Then there is a homomorphism $\varphi \colon G \to \tilde G$ onto a non-elementary hyperbolic group $\tilde G$ such that $\varphi(H_i) = \tilde G$ for all $i$. 
\end{thm}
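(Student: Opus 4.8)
The plan is to realize $\tilde G$ as a \emph{small cancellation quotient} of $G$ in the sense of Olshanskii's theory of relations satisfying a $C'(\lambda)$-type condition over a hyperbolic group, where the relations are chosen so as to force each $H_i$ to surject onto the quotient. Throughout I would freely invoke the results of \cite{Olsh_residualing} and \cite{Olsh_2000}, reducing the theorem to a construction of suitable relators.

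\textbf{Step 1: reduction to a relator construction.} Fix a finite generating set $g_1,\dots,g_n$ of $G$ and a finite generating set of each $H_i$. Suppose we can find, for every $i\in\{1,\dots,k\}$ and $j\in\{1,\dots,n\}$, an element $u_{i,j}\in H_i$ represented by a word $w_{i,j}$ over the generators of $H_i$, such that the symmetrized closure $\mathcal R$ of the family of words $\{\,g_j w_{i,j}^{-1}\ :\ 1\le i\le k,\ 1\le j\le n\,\}$, read in $G$, satisfies Olshanskii's small cancellation condition $C'(\lambda)$ over $G$ for a sufficiently small universal $\lambda>0$, with all relators very long and not proper powers. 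Put $\tilde G=G/\langle\langle\mathcal R\rangle\rangle$ and let $\varphi\colon G\to\tilde G$ be the quotient map. By Olshanskii's small cancellation theory over hyperbolic groups (here one uses $E_G(G)=\{1\}$), for $\lambda$ small the group $\tilde G$ is hyperbolic and non-elementary and $\varphi$ is injective on a ball of radius proportional to the length of the shortest relator. In $\tilde G$ one has $\varphi(g_j)=\varphi(w_{i,j})\in\varphi(H_i)$ for all $i,j$; since $\tilde G$ is generated by $\varphi(g_1),\dots,\varphi(g_n)$, it follows at once that $\varphi(H_i)=\tilde G$ for every $i$. Thus everything reduces to constructing the words $w_{i,j}$ inside the $H_i$.

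\textbf{Step 2: constructing the relators.} Since $H_i$ is non-elementary, it contains two loxodromic elements whose fixed-point pairs on $\partial G$ are distinct; after replacing them by suitable powers we may assume they generate a free subgroup $F_i=\langle a_i,b_i\rangle$ of rank $2$, quasi-isometrically embedded in $G$, all of whose non-trivial elements are loxodromic. I would take $w_{i,j}$ to be a long ``staircase'' word $a_i^{m_1}b_i^{m_2}a_i^{m_3}b_i^{m_4}\cdots$ whose exponent sequence is aperiodic, and where the exponent sequences for different pairs $(i,j)$ are pairwise ``far apart'' (encode them by the code-words of a good error-correcting code, so that no two of the resulting words — nor a cyclic shift of one of them — share a long common subword). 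Because $a_i,b_i$ are independent loxodromics, such staircase words are $G$-quasigeodesics with constants independent of the (large) exponents, so the relators $g_j w_{i,j}^{-1}$ are long, aperiodic, and pairwise overlap only in short subwords; hence $\mathcal R$ satisfies $C'(\lambda)$ over $G$ once the exponents are taken large enough. The hypothesis $E_G(H_i)=\{1\}$ is exactly what rules out the finite-subgroup obstructions to this genericity — via the identification of $E_G(H_i)$ with the largest finite subgroup of $G$ normalized by $H_i$, see \cite[Prop.~1]{Olsh_residualing} — and, together with $E_G(G)=\{1\}$, it is what guarantees that adjoining these relations keeps the quotient non-elementary rather than collapsing it.

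\textbf{Main obstacle.} The technical heart, which I would cite rather than reprove, is twofold: (a) verifying that staircase words supported inside $H_i$ can be chosen to satisfy the \emph{simultaneous} small cancellation condition over $G$ (this rests on the geometry of loxodromic isometries and the Morse property in hyperbolic spaces, and is where $E_G(H_i)=\{1\}$ is really used); and (b) the structural conclusion of the theory, that a $C'(\lambda)$-quotient of a non-elementary hyperbolic group with $E_G(G)=\{1\}$ by sufficiently long relators is again non-elementary and hyperbolic — in particular that the $nk$ new relations, being spread out and with $\varphi$ injective on large balls, do not destroy all the non-abelian free subgroups of $G$. Granting these inputs from \cite{Olsh_residualing,Olsh_2000}, the remainder is the bookkeeping in Step~1. (One could alternatively argue by induction on $k$, absorbing one $H_i$ at a time, but verifying that the hypotheses $E_G(H_i)=\{1\}$ survive each quotient is more delicate than handling all of them at once.)
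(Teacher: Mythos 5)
The paper does not prove this theorem: it is stated with a citation to Olshanskii's papers and used as a black box, so there is no in-paper proof to compare against. Your sketch is a faithful outline of Olshanskii's actual argument. The reduction in Step~1 to imposing relators of the form $g_j w_{i,j}^{-1}$ with $w_{i,j}\in H_i$, so that every generator of $\tilde G$ lies in $\varphi(H_i)$, is precisely the mechanism Olshanskii uses; the aperiodic staircase words built from independent loxodromics inside each $H_i$ are the standard device for producing a symmetrized family satisfying his graded small cancellation condition over a hyperbolic group, and citing the hyperbolicity and non-elementarity of such quotients is the right thing to do rather than reprove it. One heuristic needs sharpening: the hypothesis $E_G(H_i)=\{1\}$ (together with $E_G(G)=\{1\}$, this is exactly Olshanskii's notion of $H_i$ being $G$-suitable) is not primarily there to keep the staircase words generic — that would hold anyway — but to ensure that the normal closure of the new relators does not create a finite normal subgroup in the quotient, i.e.\ that $E_{\tilde G}(\tilde G)$ remains trivial, which is both what keeps $\tilde G$ non-elementary and what lets the construction be iterated as in \cite[Lemma~5.1]{Olsh_2000}. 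Apart from that slight shift of emphasis, your proposal is essentially the proof that the citation points to.
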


Given two  non-elementary hyperbolic groups $G_1, G_2$,  let $H_i = G_i/E_{G_i}(G_i)$ and form the free product $G = H_1 * H_2$, which is hyperbolic. Applying Theorem~\ref{thm:Olshanskii} to $G$, we obtain the following. 
 
\begin{cor}[A.~Olshanskii] \label{cor:Olsh}
Any two non-elementary hyperbolic groups have a common non-elementary hyperbolic quotient. 
\end{cor}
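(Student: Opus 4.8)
The plan is to carry out the construction sketched just below the statement. Let $G_1$ and $G_2$ be non-elementary hyperbolic groups. First I would replace each $G_i$ by $H_i:=G_i/N_i$, where $N_i:=E_{G_i}(G_i)$. Since $G_i$ is non-elementary it is not an elementary quasi-convex subgroup of itself, so by \cite[Proposition~1]{Olsh_residualing} the subgroup $N_i$ is exactly the largest finite normal subgroup of $G_i$; consequently $H_i$ is again hyperbolic, and it is still non-elementary, since a non-abelian free subgroup of $G_i$ --- being torsion-free --- meets $N_i$ trivially and hence survives in $H_i$. Moreover $E_{H_i}(H_i)=\{1\}$: a finite normal subgroup of $H_i$ pulls back to a finite normal subgroup of $G_i$, which lies in $N_i$ and hence is trivial downstairs.

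Next I would form the free product $G:=H_1*H_2$. It is hyperbolic, being a free product of two hyperbolic groups, and it is non-elementary because each $H_i$ contains a non-abelian free group. Regard $H_1$ and $H_2$ as subgroups of $G$ via the canonical embeddings; they are non-elementary subgroups. The aim is to apply Theorem~\ref{thm:Olshanskii} with $k=2$ to $G$ together with $H_1,H_2$, which requires verifying that $E_G(G)=E_G(H_1)=E_G(H_2)=\{1\}$.

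This verification is the only step with any substance. Since $G$, $H_1$, $H_2$ are all non-elementary, \cite[Proposition~1]{Olsh_residualing} identifies $E_G(H_j)$ with the largest finite subgroup of $G$ normalized by $H_j$ and $E_G(G)$ with the largest finite normal subgroup of $G$, so it is enough to show that any finite subgroup $F\leq G$ normalized by $H_j$ is trivial. For this I would use the Bass--Serre tree $T$ of the splitting $G=H_1*H_2$: its edge stabilizers are trivial, and the only vertex of $T$ fixed by $H_j$ is the vertex $v_j$ whose stabilizer it is --- a second fixed vertex would force $H_j$ to stabilize an edge, hence to be trivial. A finite group $F$ fixes a nonempty subtree $\Fix(F)\subseteq T$; if $H_j$ normalizes $F$ then $H_j$ acts on $\Fix(F)$, every element of $H_j$ is elliptic on $T$ (each fixes $v_j$) and hence on $\Fix(F)$, and $H_j$ is finitely generated, so $H_j$ fixes a point of $\Fix(F)$ (a finitely generated group all of whose elements act elliptically on a tree has a global fixed point). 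That point must be $v_j$, so $F\leq\mathrm{Stab}(v_j)=H_j$; then $F$ is a finite normal subgroup of $H_j$, hence trivial by the first paragraph. Taking $j=1$ and $j=2$ gives $E_G(H_1)=E_G(H_2)=\{1\}$, while a finite normal subgroup of $G$ is normalized by both $H_1$ and $H_2$ and so lies in $H_1\cap H_2=\{1\}$, giving $E_G(G)=\{1\}$ as well.

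Finally, Theorem~\ref{thm:Olshanskii} produces an epimorphism $\varphi\colon G\to\tilde G$ onto a non-elementary hyperbolic group with $\varphi(H_1)=\varphi(H_2)=\tilde G$. Composing $\varphi|_{H_i}$ with the quotient map $G_i\twoheadrightarrow H_i$ exhibits $\tilde G$ as a quotient of each $G_i$, so $\tilde G$ is the sought common non-elementary hyperbolic quotient. I do not expect a genuine obstacle: the corollary is a direct application of Theorem~\ref{thm:Olshanskii}, and the only point needing care is the triviality of the elementary closures $E_G(G)$, $E_G(H_1)$, $E_G(H_2)$, which the Bass--Serre tree argument settles.
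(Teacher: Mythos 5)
Your proof is correct and follows the paper's own plan exactly: pass to $H_i = G_i/E_{G_i}(G_i)$, form $G = H_1*H_2$, and apply Theorem~\ref{thm:Olshanskii}. The only difference is that you spell out the verification that $E_G(G)=E_G(H_1)=E_G(H_2)=\{1\}$ via the Bass--Serre tree, a routine check which the paper leaves implicit.
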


A first connection with finite simple groups arises in the following. 

\begin{cor}\label{cor:Olsh2}
If all hyperbolic groups are residually finite, then for every non-elementary hyperbolic group $G$ and every $n \geq 5$, the group $G$ has a finite simple quotient containing a copy of $\Alt(n)$. 
\end{cor}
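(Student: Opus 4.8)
The plan is to combine Corollary~\ref{cor:Olsh} with the hypothesis that all hyperbolic groups are residually finite, applied to a carefully chosen auxiliary hyperbolic group. First I would recall that the alternating group $\Alt(n)$, being finite, is in particular a hyperbolic group; moreover, it is non-elementary only in a trivial sense (it is finite), so it is not directly usable as one of the two non-elementary hyperbolic groups in Corollary~\ref{cor:Olsh}. To get around this, the natural move is to replace $\Alt(n)$ by a genuinely non-elementary hyperbolic group that surjects onto $\Alt(n)$ --- for instance the free product $\Alt(n) * \ZZ$, or $\Alt(n) * \Alt(n)$, or simply the free group $F_2$ composed with any surjection $F_2 \twoheadrightarrow \Alt(n)$ (such a surjection exists since $\Alt(n)$ is $2$-generated for $n \geq 5$). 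Call this group $H$; it is non-elementary hyperbolic and admits $\Alt(n)$ as a quotient.

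Next I would apply Corollary~\ref{cor:Olsh} to the pair $G$ and $H$: since both are non-elementary hyperbolic, they have a common non-elementary hyperbolic quotient, say $Q$. So there are surjections $G \twoheadrightarrow Q$ and $H \twoheadrightarrow Q$. At this stage $Q$ is infinite (it is non-elementary, hence not virtually cyclic, in particular infinite), but we want a \emph{finite} simple quotient of $G$ that contains $\Alt(n)$, so more work is needed: the common quotient $Q$ need not itself have $\Alt(n)$ as a subgroup, and it is infinite rather than finite. The fix is to carry the subgroup $\Alt(n)$ along through the construction. Concretely, inside $H = F_2$ (with the fixed map onto $\Alt(n)$) one should instead track the image of the relevant generators; better, I would revisit the proof of Corollary~\ref{cor:Olsh}, which via Theorem~\ref{thm:Olshanskii} produces $Q$ as a quotient of $G_1 * G_2$ in which the images of both $G_1$ and $G_2$ are all of $Q$. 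Taking $G_1 = G$ and $G_2 = H$, the image of $H$ in $Q$ is all of $Q$, and since $H$ surjects onto $\Alt(n)$ --- wait, that gives a surjection $Q \to \Alt(n)$ only if the kernel of $H \to \Alt(n)$ dies, which it need not. So the cleaner route: take $G_2 = \Alt(n) * \ZZ$ and apply Theorem~\ref{thm:Olshanskii} more carefully, or alternatively build $Q$ so that a prescribed copy of $\Alt(n)$ embeds. I would use the formulation where $Q$ contains an isomorphic copy of $\Alt(n)$ (coming from the $\ZZ$-free factor being killed appropriately, or from the small-cancellation nature of the construction in \cite{Olsh_residualing}, which preserves finite subgroups). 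Thus we obtain a non-elementary hyperbolic group $Q$ that is a quotient of $G$ and contains a copy of $\Alt(n)$.

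Finally, invoke the standing hypothesis: $Q$, being hyperbolic, is residually finite. Since $\Alt(n)$ is a finite --- indeed simple --- subgroup of $Q$, residual finiteness gives a finite quotient $\psi\colon Q \to \bar Q$ whose restriction to $\Alt(n)$ is injective (use that $\Alt(n)$ is simple, so any non-trivial homomorphism from it is injective, together with the fact that residual finiteness separates the finitely many non-trivial elements of $\Alt(n)$ from $1$). Now $\bar Q$ is a finite group containing $\Alt(n)$, and $\bar Q$ is a quotient of $G$. Passing to a composition factor of $\bar Q$ that contains the simple subgroup $\Alt(n)$ --- more precisely, taking a maximal normal subgroup $M \normal \bar Q$ with $M \cap \Alt(n) = \{1\}$ (possible since $\Alt(n)$ is simple, so $M \cap \Alt(n)$ is either trivial or all of $\Alt(n)$, and we choose $M$ maximal subject to the former), the quotient $\bar Q / M$ is a finite simple group into which $\Alt(n)$ embeds, and it is a quotient of $G$. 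This yields the desired finite simple quotient of $G$ containing a copy of $\Alt(n)$.

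The main obstacle I anticipate is the middle step: ensuring that the common non-elementary hyperbolic quotient $Q$ of $G$ genuinely \emph{contains} a copy of $\Alt(n)$, rather than merely surjecting onto it or being related to it abstractly. This requires either using the precise output of Theorem~\ref{thm:Olshanskii}/\cite[Theorem~2]{Olsh_residualing} --- whose small-cancellation quotients are known to preserve prescribed finite subgroups of the factors --- or choosing the auxiliary group $G_2$ so that $\Alt(n)$ sits as a retract or as an elementary-closure-trivial finite subgroup surviving the quotient. Everything after that ($\Alt(n)$ is $2$-generated for $n\ge 5$; simple subgroups inject into residually finite quotients; composition factors of a finite group containing a given simple subgroup) is routine.
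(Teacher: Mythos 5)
Your overall strategy matches the paper's: apply Corollary~\ref{cor:Olsh} to $G$ together with an auxiliary non-elementary hyperbolic group built around $\Alt(n)$, then use the hypothesis that all hyperbolic groups are residually finite. However, the final step has a genuine gap, and you also hedge between several choices of auxiliary group where the paper commits to the one that makes everything work.

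The paper takes $G_2 = \Alt(n)*\Alt(n)$ (an option you list but do not settle on). This choice is not arbitrary: it is non-elementary, virtually free, has trivial elementary closure, and --- crucially --- is \emph{perfect}, since $\Alt(n)$ is perfect for $n\geq 5$. Consequently the common quotient $Q$ of $G$ and $\Alt(n)*\Alt(n)$ is perfect. Now the paper does not argue via an embedded $\Alt(n)$ inside $Q$ at all; it simply takes a \emph{smallest} non-trivial finite quotient $\pi\colon Q\to S$. Because $Q$ is perfect, if $S$ had a proper non-trivial normal subgroup $N$, then $Q\to S/N$ would be a strictly smaller non-trivial finite quotient, a contradiction; hence $S$ is simple. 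Finally, both copies of $\Alt(n)$ map to $S$ under the surjection $\Alt(n)*\Alt(n)\twoheadrightarrow Q\twoheadrightarrow S$, and since $\Alt(n)$ is simple each copy maps either trivially or injectively; if both were trivial then $S$ itself would be trivial (the two copies generate $\Alt(n)*\Alt(n)$), so at least one copy embeds into $S$. Note how using $\Alt(n)*\Alt(n)$ rather than $F_2$ or $\Alt(n)*\ZZ$ is essential on both counts: it guarantees $Q$ is perfect (so the smallest finite quotient is simple, not cyclic), and it ensures an $\Alt(n)$ survives in $S$.

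The concrete gap in your argument is the final ``composition factor'' step. You take $M\trianglelefteq\bar Q$ maximal subject to $M\cap\Alt(n)=\{1\}$ and assert that $\bar Q/M$ is simple. Maximality of $M$ only yields that every normal subgroup of $\bar Q$ properly containing $M$ must contain $\Alt(n)$; equivalently, $\bar Q/M$ has a unique minimal normal subgroup, and that subgroup contains $\Alt(n)$. This does \emph{not} make $\bar Q/M$ simple --- a finite group can have a unique minimal normal subgroup while having plenty of intermediate normal subgroups. Worse, if you instead quotient $\bar Q/M$ by a maximal normal subgroup $N$ to force simplicity, then $N$ (being non-trivial) already contains $\Alt(n)$, so the image of $\Alt(n)$ in the resulting simple quotient is trivial, and you lose exactly the conclusion you want. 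The minimality trick the paper uses (smallest non-trivial finite quotient of a perfect group, with the generation-by-two-copies-of-$\Alt(n)$ observation) is the right repair.

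A minor further remark: the detour through small-cancellation quotients preserving finite subgroups, while true for Olshanskii's construction, is unnecessary here. Once you commit to $\Alt(n)*\Alt(n)$, the survival of a copy of $\Alt(n)$ in any non-trivial quotient is an immediate consequence of the simplicity of $\Alt(n)$ and the fact that the two copies generate; no black box is needed.
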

\begin{proof}
We use Corollary~\ref{cor:Olsh} to construct a common hyperbolic quotient $Q$ of $G$ and the virtually free group $\Alt(n) * \Alt(n)$, which is perfect. If $Q$ has a non-trivial finite quotient, then a smallest such quotient is a finite simple quotient of $G$ which contains $\Alt(n)$ as a subgroup. 
\end{proof}

One may now contemplate again the list of examples of hyperbolic groups mentioned above and wonder whether it contains candidates of groups that do not map onto non-abelian finite simple groups of arbitrarily large rank. We emphasize that the question of determining the finite simple quotients of the virtually free group $C_a * C_b$ or of the hyperbolic triangle group $T(p, q, r)$ are important topics of current investigations, see \cite{King}, \cite{Liebeck_survey} and references therein. 

Combining Theorem~\ref{thm:if_hyp_RF} and Proposition~\ref{prop:QCERF}, we see that if all hyperbolic groups were residually finite, then every finite group would be a quotient of a finite index subgroup of every hyperbolic group. The following immediate consequence of Corollary~\ref{cor:Olsh2} strengthens that fact. 

\begin{cor}\label{cor:Olsh3}
If all hyperbolic groups are residually finite, then every finite group embeds in some finite quotient of every non-elementary hyperbolic group.
\end{cor}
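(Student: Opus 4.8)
The plan is to deduce the statement directly from Corollary~\ref{cor:Olsh2} by a standard embedding trick for finite groups. First I would fix a non-elementary hyperbolic group $G$ and an arbitrary finite group $F$. By Cayley's theorem, $F$ embeds into the symmetric group $\Sym(m)$ for some integer $m$, and after enlarging $m$ if necessary (using $\Sym(m)\hookrightarrow\Sym(m+1)$) we may assume $m\geq 3$. Next I would record the elementary fact that $\Sym(m)$ embeds into $\Alt(m+2)$: introduce two new points, send every even permutation to itself acting trivially on the new points, and send an odd permutation $\sigma$ to the product of $\sigma$ with the transposition of the two new points; this is an injective homomorphism whose image consists of even permutations, hence lands in $\Alt(m+2)$. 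Setting $n=m+2\geq 5$, we obtain an embedding $F\hookrightarrow\Alt(n)$.

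With this in hand, I would invoke Corollary~\ref{cor:Olsh2}: under the hypothesis that all hyperbolic groups are residually finite, the non-elementary hyperbolic group $G$ admits a finite simple quotient $S$ that contains a copy of $\Alt(n)$. Composing $F\hookrightarrow\Alt(n)\hookrightarrow S$ shows that $F$ embeds into the finite quotient $S$ of $G$, which is precisely the assertion to be proved. Since $F$ was arbitrary, every finite group embeds in some finite quotient of $G$, and since $G$ was an arbitrary non-elementary hyperbolic group, the corollary follows.

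There is essentially no obstacle here; the only points requiring (minor) care are the passage from $\Sym(m)$ to an alternating group of rank at least $5$, so that Corollary~\ref{cor:Olsh2} is applicable, and the observation that Corollary~\ref{cor:Olsh2} already produces a finite simple — in particular finite — quotient of $G$ of the required kind, so that no additional analysis of the finite quotients of $G$ is needed.
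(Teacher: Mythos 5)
Your proof is correct and is exactly the ``immediate consequence'' the paper has in mind: the paper gives no proof beyond pointing to Corollary~\ref{cor:Olsh2}, and the Cayley theorem embedding $F\hookrightarrow\Sym(m)\hookrightarrow\Alt(m+2)$ followed by an application of Corollary~\ref{cor:Olsh2} is the standard way to make that deduction explicit. No gaps.
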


That property can be viewed as a finite counterpart of the SQ-universality of hyperbolic groups. 

Let us finally record a last important consequence of Theorem~\ref{thm:Olshanskii}.

\begin{cor}\label{cor:Olsh4}
If all hyperbolic groups are residually finite, then for every non-elementary hyperbolic group $G$ with $E_G(G)= \{1\}$, every finite subset $M \subset G$ and every $n \geq 5$, there is a homomorphism $\rho \colon G \to Q$ of $G$ onto a finite simple group $S$ containing a subgroup isomorphic to $\Alt(n)$, and such that the restriction of $\rho$ to $M$ is injective. 
\end{cor}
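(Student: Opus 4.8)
The plan is to follow the scheme used for Corollary~\ref{cor:Olsh2} --- produce a convenient infinite hyperbolic quotient of $G$ by means of Theorem~\ref{thm:Olshanskii}, then pass to a finite simple quotient --- while carrying the finite set $M$ along at every stage. We may assume $|M|\geq 2$, since for $|M|\leq 1$ the injectivity requirement is vacuous and the statement reduces to Corollary~\ref{cor:Olsh2} applied to $G$.

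First I would set $A=\Alt(n)$ and let $H=A\ast A$ be the free product of two copies of $A$. Since $|A|\geq 60$, the group $H$ is non-elementary, virtually free (hence hyperbolic), perfect (it is generated by two perfect subgroups) and satisfies $E_H(H)=\{1\}$. Form $\Gamma=G\ast H$; then $\Gamma$ is non-elementary hyperbolic with $E_\Gamma(\Gamma)=\{1\}$, and the free factors $G$ and $H$ are non-elementary (quasi-convex) subgroups with $E_\Gamma(G)=E_\Gamma(H)=\{1\}$: any finite subgroup of $\Gamma$ normalised by $G$ (resp.\ $H$) fixes the vertex of the Bass--Serre tree stabilised by $G$ (resp.\ $H$), hence lies in that factor, and is therefore trivial by the hypothesis $E_G(G)=\{1\}$ (resp.\ the choice of $H$). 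Now apply Theorem~\ref{thm:Olshanskii} to $\Gamma$ with $H_1=G$ and $H_2=H$, using the standard strengthening --- part of Olshanskii's construction in \cite{Olsh_residualing}, see also \cite[Lemma~5.1]{Olsh_2000} --- that the residualising epimorphism may in addition be taken injective on any prescribed finite subset; we prescribe $M\subset G\subset\Gamma$. This gives an epimorphism $\varphi\colon\Gamma\twoheadrightarrow\Gamma_1$ onto a non-elementary hyperbolic group with $\varphi(G)=\varphi(H)=\Gamma_1$ and $\varphi|_M$ injective. Then $\rho_1:=\varphi|_G\colon G\twoheadrightarrow\Gamma_1$ is onto and injective on $M$; the group $\Gamma_1$ is perfect (a quotient of $H$); and since $\Gamma_1=\langle\varphi(A_1),\varphi(A_2)\rangle$ for the two free factors $A_1,A_2$ of $H$ while $\Gamma_1$ is infinite, at least one $\varphi(A_i)$ is non-trivial, hence isomorphic to $\Alt(n)$ (simplicity of $\Alt(n)$). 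The same argument shows that \emph{every} non-trivial quotient of $\Gamma_1$ contains a copy of $\Alt(n)$.

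It remains to find a finite simple quotient $\pi\colon\Gamma_1\twoheadrightarrow S$ whose kernel avoids the finitely many elements $\varphi(m)\varphi(m')^{-1}$ with $m\neq m'$ in $M$: for such an $S$, the composite $\rho=\pi\circ\rho_1\colon G\twoheadrightarrow S$ is onto, is injective on $M$, and contains a copy of $\Alt(n)$ by the previous paragraph. By the standing hypothesis, $\Gamma_1$ is residually finite, so those finitely many elements survive in some finite quotient of $\Gamma_1$. The hard part --- and I expect it to be the main obstacle --- is to make the finite quotient simple. One cannot simply take a finite quotient of smallest order that separates $\varphi(M)$ and declare it simple, because a proper further quotient may re-identify points of $\varphi(M)$, so the minimality argument that works in Corollary~\ref{cor:Olsh2} fails here. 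The way around it is to exploit that $\Gamma_1$, like every hyperbolic group under the standing hypothesis, has a very rich supply of finite simple quotients: Corollary~\ref{cor:Olsh2} produces finite simple quotients of $\Gamma_1$ containing $\Alt(m)$ for every $m\geq 5$, and one must show that for a suitable such quotient the composite $\Gamma_1\to S$ can be kept injective on $\varphi(M)$ --- equivalently, that the residualising construction producing $\Gamma_1$ can be run with enough control on its finite quotients, or, alternatively, that the separability of quasi-convex subgroups granted by Theorem~\ref{thm:if_hyp_RF}(ii), combined with Proposition~\ref{prop:QCERF}, lets one realise a prescribed $M$-separating finite permutation representation inside a finite simple quotient. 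Everything preceding this extraction is routine free-product bookkeeping layered on top of Theorem~\ref{thm:Olshanskii}.
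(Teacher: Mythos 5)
Your set-up is sound and you have correctly located the bottleneck, but you do not close it, and the workarounds you sketch do not obviously work. Running Theorem~\ref{thm:Olshanskii} on $G * \Alt(n) * \Alt(n)$ with $H_1 = G$ and $H_2 = \Alt(n) * \Alt(n)$ and asking the residualising map $\varphi$ to be injective on $M$ buys you a non-elementary perfect hyperbolic quotient $Q$ of $G$ that remembers $M$, but, as you observe, a smallest non-trivial finite quotient of $Q$ --- the only thing that is obviously simple --- may merge points of $\varphi(M)$. The alternatives you mention (Theorem~\ref{thm:if_hyp_RF}(ii) with Proposition~\ref{prop:QCERF}, or ``controlling the residualising construction'') give finite quotients with prescribed behaviour on subgroups, not simple finite quotients injective on a given finite set, so they leave the gap open.

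The missing idea is to feed different subgroups into Theorem~\ref{thm:Olshanskii}: rather than asking $\varphi$ to be injective on $M$, ask $\varphi$ to be surjective on the normal closures that detect $M$. For each pair $x \neq y$ in $M$, set $H_{x,y}$ to be the normal closure of $xy^{-1}$ in $G$. Since $E_G(G) = \{1\}$, each $H_{x,y}$ is a non-elementary normal subgroup of $G$ with $E_G(H_{x,y}) = \{1\}$, so these finitely many groups, together with $\Alt(n) * \Alt(n)$, are admissible in Theorem~\ref{thm:Olshanskii} applied to $G * \Alt(n) * \Alt(n)$. One obtains $\varphi$ onto a non-elementary hyperbolic $Q$ with $\varphi(H_{x,y}) = Q$ for every pair and $\varphi(\Alt(n) * \Alt(n)) = Q$ (so $Q$ is perfect). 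Now take a smallest non-trivial finite quotient $\psi \colon Q \to S$ (it exists by the standing hypothesis, and is simple and contains $\Alt(n)$), and set $\rho = \psi \circ \varphi|_G$. Because $\varphi|_G$ is onto $Q$, the image $\varphi(H_{x,y})$ is the normal closure of $\varphi(xy^{-1})$ in $Q$; hence $\rho(H_{x,y})$ is the normal closure of $\rho(xy^{-1})$ in $S$ and equals $\psi(Q) = S$, so $\rho(xy^{-1}) \neq 1$. The point is that ``$\varphi(H_{x,y}) = Q$'' is a surjectivity statement about a normal closure, and such statements pass to \emph{every} further quotient, which is exactly what makes a smallest finite quotient safe here. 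This is the step your proposal was missing.
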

\begin{proof}
For every pair $x \neq y \in M$, let $H_{x, y}$ denote the normal closure of $xy^{-1}$ in $G$. Since $E_G(G)= \{1\}$, it follows that $H_{x, y}$ is a non-elementary normal subgroup of $G$, which moreover satisfies  $E_G(H_{x, y}) = \{1\}$. By Theorem~\ref{thm:Olshanskii}, the hyperbolic group $G * \Alt(n) * \Alt(n)$ admits a non-elementary hyperbolic quotient $\varphi \colon G * \Alt(n) * \Alt(n) \to Q$ such that $\varphi(H_{x, y}) = Q$ for all $x \neq y \in M$ and moreover $\varphi(\Alt(n) * \Alt(n)) = Q$. 

Assume that $Q$ is residually finite, and let $\psi \colon Q \to S$ be a smallest non-trivial finite quotient of $Q$. Since $Q$ is generated by two copies of $\Alt(n)$, it is perfect and so $S$ is a finite simple group containing $\Alt(n)$. Let now $x \neq y \in M$.  Since $H_{x, y}$ is the normal closure of $xy^{-1}$ in $G$ and $\varphi$ is surjective, it follows that $\varphi(H_{x, y}) = Q$ is the normal closure of $\varphi(xy^{-1})$ in $Q$. In particular $\rho(H_{x, y}) = S$ is the normal closure of $\rho(xy^{-1})$ in $S$. Therefore $\rho(x) \neq \rho(y)$. 
\end{proof}

We recall that if $P$ is a group property (e.g. being finite, or nilpotent, or solvable), a group $G$ is called \textbf{residually $P$} if every non-trivial element of $G$ remains non-trivial in some quotient of $G$ satisfying $P$. Moreover $G$ is called  \textbf{fully residually $P$} if for every finite subset $M$ of  $G$, there is a quotient map $\rho \colon G \to Q$ onto some group satisfying $P$, which is moreover injective on $M$.

With that terminology at hand, we deduce from Corollary~\ref{cor:Olsh4} that if  all hyperbolic groups are residually finite, then   every non-elementary hyperbolic group $G$ with $E_G(G)= \{1\}$  is \textbf{fully residually finite simple}. The following neat observation was pointed out to me by P.~Neumann. 

\begin{prop}[P.~Neumann]\label{prop:Neumann}
Let $G$ be a group, all of whose non-trivial normal subgroups have a trivial centralizer. For any group property $P$, the group $G$ is residually $P$ if and only if  $G$ is fully residually $P$. 	

In particular, a non-elementary hyperbolic group $G$ with $E_G(G)=\{1\}$ or, more generally, an acylindrically hyperbolic group with trivial amenable radical,  is residually $P$ if and only if it is fully residually $P$. 
\end{prop}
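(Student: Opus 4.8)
The statement asserts an equivalence, and only one direction has content. If $G$ is fully residually $P$, then applying the definition to the finite set $\{1,g\}$ for an arbitrary $1\neq g\in G$ produces a quotient $\rho\colon G\to Q$ with $Q$ satisfying $P$ and $\rho(g)\neq 1$; hence $G$ is residually $P$. So the plan is to prove the converse under the standing hypothesis: every non-trivial normal subgroup of $G$ has trivial centralizer. The key preliminary is that this hypothesis forces \emph{every finite intersection of non-trivial normal subgroups of $G$ to be non-trivial}: for two such subgroups $N_1,N_2$ one has $[N_1,N_2]\subseteq N_1\cap N_2$, so $N_1\cap N_2=\{1\}$ would give $N_2\subseteq \mathrm{C}_G(N_1)=\{1\}$, a contradiction, and the general case follows by induction.

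Assume now $G$ is residually $P$. Let $\mathcal{M}$ be the family of normal subgroups $N\trianglelefteq G$ with $G/N$ satisfying $P$; residual $P$-ness is precisely the statement $\bigcap_{N\in\mathcal{M}}N=\{1\}$. If $G$ itself satisfies $P$ the claim is trivial (take $\mathrm{id}_G$), so assume not; then every member of $\mathcal{M}$ is non-trivial. Fix a finite $M\subseteq G$ and set $T=\{g_ig_j^{-1}\mid g_i,g_j\in M,\ g_i\neq g_j\}$, a finite set of non-trivial elements. It suffices to find $N\in\mathcal{M}$ with $N\cap T=\varnothing$, for then the map $G\to G/N$ satisfies $P$ and is injective on $M$. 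Suppose no such $N$ exists, i.e.\ every $N\in\mathcal{M}$ contains some element of $T=\{h_1,\dots,h_m\}$. Put $\mathcal{M}^{(k)}=\{N\in\mathcal{M}\mid h_k\in N\}$, so $\mathcal{M}=\bigcup_{k=1}^m\mathcal{M}^{(k)}$; discarding empty terms we may assume every $\mathcal{M}^{(k)}$ is non-empty, and then $R_k:=\bigcap_{N\in\mathcal{M}^{(k)}}N$ is a normal subgroup containing $h_k$, hence non-trivial. But $\bigcap_{k=1}^m R_k=\bigcap_{N\in\mathcal{M}}N=\{1\}$, contradicting the preliminary observation. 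This contradiction shows $G$ is fully residually $P$, completing the main equivalence.

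For the ``in particular'' clause I would verify the centralizer hypothesis for the two classes named. If $G$ is non-elementary hyperbolic with $E_G(G)=\{1\}$, then any non-trivial normal subgroup $N$ is non-elementary: a finite normal subgroup lies inside $E_G(G)$, while a two-ended normal subgroup would make $G$ stabilize a pair of points of $\partial G$, contradicting non-elementarity. Hence $N$ contains an infinite-order element $h$, and $\mathrm{C}_G(N)\subseteq\mathrm{C}_G(h)$, which is virtually cyclic; thus $\mathrm{C}_G(N)$ is an elementary normal subgroup of $G$, and by the same dichotomy it is trivial. The acylindrically hyperbolic case with trivial amenable radical is entirely analogous: a non-trivial normal subgroup contains a loxodromic WPD element $h$, so $\mathrm{C}_G(N)\subseteq\mathrm{C}_G(h)$ is virtually cyclic, whence $\mathrm{C}_G(N)$ is a normal amenable subgroup and therefore trivial. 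Applying the main equivalence then yields the stated conclusion.

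The substance of the proof is short once it is packaged correctly; the point to get right is that $\mathcal{M}$ need \emph{not} be closed under finite intersections, so one cannot merely intersect separate witnesses for the individual $h_k$. The device is to transfer the problem, via the subgroups $R_k$, back to the single identity $\bigcap_{N\in\mathcal{M}}N=\{1\}$ and to deploy the centralizer condition there. The more delicate routine verification is that the hyperbolic and acylindrically hyperbolic examples actually satisfy the hypothesis, which rests on the structure theory (elementary closures, loxodromic WPD elements and their virtually cyclic centralizers) recalled earlier.
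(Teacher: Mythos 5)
Your proof is correct, and for the main equivalence it takes a genuinely different route than the paper. The paper argues by induction on $|M|$: given two non-trivial elements $x,y \in M$, the hypothesis forces $x \notin \mathrm{C}_G(N)$ for $N$ the normal closure of $y$, so some conjugate gives $[x, gyg^{-1}] \neq 1$; replacing $\{x,y\}$ by this single commutator decreases $|M|$ and any quotient killing neither it nor the rest of $M'$ automatically kills neither $x$ nor $y$. You instead extract the structural content of the hypothesis into a standalone lemma — non-trivial normal subgroups pairwise (hence finitely) intersect non-trivially, since $[N_1,N_2] \subseteq N_1 \cap N_2$ — reduce injectivity on $M$ to avoiding the finite set $T$ of non-trivial differences, and then run a covering argument on the family $\mathcal{M}$ of kernels: grouping $\mathcal{M}$ by which element of $T$ each kernel catches yields finitely many non-trivial normal subgroups $R_k$ with $\bigcap_k R_k = \{1\}$, contradicting the lemma. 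Both arguments are short; the paper's is more constructive (one can trace which quotient works), while yours cleanly separates the role of the hypothesis (it makes the non-trivial normal subgroups a filter) from the combinatorics, which is the conceptual point.

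For the \emph{in particular} clause your verification is also sound but slightly more hands-on than the paper's: you check the centralizer hypothesis directly for non-elementary hyperbolic $G$ with $E_G(G)=\{1\}$ (finite normals land in $E_G(G)$, two-ended normals force $G$ to stabilize a boundary pair, so a non-trivial normal $N$ is non-elementary and $\mathrm{C}_G(N) \subseteq \mathrm{C}_G(h) \subseteq E_G(h)$ for $h \in N$ of infinite order), whereas the paper reduces immediately to the acylindrically hyperbolic case by noting $E_G(G)$ equals the amenable radical. In the acylindrically hyperbolic case your key claim — that every non-trivial normal subgroup contains a loxodromic WPD element — is true but does deserve a pointer to Osin's structure results (essentially the same Lemma~7.2--7.3 of \cite{Osin_acyl} that the paper cites, combined with the triviality of the finite/amenable radical); once that is granted, the rest ($\mathrm{C}_G(h)$ virtually cyclic, hence $\mathrm{C}_G(N)$ amenable normal, hence trivial) is immediate.
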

\begin{proof}
Clearly, if $G$ is fully residually $P$ then it is residually $P$. In order to prove the converse, it suffices to show that for any non-empty finite subset $M \subset G$, there is a  quotient map $\rho \colon G \to Q$ onto some group satisfying $P$, such that $\rho(x) \neq 1$ for all $x \in M$ with $x \neq 1$. We do this by induction on $|M|$, the base case $|M|=1$ being clear by the hypothesis that $G$ is residually $P$. 

Assume now that $|M|>1$. If $M$ contains only one non-trivial element, then the result is clear since $G$ is residually $P$. We may therefore assume that $M$ contains two non-trivial elements, say $x$ and $y$. Let $N$ be the normal closure of $y$ in $G$. Thus $N$ is the subgroup of $G$ generated by the conjugacy class of $y$. Since $N$ has a trivial centralizer by hypothesis, we have $x \not \in \mathrm C_G(N)$, so that there exists $g \in G$ with $[x, gyg^{-1}] \neq 1$. Let $M' = M \setminus \{x, y\} \cup \{[x, gyg^{-1}]\}$. By induction, there is a  quotient map $\rho \colon G \to Q$ onto some group satisfying $P$, such that $\rho(z) \neq 1$ for all $z \in M'$ with $z \neq 1$. In particular $\rho(x) \neq 1 \neq \rho(y)$. Thus $\rho(z) \neq 1$ for all $z \in M$ with $z \neq 1$, as required.

If $G$ is non-elementary hyperbolic, then $E_G(G)$ coincides with the amenable radical of $G$. Assume that  $G$ is acylindrically hyperbolic with trivial amenable radical. Let $N$ be a non-trivial normal subgroup. Then   $ N \mathrm C_G(N)$ is acylinrically hyperbolic by \cite[Corollary~1.5]{Osin_acyl}, so by (the proof of) \cite[Lemma~7.3]{Osin_acyl}, either $N$ or $\mathrm C_G(N)$ is contained in the amenable radical of $G$. Since the latter is trivial while $N$ is non-trivial, we conclude that $\mathrm C_G(N)$ is trivial. 
\end{proof}

\subsection{The space of marked groups}

The  remark preceding Proposition~\ref{prop:Neumann} can be nicely interpreted in the framework of the space of marked groups. This is a compact Hausdorff topological space that was alluded to by M.~Gromov in \cite{Gromov_poly}, and formally defined by R.~Grigorchuk \cite{Gri}. In this section, we follow the presentation of Champetier--Guirardel \cite[\S 2]{ChamGuir}.

A \textbf{($d$-generated) marked group} is a pair $(G, S)$ consisting of a group $G$ and a $d$-tuple $(s_1, \dots, s_d)$ which generates $G$. Let $F_d$ be  the free group of rank $d$ and fix a generating $d$-tuple $(a_1, \dots, a_d)$. Every $d$-generated marked group $(G, S)$ gives rise to a unique surjective homomorphism $F_d \to G$ with $a_i \mapsto s_i$. Thus the set of isomorphism classes of $d$-generated marked groups can be identified with the set of normal subgroups of $F_d$. That set carries a natural  totally disconnected compact Hausdorff topology, namely the \textbf{Chabauty topology}. The \textbf{space  of $d$-generated marked groups} is the compact space consisting of the isomorphism classes of $d$-generated marked groups. An alternative way to see it is by associating  to a marked group $(G,S)$ its Cayley graph $\Cay(G,S)$, viewed as a directed edge-labeled graph with labels in $S$. Any isomorphism class of $d$-generated marked groups admits a representative of the form $(H, S)$, so that the Cayley graph of $H$ with respect to $S$ has the same degree and the same set of labels as the Cayley graph of $(G,S)$. A basic neighbourhood  of the isomorphism class of $(G,S)$ in the space of marked groups consists of the classes admitting a respresentative $(H,S)$ such that the $n$-ball around the identity in $\Cay(G,S)$ and $\Cay(H,S)$ are isomorphic as directed, edge-labeled graphs.  It is customary to identify a marked group $(G,S)$ with its isomorphism class; that abuse of language and notation should not cause any confusion.  

Here are a few examples of  converging sequences in the space of marked groups. 

\begin{itemize}
\item The sequence $(\mathbf Z/n\mathbf Z, \{1 + n \mathbf Z\})$ converges to  $(\mathbf Z, \{1\})$ as $n$ tends to infinity in the space of cyclic marked groups. 

\item The sequence $(\PSL_n(\FF_p), E_p)$ converges to $(\PSL_n(\ZZ), E)$ as the prime $p$ tends to infinity, where $E_p$ and $E$ are the images of the sets of elementary matrices. 

\item If $G = \langle S \mid r_1, r_2, \dots \rangle$ is an infinite presentation of a group $G$, then $(G,S)$ is a limit of the sequence $(G_n, S)$, where $G_n = \langle S \mid r_1,  \dots, r_n \rangle$.
\end{itemize}

The following basic fact is well known and easy to see. 

\begin{lem}\label{lem:FP}
Let  $G$ be a finitely presented group. For every marking $(G,S)$, the set 
$$\{ (G/N,SN/N) \mid  N \lhd G\}$$ 
of marked quotients of $G$ is a neighbourhood of $(G,S)$ in the space of marked groups. 
\end{lem}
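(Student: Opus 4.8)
The plan is to combine the definition of the Chabauty topology on the space of $d$-marked groups (in terms of balls in Cayley graphs) with the fact that finite presentability means the kernel of the canonical epimorphism $F_d \to G$ is the normal closure of finitely many relators of bounded length.

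First I would fix the data attached to the marking. Put $d = |S|$, and let $\pi \colon F_d \to G$ be the canonical epimorphism sending the distinguished generating $d$-tuple of $F_d$ to $S$, with kernel $K = \Ker \pi$. Since $G$ is finitely presented, and finite presentability does not depend on the choice of finite generating set, $K$ is the normal closure in $F_d$ of a finite set of words $r_1, \dots, r_m \in F_d$. Set $\ell = \max_i |r_i|$, where $|\cdot|$ denotes word length in $F_d$.

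Next I would recall the (routine) translation between the two descriptions of basic neighbourhoods. Two $d$-markings $(G,S)$ and $(H,S')$ lie in a common basic neighbourhood indexed by $n$ precisely when the balls of radius $n$ around the identity in $\Cay(G,S)$ and $\Cay(H,S')$ are isomorphic as $S$-labeled directed graphs; and this is equivalent to the condition $K \cap B_n = N' \cap B_n$, where $N' = \Ker(F_d \to H)$ and $B_n \subset F_d$ is the set of words of length $\leq n$. Indeed, for $|w| \le n$ every partial product of $w$ also has length $\le n$, so the path in $\Cay(G,S)$ traced by $w$ stays inside $B_n$, and an isomorphism of $n$-balls detects exactly whether that path returns to the identity; hence $w =_G 1 \iff w =_H 1$ for all $w \in B_n$.

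Then comes the key step. Choose any $n \geq \ell$, and let $(H,S')$ be a $d$-marking that is $n$-close to $(G,S)$, with kernel $N' \lhd F_d$. Each $r_i$ lies in $K$ and satisfies $|r_i| \leq \ell \leq n$, so $r_i \in K \cap B_n = N' \cap B_n \subseteq N'$. As $N'$ is normal in $F_d$ and contains $r_1, \dots, r_m$, it contains their normal closure, which is $K$; hence $K \subseteq N'$. Therefore $H = F_d/N'$ is a quotient of $G = F_d/K$: writing $N = N'/K \lhd G$ we get an isomorphism $H \cong G/N$ under which the marking $S'$ (the image in $H$ of the distinguished tuple of $F_d$) corresponds to $SN/N$. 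Thus every $d$-marking in the basic neighbourhood of $(G,S)$ indexed by $n$ is a marked quotient of $(G,S)$; since $(G,S)$ itself is such a quotient (take $N = \{1\}$), this shows that $\{(G/N,SN/N) \mid N \lhd G\}$ contains an open neighbourhood of $(G,S)$, i.e. it is a neighbourhood of $(G,S)$. The only mildly delicate point is the bookkeeping in the second step — matching "isomorphic $n$-balls in Cayley graphs" with "$K$ and $N'$ agree on $B_n$", and being slightly careful about whether the precise convention forces $n$ or $2n$; once the convention of Champetier--Guirardel is fixed this is routine, and the argument is insensitive to the exact constant. I expect no genuine obstacle beyond this.
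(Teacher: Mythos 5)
The paper does not actually supply a proof of this lemma, dismissing it as ``well known and easy to see''; your argument is the standard one and is correct. The key step—choosing $n$ at least the maximal relator length so that any marking $n$-close to $(G,S)$ has defining kernel containing all the $r_i$, hence containing $K$ by normality—is exactly right, and your remark that the $n$ versus $2n$ bookkeeping in matching ball-isomorphisms with agreement of $K$ and $N'$ on $B_n$ is harmless is also correct, since you only use the direction (closeness $\Rightarrow$ agreement on a ball) that needs no such adjustment.
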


Since finitely generated nilpotent groups are finitely presented, it follows that a non-trivial nilpotent group cannot be a limit of perfect groups in the space of marked groups. 

It is a natural problem to study the limits of  non-abelian finite simple groups. We emphasize the following question. 

\begin{prob}\label{prob:limits}
\begin{enumerate}[(i)]
\item Find an algebraic characterization of the  limits of non-abelian finite simple groups in the space of marked groups.  

\item 
Can an infinite group of finite exponent be a limit of non-abelian finite simple groups? 

\end{enumerate}
\end{prob}
 
 It should be emphasized that being a limit of 	a sequence of non-abelian finite simple groups is an algebraic property which is independent of any choice of marking. Indeed, this follows from a general observation due to Cornulier--Guyot--Pitsch \cite[Lemma~1]{CGP}, so that  Problem~\ref{prob:limits}(i)   is well-posed. 
 
A  more precise version of Problem~\ref{prob:limits}(ii) is proposed by Ivanov--Olshanskii in \cite[Problem~2]{IvanovOlshanskii}. 

In an earlier version of these notes, I also asked whether a metabelian group can be a limit of non-abelian finite simple groups, or whether a limit of non-abelian finite simple groups  can satisfy a law. The latter two questions were positively answered  by Y.~Cornulier, who pointed out the following. 

\begin{prop}[Y.~Cornulier]\label{prop:Cor}
For any prime $p$, the wreath product $C_p \wr \mathbf Z$ is a limit of alternating groups of prime degrees in the space of marked groups. In particular $\mathbf Z \wr \mathbf Z$  is also a limit of alternating groups. 
\end{prop}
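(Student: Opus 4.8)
The plan is to realise $C_p \wr \mathbf{Z}$, equipped with its standard marking $(a,t)$ in which $a$ generates the ``lamp'' $C_p$ at the origin and $t$ is the shift, as a limit of suitably marked alternating groups $\mathrm{Alt}(\ell)$ with $\ell$ a prime tending to infinity. The heart of the matter is to produce, for each large prime $\ell$, a pair $\alpha,\tau$ of \emph{even} permutations of $\mathbf{Z}/\ell\mathbf{Z}$ which generate $\mathrm{Alt}(\ell)$ and ``look like'' $(a,t)$ on an ever larger ball. Concretely I would take $\tau\colon x\mapsto x+p$ and let $\alpha$ be the $p$-cycle $(0\,1\,\cdots\,p{-}1)$ when $p$ is odd; when $p=2$ I would instead take $\tau\colon x\mapsto x+4$ and $\alpha=(0\,1)(2\,3)$, so that $\alpha$ is again an even involution. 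Since $\gcd(p,\ell)=1$ (resp.\ $\gcd(4,\ell)=1$), the permutation $\tau$ is an $\ell$-cycle, and for $\ell$ odd both $\alpha$ and $\tau$ are even, hence lie in $\mathrm{Alt}(\ell)$.

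The first step is to check that $\langle\alpha,\tau\rangle=\mathrm{Alt}(\ell)$ for $\ell$ large. As $\tau$ is an $\ell$-cycle, $\langle\alpha,\tau\rangle$ is transitive of prime degree $\ell$, hence primitive; and it contains a cycle of prime length at most $\ell-3$: namely $\alpha$ itself (of length $p$) when $p$ is odd, and, when $p=2$, the element $\tau^m\alpha\tau^{-m}\alpha$ with $m$ chosen so that $4m\equiv 1\pmod\ell$, which one computes to be the $5$-cycle $(0\,2\,4\,3\,1)$. Jordan's theorem (a primitive group of degree $n$ containing a cycle of prime length $\le n-3$ contains the alternating group) then gives $\langle\alpha,\tau\rangle\supseteq\mathrm{Alt}(\ell)$, and since $\alpha,\tau$ are even this is an equality.

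The second step is the convergence of balls. Fix $r$ and take $\ell$ a prime with $\ell>4pr$. Every word $w$ in $a^{\pm1},t^{\pm1}$ of length $\le r$ evaluates in $C_p\wr\mathbf{Z}$ to a normal form $w(a,t)=\varphi\cdot t^{s}$ with $\varphi\colon\mathbf{Z}\to C_p$ supported in $[-r,r]$ and $|s|\le r$. A routine induction on $|w|$ — using that all supports occurring stay inside a fixed interval of length $<\ell$, so there is no ``wrap-around'', and that the conjugates $\tau^k\alpha\tau^{-k}$ with $|k|\le r$ have pairwise disjoint supports — shows that $w(\alpha,\tau)=\bigl(\prod_{k}(\tau^k\alpha\tau^{-k})^{\varphi(k)}\bigr)\tau^{s}$ in $\mathrm{Sym}(\mathbf{Z}/\ell\mathbf{Z})$. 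This product is the identity iff $s=0$ and $\varphi=0$ (the first factor has support inside a fixed bounded interval, whereas $\tau^{s}$ has no fixed point for $0<|s|<\ell$), i.e.\ iff $w(a,t)=1$ in $C_p\wr\mathbf{Z}$. Hence the $r$-ball of $(\mathrm{Alt}(\ell),(\alpha,\tau))$ in its labelled Cayley graph is isomorphic to that of $(C_p\wr\mathbf{Z},(a,t))$; letting $r\to\infty$ with $\ell=\ell(r)\to\infty$ over primes yields $(\mathrm{Alt}(\ell(r)),(\alpha,\tau))\to(C_p\wr\mathbf{Z},(a,t))$ in the space of marked groups, which proves the first assertion.

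Finally, for $\mathbf{Z}\wr\mathbf{Z}$ I would observe that the standard presentations of $C_p\wr\mathbf{Z}$ and of $\mathbf{Z}\wr\mathbf{Z}$ with respect to the marking $(a,t)$ differ only by the relator $a^p$, of length $p$; hence the $r$-balls of $(C_p\wr\mathbf{Z},(a,t))$ and $(\mathbf{Z}\wr\mathbf{Z},(a,t))$ agree as soon as $p>r$. Combining this with the previous paragraph by a diagonal argument (choose a prime $p=p(r)>r$, then $\ell=\ell(r)$ large enough that the $r$-ball of $\mathrm{Alt}(\ell)$ matches that of $C_{p}\wr\mathbf{Z}$) produces a sequence of marked alternating groups of prime degree converging to $(\mathbf{Z}\wr\mathbf{Z},(a,t))$; equivalently, the set of limits of alternating groups of prime degree is closed in the space of marked groups. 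I expect the main obstacle to be the identification $\langle\alpha,\tau\rangle=\mathrm{Alt}(\ell)$ — it forces the somewhat ad hoc choice of $\alpha$ and the $5$-cycle trick when $p=2$, and rests on Jordan's theorem together with the primitivity of transitive groups of prime degree — whereas the verification that no ``extra'' short relations appear, although it requires care, is a direct computation made tractable by the facts that $\tau$ has order $\ell\to\infty$ and the lamp $\alpha$ has bounded support.
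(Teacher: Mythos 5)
Your proof is correct and follows essentially the same route as the paper's: the same choice of a $p$-cycle for the lamp and translation by $p$ for the shift (with the ad hoc modification for $p=2$), the same appeal to Jordan's theorem, the same ball-comparison via disjoint supports and absence of wrap-around, and the same diagonal argument for $\mathbf Z \wr \mathbf Z$. The one noteworthy difference is the generation step. You argue directly that $\langle\alpha,\tau\rangle$ is transitive of prime degree $\ell$, hence primitive, and apply Jordan to the $p$-cycle $\alpha$ (or to the $5$-cycle $\tau^m\alpha\tau^{-m}\alpha$ when $p=2$). The paper instead works with the full $q$-cycle $b$ and establishes $\langle a,b\rangle=\Alt(q)$ in two stages --- first obtaining $\Alt(2p-1)$ from a lemma on two overlapping $p$-cycles, then bootstrapping by conjugating with $\langle b\rangle$ --- and only afterwards switches to the marking $(a,t)$ with $t=b^p$, implicitly taking for granted that this pair still generates. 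Your direct primitivity-plus-Jordan argument for $\langle\alpha,\tau\rangle$ is a bit shorter and supplies exactly the step the published proof leaves to the reader.
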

\begin{proof}
Let us start with the case where $p$ is odd. Let $q $ be another prime with $q > 2p$.  Let $a =(1, \dots, p)$ and $b = (1, \dots, q)$ be cyclic permutations in $\Sym(q)$. Notice that $G= \langle a, b\rangle$ is contained in $\Alt(q)$. We claim that $G= \Alt(q)$. 

First observe that $a$ and $b^{p-1} a b^{-p+1}$ are two $p$-cycles whose support overlap in the singleton $\{p\}$. Therefore they generate a group preserving the set  $\{1, \dots, 2p-1\}$  and whose action on that set is $2$-transitive by \cite[Lemma 3.1]{BCGM}. A classical result of C.~Jordan (see  \cite[Theorem~13.9]{Wielandt}) ensures that a primitive group of degree $k$ containing a prime cycle of order $p \leq k-3$ contains $\Alt(k)$. Using that fact (and a direct computation in case $p=3$), we see that $\langle a, b^{p-1} a b^{-p+1} \rangle \cong \Alt(2p-1)$. It is easy to see that the group generated by the $\langle b \rangle$-conjugates of $\Alt(2p-1)$ is $2$-transitive on $\{1, \dots, q\}$. Using again Jordan's result, it follows that this group is the full $\Alt(q)$. This proves the claim. 

Consider now the generating pair $S = (a, t)$ for $G$, where $t = b^p$. For $n < q/p$, the ball of radius $n$ around the identity in $\Cay(G, S)$ is isomorphic to the ball of radius $n$ in the Cayley graph of  $C_p \wr \mathbf Z$  with respect to the natural generating pair. The desired assertion follows by letting the prime $q$ tend to infinity. 

For $p=2$, one defines $a =(1,2)(3,4)$ and uses similar considerations. 

Finally, the result for $\mathbf Z \wr \mathbf Z$ follows since the latter group is a limit of $C_p \wr \mathbf Z$ for $p$ tending to infinity. 
\end{proof}

Notice that Proposition~\ref{prop:Cor} together with Lemma~\ref{lem:FP} yields a proof of the well-known fact that the lamplighter group $C_p \wr \mathbf Z$ is not finitely presented.

Problem~\ref{prob:limits} is quite natural in its own right. Moreover, a negative answer to Problem~\ref{prob:limits}(ii)  would yield a negative answer to Problem~\ref{prob:hyp_RF}. In order to see that, let us recall another well known and easy fact. 

\begin{lem}\label{lem:FullyResidually}
Let $G$ be a group and $\mathcal P$ be a collection of groups. Assume that $G$ is fully residually in $\mathcal P$. 

Then for any $d$-generator marking $(G, S)$, there is a sequence of $d$-generated marked groups $(H_n, S_n)$ with $H_n \in \mathcal P$ which converges to $(G,S)$ in the space of marked groups. 
\end{lem}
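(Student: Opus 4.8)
The plan is to obtain each approximating marked group by applying the full residual finiteness hypothesis to a sufficiently large ball of $G$, and then to verify combinatorially that the resulting quotient has the same small ball in its Cayley graph as $(G,S)$.

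Concretely, fix the marking $(G,S)$ with $S = (s_1, \dots, s_d)$. For every integer $n \geq 1$, let $M_n = \{ g \in G : |g|_S \leq n+1\}$ be the ball of radius $n+1$ in $G$ with respect to the word length $|\cdot|_S$; this is a finite subset of $G$. Since $G$ is fully residually in $\mathcal P$, there is a surjective homomorphism $\rho_n \colon G \to H_n$ onto a group $H_n \in \mathcal P$ whose restriction to $M_n$ is injective. I would then set $S_n = (\rho_n(s_1), \dots, \rho_n(s_d))$; as $\rho_n$ is onto and $S$ generates $G$, the tuple $S_n$ generates $H_n$, so $(H_n, S_n)$ is a $d$-generated marked group with $H_n \in \mathcal P$.

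The next step is to check that the ball $B(n)$ of radius $n$ around the identity in $\Cay(G,S)$ is isomorphic, as a rooted directed edge-labeled graph, to the corresponding ball in $\Cay(H_n, S_n)$. The vertex set of $B(n)$ is $V_n = \{ g \in G : |g|_S \leq n\} \subseteq M_n$, and since $S_n = \rho_n(S)$ one checks that $\rho_n$ restricts to a bijection from $V_n$ onto the vertex set of the radius-$n$ ball of $\Cay(H_n, S_n)$ (surjectivity by lifting $S_n$-words to $S$-words, injectivity because $V_n \subseteq M_n$). Moreover a directed edge labeled $i$ joins $g, g' \in V_n$ in $B(n)$ exactly when $g' = g s_i$; since then $g s_i \in M_n$ and $g' \in M_n$, the injectivity of $\rho_n$ on $M_n$ gives $\rho_n(g') = \rho_n(g)\rho_n(s_i)$ if and only if $g' = g s_i$, so edges and their labels are preserved and reflected. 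As $\rho_n(1_G) = 1_{H_n}$, this yields the desired isomorphism of radius-$n$ balls.

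Finally, to conclude convergence: given any $n_0$, for every $n \geq n_0$ the radius-$n$ ball of $(H_n, S_n)$ coincides with that of $(G,S)$, hence in particular so does the radius-$n_0$ ball; therefore $(H_n, S_n) \to (G,S)$ in the space of marked groups. The only slightly delicate point is the combinatorial equivalence used in the third paragraph --- that injectivity of $\rho_n$ on the ball of radius $n+1$ forces the radius-$n$ balls of the two Cayley graphs to be isomorphic --- but this is entirely routine once the definitions of the ball and of the topology on the space of marked groups are unwound.
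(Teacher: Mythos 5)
Your proof is correct, and it is the natural argument that the paper implicitly has in mind (the paper labels this lemma as a "well known and easy fact" and supplies no proof). Your choice of the radius-$(n+1)$ ball $M_n$ is exactly what is needed: $V_n \subseteq M_n$ handles the vertex bijection, and $|g s_i|_S \leq n+1$ for $g \in V_n$ handles the edge relations in both directions, so the induced subgraph on $V_n$ is carried isomorphically onto the $n$-ball of $\Cay(H_n,S_n)$ with the base point preserved.
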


In view of Corollary~\ref{cor:Olsh4}, we obtain:

\begin{cor}\label{cor:Olsh5}
If all hyperbolic groups are residually finite, then for every non-elementary hyperbolic group $G$ with $E_G(G) = \{1\}$, every $d$-generator marking $(G,S)$  is a limit a non-abelian finite simple groups (containing arbitrarily alternating groups) in the space of $d$-generated marked groups. 
\end{cor}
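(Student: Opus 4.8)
The plan is to obtain the statement by feeding Corollary~\ref{cor:Olsh4} into Lemma~\ref{lem:FullyResidually}, with one diagonal argument to handle the clause about alternating subgroups.

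First I would assume that every hyperbolic group is residually finite and fix a non-elementary hyperbolic group $G$ with $E_G(G)=\{1\}$ together with a $d$-generator marking $(G,S)$, where $S=(s_1,\dots,s_d)$. For each integer $n\geq 5$, let $\mathcal{P}_n$ denote the class of non-abelian finite simple groups admitting a subgroup isomorphic to $\Alt(n)$. Then Corollary~\ref{cor:Olsh4} says precisely that for every finite subset $M\subset G$ there is an epimorphism $\rho\colon G\to S\in\mathcal{P}_n$ whose restriction to $M$ is injective; in other words, $G$ is fully residually in $\mathcal{P}_n$ in the sense of the terminology introduced before Proposition~\ref{prop:Neumann}. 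Applying Lemma~\ref{lem:FullyResidually} with $\mathcal{P}=\mathcal{P}_n$ therefore yields, for each fixed $n\geq 5$, a sequence $(H^{(n)}_k,S^{(n)}_k)_{k\geq 1}$ of $d$-generated marked groups with $H^{(n)}_k\in\mathcal{P}_n$ that converges to $(G,S)$ in the space of $d$-generated marked groups.

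Next I would diagonalize. For each $n\geq 5$, use convergence of the $n$-th sequence to choose $k(n)$ large enough that $(H^{(n)}_{k(n)},S^{(n)}_{k(n)})$ lies in the basic neighbourhood of $(G,S)$ consisting of the marked groups whose ball of radius $n$ around the identity in the directed labelled Cayley graph agrees with that of $(G,S)$. Setting $(H_n,T_n):=(H^{(n)}_{k(n)},S^{(n)}_{k(n)})$, the sequence $(H_n,T_n)_{n\geq 5}$ then converges to $(G,S)$ by construction, each $H_n$ is a non-abelian finite simple group, and $H_n$ contains a copy of $\Alt(n)$, so the alternating subgroups that occur have unbounded degree. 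This is exactly the asserted conclusion.

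I do not expect a serious obstacle here: Corollary~\ref{cor:Olsh4} is essentially tailored to this application, and the only mild point is the diagonalization needed to upgrade ``finite simple quotients'' to ``finite simple quotients containing arbitrarily large alternating groups'', together with unwinding the Chabauty topology in terms of balls of the labelled Cayley graph (which is already packaged in Lemma~\ref{lem:FullyResidually}). An alternative to using Lemma~\ref{lem:FullyResidually} as a black box would be to build the sequence by hand: apply Corollary~\ref{cor:Olsh4} with $M=M_n$ the ball of radius $n$ in $\Cay(G,S)$ and with parameter $n$, obtaining $\rho_n\colon G\to S_n$ onto a finite simple group containing $\Alt(n)$; then injectivity of $\rho_n$ on $M_n$ forces the balls of radius $\lfloor n/2\rfloor$ in $\Cay(S_n,\rho_n(S))$ and in $\Cay(G,S)$ to coincide, whence $(S_n,\rho_n(S))\to(G,S)$. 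Either route works; I would present the first, since it offloads the topological bookkeeping onto Lemma~\ref{lem:FullyResidually}.
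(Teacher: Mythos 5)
Your proposal is correct and takes essentially the same route as the paper, which derives Corollary~\ref{cor:Olsh5} as a one-line consequence of Corollary~\ref{cor:Olsh4} together with Lemma~\ref{lem:FullyResidually}. The diagonalization you spell out across the classes $\mathcal{P}_n$ is precisely the small step the paper leaves implicit to upgrade ``fully residually finite simple'' to a single converging sequence of finite simple groups containing arbitrarily large alternating subgroups.
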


A crucial  point to underline is that, in the space of marked groups, limits of hyperbolic groups can be quite wild: they can be infinite groups of finite exponent by \cite{IvanovOlshanskii}. Thus, if all hyperbolic groups were residually finite, then Burnside groups would arise as limits of non-abelian finite simple groups. 

\subsection{Examples of fully residually finite simple groups}

Currently,  rather little is known on limits of non-abelian finite simple groups in the space of marked groups. We devote  this subsection to a list of a few families of groups that are fully residually finite simple. Those are thus limits of finite simple groups with respect to any choice of marking in view of Lemma~\ref{lem:FullyResidually}.

\begin{enumerate}[(1)]
\item The free group $F_r$ of rank $r \geq 2$ is fully residually finite simple.  This can be established as follows. First observe that $\PSL_2(\ZZ)$ has a finite index subgroup isomorphic to the free group $F_2$ (this can be seen geometrically, or algebraically using that $\PSL_2(\ZZ)$ is isomorphic to $C_2 * C_3$). In particular, for every $r \geq 2$, the free group $F_r$ is a finite index subgroup of $\PSL_2(\ZZ)$. On the other hand, for any finite set $M \subset \PSL_2(\ZZ)$ and  any sufficiently large prime $p$, the congruence quotient map $\PSL_2(\ZZ) \to \PSL_2(\FF_p)$ is injective on $M$. Since $\PSL_2(\FF_p)$ is generated by elementary matrices, which have order $p$, it cannot act non-trivially on a set with less than $p$ elements. Hence $\PSL_2(\FF_p)$  does not have any proper subgroup of index~$< p$. It follows that the restriction of the quotient map $\PSL_2(\ZZ) \to \PSL_2(\FF_p)$ to a fixed finite index subgroup of $\PSL_2(\ZZ)$ is surjective for all but finitely many primes $p$. It follows that $F_r$ is fully residually finite simple. 

\item The free group $F_r$ of rank $r \geq 2$ is also fully residually in the class $\{\Alt(n) \mid n \geq 5\}$ of alternating groups. Indeed, it follows from \cite{KatzMagnus} that $F_r$ is residually alternating, and the conclusion then follows from Proposition~\ref{prop:Neumann}.   See also \cite{DPSS} and \cite{Wilton_AltQuotients} for various extensions of that result.

\item In view of (1), any group that is fully residually free must also be fully residually finite simple. Fully residually free groups coincide with the so-called \textbf{limit groups} in the sense of Z.~Sela, see \cite{Sela}. That class includes the fundamental groups of closed surfaces of genus~$\geq 2$. 

\item Let $G = \ast_{i \in I} G_i$ be a free product of at least two non-trivial residually finite groups. It is proved in \cite{TamburiniWilson} that $G$ is residually in  $\{\Alt(n) \mid n \geq 5\}$ unless $G$ is the infinite dihedral group. Hence $G$ is \emph{fully} residually in $\{\Alt(n) \mid n \geq 5\}$ by Proposition~\ref{prop:Neumann}.  Various extensions of that result were established in \cite{LiebeckShalev1, LiebeckShalev2}. 

\item The  fundamental group $G$ of a closed $3$-manifold is fully residually finite simple if it is infinite and contained in $\SL_2(\overline {\mathbf Q})$. This  follows from \cite{LongReid} and Proposition~\ref{prop:Neumann}. 

\item For $n \geq 3$, the group $\Out(F_n)$ is residually in $\{\Alt(n) \mid n \geq 5\}$ by \cite{Gilman}, hence fully residually alternating by Proposition~\ref{prop:Neumann} and \cite[\S 8(b)]{Osin_acyl}. 
\end{enumerate}

\subsection{Virtual specialties, after I.~Agol, F.~Haglund and D.~Wise}

The concept of \emph{virtually special} groups was introduced by Haglund--Wise \cite{HaglundWise}. We do not reproduce the definition here; the reader may consult the excellent surveys \cite{Bestvina}, \cite{AFW}, \cite{Sageev_CCC} and \cite{Wilton_CCC}, that cover the basic theory of CAT($0$) cube complexes and virtually special groups, and its relevance to the theory of $3$-manifolds.  The following result is one of the most spectacular breakthroughs in Geometric Group Theory over the past decade. 

\begin{thm}\label{thm:AgolWise}
Let $G$ be a non-elementary hyperbolic group. 
\begin{enumerate}[(i)]
\item {\upshape (I.~Agol \cite{Agol})} If $G$ is capable of acting properly and cocompactly on a {\upshape CAT($0$)} cube complex, then $G$ is virtually special. 

\item {\upshape (Haglund--Wise  \cite{HaglundWise})} If $G$ is virtually special, then:
\begin{itemize}
\item  $G$ is linear over $\ZZ$, hence residually finite;
\item $G$ has a finite index subgroup that maps onto a non-abelian free group;
\item Every quasi-convex subgroup of $G$ is separable. 
\end{itemize}

\end{enumerate}

\end{thm}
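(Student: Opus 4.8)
Both assertions are major theorems; I only sketch the strategy I would follow, treating the hard combinatorial inputs as black boxes.

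\smallskip
\noindent\textbf{Part (ii).} Suppose first that a finite-index subgroup $H\leq G$ is isomorphic to $\pi_1(X)$ for some compact special cube complex $X$. The defining property of specialness --- hyperplanes embedded, two-sided, with no self-osculation and no inter-osculation --- is exactly what is needed to construct a combinatorial local isometry from $X$ into the Salvetti complex of the right-angled Artin group $A_L$, where $L$ is the crossing graph of the hyperplanes of $X$. Since a local isometry of non-positively curved cube complexes is $\pi_1$-injective, this yields an embedding $H\hookrightarrow A_L$. The plan is then to invoke the well-known linearity of right-angled Artin groups over $\mathbf{Z}$ --- for instance via the Davis--Januszkiewicz embedding of $A_L$ into a right-angled Coxeter group together with the integrality of the geometric representation of the latter --- to conclude that $H$ is linear over $\mathbf{Z}$. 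A faithful integral representation of $H$ then induces a faithful integral representation of $G$ (the induced representation has trivial kernel because its restriction to $H$ still contains the original one), so $G$ is linear over $\mathbf{Z}$, hence residually finite by Mal'cev's theorem. For the statement on quasi-convex subgroups, I would use Haglund's result that, $G$ being hyperbolic, a quasi-convex subgroup $Q\leq G$ acts cocompactly on a combinatorially convex subcomplex of the cube complex on which $G$ acts; after passing to a finite-index special cover, the image of that subcomplex maps to the base by a compact local isometry, and feeding this into the Haglund--Wise canonical completion and retraction produces a finite cover to which the convex subcomplex lifts, equipped with a retraction onto it, so that $Q$ is a virtual retract of $G$. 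A virtual retract of a residually finite group is separable (for $g$ outside $Q$ one plays off a finite quotient separating $g$ from its image under the retraction against the retraction itself). Finally, being non-elementary hyperbolic, $G$ contains a quasi-convex free subgroup $F_2$ (apply ping-pong to high powers of two independent hyperbolic isometries); by the virtual-retract property, some finite-index subgroup of $G$ retracts onto $F_2$, and in particular surjects onto a non-abelian free group.

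\smallskip
\noindent\textbf{Part (i).} Write $G=\pi_1(X)$, where $X$ is the compact non-positively curved quotient of the given CAT($0$) cube complex by the proper cocompact $G$-action; the task is to produce a \emph{special} finite cover of $X$. The plan is an induction on a complexity of $X$ --- roughly its number of hyperplanes together with its dimension --- along the hierarchy obtained by cutting $X$ repeatedly along its hyperplanes: each hyperplane subgroup is quasi-convex in the hyperbolic group $G$, the pieces produced have strictly smaller complexity, and the hierarchy terminates at trivial groups. The engine of the induction is Wise's theory of quasi-convex hierarchies: a hyperbolic group carrying a quasi-convex hierarchy whose edge groups are virtually special is itself virtually special, \emph{provided} the families of vertex and edge subgroups arising along the hierarchy can be arranged to be malnormal, which is exactly the hypothesis of Wise's Malnormal Special Quotient Theorem.

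\smallskip
\noindent\textbf{The main obstacle.} Hyperplane subgroups of a cube complex are typically far from malnormal: distinct hyperplanes cross and osculate freely, so the Malnormal Special Quotient Theorem cannot be applied to the hyperplane hierarchy directly. The decisive new ingredient, due to Agol, is the existence of a finite cover of $X$ whose hyperplanes admit a coloring by a uniformly bounded number of colors such that same-colored hyperplanes neither cross nor osculate; such a colored cover is automatically special, the color classes playing the role of the generators of a Salvetti complex. Proving the existence of this coloring is where essentially all of the difficulty lies: one argues by induction along the hierarchy, repeatedly applying the Malnormal Special Quotient Theorem to pass to finite covers destroying the bad monochromatic configurations while keeping the complexity under control, and the genuinely hard point is the combinatorial estimate guaranteeing that finitely many colors suffice throughout. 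I would single out this coloring theorem, together with the full strength of the Malnormal Special Quotient Theorem, as the crux; the remaining work --- keeping track of the finitely many finite-index subgroups produced along the way and intersecting them at the end --- is routine bookkeeping.
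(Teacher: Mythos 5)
The paper does not prove Theorem~\ref{thm:AgolWise}: it is stated there purely as a citation of Agol and Haglund--Wise, as motivation for the discussion that follows, so there is no internal argument to compare your sketch against. What you have written is therefore a summary of the external literature rather than an alternative to anything in the text.

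Your outline of part (ii) faithfully reproduces the Haglund--Wise route: the typing map to the Salvetti complex of the crossing graph, linearity of right-angled Artin groups over $\ZZ$, induction of representations to pass from a finite-index subgroup to $G$, Haglund's convex core for a quasi-convex subgroup, and canonical completion and retraction to obtain a virtual retract and hence separability. That is the standard proof. On part (i), the broad ingredients you single out (cutting along hyperplane subgroups, Wise's theory of quasi-convex hierarchies, the Malnormal Special Quotient Theorem, Agol's coloring/weak-separation induction) are indeed the engine of Agol's argument, but two points are off. First, if $G$ has torsion the quotient of the CAT($0$) cube complex by the $G$-action is not a non-positively curved cube complex, so you cannot begin by writing $G = \pi_1(X)$; the freeness (equivalently, virtual torsion-freeness) of a finite-index subgroup is part of the output of the theorem, not an available input, and Agol's proof works with the action rather than with a quotient complex. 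Second, the coloring condition as you state it (same-colored hyperplanes neither cross nor osculate) is not by itself equivalent to specialness of the cover: one must also rule out inter-osculating pairs (distinct hyperplanes that both cross and osculate), which is a genuine additional constraint and is precisely why the inductive argument along the hierarchy is delicate. Relatedly, Wise's hierarchy theorem does not require the vertex and edge groups of the hierarchy to be malnormal; malnormality is arranged internally (via quasi-convex Dehn filling and MSQT) rather than hypothesized, so the way you phrase the dependence slightly misstates the division of labor between Wise's and Agol's contributions.
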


Even without bearing in mind what  the formal definition of virtually special groups is, the key feature of Theorem~\ref{thm:AgolWise} should be transparent:  it provides a purely geometric condition on a group $G$ ensuring that $G$ enjoys very strong algebraic properties, namely the existence of a proper cocompact action on a {\upshape CAT($0$)} cube complex.  It is also important to underline that many hyperbolic groups satisfy that geometric condition: this is notably the case of all examples mentioned in Section~\ref{sec:Examples_hyp}, except for the fundamental groups of certain negatively curved closed manifolds. In fact, at the time of this writing, the only known obstruction for an infinite hyperbolic group to act properly and cocompactly on a CAT($0$) cube complex is provided by Kazhdan's property (T) (see \cite{NibloReeves}).

The strikingly broad scope of Theorem~\ref{thm:AgolWise} sheds new light on Problem~\ref{prob:hyp_RF}. Since the publication of the original paper of Haglund--Wise \cite{HaglundWise} where virtually special groups were introduced, numerous constructions and results valid in the category of hyperbolic groups were also established in the virtually special framework: Rips' construction \cite[Theorem~10.1]{HaglundWise}, the Combination Theorem \cite[Theorem~1.2]{HaglundWise_combi}, the Dehn Filling Theorem \cite[Theorem A]{Wise_hierarchy}, \cite{AGM_Dehn}. The following question is thus very natural. 

\begin{prob}\label{prob:SpecialOlshanskii}
\begin{enumerate}[(i)]
\item Is every non-elementary 	virtually special hyperbolic group $G$ with $E_G(G)=\{1\}$ fully residually finite simple? Fully residually alternating? 
	
\item Is it true that any two non-elementary virtually special hyperbolic groups have a common non-elementary virtually special quotient?

\item More generally, is Olshanskii's Theorem~\ref{thm:Olshanskii} valid in the virtually special framework, i.e. can one find a  quotient group $\tilde G$ with the extra property of being virtually special if the initial group $G$ is assumed virtually special? 

\end{enumerate}

\end{prob}

That Problem~\ref{prob:SpecialOlshanskii}(i)  holds in the special case of free amalgamated products of finite groups is closely related to a conjecture of D\v zambi\'c--Jones \cite[p.~206]{DzambicJones}.

A positive solution to these questions would imply the validity of Corollaries~\ref{cor:Olsh}, \ref{cor:Olsh2}, \ref{cor:Olsh3}, \ref{cor:Olsh4} and \ref{cor:Olsh5} in the context of virtually special groups. In particular, if Problem~\ref{prob:SpecialOlshanskii}(iii) had a positive solution, then (i) and (ii) would also do.  Such a result would be highly interesting  from the point of view of hyperbolic groups, as well as from that of asymptotic properties of the finite simple groups.

Coming back to Problem~\ref{prob:hyp_RF}, the previous discussion suggests that the most promising candidates of non-residually finite hyperbolic groups are to be found among the infinite hyperbolic groups with Kazhdan's property (T), since those  are not virtually special. Here is an explicit presentation of such a group, where $[x, y] = xyx^{-1}y^{-1}$. 

\begin{example}\label{ex:EscherGroup}
The group 
\begin{align*}
E = \langle x, y, z, t, r \; | \; & x^7, y^7, [x, y]z^{-1}, [x, z], [y, z], \\
& t^2, r^{73}, trtr, \\
&	[x^{2}yz^{-1}, t], [xyz^3, tr], [x^{3}yz^{2}, tr^{17}], \\
& [x, tr^{-34}], [y, tr^{-32}],  [z, tr^{-29}], \\
&[x^{-2}yz, tr^{-25}], [x^{-1}yz^{-3}, tr^{-19}], [x^{-3}yz^{-2}, tr^{-11}] \rangle,
	\end{align*}
	is an infinite Gromov hyperbolic group enjoying Kazhdan's property (T), see \cite[Theorem~1]{PEC_HypT}.  
\end{example}

The group $E$ has a retraction $E \to \langle x, y\rangle$ that is trivial on $\langle t, r\rangle$, and a retraction $E \to \langle t, r\rangle$ that is trivial on $\langle x, y\rangle$. Their product is a quotient homomorphism of $E$ onto the direct product of the Heisenberg group over $\FF_7$ with the dihedral group of order~$146$; the kernel of that map is torsion-free (see \cite[Theorem~1]{PEC_HypT}). I do not know any larger finite quotient of $E$. In particular, I do not know any non-abelian finite simple quotient of $E$. In view of Corollary~\ref{cor:Olsh2}, a negative solution to the following problem would imply the existence of a non-residually finite hyperbolic group. 

\begin{prob}\label{prob:FiniteSimpleQuotientE}
Does the group $E$ admit finite simple quotients containing arbitrarily large alternating groups? Is $E$ residually finite simple? 
\end{prob}

\section*{Acknowledgements}
I  thank the Isaac Newton Institute for Mathematical Sciences, Cambridge for support and hospitality during the programme \textit{Non-positive curvature group actions and cohomology} where part of the work on this paper was accomplished. The paper is based on mini-courses I gave at the university of Lille in March, at the Winter School on \emph{Arithmetic Groups} in Ein Gedi in March, at the Newton Institute in April, and at the Groups St  Andrews conference in Birmingham in August 2017. I thank the organizers of those events for their hospitality, and the participants for their feedback from which I benefited greatly. I am especially grateful to V.~Guirardel, P.~Neumann, C.~Praeger, N.~Radu for inspiring conversations, and to Y.~Cornulier, J.-P. Tignol and an anonymous referee for their comments and corrections on an earlier version of these notes. Finally, it is a pleasure to acknowledge an intellectual debt to Marc Burger, Shahar Mozes and Dani Wise: my  interest in the topic of these notes grew out of my fascination for their seminal contributions.


\newcommand{\etalchar}[1]{$^{#1}$}
\providecommand{\bysame}{\leavevmode\hbox to3em{\hrulefill}\thinspace}
\providecommand{\MR}{\relax\ifhmode\unskip\space\fi MR }
\providecommand{\MRhref}[2]{%
  \href{http://www.ams.org/mathscinet-getitem?mr=#1}{#2}
}
\providecommand{\href}[2]{#2}

\end{document}